\definecolor{darkblue}{rgb}{0.0, 0.0, 0.55}
\definecolor{chicago-maroon}{RGB}{128,0,0}
\definecolor{northwestern-purple}{RGB}{82,0,99}
\providecommand{\texorpdfstring}[2]{#1}
\theoremstyle{plain}
\newtheorem{theorem}{Theorem}[section]
\newtheorem{lemma}{Lemma}[section]
\newtheorem{proposition}{Proposition}[section]
\newtheorem{corollary}{Corollary}[section]
\newtheorem{remark}{Remark}[section]
\theoremstyle{definition}
\newtheorem{definition}{Definition}[section]
\newtheorem{example}{Example}[section]
\newtheorem{assumption}{Assumption}[section]
\newcommand\independent{\protect\mathpalette{\protect\independenT}{\perp}}
\def\independenT#1#2{\mathrel{\rlap{$#1#2$}\mkern2mu{#1#2}}}
\def\nuis{\mathrm{nuis}}
\def\Hodges{\mathrm{Hodges}}
\def\Frechet{Fr\'{e}chet}
\def\Hajek{H\'{a}jek}
\def\Cramer{Cram\'{e}r}
\def\E{\mathbb{E}}
\def\P{\mathbb{P}}
\def\Q{\mathbb{Q}}
\def\p{\mathsf{p}}
\def\calM{\mathcal{M}}
\def\calP{\mathcal{P}}
\def\bbS{\mathbb{S}}
\def\bbM{\mathbb{M}}
\def\bbI{\mathbb{I}}
\def\bbV{\mathbb{V}}
\def\bbG{\mathbb{G}}
\def\bbR{\mathbb{R}}
\def\bbX{\mathbb{X}}
\def\calA{\mathcal{A}}
\def\hat{\widehat}
\def\tilde{\widetilde}
\def\diff{\mathrm{d}}
\def\Exp{\mathsf{Exp}}
\def\bbL{\mathbb{L}}
\def\T{\mathrm{T}}
\def\calR{\mathcal{R}}
\def\Cov{\mathrm{cov}}
\def\Var{\mathrm{var}}
\def\r{\mathsf{r}}
\def\calN{\mathcal{N}}
\def\Bias{\mathsf{bias}}
\def\L{\mathsf{L}}
\def\bbB{\mathbb{B}}
\def\rmi{\mathrm{i}}
\def\frakc{\mathfrak{c}}
\def\eps{\epsilon}
\def\id{\mathrm{id}}
\def\linspan{\mathrm{span}}
\def\IF{\mathsf{IF}}
\def\eff{\mathrm{eff}}
\def\s{\mathsf{s}}
\def\S{\mathsf{S}}
\def\bX{\mathbf{X}}
\def\calC{\mathcal{C}}
\def\calT{\mathcal{T}}
\def\calK{\mathcal{K}}
\def\calX{\mathcal{X}}
\def\calB{\mathcal{B}}
\newcounter{mybibunit}
\renewcommand{\hyper@natlinkstart}[1]{%
  \Hy@backout{#1}%
  \hyper@linkstart{cite}{cite.\themybibunit @#1}%
  \def\hyper@nat@current{#1}%
}
\renewcommand{\hyper@natlinkbreak}[2]{%
  \hyper@linkend#1\hyper@linkstart{cite}{cite.\themybibunit @#2}%
}
\renewcommand{\hyper@natanchorstart}[1]{%
  \Hy@raisedlink{\hyper@anchorstart{cite.\themybibunit @#1}}%
}
\title{Toward an Asymptotic Efficiency Theory on Regular Parameter Manifolds}
\author{Lvfang~Sun, Zhenhua~Lin\orcidlink{0000-0003-1690-9713}, and Lin~Liu\orcidlink{0000-0002-9883-7962}
\thanks{Lvfang Sun and Zhenhua Lin are with the Department of Statistics and Data Science, National University of Singapore, Singapore (e-mail: \href{sunlyufang@u.nus.edu}{sunlyufang@u.nus.edu}; \href{linz@nus.edu.sg}{linz@nus.edu.sg}). Lin Liu is with the Institute of Natural Sciences, MOE--LSC, School of Mathematical Sciences, CMA--Shanghai, SJTU--Yale Joint Center for Biostatistics and Data Science, Shanghai Jiao Tong University, Shanghai, China (e-mail: \href{linliu@sjtu.edu.cn}{linliu@sjtu.edu.cn}). Lin Liu and Zhenhua Lin are corresponding authors and are alphabetically ordered with equal contribution.

Lin Liu's research is funded by the National Key R\&D Program of China Project Number 2025YFA1016700, NSFC Grant No.~12471274 and Science and Technology Talent and Platform Program of Yunnan Province Grant No.~202605AF35007. Zhenhua Lin's research is partially funded by the NUS startup grant A-0004816-00-00. 

The authors would like to thank Tom Kolokotrones, Shuyang Ling, Rajarshi Mukherjee, Adam Quinn Jaffe, Thomas Richardson, James Robins, Aad van der Vaart, Linbo Wang, Yingcun Xia, and Qingyuan Zhao for insightful discussions. 

The authors sincerely appreciate three anonymous referees and the associate editor for their careful reading and highly constructive and useful comments and critiques (in particular on the distinction between reference to a collection of paths and reference to the tangent space generated by the collection of paths) that significantly improved our paper. 

MSC2020 subject classifications: Primary 62E20; secondary 62R30.}}
\begin{document}
\maketitle

\begin{abstract}
Asymptotic efficiency theory is one of the pillars in the foundations of modern mathematical statistics. Not only does it serve as a rigorous theoretical benchmark for evaluating statistical methods, but it also sheds light on how to develop and unify novel statistical procedures. For example, the calculus of influence functions has led to many important statistical breakthroughs in the past decades. Responding to the pressing challenge of analyzing increasingly complex datasets, particularly those with non-Euclidean/nonlinear structures, many novel statistical models and methods have been proposed in recent years. However, the existing efficiency theory is not always readily applicable to these cases, as the theory was developed, for the most part, under the often neglected premise that both the sample space and the parameter space are normed linear spaces. As a consequence, efficiency results outside normed linear spaces are quite rare and isolated, obtained on a case-by-case basis. This paper aims to develop a more unified asymptotic efficiency theory, allowing the sample space, the parameter space, or both to be Riemannian manifolds satisfying certain regularity conditions. We build a vocabulary that helps translate essential concepts in efficiency theory from normed linear spaces to Riemannian manifolds, such as (locally) regular estimators, differentiable functionals, etc. Efficiency bounds are established under conditions parallel to those for normed linear spaces. We also demonstrate the conceptual advantage of the new framework by applying it to two concrete examples in statistics: the population \Frechet{} mean and the regression coefficient vector of Single-Index Models.
\end{abstract}

\begin{IEEEkeywords}
Asymptotic Efficiency Theory, Semiparametric Theory, Riemannian Manifolds, Geometric Statistics, \Frechet{} Mean, Influence Functions
\end{IEEEkeywords}

\newpage

\section{Introduction}
\label{sec:intro}

\IEEEPARstart{A}{symptotic} efficiency theory (abbreviated as ``efficiency theory'' in the sequel) is one of the central pillars of the foundations of classical and modern mathematical statistics \cite{stein1956efficient, le1960locally, hajek1970characterization, begun1983information, schick1986asymptotically, le1986asymptotic, bickel1998efficient, van1988statistical, van1991asymptotic, van1991differentiable, ibragimov1991asymptotically, le2000asymptotics, van2002part, hallin2003semi, ai2003efficient, pfanzagl2011parametric, chen2018overidentification, ibragimov2013statistical}. Efficiency in what follows is understood to be \textit{\`{a} la} Le Cam, H\'{a}jek, Ibragimov, and Has'minskii (we will simply use ``efficiency theory \textit{\`{a} la} Le Cam'' in the sequel to simplify our presentation). Theoretically, it lays a foundational framework under which the optimality of statistical procedures can be rigorously defined and assessed \cite{wolfowitz1965asymptotic, van1997superefficiency}. For example, efficiency theory has helped reconcile the intriguing paradox between Fisher's conjecture on the asymptotic optimality of maximum likelihood estimators (MLE) and the super-efficiency phenomenon revealed by Hodges' estimator \cite{van1997superefficiency}. Practically, several important concepts rooted in this theory, such as regular and asymptotic linear (RAL) estimators and (efficient) influence functions or (efficient) scores, have been instrumental in shedding light on the development of novel estimators \cite{robins1994estimation} and the unification of existing methods \cite{graham2026towards} in substantive fields such as epidemiology and economics. The celebrated augmented inverse probability weighting (AIPW) or double machine learning estimator in the causal inference and missing data literature \cite{robins1994estimation, chernozhukov2018double, van2003unified, van2006targeted} stands out as a poster-child in this regard (see Remark~\ref{rem:AIPW} toward the end of our paper).

A large fraction of statistical problems can be summarized as follows. Given a sample of observations, denoted abstractly as $\bX_{n}$ indexed by a natural number $n$, that is drawn from some unknown $\P$ belonging to a collection $\calP$ of probability distributions, often referred to as the \emph{statistical model}, the statistician is interested in learning about some functional of $\P$ (referred to as the \emph{parameter of interest}), $\chi \equiv \chi (\P)$. Efficiency theory, in a nutshell, concerns the following question: as $n \rightarrow \infty$, what is the optimal limiting variance of $r_{n} (\hat{\chi} - \chi)$, for an estimator $\hat{\chi}$ of $\chi$, upon appropriately rescaled by a diverging sequence $r_{n}$, called ``norming rate operators'' in \cite{van2002part}, such that $r_{n} (\hat{\chi} - \chi)$ is centered and asymptotic normal? In many cases, the index $n$ represents the sample size of repeated i.i.d. draws from $\P$ and then $r_{n} = n^{1 / 2}$.

When it is natural or convenient to parameterize $\P$ by $\theta \in \Theta$, with $\Theta$ denoting the parameter space, we also write $\chi \equiv \chi (\P_{\theta}) \equiv \psi (\theta) \equiv \psi$. $\theta$ is allowed to be infinite-dimensional, in order to incorporate nonparametric statistical models. In the most general form, efficiency theory has been established under the following predicates: 
\begin{enumerate}[label = (\arabic*)]
\item $\chi$, or equivalently $\psi$, takes values in a normed linear space $(\bbB, \Vert \cdot \Vert)$, with Banach spaces (e.g. $\bbR$) being the most general space that had been dealt with to date \cite{van1988statistical};
\item $\P \in \calP$ is smooth in the sense of being differentiable in quadratic mean (DQM) (see Definition~\ref{def:par dqm} later);
\item $\chi (\P)$ is a differentiable functional as defined in \cite{van1991differentiable}.
\end{enumerate}

Due to rapidly evolving data-collection technologies, statisticians are confronted with datasets with increasingly complex geometric structures. Relevant examples include covariance matrices \cite{yuan2012local,lin2019riemannian}, probability measures \cite{petersen2021wasserstein, chen2023wasserstein, lin2023causal}, networks \cite{dubey2019frechet}, and general manifolds \cite{hallin2024quantiles}, among many others. The importance of the analysis of such complex data structures has recently been highlighted in \cite{marron2014overview, wang2007object}. Such data also arise in numerous applications, ranging from diffusion tensor imaging (DTI) to functional connectivity analysis and developmental single-cell biology \cite{schiebinger2019optimal}. Also see Section 3.3.3 of \cite{cinelli2025challenges} for a related discussion focusing on causal inference.

To address these challenges, a broad spectrum of research has focused on developing statistical concepts suitable for nonlinear spaces. For instance, the notion of \Frechet{} mean has become a cornerstone in the analysis of random objects in general metric spaces \cite{bhattacharya2003large, bhattacharya2005large}; and recently \cite{hallin2024quantiles} studied quantiles and quantile regression for manifold-valued data. A parallel line of work \cite{pennec1999probabilities, pennec2006intrinsic} also develops some basic probabilistic tools on Riemannian manifolds, such as the concept of bias, covariance, and asymptotic normality. The above progress has led to the development of \Frechet{} regression \cite{petersen2019frechet}, where conditional \Frechet{} means are estimated as responses to Euclidean or manifold-valued predictors.

Building on these geometric foundations, various new statistical methods have been developed to accommodate nonlinear sample spaces, such as shape analyses \cite{harms2023geometry} and $M$-estimation \cite{brunel2023geodesically,brunel2024concentration}. The scope of regression models for responses and predictors has broadened significantly, beginning with
relatively simple spaces such as a circle or sphere \cite{fisher1993statistical}, then onto smooth manifolds \cite{cheng2013local, yuan2012local, jeon2022nonparametric, steinke2010nonparametric, davis2010population, cornea2017regression, thomas2013geodesic, zhong2024uncertainty} and even more general metric spaces \cite{marron2021object, petersen2019frechet,lin2021total}. Generalizations to regression models and dimension reduction methods can also be found in \cite{lin2023additive, huckemann2010intrinsic, qiu2024random, zhang2024dimension}. Examples of such manifold-valued modeling abound in modern data applications. As a concrete one, it can arise in the study of age-related changes in corpus callosum shape,  where each observed response is a shape object lying in a non-Euclidean manifold, namely Kendall’s shape space. To model this age effect, geodesic regression \cite{thomas2013geodesic}, the manifold analogue of simple linear regression,  was introduced, in which the fitted trajectory is 
$\gamma(t)=\Exp_{p}(tv)$ 
for a scalar predictor $t$ (e.g.,  age), where $p\in \bbM$ is a starting point on the manifold, $v$ is a tangent-vector direction, and $\Exp_p$ denotes the Riemannian exponential map. In this formulation, both $p$ and $v$ are manifold-valued parameters: the point $p$ represents a baseline shape on the manifold, while the direction $v$ describes the direction of shape change with the predictor and lies in the tangent bundle, itself a smooth manifold. We also refer readers to Remark~\ref{rem:Hessian} later in Section~\ref{sec:M-estimation} for more related examples.

In light of the recent development on statistical methods and theory over nonlinear spaces, the following inquiry naturally arises: 
\begin{quote}
\emph{Can we develop an efficiency theory for parameters lying in a possibly nonlinear space, \textit{\`{a} la} Le Cam? If so, how?}
\end{quote}
The goal of this paper is to advance the efficiency theory to a point closer to addressing the above questions. However, there are challenges in bringing this research program to fruition. In the aforementioned predicates of efficiency theory, (1) is often taken for granted. However, due to the absence of linear structures, ``+'' or ``-'' is generally not defined for nonlinear parameter spaces. To make progress, we consider a particular type of nonlinear parameter spaces, Riemmanian manifolds. We coin such parameter spaces as parameter manifolds. Though generally nonlinear, parameter manifolds still have a rich enough structure such as the associated tangent spaces and tangent bundle to leverage. We leave extensions to more general nonlinear spaces, such as metric spaces or infinite-dimensional Banach manifolds, to future work. We will point out a possible idea in Remark~\ref{rem:nonlinear}.

\subsection{Statistical Motivations}
\label{sec:motivation}

\subsubsection{The echoing issue of super-efficiency}
\label{sec:hodges}
\leavevmode

For the problem of estimating the normal mean over the Euclidean space, Hodges' super-efficient estimator dominates MLE under $L^{2}$ loss in a pointwise manner, thereby challenging Fisher's conjecture regarding the optimality of MLE. Not surprisingly, super-efficiency also occurs when the parameter space is nonlinear. Consider a Riemannian manifold $(\bbM, d)$, given a homogeneous Gaussian measure $\P$ on $\bbM$, as defined in Example~\ref{ex:riemannian gaussian} later, we assume to have access to a sample of independent and identically distributed (i.i.d.) draws from $\P$: $\bX_{n} \coloneqq \{X_{i}\}_{i = 1}^{n} \overset{\rm i.i.d.}{\sim} \P$. It is often of interest to estimate the population \Frechet{} mean $\mu \coloneqq \arg\min_{x \in \bbM} \E [d (X, x)^2]$. The sample \Frechet{} mean estimator based on $\bX_{n}$, defined as $\hat{\mu}_{n} \coloneqq \arg\min_{x \in \bbM} \sum_{i = 1}^n d (X_i, x)^2$, is a natural estimator of $\mu$, and also happens to be the MLE. The corresponding Hodges' estimator of $\mu$ can then be defined as
\begin{equation}
\label{hodges}
\hat{\mu}_{n, \Hodges} \coloneqq \left\{ 
\begin{aligned}
\hat{\mu}_n &,& d (\hat{\mu}_{n}, \mu_{0}) > n^{-\frac{1}{4}}, \\
\mu_{0} &,& d (\hat{\mu}_{n}, \mu_{0}) \leq n^{-\frac{1}{4}},
\end{aligned} \right.
\end{equation}
where $\mu_{0}$ is an arbitrarily chosen reference point in $\bbM$. Building on \cite{bhattacharya2003large, bhattacharya2005large}, in Appendix~\ref{app:frechet mean}, we show that $\hat{\mu}_{n, \Hodges}$ is super-efficient in the sense that $\E [(\sqrt{n} \Exp_{\mu_{n}}^{-1} \hat{\mu}_{n, \Hodges})^2] \rightarrow \infty$ as $n \rightarrow \infty$ for $\mu_{n}$ in a neighborhood of $\mu_{0}$ shrinking toward $\mu_{0}$ at a certain rate, where $\Exp^{-1}_{\mu_{n}} \cdot$ is the inverse of the so-called \emph{exponential map} at $\mu_{n}$, a natural extension of the subtraction operation from normed linear spaces to Riemannian manifolds (see Section~\ref{sec:manifolds} for details).
\subsubsection{The Intrinsic Viewpoint}
\label{sec:intrinsic}
\leavevmode
Even for parameters lying in a normed linear space, as quite persuasively argued by \cite{oller1995intrinsic}, the intrinsic or coordinate-free perspective to statistical modeling and inference can still be fruitful and mathematically elegant. Thus, it is also theoretically appealing to recast efficiency theory in an intrinsic or coordinate-free fashion. In this regard, the theoretical framework to be developed in the sequel can be viewed as a marriage between the intrinsic approach in the spirit of \cite{oller1995intrinsic} and the efficiency theory in the spirit of Le Cam. A conceptual advantage of the intrinsic viewpoint will be demonstrated in Section~\ref{sec:SIM}. It is arguable that one could instead first choose a chart or coordinate system on the manifold and then try to apply the classical efficiency theory developed for linear spaces. However, such a coordinate-dependent approach can be cumbersome to work with, when the parameter space carries a natural manifold structure (see examples provided at the beginning of the Introduction or Remark~\ref{rem:Hessian} later). For example, certain estimators of manifold-valued parameters are not easily expressible in closed form in a local chart, such as the empirical \Frechet{} mean. A possibly more convincing advantage is that, when choosing a chart, eventually one needs to take the supremum over all possible charts for deriving efficiency bounds, which may complicate analysis; otherwise two different statisticians may choose different charts and one run into the trouble of further verifying if their efficiency bounds agree. Working with intrinsic geometric concepts as to be developed later in our paper avoids such a complication. Our viewpoint is that it is conceptually natural to establish the efficiency theory using the intrinsic geometric concepts adopted here. In practice, one could resort to convenient numerical tools (such as retraction \cite{absil2008optimization} or other numerical methods for approximations) for computation.
\subsection{Related Literature}
\label{sec:literature}
Our work belongs to the broader program of (semi-)parametric efficiency theory in statistics. As indicated, parametric efficiency theory dates back to classical works such as \cite{le1960locally, hajek1970characterization, le1986asymptotic, le2000asymptotics}. Semiparametric efficiency was, to our knowledge, first formulated in \cite{stein1956efficient}, with the more general theory developed in \cite{begun1983information, van1988statistical, bickel1998efficient, van1991differentiable}, among many others. \cite{van1991differentiable} developed a general semiparametric efficiency theory and differentiable functionals, on which much classical and contemporary work on semiparametric efficiency theory is heavily based \cite{robins1994estimation, luedtke2024one, graham2026towards, takatsu2024generalized}. Textbook or tutorial-level treatment on semiparametric efficiency theory can be found in \cite{bickel1998efficient, van2000asymptotic, tsiatis2006semiparametric}.
All the above works assume that the parameter space is a normed linear space. The relevant literature on parameter manifolds is relatively scarce. Classical works such as \cite{bhattacharya2003large, bhattacharya2005large}, together with a more recent work \cite{brunel2023geodesically}, establish asymptotic normality and convergence rates for $M$/$Z$-estimators over parameter manifolds, albeit leaving the efficiency of such estimators aside. In \cite{oller1995intrinsic} just mentioned, foundational statistical concepts, including unbiasedness, are extended to the case where the underlying parameter space is a Riemannian manifold, with further development in \cite{smith2005covariance, jupp2010van} and references therein. More recently, \cite{chen2025semiparametric} also investigated the problem of estimating the integral of a regression function on a smooth submanifold through the lens of semiparametric theory. Closer to our work, there exist a few isolated papers establishing the efficiency bound for parameter manifolds on a case-by-case basis, which also partly motivates our work to offer a more unified framework. \cite{bigot2012semiparametric} briefly considers the efficiency of finite-dimensional parameters in a generalized white noise model defined on Abelian Lie groups, motivated by image registration problems in practice. However, as also mentioned in that paper, their results cannot be directly extended to efficiency theory over more general manifolds. We refer readers to Remark~\ref{rem:Lie groups} for more details on the results in \cite{bigot2012semiparametric}. Another example is the single-index model, in which the regression coefficient parameter is assumed to belong to a unit hemisphere (a geometric constraint) to ensure identifiability. \cite{kuchibhotla2020efficient} rigorously derived the semiparametric efficiency bound by carefully crafting a parametric submodel that respects the geometric constraint. As mentioned in \cite{kuchibhotla2020efficient}, this construction can be a nontrivial problem (in the words of \cite{cui2011efm}, this problem is nonregular and challenging). However, in Section~\ref{sec:SIM} we will demonstrate that the differential geometric language to be adopted in our framework can drastically simplify the construction, at least conceptually. In a sense, the challenge or difficulty disappears once we adopt the correct mathematical language; see Remark~\ref{rem:intuition} for a related comment. Finally, a very interesting recent work by \cite{huang2026high} established methods for statistical inference on parameter manifolds that can achieve higher-order asymptotic accuracy, which can be a natural next step for our work.
\subsection{Main Contributions and Organization}
\label{sec:contributions}
To our knowledge, this is the first paper to systematically generalize the semiparametric theory from \textit{linear spaces} to \textit{parameter manifolds}, unifying previous isolated results. The take-home message of our paper can be aptly summarized in Table~\ref{tab:summary}, which provides a vocabulary that translates the essential concepts in efficiency theory for normed linear parameter spaces into those for parameter manifolds. The precise meaning of the mathematical notation that appears in Table~\ref{tab:summary} will be defined later in Section~\ref{sec:geometry}.

\begin{table*}[!t]
\centering
\resizebox{\textwidth}{!}{%
\begin{tabular}{c|c|c}
\toprule
& linear parameter spaces & parameter manifolds \\
\midrule
\makecell{perturbation/path \\ (Definition~\ref{def:riemann regular} and \ref{def:semipar regular estimators})} & $\theta_{h} = \theta + h$ & $\theta_{h} = \Exp_{\theta} h$ \\
\hline
\makecell{estimation error \\ (Definition~\ref{def:bias})} & $\hat{\psi}_{n} - \psi (\theta)$ & $\Exp^{-1}_{\psi (\theta)} \hat{\psi}_{n}$ \\
\hline
\makecell{regular estimators \\ (Definition~\ref{def:riemann regular} and \ref{def:semipar regular estimators}; \\ Theorem~\ref{thm:par} and \ref{thm:semipar convolution})} & $\sqrt{n} (\hat{\psi}_{n} - \psi (\theta_{h / \sqrt{n}})) \rightsquigarrow \mathsf{Z}_{\theta}$ & $\sqrt{n} \Pi_{\psi (\theta_{h / \sqrt{n}})}^{\psi (\theta)} \Exp_{\psi (\theta_{h / \sqrt{n}})}^{-1} \hat{\psi}_{n} \rightsquigarrow \mathsf{Z}_{\theta}$ \\
\hline
\makecell{functional derivative \\ (parametric version; \\
Section~\ref{sec:par})} & \makecell{$\dot{\psi} (\theta) = \underset{h \rightarrow 0}{\lim} \dfrac{\psi (\theta_{h}) - \psi (\theta)}{h} = \E [\IF \cdot \S]$} & \makecell{$\dot{\psi} (\theta) = \underset{h \rightarrow 0}{\lim} \dfrac{\Exp^{-1}_{\psi (\theta)} \psi (\theta_{h})}{h} = \E [\IF \cdot \S]$} \\
\hline
\makecell{functional derivative \\ (general version; \\
Section~\ref{sec:semipar})} & \makecell{$\dot{\chi}_{\P} (\s) = \underset{t \rightarrow 0}{\lim} \dfrac{\chi (\P_{t}) - \chi (\P)}{t} = \E [\IF \cdot \S]$} & \makecell{$\dot{\chi}_{\P} (\s) = \underset{t \rightarrow 0}{\lim} \dfrac{\Exp^{-1}_{\chi (\P)} \chi (\P_{t})}{t} = \E [\IF \cdot \S]$} \\
\bottomrule
\end{tabular}
}
\vspace{1em} 
\caption{A vocabulary of concepts in efficiency theory: From normed linear parameter spaces to parameter manifolds. Here $\psi (\theta)$ stands for the parameter of interest and $\hat{\psi}_{n}$ stands for its estimator based on $n$ i.i.d. samples. $\S$ denotes the score corresponding to the joint distribution. The reference to Definitions and Theorems in the parentheses orients readers to the most relevant parts of later sections. But all listed concepts are related to the paper as a whole.}
\label{tab:summary}
\end{table*}

Our main contribution of this paper is to rigorously establish the vocabulary in Table~\ref{tab:summary} to facilitate the use of efficiency theory over parameter manifolds, as the familiar concepts in normed linear parameter spaces can be transported to parameter manifolds. But establishing Table~\ref{tab:summary} involves geometric analysis unique to parameter manifolds. More specifically, we make contributions in the following aspects. 
\begin{itemize}
\item First, efficiency theory mainly concerns regular statistical models. We generalize concepts related to regular statistical models from normed linear spaces to manifolds, including \emph{differentiable in quadratic mean} (DQM) \cite{pollard1997another} and \emph{local asymptotic normality} (LAN) \cite{le1960locally}. 
\item Second, in finite-dimensional parametric models over parameter manifolds, we extend the convolution theorem and local asymptotic minimax theorem to manifold settings. As a byproduct, we justify the calculus of influence functions for manifold-valued parameters, allowing practitioners interested in constructing estimators via influence functions to transport the familiar approach from linear spaces to manifolds. 
\item Third, we also consider semiparametric problems through the more general lens of differentiable functionals, akin to the approach adopted in \cite{van1991differentiable}. 
\item Finally, using two concrete examples, we demonstrate that our proposed framework can be used to (1) establish efficiency bound for common geometrical statistical problems such as \Frechet{} mean and (2) conceptually simplify the existing efficiency results on manifold-valued parameters in the geometric language of our new framework. 
In particular, as a byproduct of the efficiency bound of \Frechet{} mean, we also derive the influence function of the \Frechet{} mean subject to missing-at-random (MAR) missingness, which generalizes the influence function of the mean under MAR originally derived in the seminal work of \cite{robins1994estimation, hahn1998role} when the sample space is linear; see Remark~\ref{rem:AIPW}.
\end{itemize}
Below is a roadmap for the rest of this paper. Section~\ref{sec:geometry} introduces basic geometric concepts and collects frequently used notation. The information bound, the H\'{a}jek-Le Cam convolution theorem, and the Local Asymptotic Minimax theorem for parametric models are generalized to parameter manifolds in Section~\ref{sec:par}. Semiparametric efficiency theory for manifold-valued parameters is presented in Section~\ref{sec:semipar}. Applications of the generalized efficiency theory are provided in Section~\ref{sec:examples}. We summarize the results in the paper and propose some remarks on future directions of research in Section~\ref{sec:conclusion}. Technical details are deferred to the Appendix. During the presentation, we will point out the place where the nonlinearity of the parameter manifolds necessitates a revamp of the existing efficiency theory over normed linear spaces.
\section{Geometric Background and Notation}
\label{sec:geometry}
In this section, we first briefly review some geometric concepts that are essential to our developments (Section~\ref{sec:manifolds}) and prepare some basic notation (Section~\ref{sec:notation}); more technical geometric concepts and results are delegated to Appendix~\ref{app:technical}. We steer interested readers to Section S.1 of the supplementary materials of \cite{shao2022intrinsic} for a more comprehensive treatment on metric and Riemannian geometry. We will adopt the Einstein summation convention throughout this paper.
\subsection{Concepts Related to Riemannian Manifolds}
\label{sec:manifolds}
A $p$-dimensional Riemannian manifold $\bbM$ is a smooth manifold equipped with a Riemannian metric $g$, which is a $(0, 2)$-tensor field (see Appendix~\ref{app:tensor}). Associated with each point $x \in \bbM$ is the tangent space $\T_{x} \bbM$, comprised of tangent vectors $\nu_{\gamma}$ induced by all smooth curves $\gamma: [0, 1] \rightarrow \bbM$ such that $\gamma (0) \equiv x$ and $\dot{\gamma} (0) \equiv \left. \frac{\diff \gamma (t)}{\diff t} \right\vert_{t = 0} \equiv \nu_{\gamma}$, where $\dot{\gamma}$ formally denotes the differential of $\gamma$; we will write $\nu \equiv \nu_{\gamma}$ if not leading to any ambiguity. The associated cotangent space, denoted by $\T_{x}^{\ast} \bbM$, is simply the dual space of $\T_{x} \bbM$. Following the convention in differential geometry, $\T \bbM \equiv \bigcup_{x \in \bbM} \T_{x} \bbM$ is the tangent bundle of $\bbM$ and $\Gamma (\T \bbM)$ the space of vector fields on $\bbM$.
The metric $g$ defines an inner product $g_{x} (\cdot, \cdot) \coloneqq \langle \cdot, \cdot \rangle_{x}$, that varies smoothly in $x$, on the tangent space $\T_{x} \bbM$ locally at $x \in \bbM$. The associated norm is denoted as $\Vert \nu \Vert_{x} \coloneqq \sqrt{\langle \nu, \nu \rangle_{x}}$ for any tangent vector $\nu \in \T_{x} \bbM$. Again, we often drop the subscript $x$ in both the inner product and the norm if it is clear from the context. Heretofore we also drop the prefix ``Riemannian'' and any mentioned manifold will be understood to be a Riemannian manifold. One should use $(\bbM, g)$ to represent a manifold, though we sometimes write $\bbM$ for short. When written in chart forms, at each $x\in\bbM$, $g$ can be  represented in a form of $p \times p$-matrix $G \coloneqq (g_{i j})$ and we let $(g^{i j})$ denote elements of $G^{-1}$. $g$ also gives rise to a reference measure over $\bbM$ called the volume form: $\diff V_{g} (x) \coloneqq \sqrt{\det G (x)} \diff x$, which is the analogue of the Lebesgue measure over Euclidean spaces. $\diff x$ can be rewritten as $\diff x^1 \cdots \diff x^p$, where $(x^1,\cdots,x^p)$ is understood as a local coordinate. Subsequently, we slightly abuse notation and write $\diff x=\diff V_{g}(x)$ to avoid clutter.
A curve $\frakc$ is called a geodesic if it is a critical point of the energy functional $E (\gamma) \coloneqq \frac{1}{2} \int_{0}^{1} \Vert \dot{\gamma} (t) \Vert^{2} \diff t$. By the Euler-Lagrangian equations, a geodesic must satisfy the following ordinary differential equation (ODE), often referred to as the geodesic equation, written in the coordinate chart form:
\begin{equation}
\label{ODE}
\ddot{\frakc}^{i} (t) + \Gamma_{j, k}^{i} (\frakc (t)) \dot{\frakc}^{j} (t) \dot{\frakc}^{k} (t) = 0, \text{ for } i = 1, \cdots, p,
\end{equation}
where $\Gamma_{j, k}^{i} \coloneqq \frac{1}{2} g^{i l} \left( g_{j l, k} + g_{k l, j} - g_{j k, l} \right)$ is referred to as the Christoffel symbol and $g_{j l, k} \coloneqq \frac{\partial}{\partial \frakc^{k}} g_{j l}$. This ODE formulation can be helpful when one needs to compute certain manifold-related quantities in concrete applications, such as the exponential map and its inverse to be defined in the next paragraph. In general, when analytical expressions for these quantities cannot be found, one can still solve \eqref{ODE} and evaluate related quantities numerically.
By applying the Picard–Lindel\"{o}f theorem to the geodesic equation~\eqref{ODE}, given $x \in \bbM$ and $\nu \in \T_{x} \bbM$, there is a unique geodesic $\frakc_{\nu}: [0, \eps] \rightarrow \bbM$ for some $\eps > 0$ such that $\frakc_{\nu} (0) = x, \dot{\frakc}_{\nu} (0) = \nu$ and $\frakc_{\nu}$ vary smoothly in both $x$ and $\nu$. Let $V_{x} \coloneqq \{\nu \in \T_{x} \bbM: \text{ $\frakc_{\nu}$ is uniquely defined over $[0, 1]$}\}$. We can then introduce the exponential map $\Exp_{x}: V_{x} \subseteq \T_{x} \bbM \rightarrow \bbM$ where $\Exp_{x} (\nu) = \frakc_{\nu} (1)$. We further assume $\bbM$ to be a complete manifold throughout this paper, so $\Exp_{x}$ is well defined over $\T_{x} \bbM$. Its inverse, denoted by $\Exp_{x}^{-1}: \bbM \rightarrow V_{x}$, if it exists, is often referred to as the logarithmic map. $\Exp_{x}$ is a locally diffeomorphic map so $\Exp_{x}^{-1}$ is uniquely defined locally, but not necessarily globally. $\bbM$ is often additionally attached with an \textit{(affine) connection} $\nabla \equiv \nabla_{\cdot} \cdot: \Gamma (\T \bbM) \times \Gamma (\T \bbM) \rightarrow \Gamma (\T \bbM)$. In this paper, we adopt the torsion-free (Levi-Civita) connection $\nabla$ induced by $g$ in the following sense: in chart form, $\nabla_{\partial_{j}} \partial_{i} \equiv \Gamma_{i, j}^{k} \partial_{k}$ for $i, j = 1, \cdots, p$, where $\partial_{i}$ refers to the coordinate basis vector field associated with the $i$-th coordinate in the local chart for $i = 1, \cdots, p$.
With $\nabla$, the (Riemannian) curvature tensor field $\calR: \Gamma (\T \bbM) \times \Gamma (\T \bbM) \times \Gamma (\T \bbM) \rightarrow \Gamma (\T \bbM)$ can be represented as
\begin{equation}
\label{curvature tensor field}
\begin{split}
\calR (\nu, \mu) \omega & \equiv \nabla_{\nu} \nabla_{\mu} \omega - \nabla_{\mu} \nabla_{\nu} \omega - \nabla_{[\nu, \mu]} \omega \\
& \equiv \left( [\nabla_{\nu}, \nabla_{\mu}] - \nabla_{[\nu, \mu]} \right) \omega,
\end{split}
\end{equation}
where $\nu, \mu, \omega \in \Gamma (\T \bbM)$, $[\nu, \mu]$ is the Lie bracket of vector fields, and $[\nabla_{\nu}, \nabla_{\mu}]$ denotes the commutator between connections. In differential geometry, depending on the context, various different curvatures related to the (Riemannian) curvature have been introduced. Later in this paper, we will also encounter the \emph{sectional curvature}, denoted as $\calK$. Formally, let $x \in \bbM$ be a point and $M_{2} \subset \T_{x} \bbM$ be a 2-dimensional subspace of the tangent space at $x$. Choose an orthonormal basis $\{\mu_1, \mu_2\}$ of $M_{2}$, then the sectional curvature of $M_{2}$ at $x$ is defined as $\calK (M_{2}) \coloneqq \frac{\langle \calR (\mu_2, \mu_1) \mu_1, \mu_2 \rangle}{\|\mu_1 \wedge \mu_2\|^2}$, where $\|\mu_1 \wedge \mu_2\|^2$ denotes the squared area of the parallelogram formed by $\mu_1$ and $\mu_2$.
$\nabla$ also allows one to define the so-called \textit{parallel transport} map: given a smooth curve $\gamma: [0, 1] \rightarrow \bbM$ and $s, t \in [0, 1]$, $\Pi (\gamma)_{s}^{t}$ denotes the parallel transport map along $\gamma$ from $\T_{\gamma (s)} \bbM$ to $\T_{\gamma (t)} \bbM$. It means that for any $\nu \in \T_{\gamma (s)} \bbM$, there is an associated vector field $\nu_{\cdot} \in \Gamma (\T \bbM)$ such that $\nabla_{\dot{\gamma}} \nu = 0$, $\nu_{\gamma (s)} = \nu$ and $\nu_{\gamma (t)} = \Pi (\gamma)_{s}^{t} \nu$. When the smooth curve $\gamma$ is clear from the context, we also write $\Pi_{x_{1}}^{x_{2}} \equiv \Pi (\gamma)_{s}^{t}$ for short, with $x_{1} = \gamma (s), x_{2} = \gamma (t)$. Geometrically, it moves tangent vectors in $\T_{x_{1}} \bbM$ to tangent vectors in $\T_{x_{2}} \bbM$ smoothly along the curve, with the inner product preserved. The parallel transport map plays a significant role in our paper because it licenses comparisons between tangent vectors belonging to different tangent spaces. Whenever parallel transport is used between two nearby points, it is taken along the unique distance-minimizing geodesic inside a chosen normal neighborhood.
\begin{remark}
Another concept that distinguishes a manifold from a Euclidean space is cut locus. The cut locus of a point $\mu \in \bbM$, denoted as $\calC (\mu)$, is the set of points in $\bbM$ to which the distance-minimizing geodesics from $\mu$ are not unique \cite{hotz2024central}. For instance, points on a Euclidean space have an empty cut locus, but the cut locus of a point on a sphere is a singleton composed of its antipodal point. In general, $\Exp_{\mu}$ only has a unique inverse map $\Exp_{\mu}^{-1}$ (also called the logarithmic map) outside the cut locus $\calC (\mu)$ of $\mu$, but the inverse is not unique within $\calC (\mu)$. However, since we have assumed that the manifold is smooth, the exponential map is smooth and its differential at the origin of $\T_{\mu}\bbM$ is the identity. By the Inverse Function Theorem, there exists a neighborhood of $\mu$ on which $\Exp_{\mu}$ is a diffeomorphism, $\Exp_{\mu}^{-1}$ always exists locally around $\mu$.
\end{remark}
\begin{remark}
\label{rem:intuition}
As mentioned previously, for parameters that lie in a manifold, such as the regression coefficients of a single-index model, it is not immediately clear how to perturb such parameters to ensure that the perturbed parameters still lie in the manifold. It should not be surprising to readers familiar with differential geometry that this desiderata can be guaranteed by perturbing the parameters via the exponential map. Then it only remains to compute the exponential map given a particular manifold. We direct readers to Section~\ref{sec:examples} for concrete examples. This is why we believe that adopting the geometric language can conceptually simplify the application of efficiency theory, as it is expected to become easier to find out how to compute exponential and logarithmic maps with the improvement of modern tools with a focus on mathematics and the formalization of mathematics and mathematical statistics (see, for instance, the \href{https://stat-lib.github.io/}{stat-lib project}).
\end{remark}
\subsection{Notation}
\label{sec:notation}
Before proceeding, we collect some extra notation that is frequently used throughout the paper. Capital letters are reserved for random variables, including the symbol $\IF$, which is often used to denote influence functions in the modern literature on (semiparametric) efficiency theory. Given a random object $A$, we let $\bbL (A)$ denote its probability law. $\P_{n} (\cdot)$ is reserved for the empirical measure over $n$ observations. Given a sequence of random objects $\{X_{n}\}$ indexed by $n$, $X_{n} \rightsquigarrow_{\P} X$ means that the sequence $X_n$ weakly converges to $X$ under a probability measure $\P$. When it is clear from the context what the probability measure $\P$ is, we may also write $X_{n} \rightsquigarrow X$. We let $\E, \Var$, and $\Cov$ be the expectation, variance, and covariance under the probability measure $\P$. If $\P$ is absolutely continuous with respect to some standard reference measure $\lambda$\footnote{For example, if the underlying sample space is Euclidean, then $\lambda$ is understood to be the Lebesgue measure, whereas if the underlying sample space is a Riemmanian manifold, then $\lambda$ is understood to be the volume form.}, we let $\p \coloneqq \frac{\diff \P}{\diff \lambda}$ be the corresponding Radon-Nikodym derivative (or density) of $\P$, $\r \coloneqq \sqrt{\p}$ be the square-root density, and $\ell \coloneqq \log \p$ be the log-likelihood function. We denote $L_{0}^{2} (\P)$ be the space of square integrable functions on tangent space with zero mean under $\P$. When needed, we also write $\P_{\theta}, \E_{\theta}, \cdots$ to emphasize the dependence on some (unknown) parameter $\theta$.
Given two linear spaces $\bbX_{1}, \bbX_{2}$, we denote $\bbX_{1} \otimes \bbX_{2}$ as their  (algebraic) tensor product. Let $\bbX_{1}^{\ast}, \bbX_{2}^{\ast}$ be the dual spaces of $\bbX_{1}, \bbX_{2}$, respectively. For a linear space $\bbX$ and its dual $\bbX^{\ast}$, we use $\langle \mathsf{x}^{\ast}, \mathsf{x} \rangle$ to denote the value of the functional $\mathsf{x}^{\ast} \in \bbX^{\ast}$ acting on $\mathsf{x} \in \bbX$. Given $\mathsf{x}_{1}^{\ast} \in \bbX_{1}^{\ast}$, $\mathsf{x}_{2}^{\ast} \in \bbX_{2}^{\ast}$, we identify their tensor product $\mathsf{x}_{1}^{\ast} \otimes \mathsf{x}_{2}^{\ast}: \bbX_{1} \otimes \bbX_{2} \rightarrow \bbR$ with the following bilinear form:
\begin{equation}
\label{operator tensor}
\mathsf{x}_{1}^{\ast} \otimes \mathsf{x}_{2}^{\ast} [\mathsf{x}_{1}, \mathsf{x}_{2}] \coloneqq \langle \mathsf{x}_{1}^{\ast}, \mathsf{x}_{1} \rangle \langle \mathsf{x}_{2}^{\ast}, \mathsf{x}_{2} \rangle,
\end{equation}
for any $(\mathsf{x}_{1}, \mathsf{x}_{2}) \in \bbX_{1} \otimes \bbX_{2}$. Finally, $\rmi$ is reserved for the imaginary unit of complex numbers.
Throughout the paper, unless otherwise stated, all probability measures are assumed to be dominated by a reference measure $\lambda$ on the sample space $(\bbX, \calX)$. If $\bbX$ is Euclidean, $\lambda$ is the Lebesgue measure; if $\bbX$ is a Riemannian manifold, $\lambda$ is the associated volume form. For each parameter value $\theta$, we write
\[
\p(\cdot;\theta)=\frac{\diff \P_\theta}{\diff \lambda} (\cdot),
\qquad
\r (\cdot;\theta)=\sqrt{\p}(\cdot;\theta).
\]
Whenever a parametric density is used, $(x,\theta)\mapsto \p(x;\theta)$ is assumed to be jointly measurable. We also assume that $\theta\mapsto \psi(\theta)$ is measurable, and that all expectations and integrals appearing in the paper are well defined with respect to the relevant dominating measure.
\section{Efficiency Theory on Parameter Manifolds for Parametric Models}
\label{sec:par}
We commence by assuming that the probability law generating the sample $\bX$ belongs to a parametric family of distributions (or a statistical model), denoted as 
\begin{equation}
\label{parametric model}
\calP \equiv \calP (\Theta) \coloneqq \{\P_\theta: \theta \in \Theta\}, 
\end{equation}
where $\P_{\theta}$'s are probability measures defined over a measurable sample space $(\bbX, \calX)$ parameterized by $\theta \in \Theta$. $(\Theta, d)$ is a manifold equipped with metric $d$ and of dimension $p$. Given $\bX$, we are interested in conducting inference on the parameter of interest $\psi (\theta)$, with $\psi: \Theta \rightarrow \Psi$, where $(\Psi, g)$ is the $q$-dimensional parameter manifold of $\psi (\theta)$. When $\psi = \id$, the parameter of interest is simply $\theta$ itself.
The equivalence between the smoothness of $\psi$ and the existence of regular estimators when both $\Theta$ and $\Psi$ are normed linear spaces \cite{hirano2012impossibility} prompts us to impose the following differentiability condition on $\psi$.
\begin{assumption}
\label{as:smooth transformation}
$\psi$ is assumed to be a differentiable map from $\Theta$ to $\Psi$, with differential (a linear map) denoted by $\dot{\psi} (\theta)$.
\end{assumption}
\begin{lemma}
\label{lem:directional derivative}
Under Assumption~\ref{as:smooth transformation}, for any $h \in \T_{\theta} \Theta$,
\begin{align*}
\dot{\psi} (\theta) (h) = \lim_{t \rightarrow 0} t^{-1} \Exp^{-1}_{\psi (\theta)} \psi (\Exp_{\theta} (t \cdot h)) \in \T_{\psi (\theta)} \Psi.
\end{align*}
Note that the exponential map $\Exp_{\psi (\theta)}$ and inverse exponential map $\Exp_{\theta}^{-1}$ in the above display relate to different manifolds.
\end{lemma}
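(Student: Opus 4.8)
The plan is to reduce the statement to two facts that are essentially built into the geometric setup: (i) the differential of a smooth map between manifolds computes velocities of pushed-forward curves, and (ii) the differential of the exponential map at the origin of a tangent space is the identity. Fix $h \in \T_{\theta} \Theta$ and consider the geodesic $\gamma (t) \coloneqq \Exp_{\theta} (t \cdot h)$. By homogeneity of geodesics, $\gamma (t) = \frakc_{h} (t)$, so $\gamma (0) = \theta$ and $\dot{\gamma} (0) = \dot{\frakc}_{h} (0) = h$; thus $\gamma$ is a legitimate representative curve for the tangent vector $h$. By the definition of the differential of a smooth map under Assumption~\ref{as:smooth transformation}, $\dot{\psi} (\theta)$ acts on $h$ by pushing forward this curve, i.e.
\[
\dot{\psi} (\theta) (h) = \frac{\diff}{\diff t} \Big|_{t = 0} \psi (\gamma (t)).
\]
Writing $c (t) \coloneqq \psi (\Exp_{\theta} (t \cdot h))$, a smooth curve in $\Psi$ with $c (0) = \psi (\theta)$, the task reduces to showing $\dot{c} (0) = \lim_{t \rightarrow 0} t^{-1} \Exp_{\psi (\theta)}^{-1} (c (t))$.

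Next I would check that the right-hand side is well-defined and evaluate it. As discussed around the cut locus in Section~\ref{sec:manifolds}, $\Exp_{\psi (\theta)}$ is a local diffeomorphism near the origin, so $\Exp_{\psi (\theta)}^{-1}$ is defined on a neighborhood of $\psi (\theta)$; since $c (t) \rightarrow \psi (\theta)$ by continuity of $\psi$ and of the geodesic, the composition $\Exp_{\psi (\theta)}^{-1} (c (t))$ is meaningful for all sufficiently small $t$. Crucially, the curve $t \mapsto \Exp_{\psi (\theta)}^{-1} (c (t))$ now takes values in the \emph{fixed} linear space $\T_{\psi (\theta)} \Psi$, so its ordinary (difference-quotient) derivative at $t = 0$ is precisely the limit on the right-hand side, using $\Exp_{\psi (\theta)}^{-1} (c (0)) = 0$. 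By the chain rule this derivative equals $\diff (\Exp_{\psi (\theta)}^{-1})_{\psi (\theta)} (\dot{c} (0))$.

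The crux is therefore the identity $\diff (\Exp_{\psi (\theta)}^{-1})_{\psi (\theta)} = \id$. This follows from the standard fact that $\diff (\Exp_{y})_{0} = \id_{\T_{y} \Psi}$ under the canonical identification of $\T_{0} (\T_{y} \Psi)$ with $\T_{y} \Psi$; since $\Exp_{\psi (\theta)}^{-1}$ is its local inverse, its differential at $\psi (\theta)$ is the identity as well. Combining the pieces, $\diff (\Exp_{\psi (\theta)}^{-1})_{\psi (\theta)} (\dot{c} (0)) = \dot{c} (0) = \dot{\psi} (\theta) (h)$, which is the claim. That the limit indeed lands in $\T_{\psi (\theta)} \Psi$ is immediate, as $\Exp_{\psi (\theta)}^{-1}$ maps into that tangent space.

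I do not expect analytical depth to be the obstacle here; the difficulty is purely one of bookkeeping the identifications. One must argue carefully that the stated limit is literally the derivative of a curve valued in the linear space $\T_{\psi (\theta)} \Psi$ (so that the naive difference quotient and the intrinsic velocity coincide), and that the identification $\T_{0} (\T_{\psi (\theta)} \Psi) \cong \T_{\psi (\theta)} \Psi$ used in $\diff (\Exp_{\psi (\theta)})_{0} = \id$ is the same one under which the right-hand side is read as an element of $\T_{\psi (\theta)} \Psi$. Once these identifications are pinned down, the argument is a one-line application of the chain rule, and the fact that $\Exp_{\theta}$ and $\Exp_{\psi (\theta)}^{-1}$ live on different manifolds causes no trouble, as it is $\dot{c} (0)$ alone that carries the information.
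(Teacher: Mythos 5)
Your proof is correct and follows essentially the same route as the paper's: push $h$ forward along the geodesic $\gamma(t) = \Exp_{\theta}(t \cdot h)$ and identify the velocity $\left.\frac{\diff}{\diff t}\right|_{t=0} \psi(\gamma(t))$ with the difference quotient $\lim_{t \rightarrow 0} t^{-1} \Exp_{\psi(\theta)}^{-1} \psi(\gamma(t))$. The only difference is that you explicitly justify the latter identification via $\diff (\Exp_{\psi(\theta)})_{0} = \id$ and the chain rule, a step the paper's proof asserts without comment, so your write-up is if anything slightly more complete.
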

\begin{proof}
Given a tangent vector $h \in \T_{\theta} \Theta$, we take a smooth curve $\gamma (t)$ on $\Theta$ such that $\gamma (0) = \theta$ and $\dot{\gamma} (0) = h \in \T_{\theta} \Theta$. A canonical choice is the geodesic starting at $\theta$ along the direction $h$, i.e., $\gamma (t) = \Exp_\theta (t \cdot h)$. By definition, the differential of $\psi$ at $\theta$ applied to $h$ is $\dot{\psi} (\theta)(h) \equiv \left.\frac{\diff}{\diff t}\right|_{t=0} \psi (\gamma(t)) \in \T_{\psi (\theta)} \Psi$. Then $\left. \frac{\diff}{\diff t}\right|_{t = 0} \psi (\gamma (t)) = \lim_{t \rightarrow 0} \frac{1}{t} \Exp_{\psi(\theta)}^{-1} (\psi (\gamma (t)))$. Since $\gamma (t) \equiv \Exp_{\theta} (t \cdot h)$, the proof is complete by substituting this identity into $\left. \frac{\diff}{\diff t}\right|_{t = 0} \psi (\gamma (t))$.
\end{proof}
\begin{remark}
\label{rem:retraction}
As pointed out by a referee, retractions and vector transports, instead of exponential maps and parallel transports, are often used in practice for computational purposes \cite{absil2008optimization, huang2026high}. It is therefore practically useful to investigate whether our theoretical results are still robust if we change exponential maps and parallel transports to retractions and vector transports. In Appendix~\ref{app:retraction}, we discuss this issue thoroughly. In a nutshell, it is possible to reframe our results using appropriate retractions and vector transports that agree with exponential/logarithmic maps and parallel transports up to certain approximation accuracy, so as to further narrow the gap between our theoretical results and practice in certain areas. However, it indeed becomes quite cumbersome to do so, given that we already have a set of convenient geometric tools to leverage.
\end{remark}
\subsection{Parametric Models that are Differentiable in Quadratic Mean over Regular Parameter Manifold}
\label{sec:par dqm}
The first step toward establishing efficiency theory is to impose appropriate regularity conditions on the underlying statistical model. In the classical settings where parameters take values in Euclidean spaces, models that are differentiable in quadratic mean (DQM) constitutes the most general class of statistical models to which efficiency theory is applicable. For parameters taking values in manifolds, we generalize the definition of DQM as follows.
\begin{definition}
\label{def:par dqm}
Suppose $\theta \in \Theta$ is the parameter of interest taking values in a manifold $\Theta$. Let $\p (\cdot; \theta)$ be the probability density function of $\P_{\theta}$ and recall that $\r (\cdot; \theta) \equiv \sqrt{\p (\cdot; \theta)}$ dominated by some reference measure $\lambda$ (which is not necessarily unique). Given any tangent vector $h \in \T_{\theta} \Theta$, let $\theta_{h} \coloneqq \Exp_{\theta} (h)$ and thus $\theta_{0} \equiv \theta$. The statistical model $\calP \equiv \calP_{\Theta}$ is said to satisfy DQM at $\P_{\theta}$ if there exists an operator $\s (\cdot; \theta): \bbX \rightarrow \T_{\theta}^{\ast} \Theta$, where $\T_{\theta}^{\ast} \Theta$ denotes the dual of $\T_{\theta} \Theta$ (or the cotangent space), such that as $h \rightarrow 0$,
\begin{equation}
\label{dqm}
\begin{split}
\int_{x \in \bbX} \left\{ \r (x; \theta_{h}) - \r (x; \theta) - \frac{1}{2} \s (x; \theta) (h) \cdot \r (x; \theta) \right\}^2 \diff x \\
= o (\left\Vert h \right\Vert^2).
\end{split}
\end{equation}
Here, $\s (\cdot; \theta)$ is the score function of $\theta$. For convenience, we often write $\S_{\theta} \equiv \s (X; \theta)$. We define the \emph{model tangent space} as
\begin{equation}
\label{parametric model tangent space}
\Lambda \coloneqq \mathrm{cl} \left[ \, \linspan \left\{ \S_{\theta} (h): h \in \T_{\theta}  \Theta \right\} \right],
\end{equation}
where $\mathrm{cl} [\cdot]$ denotes the closure of a set.
\end{definition}
\begin{remark}
\label{rem:tangent space terminology}
Note that here we deliberately use ``model tangent space'' instead of ``tangent space'' and introduce a different notation to avoid confusions between the tangent space of a statistical model (or a statistical manifold in the language of information geometry \cite{lauritzen1987statistical, kass1989geometry, kass2011geometrical}) and the tangent space of the sample/parameter space that is itself a manifold.
\end{remark}
\begin{remark}
\label{rem:DQM}
Checking whether a model is DQM as in Definition~\ref{def:par dqm} requires geometric analysis machinery, which we collect in Appendix~\ref{app:geometric analysis}. Given a square-root density $\r (x; \theta)$, to verify \eqref{dqm}, we first need to analyze how close $\theta_{h}$ is to $\theta$. This can be done by invoking Lemma~\ref{lem:log map expansion} and Lemma~\ref{lem:connection} in Appendix~\ref{app:geometric analysis}.
\end{remark}
With the score function introduced, we are in place to define the (Fisher) Information Operator $\bbG_{\theta}$ and its inverse at $\P_{\theta}$, another two important concepts toward building the efficiency theory.
\begin{definition}
\label{def:information operator}
Let $\nu_{1}$ and $\nu_{2}$ be two arbitrary tangent vectors in $\T_{\theta} \Theta$. We define the Fisher information operator $\bbG_{\theta} (\cdot, \cdot): \T_{\theta} \Theta \times \T_{\theta} \Theta \rightarrow \bbR$ as $\bbG_{\theta} (\nu_{1}, \nu_{2}) = \E [\S_{\theta} (\nu_{1}) \cdot \S_{\theta} (\nu_{2})]$. $\bbG$ can be viewed as a $(0, 2)$-tensor field (see Remark~\ref{rem:tensor examples} in Appendix~\ref{app:tensor}). We can also write $\bbG_{\theta} = \E [\S_{\theta} \otimes \S_{\theta}]$. Furthermore, if $\bbG_{\theta}$ is non-singular, we let $\bbG_{\theta}^{-1}$ denote its inverse, that is, $\bbG_{\theta}^{-1} \cdot \bbG_{\theta} = \bbG_{\theta} \cdot \bbG_{\theta}^{-1} = \mathbb{I}$.
\end{definition}
\begin{remark}
\label{rem:Fisher 2nd}
Following \cite{smith2005covariance} (and also earlier works such as \cite{hendriks1991cramer}), as in the case of normed linear spaces, Fisher information operator $\bbG_{\theta}$ can be equivalently characterized in the form of a \emph{Hessian}. Recall that $\p (\cdot; \theta)$ is the probability density function of $\P_{\theta}$ and $\r (\cdot; \theta) \equiv \sqrt{\p (\cdot; \theta)}$ is dominated by the reference measure $\lambda$. Suppose that $\p (\cdot; \theta)$ is twice differentiable in $\theta$, $\nabla_{\theta}\int_{x \in \bbX} \nabla_{\theta} \p (x; \theta)\diff x=\int_{x \in \bbX} \nabla^2_{\theta} \p (x; \theta)\diff x$, and $\nabla_{\theta}\int_{x \in \bbX} \p (x; \theta)\diff x=\int_{x \in \bbX} \nabla_{\theta} \p (x; \theta)\diff x$ for $\theta \in \Theta$, we have
\begin{equation}
\label{Fisher 2nd}
\bbG_{\theta} \equiv - \E [2 \nabla_{\theta}^{2} \r (X; \theta)],
\end{equation}
where $\nabla$ is the connection induced by the metric $d$.
\end{remark}
The following proposition states that DQM is a sufficient condition for LAN under the i.i.d. sampling scheme from a parametric statistical model with manifold-valued parameter $\theta \in \Theta$.
\begin{proposition}[Local Asymptotic Normality of DQM Experiments]
\label{prop:lan}
Suppose that the statistical model $\calP \coloneqq \{\P_{\theta}: \theta \in \Theta\}$ is DQM at $\theta$, and that we have access to a sample of $n$ i.i.d. observations $\{X_{i}\}_{i = 1}^{n}$ drawn from $\P_{\theta}$. As in Definition~\ref{def:par dqm}, for every $h \in \T_{\theta} \Theta$, let $\theta_{n, h} \coloneqq \Exp_{\theta} \left( \frac{h}{\sqrt{n}} \right)$, we have
\begin{equation}
\label{lan}
\begin{split}
& \sum_{i=1}^n \log\frac{\p (X_i; \theta_{n, h})}{\p (X_i; \theta)} = \sum_{i = 1}^n \s (X_{i}; \theta) \left( \frac{h}{\sqrt{n}} \right) \\
& \qquad\qquad\qquad - \frac{1}{2} \bbG_{\theta} (h, h) + o_{\P_{\theta}} (1).
\end{split}
\end{equation}
\end{proposition}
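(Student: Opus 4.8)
The plan is to reduce the claim to the classical Euclidean proof of local asymptotic normality (see \citet{van2000asymptotic}, Theorem~7.2), after observing that the only way the manifold enters is through the local perturbation $\theta_{n, h} = \Exp_{\theta}(h / \sqrt{n})$ and through the \emph{linearity} of the score covector $\S_{\theta} = \s(X; \theta)$ on $\T_{\theta} \Theta$. Since $\Theta$ is complete, $\Exp_{\theta}(h / \sqrt{n})$ is well defined, and since $\Vert h / \sqrt{n} \Vert \to 0$ the DQM condition in Definition~\ref{def:par dqm} applies verbatim with the tangent vector $h / \sqrt{n}$ in place of $h$. Linearity gives $\s(x; \theta)(h / \sqrt{n}) = n^{-1/2} \s(x; \theta)(h)$, so every object in the expansion becomes a scalar-valued function on $\bbX$ and the remaining argument is purely analytic, with no curvature correction entering at this order.

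First I would pass to square-root densities, defining the row-i.i.d. triangular array $W_{ni} \coloneqq \r(X_i; \theta_{n, h}) / \r(X_i; \theta) - 1$ on the set where $\r(X_i; \theta) > 0$, so that $\sum_{i} \log\{\p(X_i; \theta_{n, h}) / \p(X_i; \theta)\} = 2 \sum_{i} \log(1 + W_{ni})$. Applying DQM at $h / \sqrt{n}$ supplies the master estimate $\int \{\r(x; \theta_{n, h}) - \r(x; \theta) - \tfrac{1}{2 \sqrt{n}} \s(x; \theta)(h) \, \r(x; \theta)\}^{2} \diff x = o(1 / n)$, from which all subsequent bounds follow.

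Next I would assemble the three standard ingredients. (i) Because $\E[W_{n1}^{2}] = \int (\r(x; \theta_{n, h}) - \r(x; \theta))^{2} \diff x$ is a squared Hellinger distance, the master estimate gives $n \, \E[W_{n1}^{2}] \to \tfrac{1}{4} \bbG_{\theta}(h, h)$, and a weak law of large numbers for the array yields $\sum_{i} W_{ni}^{2} \overset{\P_{\theta}}{\longrightarrow} \tfrac{1}{4} \bbG_{\theta}(h, h)$. (ii) The same second-moment control gives asymptotic negligibility, $\max_{i} |W_{ni}| \overset{\P_{\theta}}{\longrightarrow} 0$. (iii) For the linear term, set $Y_{ni} \coloneqq 2 W_{ni} - n^{-1/2} \s(X_i; \theta)(h)$; the master estimate forces $n \, \Var(Y_{n1}) \to 0$, while the Hellinger identity $\int \r(x; \theta_{n, h}) \r(x; \theta) \diff x = 1 - \tfrac{1}{2} \int (\r(x; \theta_{n, h}) - \r(x; \theta))^{2} \diff x$ yields $2 \E[W_{n1}] = -\tfrac{1}{4 n} \bbG_{\theta}(h, h) + o(1 / n)$. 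Comparing this Hellinger identity with its DQM expansion also shows, as a byproduct, that the score is centered, $\E[\s(X; \theta)(h)] = 0$, which is exactly what makes the linear term $O_{\P_{\theta}}(1)$; Chebyshev's inequality then delivers $2 \sum_{i} W_{ni} = n^{-1/2} \sum_{i} \s(X_i; \theta)(h) - \tfrac{1}{4} \bbG_{\theta}(h, h) + o_{\P_{\theta}}(1)$.

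Finally I would combine everything through the expansion $2 \log(1 + w) = 2 w - w^{2} + w^{2} \rho(w)$ with $\rho(w) \to 0$ as $w \to 0$: ingredient (ii) bounds $\sup_{i} |\rho(W_{ni})|$, ingredient (i) bounds $\sum_{i} W_{ni}^{2}$, so the remainder is $o_{\P_{\theta}}(1)$, and substituting (i) and (iii) gives $\tfrac{1}{\sqrt{n}} \sum_{i} \s(X_i; \theta)(h) - \tfrac{1}{4} \bbG_{\theta}(h, h) - \tfrac{1}{4} \bbG_{\theta}(h, h) + o_{\P_{\theta}}(1)$, which is precisely \eqref{lan}. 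I do not expect any single estimate to be the real obstacle; rather, the care lies in the bookkeeping that links the manifold-specific perturbation to this scalar analysis --- verifying that $\Exp_{\theta}(h / \sqrt{n})$ stays within the neighborhood where the density representation and the DQM expansion are valid (guaranteed locally by completeness and $\Vert h / \sqrt{n} \Vert \to 0$), controlling the event $\{\r(X_i; \theta_{n, h}) = 0\}$ on which the log-ratio degenerates, and confirming that linearity of the score produces the exact $n^{-1/2}$ scaling. Once these points are settled, the proof is a faithful transcription of the normed-linear-space argument.
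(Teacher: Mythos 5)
Your proposal is correct, and it shares the paper's one structural insight --- that the manifold enters only through the perturbation $\theta_{n,h} = \Exp_{\theta}(h/\sqrt{n})$ and the linearity of the score covector on $\T_{\theta}\Theta$, after which everything is scalar analysis --- but you execute that scalar analysis by a genuinely different route than the paper's own proof in Appendix~\ref{app:prop lan}. The paper substitutes the DQM expansion directly into the log-likelihood ratio, writing each summand as $2\log\bigl(1 + \tfrac{1}{2}\s(X_i;\theta)(h/\sqrt{n}) + o_{\P}(n^{-1/2})\bigr)$, Taylor-expands the logarithm termwise, and finishes with the law of large numbers for the squared-score term plus Slutsky; in particular it obtains the full $-\tfrac{1}{2}\bbG_{\theta}(h,h)$ centering from $\E[\S_{\theta}(h)^2]$ and implicitly uses $\E[\S_{\theta}(h)]=0$. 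You instead run the classical triangular-array argument (van der Vaart, Theorem 7.2): the likelihood-ratio roots $W_{ni}$, negligibility $\max_i|W_{ni}|\to_{\P}0$, the $L^2$ control of $Y_{ni}=2W_{ni}-n^{-1/2}\s(X_i;\theta)(h)$, and the Hellinger identity, so that the centering arrives in two halves --- $-\tfrac{1}{4}\bbG_{\theta}(h,h)$ from $\sum_i W_{ni}^2$ and $-\tfrac{1}{4}\bbG_{\theta}(h,h)$ from $2\,\E[W_{n1}]$ --- with score centeredness falling out as a byproduct rather than being assumed. What each buys: the paper's version is shorter and keeps the geometric content (where Definition~\ref{def:par dqm} and the exponential map are invoked) in the foreground; your version is tighter on the probabilistic bookkeeping, since it replaces the paper's delicate step of summing $n$ per-observation $o_{\P}(n^{-1/2})$ remainders --- which, strictly, needs exactly the array-level second-moment control you build in --- by explicit Chebyshev bounds, and it also attends to the degenerate events where a density vanishes, which the paper's proof passes over. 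Two small cautions if you write this up: your ingredient (i) needs the usual truncation device (fourth moments of $W_{ni}$ are not available, so the weak law for $\sum_i W_{ni}^2$ is proved by comparing with $\tfrac{1}{4n}\sum_i \s(X_i;\theta)(h)^2$ via Cauchy--Schwarz and negligibility, not by a naive variance bound), and the identity $\E[W_{n1}^2]=\int(\r(x;\theta_{n,h})-\r(x;\theta))^2\,\diff x$ holds only up to the mass of $\P_{\theta_{n,h}}$ off the support of $\P_{\theta}$, which DQM shows is $o(1/n)$ --- the relevant null set is $\{\p(\cdot;\theta)=0\}$, in addition to the event $\{\r(X_i;\theta_{n,h})=0\}$ you flag.
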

The proof of Proposition~\ref{prop:lan} is given in Appendix~\ref{app:prop lan}. Since, by DQM, we have assumed the existence of a score function in the cotangent space that is locally isomorphic to a linear space, the proof is not essentially different from the case where $\Theta$ is normed linear space. An important consequence of Proposition~\ref{prop:lan} is 
\begin{equation}
\label{lan_clt}
\sum_{i = 1}^{n} \log \frac{\p (X_i; \theta_{n, h})}{\p (X_i; \theta)} \rightsquigarrow_{\P_{\theta}} \calN \left( -\frac{1}{2} \bbG_{\theta} (h, h), \bbG_{\theta} (h, h) \right),
\end{equation}
that is, the log-likelihood ratio process between two contiguous probability distributions converges weakly to a tight Gaussian measure.
\begin{remark}
\label{rem:LAN}
The i.i.d. assumption imposed in Proposition~\ref{prop:lan} is not necessary for LAN. Other conditions may also suffice, such as rapidly mixing dependent data. Often the LAN expansion itself can be utilized as a high-level condition imposed on a sequence of regular statistical models.
\end{remark}
\begin{example}
\label{ex:riemannian gaussian}
Consider the family of Gaussian probability measures $\{\P_{\mu, \sigma^2}: \mu \in \bbM, \sigma^{2} > 0\}$ parameterized by $\mu \in \bbM$ where $(\bbM, d)$ is a manifold and $\sigma^2 > 0$ \cite{said2021statistical}. The probability density function is
\begin{equation}
\p (x; \mu, \sigma^2) = \mathrm{Z}_{\mu, \sigma^{2}}^{-1} \exp \left\{ - \frac{d (x, \mu)^2}{2 \sigma^2} \right\}
\end{equation}
and $\mathrm{Z}_{\mu, \sigma^{2}}$ is the partition function. If we further assume that $\bbM$ is a homogeneous Hadamard manifold with strictly bounded curvature, the partition function $\mathrm{Z}_{\mu, \sigma^{2}} \equiv \mathrm{Z}_{\sigma^{2}}$ does not depend on $\mu$, in which case the score function w.r.t. $\mu$ is simplified as
\begin{align*}
\s (x; \mu) \coloneqq - \frac{d (x, \mu)}{\sigma^{2}} \nabla_{\mu} d (x, \mu).
\end{align*}
One can straightforwardly check that $\s (x; \mu)$ satisfies \eqref{dqm}, by considering a local perturbation of $\mu$ by $\mu_{h} = \Exp_{\mu} (h)$ using results from Appendix~\ref{app:geometric analysis}. Apart from Riemannian Gaussian distributions, exponential family distributions have also been extended to manifolds \cite{mardia2009directional}.
\end{example}
\subsection{The \Hajek-Le Cam Convolution Theorem}
\label{sec:convolution theorem}
Before stating the \Hajek-Le Cam convolution theorem for parameters taking values in a manifold, we need to first discuss several instrumental concepts in efficiency theory: unbiasedness, the \Cramer-Rao lower bound (CRLB), and regular estimator sequences.
Starting from this section, since we need to compare estimators and parameters using the logarithmic map, we always impose the following two regularity assumptions without explicitly mentioning them.
\begin{assumption}
\label{as:cut locus}
We assume the following for any estimator $\hat{\psi}$ of $\psi (\theta)$:
\begin{align*}
\sup_{\theta \in \Theta} \P_{\theta} (\hat{\psi} \in \calC (\psi (\theta))) = 0.
\end{align*}
\end{assumption}
\begin{assumption}
    \label{perturbation}
    For every $\theta \in \Theta$ and every fixed $h \in \T_\theta \Theta$, there exists a normal neighborhood $U_{\psi(\theta)} \subset \Psi$ of $\psi(\theta)$ such that $\psi(\theta_{n, h}) \in U_{\psi(\theta)}$ for all sufficiently large $n$, where $\theta_{n, h}=\Exp_\theta(h / \sqrt{n})$. In particular, the distance-minimizing geodesic from $\psi(\theta)$ to $\psi(\theta_{n, h})$ is unique, and $\Pi_{\psi\left(\theta_{n, h}\right)}^{\psi(\theta)}$ is understood as parallel transport along this geodesic.
\end{assumption}
\begin{remark}
Since $\theta_{n, h} \to \theta$ as $n \to \infty$ and $\psi$ is smooth, we have $\psi (\theta_{n, h}) \to \psi(\theta)$. Hence Assumption~\ref{perturbation} is a local condition that is automatically satisfied for each fixed $(\theta, h)$ after choosing a sufficiently small normal neighborhood of $\psi (\theta)$.
\end{remark}
Assumption~\ref{as:cut locus} ensures that $\Exp^{-1}_{\psi (\theta)} \hat{\psi}$ is well defined. For example, when $\Psi$ is a sphere, it requires that $\hat{\psi}$ cannot be the antipodal point of $\psi (\theta)$ almost surely. This is not a strong assumption, since the antipodal point $\psi (\theta)$ is a null set for any continuous distribution over the sphere. This assumption also holds for Hadamard spaces since $\calC(\psi)=\emptyset$ if $\Psi$ is a Hadamard manifold. Next, we generalize the concept of bias of an estimator $\hat{\psi}$ of $\psi (\theta)$ from linear spaces to manifolds, as done in \cite{oller1995intrinsic}.
\begin{definition}
\label{def:bias}
Given an estimator $\hat{\psi}$ of $\psi (\theta) \in \Psi$, the bias (vector) field $\Bias_{\psi (\theta)} (\hat{\psi}) \in \T_{\psi (\theta)} \Psi$ of $\hat{\psi}$ is defined as  
\begin{align*}
\Bias_{\psi (\theta)} (\hat{\psi}) \coloneqq \E [\Exp_{\psi (\theta)}^{-1} \hat{\psi}]. 
\end{align*}
The estimator $\hat{\psi}$ is said to be unbiased if and only if $\Bias_{\psi (\theta)} (\hat{\psi}) \equiv 0$, which is the zero vector field.
\end{definition}
\begin{definition}
\label{def:pop curvature}
Let $\delta \coloneqq \Exp_{\psi (\theta)}^{-1} \hat{\psi} - \Bias_{\psi (\theta)} (\hat{\psi})$ denote the centered residual and $\mathbb{C} \coloneqq \Cov (\delta)$. When $\hat{\psi}_{n}$ is a sequence indexed by $n$, we also write $\mathbb{C}_{n}$. Given any tangent vector $\nu \in \T_{\psi (\theta)} \Psi$, the following population quadratic form related to the curvature tensor $\calR (\cdot, \cdot) (\cdot)$ of $\Psi$
\begin{equation}
\label{pop curvature}
\E [\langle \calR (\delta, \nu) \nu, \delta \rangle]
\end{equation}
depends on the distribution of $\delta$ only via its covariance operator $\mathbb{C}$ (see Remark~\ref{rem:pop curvature}). Therefore we introduce the operator $\calR (\mathbb{C})$ and write \eqref{pop curvature} as a quadratic form $\langle \calR (\mathbb{C}) \nu, \nu \rangle$ for short. Formally, in the notation $\langle \calR (\mathbb{C}) \nu, \nu \rangle$, $\mathbb{C}$ can be replaced by any non-singular operator, e.g. the inverse Fisher information operator $\bbG_{\theta}^{-1}$.
\end{definition}
As proved in Lemma~\ref{lem:linearity of curvature} in Appendix~\ref{app:geometric analysis}, $\calR: A \mapsto \calR (A)$ is a linear map for any non-singular operator $A$. Armed with the above definition, we are ready to introduce the generalized CRLB over parameter manifolds; see \cite{oller1995intrinsic, smith2005covariance, jupp2010van}. In particular, Theorem 2 of \cite{smith2005covariance} characterized the following information-theoretic lower bound on any unbiased estimator of $\psi (\theta) \in \Psi$.
\begin{lemma}[Theorem 2 of \cite{smith2005covariance}]
\label{lem:smith}
Suppose that $\hat{\psi}$ is an unbiased estimator of $\psi (\theta)$ computed from $X \sim \P_{\theta}$. Then the covariance $\mathbb{C}$ of the residual $\Exp_{\psi (\theta)}^{-1} \hat{\psi}$ satisfies the following lower bound:
\begin{equation}
\label{nonlinear CR}
\begin{split}
\mathbb{C} & \succeq \dot{\psi} (\theta) \Big\{ \bbG_{\theta}^{-1} - \frac{1}{3} \calR (\bbG_{\theta}^{-1}) \bbG_{\theta}^{-1} \\
& \qquad - \frac{1}{3} \bbG_{\theta}^{-1} \calR (\bbG_{\theta}^{-1}) \Big\} \dot{\psi} (\theta)^{\ast}.
\end{split}
\end{equation}
\end{lemma}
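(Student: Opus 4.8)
The plan is to adapt the intrinsic \Cramer-Rao argument of \citet{oller1995intrinsic} and \citet{smith2005covariance} to the present setup. The backbone mirrors the Euclidean proof: differentiate the unbiasedness constraint in $\theta$ to couple the centered residual $\delta=\Exp^{-1}_{\psi(\theta)}\hat\psi$ with the score operator $\S_\theta$, then invoke a Gram-matrix (Cauchy--Schwarz) inequality. The one genuinely new ingredient is geometric: since $\delta$ takes values in the \emph{moving} tangent space $\T_{\psi(\theta)}\Psi$, covariantly differentiating $\delta$ with respect to the base point $\psi(\theta)$ produces a curvature correction, and this is exactly the origin of the $\calR$-terms in \eqref{nonlinear CR}.

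First I would start from unbiasedness, $\Bias_{\psi(\theta)}(\hat\psi)=\E_\theta[\Exp^{-1}_{\psi(\theta)}\hat\psi]=0$ (Definition~\ref{def:bias}), which holds on a neighborhood of the true $\theta$, and differentiate it covariantly along a direction $h\in\T_\theta\Theta$, interchanging $\nabla$ and the integral under standard dominated-convergence regularity. Two contributions emerge. Differentiating the density $\p(x;\theta)$ reproduces, through DQM (Definition~\ref{def:par dqm}), the familiar cross term $\E[\delta\otimes\S_\theta]$; differentiating the integrand $\Exp^{-1}_{\psi(\theta)}\hat\psi$ with respect to its base point contributes the covariant differential of the logarithmic map, evaluated in the direction $\dot\psi(\theta)(h)$.

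The crux is a Jacobi-field expansion of that covariant differential. For the log map $p\mapsto\Exp^{-1}_m p$, moving the base point $m=\psi(\theta)$ in a direction $w$ yields $\nabla_w(\Exp^{-1}_m p)=-w+\tfrac13\calR(\delta,w)\delta+O(\|\delta\|^4)$ with $\delta=\Exp^{-1}_m p$; this follows by solving the Jacobi equation along the geodesic from $m$ to $p$ and expanding in the curvature of $\Psi$, and the coefficient $\tfrac13$ is exactly what surfaces at second order. Taking expectations, the curvature contribution depends on the law of $\delta$ only through its covariance (Definition~\ref{def:pop curvature}, Lemma~\ref{lem:linearity of curvature}), so it collapses to the linear operator $\calR(\mathbb{C})$. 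Combining the two contributions with the vanishing of the left-hand derivative gives the cross-covariance operator $A:=\E[\delta\otimes\S_\theta]$ as $\dot\psi(\theta)$ modified by a first-order curvature factor of the schematic form $\mathbb{I}-\tfrac13\calR(\mathbb{C})$ acting on the score side.

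Finally I would assemble the bound from positive semidefiniteness of the joint Gram operator $\bigl(\begin{smallmatrix}\mathbb{C}&A\\A^\ast&\bbG_\theta\end{smallmatrix}\bigr)\succeq0$, which yields $\mathbb{C}\succeq A\,\bbG_\theta^{-1}A^\ast$. Substituting $A$ and keeping only terms linear in the curvature --- in particular replacing the unknown $\mathbb{C}$ inside $\calR(\mathbb{C})$ by its leading value, equivalently $\bbG_\theta^{-1}$ inside $\calR$, as licensed by the closing sentence of Definition~\ref{def:pop curvature} --- makes the single-sided factor $\mathbb{I}-\tfrac13\calR(\bbG_\theta^{-1})$ symmetrize when forming $A\,\bbG_\theta^{-1}A^\ast$, producing $\bbG_\theta^{-1}-\tfrac13\{\calR(\bbG_\theta^{-1})\bbG_\theta^{-1}+\bbG_\theta^{-1}\calR(\bbG_\theta^{-1})\}$ and hence, after conjugation by $\dot\psi(\theta)$, the bound \eqref{nonlinear CR}. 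I expect the main obstacle to be this Jacobi-field expansion of the covariant derivative of the logarithmic map: correctly tracking covariant derivatives across the varying tangent spaces $\T_{\psi(\theta)}\Psi$ and pinning down the coefficient $\tfrac13$ rather than the $\tfrac16$ of the bare expansion of $\mathrm{d}\Exp_m$; a secondary point is justifying the self-consistent replacement of $\mathbb{C}$ by $\bbG_\theta^{-1}$ inside $\calR$ to the order at which the bound holds, together with differentiation under the integral.
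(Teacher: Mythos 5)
Your reconstruction is essentially correct, but note first that the paper contains no proof of this lemma: it is imported verbatim as Theorem~2 of \citet{smith2005covariance}, so the comparison is with Smith's original argument and with the supporting apparatus the paper reproduces in Definition~\ref{def:pop curvature} and Appendix~\ref{app:geometric analysis}. Your route matches that argument step for step: differentiating the unbiasedness constraint couples $\delta$ with the score; your Jacobi-field computation of the \emph{base-point} derivative of the logarithmic map (boundary conditions $J(0)=w$, $J(1)=0$, correctly producing the coefficient $\tfrac{1}{3}$ rather than the $\tfrac{1}{6}$ of the endpoint expansion in Lemma~\ref{lem:connection}) is exactly the content of Lemma~\ref{lem:covariant derivative}, whose quadratic form satisfies $\E[Q(\delta)]=-\tfrac{1}{3}\calR(\mathbb{C})$ by Lemma~\ref{rem:pop curvature} (this is Smith's Lemma~1, invoked in Appendix~\ref{app:frechet mean}); Gram positivity then closes the bound as you describe, up to immaterial curvature-sign conventions in $\calR(\delta,w)\delta$ versus $\calR(w,\delta)\delta$.

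One caveat on your final step deserves emphasis. The closing sentence of Definition~\ref{def:pop curvature} merely \emph{defines} the notation $\calR(A)$ for an arbitrary non-singular operator $A$; it does not license substituting $\calR(\bbG_{\theta}^{-1})$ for $\calR(\mathbb{C})$. Nor does the zeroth-order inequality $\mathbb{C}\succeq\dot{\psi}(\theta)\bbG_{\theta}^{-1}\dot{\psi}(\theta)^{\ast}$ force $\calR(\mathbb{C})\approx\calR(\bbG_{\theta}^{-1})$, since $\mathbb{C}$ may be much larger for a poor estimator. The substitution is valid only in the small-error regime in which Smith states his result: his Theorem~2 carries explicit higher-order remainder terms that the restatement \eqref{nonlinear CR} suppresses, and your derivation establishes the lemma exactly in that sense --- which is also the only sense needed downstream, since in the paper's asymptotic use (the Corollary with $\calR(n^{-1}\bbG_{\theta}^{-1})$ and \eqref{asymptotic CRLB}) the curvature terms vanish by the linearity of $\calR$ (Lemma~\ref{lem:linearity of curvature}) regardless of this subtlety.
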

Compared with the classical CRLB, the CRLB over parameter manifolds has two extra terms related to the curvature. For flat spaces such as Euclidean spaces, these two extra terms vanish to zero as $\calR (\bbG_{\theta}^{-1}) \equiv 0$. The following direct consequence of Lemma~\ref{lem:smith} is more relevant to the i.i.d. observation scheme, which is the main focus of this paper.
\begin{corollary}
Let $\hat{\psi}_{n}$ be an unbiased estimator of $\psi \in \Psi$ computed from $n$ i.i.d. observations $\{X_{i}\}_{i = 1}^{n} \overset{\rm i.i.d.}{\sim} \P_{\theta}$. The covariance of the $\sqrt{n}$-scaled estimation error, $\sqrt{n} \Exp_{\psi (\theta)}^{-1} \hat{\psi}_{n}$, satisfies the inequality
\begin{equation}
\label{n CRLB}
\begin{split}
& \ \Cov \left( \sqrt{n} \Exp_{\psi (\theta)}^{-1} \hat{\psi}_{n} \right) \equiv n \mathbb{C}_{n} \\
\succeq & \ \dot{\psi} (\theta) \Big\{ \bbG_{\theta}^{-1} - \frac{1}{3} \calR \left( n^{-1} \bbG_{\theta}^{-1} \right) \bbG_{\theta}^{-1} \\
& - \frac{1}{3} \bbG_{\theta}^{-1} \calR \left( n^{-1} \bbG_{\theta}^{-1} \right) \Big\} \dot{\psi} (\theta)^{\ast}.
\end{split}
\end{equation}
Asymptotically, \eqref{n CRLB} can be reduced to
\begin{equation}
\label{asymptotic CRLB}
\lim_{n \rightarrow \infty} \Cov \left( \sqrt{n} \Exp_{\psi (\theta)}^{-1} \hat{\psi}_{n} \right) \succeq \dot{\psi} (\theta) \bbG_{\theta}^{-1} \dot{\psi} (\theta)^{\ast}.
\end{equation}
\end{corollary}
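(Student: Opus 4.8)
The plan is to apply Lemma~\ref{lem:smith} to the product experiment generated by the $n$ i.i.d. draws and then track how the relevant quantities scale in $n$. First I would observe that under the i.i.d. sampling scheme the vector $(X_{1}, \dots, X_{n})$ can be regarded as a single observation from the product model $\P_{\theta}^{\otimes n}$, whose density is $\prod_{i = 1}^{n} \p (x_{i}; \theta)$. Because each factor is DQM at $\theta$ with score operator $\s (\cdot; \theta)$, the product model is itself DQM at $\theta$ with score operator $\sum_{i = 1}^{n} \s (X_{i}; \theta)$; since the cotangent space $\T_{\theta}^{\ast} \Theta$ is linear, the additivity of scores carries over verbatim from the Euclidean case. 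Consequently its Fisher information operator is $\E [(\sum_{i} \S_{\theta}) \otimes (\sum_{i} \S_{\theta})] = n \bbG_{\theta}$ by independence and the zero-mean property of the score, and hence the inverse information is $(n \bbG_{\theta})^{-1} = n^{-1} \bbG_{\theta}^{-1}$.

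Next I would invoke Lemma~\ref{lem:smith} for this product experiment, with $\bbG_{\theta}$ there replaced by $n \bbG_{\theta}$. Writing $\mathbb{C}_{n} = \Cov (\Exp_{\psi (\theta)}^{-1} \hat{\psi}_{n})$ for the residual covariance (so that $\Cov (\sqrt{n} \Exp_{\psi (\theta)}^{-1} \hat{\psi}_{n}) = n \mathbb{C}_{n}$), this yields
\[
\mathbb{C}_{n} \succeq \dot{\psi} (\theta) \left( (n \bbG_{\theta})^{-1} - \tfrac{1}{3} \left\{ \calR ((n \bbG_{\theta})^{-1}) (n \bbG_{\theta})^{-1} + (n \bbG_{\theta})^{-1} \calR ((n \bbG_{\theta})^{-1}) \right\} \right) \dot{\psi} (\theta)^{\ast}.
\]
Substituting $(n \bbG_{\theta})^{-1} = n^{-1} \bbG_{\theta}^{-1}$ and multiplying both sides by $n$, I arrive at \eqref{n CRLB} after regrouping a single factor of $n^{-1}$ into each curvature argument $\calR (n^{-1} \bbG_{\theta}^{-1})$. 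The only algebraic step requiring care is that $\calR$ is \emph{linear}, so that $\calR (n^{-1} \bbG_{\theta}^{-1}) = n^{-1} \calR (\bbG_{\theta}^{-1})$; this is exactly the content of Lemma~\ref{lem:linearity of curvature}, and it is what makes the bookkeeping consistent between the form written with $(n \bbG_{\theta})^{-1}$ and the form displayed in \eqref{n CRLB}.

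Finally, for the asymptotic reduction \eqref{asymptotic CRLB}, I would note that each of the two curvature correction terms inside \eqref{n CRLB} carries a factor $\calR (n^{-1} \bbG_{\theta}^{-1}) = n^{-1} \calR (\bbG_{\theta}^{-1})$ and is therefore $O (n^{-1})$, so both vanish as $n \rightarrow \infty$. Passing to the limit on the right-hand side (the Loewner order $\succeq$ being preserved under such limits) leaves only the leading term $\dot{\psi} (\theta) \bbG_{\theta}^{-1} \dot{\psi} (\theta)^{\ast}$, which gives the claimed inequality.

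I do not expect a genuine obstacle, since the corollary is essentially a scaling specialization of Lemma~\ref{lem:smith}: the only points that demand attention are the additivity of the score and information under the product model and the linearity of the curvature operator $\calR$, both of which are already available. A minor preliminary verification is that Assumption~\ref{as:cut locus} guarantees $\Exp_{\psi (\theta)}^{-1} \hat{\psi}_{n}$ is well defined, so that $\mathbb{C}_{n}$ and the (vanishing) bias of the unbiased estimator are meaningful to begin with.
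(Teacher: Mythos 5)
Your proposal is correct and follows essentially the same route as the paper: the finite-$n$ bound \eqref{n CRLB} is exactly Lemma~\ref{lem:smith} applied to the product experiment with Fisher information operator $n\,\bbG_{\theta}$ (so inverse information $n^{-1}\bbG_{\theta}^{-1}$), rescaled by $n$, and the asymptotic bound \eqref{asymptotic CRLB} then follows from the linearity of $\calR$ (Lemma~\ref{lem:linearity of curvature}), which is precisely the paper's one-line proof. Your additional verifications --- DQM and score/information additivity for the product model, well-definedness of $\Exp_{\psi(\theta)}^{-1}\hat{\psi}_{n}$ via Assumption~\ref{as:cut locus}, and preservation of the order $\succeq$ in the limit --- are correct routine details that the paper leaves implicit.
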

\begin{proof}
\eqref{asymptotic CRLB} is a simple consequence of \eqref{n CRLB} and the linearity of $\calR$.
\end{proof}
\newtheorem{innercustomthm}{Example}
\newenvironment{customeg}[1]
{\renewcommand\theinnercustomthm{#1}\innercustomthm}
{\endinnercustomthm}
As mentioned in the Introduction, the super-efficiency phenomenon has led statisticians to search for a more appropriate criterion for justifying the optimality of MLE, which asymptotically achieves the CRLB. One common strategy to reconcile the super-efficiency phenomenon in efficiency theory is by restricting the attention only to the class of (locally) regular estimators. This strategy results in the so-called H{\'a}jek-Le Cam convolution theorem \cite{hajek1970characterization}. Regular estimators are of particular interests in substantive fields including epidemiology and economics, due to the ease of constructing uniformly valid Wald confidence intervals for conducting statistical inference.
We first define the notion of (locally) regular estimators for $\psi (\theta) \in \Psi$ for  a manifold $\Psi$.
\begin{definition}
\label{def:riemann regular}
Suppose that $\psi (\theta) \in \Psi$ is the parameter of interest. For every tangent vector $h \in \T_{\theta} \Theta$, we let $\theta_{n, h} \coloneqq \Exp_{\theta} \left( \frac{h}{\sqrt{n}} \right)$. Then a sequence of estimators $\hat{\psi}_{n}$ indexed by $n$ for $\psi (\theta)$ is called (locally) regular at $\theta$ if there exists a random variable $Z_{\theta}$ whose law, denoted as $\L_{\theta}$, is independent of $h$ such that
\begin{equation}
\Pi_{\psi (\theta_{n, h})}^{\psi (\theta)} \sqrt{n} \, \Exp_{\psi (\theta_{n, h})}^{-1} \hat{\psi}_{n} \rightsquigarrow_{\P_{\theta_{n, h}}} Z_{\theta},
\end{equation}
as $n \rightarrow \infty$.
\end{definition}
As stated above, the definition of regular estimators over parameter manifolds deviates from that over a normed linear parameter space due to the nonlinearity of manifolds. Since the regularity of an estimator concerns its stability when the true parameter is locally perturbed, it involves a comparison between two residuals $\sqrt{n} \Exp_{\psi (\theta_{n, h})}^{-1} \hat{\psi}_{n}$ and $\sqrt{n} \Exp_{\psi (\theta)}^{-1} \hat{\psi}_{n}$. For normed linear spaces, both residuals lie in the same space and thus can be compared directly; whereas for parameter manifolds, the two residuals reside in two different tangent spaces, one locally at $\psi (\theta_{n, h})$ and the other locally at $\psi (\theta)$. To facilitate a direct comparison, the parallel transport map $\Pi_{\psi (\theta_{n, h})}^{\psi (\theta)}$ is introduced to transport the residual with perturbation to $\T_{\psi (\theta)} \Psi$, where the residual without perturbation belongs.
\begin{remark}
\label{rem:nonlinear}
In fact, regular estimator sequences can be defined for more general nonlinear spaces without utilizing the manifold structure, such as relying on the notion of tangent spaces. For example, a possibility is to exploit the metric-geometric machinery exploited in \cite{lin2021total} for more general geodesic spaces. We decide to focus on parameter manifolds in this paper, and the more general case is left for future work.
\end{remark}
\begin{remark}
In view of the above definition, the sample \Frechet{} mean $\hat{\mu}_{n}$ is a regular estimator of the population \Frechet{} mean $\mu$ over a Hadamard manifold $\bbM$; see Appendix~\ref{app:frechet mean} for further details. On the other hand, also in Appendix~\ref{app:frechet mean}, we show that Hodges' estimator $\hat{\mu}_{n, \Hodges}$ defined in \eqref{hodges} is not regular.
\end{remark}
\begin{remark}
\label{rem:inference}
Given a RAL estimator $\hat{\psi}_{n}$ of $\psi = \psi (\theta)$ that satisfies $\sqrt{n} \Exp_{\psi}^{-1} \hat{\psi}_{n} \rightsquigarrow \calN (0, \bbV_{\psi})$ and a consistent estimator $\hat{\bbV}_{n}$ of its asymptotic variance $\bbV_{\psi}$, we can construct an asymptotically valid confidence region of $\psi$ as follows. First, locally at $\hat{\psi}_{n}$, we can construct a nominal asymptotically $(1 - \alpha)$ confidence set $\mathcal{E}_{n, 1 - \alpha} \coloneqq \left\{ v \in \T_{\hat{\psi}_{n}} \Psi: n \langle v, \hat{\bbV}_{n}^{-1} v \rangle \leq \chi^{2}_{p, 1 - \alpha} \right\}$ in the tangent space $\T_{\hat{\psi}_{n}} \Psi$, where $\chi^{2}_{p, 1 - \alpha}$ is the lower $(1 - \alpha)$ quantile of the chi-squared distribution with $p$-degrees of freedom and $p$ is the dimension of $\Psi$. We can show that the set $\mathcal{C}_{n, 1 - \alpha} \coloneqq \Exp_{\hat{\psi}_{n}} (\mathcal{E}_{n, 1 - \alpha})$ that maps the set $\mathcal{E}_{n, 1- \alpha}$ defined on the tangent space $\T_{\hat{\psi}_{n}} \Psi$ back to the manifold $\Psi$ is an asymptotically valid $1 - \alpha$ confidence set of $\psi$. A proof sketch is as follows. Based on our assumptions, we have $n \langle \Exp_{\psi}^{-1} \hat{\psi}_{n}, \bbV_{\psi}^{-1} \Exp_{\psi}^{-1} \hat{\psi}_{n} \rangle \rightsquigarrow \chi^{2}_{p}$. Since our confidence set is defined on the tangent space $\T_{\hat{\psi}_{n}} \Psi$, to prove the asymptotic coverage, we first simultaneously parallel transport $\Exp_{\hat{\psi}_{n}}^{-1} \psi$ and $\hat{\bbV}_{n}$ to the tangent space $\T_{\psi} \Psi$ locally at the true parameter value $\psi$. Then, by the identity $\Exp_{\hat{\psi}_{n}}^{-1} \psi = - \Pi_{\psi}^{\hat{\psi}_{n}} \Exp_{\psi}^{-1} \hat{\psi}_{n}$ and consistency of $\hat{\psi}_{n}$ and $\hat{\bbV}_{n}$, the desired conclusion is proved. Here, we essentially need the following regularity conditions to ensure that both $\Exp_{\psi}^{-1} \hat{\psi}_{n}$ and $\Exp_{\hat{\psi}_{n}}^{-1} \psi$ are well defined: There exists an open neighborhood $U$ of $\psi_0$ and a constant $\rho > 0$ such that $\P (\hat{\psi}_n \in U) \rightarrow 1$ and for every $x \in U$, the exponential map $\Exp_x$ is a diffeomorphism from the open ball $B_\rho (0_x) \subset \T_x \Psi$ onto its image $N_x \coloneqq \Exp_x \{B_\rho (0_x)\}$, and for every $x \in U$, the set $N_x$ contains $\psi_0$.
In terms of computing $\hat{\bbV}_{n}$, such a covariance estimator can be constructed by the usual plug-in methods. For example, in a semiparametric model, if an influence function $\IF_{\psi} \in \T_{\psi (\theta)} \Psi$ for the estimator is available, in particular the efficient influence function when the estimator is efficient, one may plug in estimators of the unknown distribution and nuisance components to obtain
estimated influence functions
$\hat{\IF}_{\psi,i}\in \T_{\hat{\psi}_{n}} \Psi$, and then take their empirical covariance
\begin{align*}
\hat{\bbV}_n = \frac{1}{n} \sum_{i = 1}^{n} \Big( \hat{\IF}_{\psi,i} - \frac{1}{n} \sum_{j = 1}^{n} \hat{\IF}_{\psi, j} \Big)^{\otimes 2}.
\end{align*}
In finite-dimensional parametric models, one may instead estimate the covariance by the plug-in information bound $\dot{\psi} (\hat{\theta}_{n}) \hat{\bbG}_{\hat{\theta}_{n}}^{-1} \dot{\psi} (\hat{\theta}_{n})^*$, or equivalently by the usual sandwich or profile-score covariance estimator. These estimators are naturally viewed as operators on $\T_{\hat{\psi}_{n}}\Psi$, and consistency is understood after parallel transport to $\T_{\psi}\Psi$.
\end{remark}
With all the above concepts in place, we are finally ready to present the convolution theorem for parameters taking values in a manifold.
\begin{theorem}
\label{thm:par}
Let $\mathcal{P} \coloneqq \left\{ \P_{\theta}: \theta \in \Theta \right\}$ be a statistical model parameterized by $\theta \in \Theta$. Assume that $\mathcal{P}$ is DQM. $\psi (\theta)$ is the parameter of interest with $\psi$ satisfying Assumption~\ref{as:smooth transformation}. Further let $\hat{\psi}_{n}$ be a regular estimator sequence based on $n$ i.i.d. observations drawn from $\P_{\theta}$ with limiting distribution $\L_{\psi (\theta)}$. Then there exists a probability measure $\Delta_{\psi (\theta)}$ such that
\begin{equation}
\label{convolution}
\L_{\psi (\theta)} = Z_{\psi (\theta)} \ast \Delta_{\psi (\theta)}
\end{equation}
with $Z_{\psi_{\theta}} \sim \calN (0, \dot{\psi} (\theta) \bbG_{\theta}^{-1} \dot{\psi} (\theta)^{\ast})$ and $Z_{\psi (\theta)} \independent \Delta_{\psi (\theta)}$. That is, the limiting law $\L_{\theta}$ is represented as a convolution between a Gaussian measure with the covariance operator equal to the inverse Fisher information operator and a probability measure $\Delta_{\psi (\theta)}$ independent of that Gaussian measure.
\end{theorem}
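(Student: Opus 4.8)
The plan is to transplant the classical \Hajek--Le Cam argument into the fixed linear space $\T_{\psi (\theta)} \Psi$, exploiting the fact that at the $\sqrt{n}$-scale the manifold $\Psi$ looks flat. After linearization the only genuinely new work is the geometric bookkeeping that relates the two logarithmic-map residuals appearing in the regularity definition via parallel transport. First I would set up the local limit experiment: by the LAN expansion of Proposition~\ref{prop:lan}, the log-likelihood ratio $\sum_{i} \log \{\p (X_i; \theta_{n, h}) / \p (X_i; \theta)\}$ is asymptotically a Gaussian shift governed by $\bbG_{\theta}$, while $\sqrt{n}$-consistency of $\hat{\psi}_{n}$ (implied by regularity) forces the residual $\sqrt{n} \Exp_{\psi (\theta)}^{-1} \hat{\psi}_{n}$ to live asymptotically in the linear space $\T_{\psi (\theta)} \Psi$. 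I would then establish, under $\P_{\theta}$, the joint weak convergence of the pair consisting of this residual and the log-likelihood ratio to a jointly Gaussian limit, using the central limit theorem in the cotangent space applied to the i.i.d.\ score operators $\s (X_i; \theta)$.

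The geometric heart of the argument is to relate the transported perturbed residual $\Pi_{\psi (\theta_{n, h})}^{\psi (\theta)} \sqrt{n} \Exp_{\psi (\theta_{n, h})}^{-1} \hat{\psi}_{n}$ of Definition~\ref{def:riemann regular} to the base-point residual shifted by $\dot{\psi} (\theta) (h)$. Concretely, I expect the identity
\[
\Pi_{\psi (\theta_{n, h})}^{\psi (\theta)} \sqrt{n} \, \Exp_{\psi (\theta_{n, h})}^{-1} \hat{\psi}_{n} = \sqrt{n} \, \Exp_{\psi (\theta)}^{-1} \hat{\psi}_{n} - \dot{\psi} (\theta) (h) + o_{\P_{\theta}} (1),
\]
which should follow from a second-order expansion of the logarithmic map and of parallel transport along the minimizing geodesic joining the nearby points $\psi (\theta)$ and $\psi (\theta_{n, h})$. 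Here I would invoke Lemma~\ref{lem:directional derivative} to identify $\sqrt{n} \Exp_{\psi (\theta)}^{-1} \psi (\theta_{n, h}) \to \dot{\psi} (\theta) (h)$, together with the geometric-analysis lemmas of the Appendix (Lemma~\ref{lem:log map expansion} and Lemma~\ref{lem:connection}). Crucially, the curvature corrections enter only at order $n^{-1}$, exactly as in the \Cramer--Rao bound of Lemma~\ref{lem:smith}, so they are negligible after $\sqrt{n}$-scaling; this is precisely why the Gaussian factor in the convolution carries the \emph{flat} covariance $\dot{\psi} (\theta) \bbG_{\theta}^{-1} \dot{\psi} (\theta)^{\ast}$ with no curvature term.

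Third, I would combine regularity with Le Cam's third lemma. Regularity asserts that the transported residual converges to the $h$-free limit $Z_{\theta}$ under $\P_{\theta_{n, h}}$; by the displayed identity this is equivalent to the base-point residual, shifted by $-\dot{\psi} (\theta) (h)$, converging under $\P_{\theta_{n, h}}$ to $\L_{\psi (\theta)}$. Le Cam's third lemma then computes the $\P_{\theta_{n, h}}$-limit of the base-point residual from its joint $\P_{\theta}$-limit with the log-likelihood ratio obtained in the first step: the marginal limit is $\L_{\psi (\theta)}$ translated by the covariance coupling along the score direction $h$. Equating the two descriptions forces the characteristic function of $\L_{\psi (\theta)}$ to satisfy the shift-invariance identity characteristic of convolutions with a Gaussian. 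The standard \Hajek{} argument, inspecting the characteristic function of $\L_{\psi (\theta)}$ and showing that dividing out the Gaussian factor leaves a valid characteristic function, then yields the factorization $\L_{\psi (\theta)} = \calN (0, \dot{\psi} (\theta) \bbG_{\theta}^{-1} \dot{\psi} (\theta)^{\ast}) \ast \Delta_{\psi (\theta)}$ with $Z_{\psi (\theta)} \independent \Delta_{\psi (\theta)}$.

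I expect the main obstacle to be the geometric expansion in the second step: rigorously controlling the remainder in the parallel-transport/log-map identity so as to conclude the $o_{\P_{\theta}} (1)$ claim, and verifying that the surviving third-order geometric terms do not contaminate the first-order Gaussian limit. Once this identity is secured, the remainder of the argument proceeds in parallel with the normed-linear-space case, since every surviving object resides in the single fixed linear space $\T_{\psi (\theta)} \Psi$, where the classical characterization of convolution limits applies verbatim.
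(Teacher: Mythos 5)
Your proposal is correct and follows essentially the same route as the paper's proof: joint weak convergence of the residual and the log-likelihood ratio under $\P_{\theta}$ (via tightness and a subsequence argument), a geometric expansion identical to the paper's Lemma~\ref{lem:Un} showing the transported residual equals the base-point residual shifted by $\dot{\psi}(\theta)(h)$ up to $o_{\P}(1)$ (the paper uses Lemma~\ref{lem:parallel expansion} and Lemma~\ref{lem:covariant derivative}, where the curvature correction $Q(\delta)\eps_{h}$ vanishes because $\delta = o_{\P_{\theta'}}(1)$, matching your observation that curvature is negligible at the $\sqrt{n}$-scale), and then the classical characteristic-function factorization via contiguity. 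The only cosmetic difference is that the paper carries out the contiguity step directly with uniform integrability of $\exp\{W_{n}\}$ rather than citing Le Cam's third lemma by name.
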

The proof of Theorem~\ref{thm:par} can be found in Appendix~\ref{app:thm par}. The proof deviates from that for the normed linear parameter space due to the parallel transport operation in the definition of regular estimators; see Lemma~\ref{lem:Un} in Appendix~\ref{app:thm par}.
\begin{remark}
\label{rem:almost everywhere}
Historically, almost sure convolution theorem was also proved to argue that the point in the parameter space at which superefficiency phenomenon occurs has Lebesgue measure zero asymptotically; see \cite[Theorem~8.9]{van2000asymptotic} for a relatively recent reference. It seems that convolution theorem in the flavor of Theorem~\ref{thm:par} or the local asymptotic minimax theorem to be presented later receive more attention in modern mathematical statistics. For the sake of completeness, we generalize such a result to parameter manifolds in Appendix~\ref{app:almost everywhere}.
\end{remark}  
An important consequence of the convolution theorem is the ``calculus of influence functions'', an important consequence of efficiency theory that offers practicing statisticians a streamlined procedure to follow when trying to construct (nearly) efficient estimators.
\begin{lemma}
\label{lem:calculus of IF, parametric}
Let $\hat{\psi}_{n}$ be an asymptotically linear estimator of $\psi (\theta)$ based on $n$ i.i.d. observations drawn from $\P_{\theta}$ in the following sense: there exists a random element $\IF_{\psi} (\theta) \equiv \IF_{\psi} \in \T_{\psi (\theta)} \Psi$, such that
\begin{align*}
\sqrt{n} \Exp^{-1}_{\psi (\theta)} \hat{\psi}_{n} = \frac{1}{\sqrt{n}} \sum_{i = 1}^{n} \IF_{\psi, i} + o_{\P_{\theta}} (1).
\end{align*}
If $\hat{\psi}_{n}$ is further assumed to be regular (in the sense of Definition~\ref{def:riemann regular}), then
\begin{equation}
\label{par functional equation}
\dot{\psi} (\theta) = \E \left[ \IF_{\psi} \cdot \s \left( X; \theta \right) \right].
\end{equation}
\end{lemma}
The proof of the above lemma can be found in Appendix~\ref{app:calculus of IF, parametric}. The proof also highlights certain distinctions between linear spaces and manifolds. In the proof, we ought to show the following (see \eqref{distinction}):
\begin{equation}
\begin{split}
& \sqrt{n} \left( \Pi_{\psi (\theta_{n, h})}^{\psi (\theta)} \Exp_{\psi (\theta_{n, h})}^{-1} \hat{\psi}_{n} - \Exp_{\psi (\theta)}^{-1} \hat{\psi}_{n} \right) \\
& = - \nabla_{\theta} \psi (\theta) (h) + o_{\P_{\theta_{n, h}}} (1).
\label{diff}
\end{split}
\end{equation}
In the case of normed linear spaces, the LHS of the above display reduces to $\sqrt{n} (\psi (\theta_{n, h}) - \psi (\theta))$, which no longer involves the estimator $\hat{\psi}_{n}$. For parameter manifolds, no such an algebraic simplification can be made due to the non-zero curvature effect. Instead, we need to conduct a more delicate analysis using the geometric machinery introduced in Appendix~\ref{app:geometric analysis} to prove \eqref{diff}.
The statistical implication of the functional equation~\eqref{par functional equation} in Lemma~\ref{lem:calculus of IF, parametric} is important. When the parameter of interest belongs to a normed linear space, a functional equation of almost the same form is often used to find the corresponding influence function, which can in turn be exploited to construct RAL or even efficient estimators. Lemma~\ref{lem:calculus of IF, parametric} showed that the same calculus rule also holds for parameter manifolds.
\subsection{The Local Asymptotic Minimax (LAM) Theorem}
\label{sec:LAM theorem}
Convolution theorem restricts estimators to regular ones. An alternative estimator-agnostic approach to rigorously establishing efficiency bound is the Local Asymptotic Minimaxity (LAM) \cite{hajek1972local}. When the parameter space is linear, van Trees inequality has emerged as a convenient tool for proving LAM, circumventing the technically more difficult route of equivalent experiments \cite{gill1995applications, gassiat2024van, wahl2019van, takatsu2024generalized}. In this section, we first establish the intrinsic analogue of van Trees inequality in the spirit of \cite{gassiat2024van} and consequently establish LAM for parameter manifolds. It is worth noting that \cite{jupp2010van} has also established intrinsic van Trees inequality over parameter manifolds, under a different definition of bias from our choice in Definition~\ref{def:bias} and a stronger regularity condition about the statistical model. We therefore decide to provide our own version of intrinsic van Trees inequality.
Since van Trees inequality is a Bayesian version of the CRLB, we let $\pi$ denote a generic prior distribution over the parameter manifold $\Theta$ and $\E_{\pi}$ be the expectation taken w.r.t. the parameter $\theta$ under $\pi$. With slightly abuse of notation, we let $\pi$ also denote the probability density function corresponding to $\pi$, with the square root density as $\r_{\pi}$ and the score function as $\s_{\pi}$ respectively. We also denote the mean squared error of an estimator $\hat{\psi}$ of $\psi (\theta) \in \Psi$ under $\pi$ as
\begin{align*}
\gamma (\pi) \coloneqq \E_{\pi} \E [(\Exp_{\psi (\theta)}^{-1} \hat{\psi})^{\otimes 2}].
\end{align*}
First, we generalize van Trees inequality obtained in \cite{gassiat2024van} in intrinsic forms, which is instrumental for proving the LAM theorem.
\begin{lemma}
\label{lem:van Trees}
Let $\bbG_{\pi} \coloneqq \int_{\Theta} \s_{\pi} (\theta)^{\otimes 2} \diff \pi (\theta)$ be the Fisher information operator of the prior $\pi$. Assume $\bbG_{\pi}\succ 0$, $\psi$ is continuously differentiable on $\Theta$, the prior density $\pi$ is continuous and compactly supported, the statistical model $\calP = \{\P_\theta: \theta \in \Theta\}$ is dominated by the volume form and the corresponding Fisher information $\bbG_{\theta}$ is continuous in $\theta$, the model also satisfies Definition \ref{def:par dqm} for $\theta\in \Theta\cap\operatorname{Supp}(\pi)$, and $\gamma (\pi) $, $\E_{\pi} [\dot{\psi} (\theta)] $ and $\int_{\Theta} \bbG_\theta \diff \pi (\theta) $ exist with finite norm. Then,
\begin{equation}
\label{van Trees}
\begin{split}
\gamma (\pi) & \succeq \E_{\pi} \E [\nabla \Exp_{\psi (\theta)}^{-1} \hat{\psi}] \left( \bbG_{\pi} + \int_{\Theta} \bbG_\theta \diff \pi (\theta) \right)^{-1} \\
& \quad \left( \E_{\pi} \E [\nabla \Exp_{\psi (\theta)}^{-1} \hat{\psi}] \right)^{\ast}.
\end{split}
\end{equation}
\end{lemma}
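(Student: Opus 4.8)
The plan is to lift the classical Bayesian Cram\'{e}r–Rao (van Trees) argument to the manifold setting, replacing the scalar estimation error by the logarithmic-map residual $V(\theta, x) \coloneqq \Exp_{\psi (\theta)}^{-1} \hat{\psi}$, which is a section of the pullback bundle $\psi^{\ast} \T \Psi$ over $\Theta$, and replacing $\dot{\psi} (\theta)$ by its covariant derivative $\nabla V$. The backbone is a single operator Cauchy–Schwarz (Schur-complement) inequality applied to the pair $(V, \S_{\mathrm{joint}})$, where $\S_{\mathrm{joint}}$ is the joint score of the Bayesian experiment. First I would set up the joint model on $\bbX \times \Theta$ with density $q (x, \theta) = f_{\theta} (x) \, \pi (\theta)$ relative to $\mu (x)$ times the volume form on $\Theta$. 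Because both the model (via DQM, Definition~\ref{def:par dqm}) and the prior admit square-root densities differentiable in quadratic mean, the joint score with respect to $\theta$ is the cotangent-valued operator $\S_{\mathrm{joint}} = \s (x; \theta) + \s_{\pi} (\theta) \in \T_{\theta}^{\ast} \Theta$.

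Next I would compute the two moment identities that feed the inequality. The second-moment operator of the joint score factorizes: expanding $\E_{\pi} \E [\S_{\mathrm{joint}}^{\otimes 2}]$ and using $\E [\s (X; \theta) \mid \theta] = 0$ (so the cross terms $\E_{\pi} \E [\s \otimes \s_{\pi}]$ vanish) gives $\E_{\pi} \E [\S_{\mathrm{joint}}^{\otimes 2}] = \int_{\Theta} \bbG (\theta) \diff \pi (\theta) + \bbG_{\pi}$, precisely the operator inverted in~\eqref{van Trees}. The crux is the sensitivity identity $\E_{\pi} \E [V \otimes \S_{\mathrm{joint}}] = - \E_{\pi} \E [\nabla V]$, obtained by integration by parts in $\theta$: writing $\S_{\mathrm{joint}} \, q = \nabla_{\theta} q$ and integrating the bundle-valued integrand $V \otimes \nabla_{\theta} q$ against the volume form, the divergence theorem on $\Theta$ transfers the derivative onto $V$, and the boundary terms are killed by the continuity and compact support of $\pi$.

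This last step is where the geometry genuinely enters and is the main obstacle. One must (i) interpret $\nabla_{\theta} V$ as the covariant derivative of a section of $\psi^{\ast} \T \Psi$, using the pullback of the Levi-Civita connection on $\Psi$ for the $\Psi$-valued factor, so that $\nabla V$ is exactly the $\T_{\psi (\theta)} \Psi \otimes \T_{\theta}^{\ast} \Theta$-valued object appearing in the statement; and (ii) verify that the extra terms arising from differentiating the volume-form factor $\sqrt{\det G}$ combine with the coordinate derivative to reproduce the metric (covariant) divergence on $\Theta$, which is precisely what makes $\s_{\pi}$—defined through the Levi-Civita connection—the correct prior score. As a sanity check, in the flat case $V = \hat{\psi} - \psi (\theta)$ this collapses to $\nabla_{\theta} V = - \dot{\psi} (\theta)$ and the familiar identity $\E_{\pi} \E [V \S_{\mathrm{joint}}] = \E_{\pi} [\dot{\psi} (\theta)]$, so the two curvature-free terms in the classical van Trees inequality are recovered.

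Finally I would combine the two identities through the positive-semidefiniteness of the joint second-moment block matrix of $(V, \S_{\mathrm{joint}})$. Taking the Schur complement with respect to the block $\E_{\pi} \E [\S_{\mathrm{joint}}^{\otimes 2}]$, which is invertible since $\bbG_{\pi} \succ 0$ and $\int_{\Theta} \bbG (\theta) \diff \pi (\theta)$ is finite, yields
\begin{equation*}
\gamma (\pi) = \E_{\pi} \E [V^{\otimes 2}] \succeq \E_{\pi} \E [V \otimes \S_{\mathrm{joint}}] \bigl( \E_{\pi} \E [\S_{\mathrm{joint}}^{\otimes 2}] \bigr)^{-1} \bigl( \E_{\pi} \E [V \otimes \S_{\mathrm{joint}}] \bigr)^{\ast}.
\end{equation*}
Substituting $\E_{\pi} \E [V \otimes \S_{\mathrm{joint}}] = - \E_{\pi} \E [\nabla V]$ (the sign being immaterial in the quadratic form) together with $\E_{\pi} \E [\S_{\mathrm{joint}}^{\otimes 2}] = \bbG_{\pi} + \int_{\Theta} \bbG (\theta) \diff \pi (\theta)$ reproduces~\eqref{van Trees} verbatim. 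The finiteness hypotheses on $\gamma (\pi)$, $\E_{\pi} [\dot{\psi} (\theta)]$, and $\int_{\Theta} \bbG (\theta) \diff \pi (\theta)$ guarantee that every operator in this chain is well defined and that the integration by parts is justified.
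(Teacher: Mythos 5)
Your proposal is correct and follows essentially the same route as the paper's proof: the paper stacks the residual and the quantity $2\Delta(x;\theta) = 2\nabla_{\theta}\bigl(\r(x;\theta)\,\r_{\pi}(\theta)\bigr)$ (the joint score in square-root-density form, i.e.\ your $\S_{\mathrm{joint}}$ times the joint root density) into one vector $V(x;\theta)$ and invokes $\int V^{\otimes 2} \succcurlyeq 0$, which is exactly your block Gram matrix plus Schur complement, then establishes your two moment identities the same way --- the cross term reduces to $-\E_{\pi}\E[\nabla \Exp_{\psi(\theta)}^{-1}\hat{\psi}]$ by integration by parts with boundary terms killed by the compact support of $\pi$, and the score block factorizes as $\bbG_{\pi} + \int_{\Theta}\bbG(\theta)\,\diff\pi(\theta)$ since the conditional score has mean zero. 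The only cosmetic difference is that the paper additionally expands $\E_{\pi}\E[\nabla \Exp_{\psi(\theta)}^{-1}\hat{\psi}]$ in terms of sectional curvature and $\calR(\mathbb{C})$, which is not needed for the lemma itself but feeds the subsequent LAM argument.
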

The proof of Lemma~\ref{lem:van Trees} can be found in Appendix~\ref{app:finite-dimensional LAM}. It is not difficult to see that when $\pi$ is a degenerate measure over $\theta$, van Trees inequality \eqref{van Trees} reduces to CRLB \eqref{asymptotic CRLB}. Aided by this lemma, we can establish the LAM theorem for finite-dimensional parameter manifolds.
\begin{theorem}
\label{thm:finite-dimensional LAM}
Let $N_{\theta_{0}}$ be a neighborhood of $\theta_{0} \in \Theta$. Suppose that every $\P_{\theta} \in \calP$ is DQM as long as $\theta \in N_{\theta_{0}}$, that the Fisher information operator $\bbG_{\theta_0}$ is bounded, and that $\psi$ satisfies the conditions stated in Lemma~\ref{lem:van Trees}. For all estimator sequences $\psi_{n} \in \Psi$ of $\psi (\theta)$ indexed by sample size $n$, we have
\begin{align}
& \liminf_{c \rightarrow \infty} \liminf_{n \rightarrow \infty} \sup_{\theta \in N_{\theta_{0}}: \Vert \Exp^{-1}_{\theta_{0}} \theta \Vert \leq \frac{c}{\sqrt{n}}} \E_{\theta} (\Pi_{\psi (\theta)}^{\psi (\theta_{0})} \sqrt{n} \Exp^{-1}_{\psi (\theta)} \psi_{n})^{\otimes 2} \nonumber \\ 
& \succeq \E [Z_{\theta_{0}}^{\otimes 2}], \label{LAM} \\
& \text{where $\T_{\psi (\theta_{0})} \Psi \ni Z_{\theta_{0}} \sim \calN \left( 0, \dot{\psi} (\theta_{0}) \bbG_{\theta_{0}}^{-1} (\dot{\psi} (\theta_{0}))^{\ast} \right)$}. \nonumber
\end{align}
\end{theorem}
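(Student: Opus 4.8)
The plan is to prove the scalar version of the bound for an arbitrary positive quadratic form $\ell$ and then recover the Loewner inequality in the display by ranging over all such $\ell$. All risk operators live in different tangent spaces $\T_{\psi(\theta)}\Psi$, so I first transport everything to the fixed space $\T_{\psi(\theta_0)}\Psi$ via $\Pi_{\psi(\theta)}^{\psi(\theta_0)}$ (as in Definition~\ref{def:riemann regular}) and write the localized scalar risk as $R_n(\theta):=\E_\theta[\ell(\Pi_{\psi(\theta)}^{\psi(\theta_0)}\sqrt{n}\,\Exp^{-1}_{\psi(\theta)}\psi_n)]$, with $\ell(\cdot)$ also denoting the contraction of the form with a second-moment tensor. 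The first, purely measure-theoretic step is the classical reduction from local maximum risk to Bayes risk: for any probability measure $\pi_n$ supported in $B_{c,n}:=\{\theta:\Vert\Exp^{-1}_{\theta_0}\theta\Vert\le c/\sqrt{n}\}$ one has $\sup_{\theta\in B_{c,n}}R_n(\theta)\ge\int_{B_{c,n}}R_n(\theta)\,\diff\pi_n(\theta)$, and the right-hand side is exactly $\ell(n\gamma(\pi_n))$ up to parallel-transport corrections, where $\gamma(\pi_n)=\E_{\pi_n}\E[(\Exp^{-1}_{\psi(\theta)}\psi_n)^{\otimes2}]$ as in Lemma~\ref{lem:van Trees}.

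The heart of the argument is the choice of prior and the application of the intrinsic van Trees inequality (Lemma~\ref{lem:van Trees}). I would fix a smooth, compactly supported probability density $\phi$ on the unit ball of $\T_{\theta_0}\Theta$, rescale it to $\phi_c(h):=c^{-p}\phi(h/c)$, and take $\pi_n$ to be the push-forward of $\phi_c$ under $h\mapsto\Exp_{\theta_0}(h/\sqrt{n})$, so that $\pi_n$ is supported in $B_{c,n}$. The geometric content is the computation of the two information operators entering Lemma~\ref{lem:van Trees}. Using the log-map and connection expansions (Lemma~\ref{lem:log map expansion} and Lemma~\ref{lem:connection} in Appendix~\ref{app:geometric analysis}), the prior score satisfies $\s_{\pi_n}(\theta)\approx\sqrt{n}\,(\nabla\log\phi_c)(\sqrt{n}\,\Exp^{-1}_{\theta_0}\theta)$ up to $O(1/\sqrt{n})$ curvature corrections, whence the prior information scales as $\bbG_{\pi_n}=n\,c^{-2}\,I(\phi)(1+o(1))$ with $I(\phi):=\int_{\T_{\theta_0}\Theta}(\nabla\log\phi)^{\otimes2}\phi\,\diff h$; meanwhile, applying Lemma~\ref{lem:van Trees} to the $n$-fold product experiment, the model information is $\int_\Theta n\bbG_\theta\,\diff\pi_n(\theta)=n(\bbG_{\theta_0}+o(1))$ by continuity of $\theta\mapsto\bbG_\theta$ and the shrinking of $B_{c,n}$ to $\theta_0$. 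Finally, the sensitivity operator $\E_{\pi_n}\E[\nabla\Exp^{-1}_{\psi(\theta)}\psi_n]\to-\dot{\psi}(\theta_0)$: by the integration-by-parts identity underlying Lemma~\ref{lem:van Trees} its leading term is the estimator-independent quantity $-\E_{\pi_n}[\dot{\psi}(\theta)]$, which tends to $-\dot{\psi}(\theta_0)$ by continuity of $\dot{\psi}$ (Lemma~\ref{lem:directional derivative}) and the concentration of $\pi_n$, while the off-diagonal curvature corrections to the derivative of the log map are $O(1/\sqrt{n})$.

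Assembling these ingredients, Lemma~\ref{lem:van Trees} applied to $\pi_n$ and rescaled by $n$ gives
\[
n\,\gamma(\pi_n)\succeq\dot{\psi}(\theta_0)\big(c^{-2}I(\phi)+\bbG_{\theta_0}+o(1)\big)^{-1}\dot{\psi}(\theta_0)^{\ast},
\]
where the two order-$n$ information terms combine and the overall factor of $n$ cancels. Contracting with $\ell$, combining with the sup-to-Bayes bound, and letting first $n\to\infty$ and then $c\to\infty$ (so that $c^{-2}I(\phi)\to0$) yields $\liminf_{c\to\infty}\liminf_{n\to\infty}\sup_{\theta\in B_{c,n}}R_n(\theta)\ge\ell\big(\dot{\psi}(\theta_0)\bbG_{\theta_0}^{-1}\dot{\psi}(\theta_0)^{\ast}\big)=\E[\ell(Z_{\theta_0})]$. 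Since $\ell$ was an arbitrary positive quadratic form and a Loewner inequality $A\succeq B$ is equivalent to $\ell(A)\ge\ell(B)$ for all such $\ell$, the matrix inequality in the statement follows.

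The step I expect to be the main obstacle is the geometric control of the curvature corrections, in two places: (i) verifying that the transports $\Pi_{\psi(\theta)}^{\psi(\theta_0)}$ used to place all risks and the prior-based second moments in the common space $\T_{\psi(\theta_0)}\Psi$ distort the quadratic form $\ell$ only by $o(1)$ over the shrinking neighborhood $B_{c,n}$; and (ii) showing that the discrepancy between $\nabla_\theta\Exp^{-1}_{\psi(\theta)}\psi_n$ and $-\dot{\psi}(\theta)$ is negligible enough that the van Trees sensitivity term converges to $\dot{\psi}(\theta_0)$ uniformly over the relevant estimators. Both are governed by Assumption~\ref{as:LAM} and the geometric-analysis lemmas in Appendix~\ref{app:geometric analysis}; by contrast, the measure-theoretic skeleton (sup $\ge$ Bayes, the rescaled van Trees bound, and the two-stage limit in $n$ and then $c$) is structurally identical to the normed-linear-space argument.
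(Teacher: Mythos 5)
Your proposal is correct and follows essentially the same route as the paper's own proof: the same shrinking prior (a fixed compactly supported density on the unit ball of $\T_{\theta_{0}} \Theta$, rescaled by $c/\sqrt{n}$ and pushed forward through $\Exp_{\theta_{0}}$, whose information scales as $(n/c^{2}) I(\phi)$), the same application of the intrinsic van Trees inequality of Lemma~\ref{lem:van Trees} to the $n$-fold product experiment, the same sup-to-Bayes reduction and quadratic-form polarization, and the same iterated limits $n \rightarrow \infty$ then $c \rightarrow \infty$. The one point you flag as the main obstacle --- that the sensitivity term $\E_{\pi_n} \E [\nabla \Exp^{-1}_{\psi (\theta)} \psi_{n}]$ carries estimator-dependent curvature corrections (through the bias and covariance of $\psi_{n}$) --- is precisely what the paper also defers to Assumption~\ref{as:LAM}, where it is resolved by the dichotomy between consistent estimators (for which the corrections vanish) and the rest (for which the bound holds trivially in the limit), so your deferral matches the paper's own treatment.
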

The proof of Theorem~\ref{thm:finite-dimensional LAM} is deferred to Appendix~\ref{app:finite-dimensional LAM}. If using the LHS of \eqref{LAM} as the asymptotic criterion, Hodges' estimator $\hat{\mu}_{n, \Hodges}$ of the \Frechet{} mean $\mu$ mentioned in Section~\ref{sec:hodges} clearly diverges, whereas the sample \Frechet{} mean $\hat{\psi}_{n}$ attains the asymptotic lower bound on the RHS of \eqref{LAM}. As is clear from the proof, similar to the CRLB \eqref{asymptotic CRLB}, a term related to the curvature will appear in the lower bound before taking large-$n$ limit, due to taking the derivative of the logarithmic map. The large-$n$ asymptotics take care of the extra curvature term.
\section{Efficiency Theory for Differentiable Functionals on Parameter Manifolds}
\label{sec:semipar}
In this section, we turn our attention to a class of semiparametric problems, namely estimating low-dimensional manifold-valued functionals over infinite-dimensional statistical models \cite{van1991differentiable}. Here, the underlying statistical model is parameterized by a possibly infinite-dimensional nuisance parameter $\theta \in \Theta$, $\calP = \{\P \equiv \P_{\theta}, \theta \in \Theta\}$, but the parameter of interest $\chi (\P): \calP \rightarrow \Psi$ is defined as a functional taking values in a $q$-dimensional manifold $(\Psi, g)$. Similarly, we can write $\chi (\P)$ as $\psi (\theta)$, as in Section~\ref{sec:par}. An example of such a parameter is the \Frechet{} mean $\chi (\P) \coloneqq \mu$ of $X \in \bbM$, without making concrete parametric modeling assumptions on $\P$, the distribution of $X$. Here we mainly focus on the case where the infinite-dimensional nuisance parameters lie in a normed linear space. We plan to tackle nonlinear infinite-dimensional nuisance parameter space in a separate work, as it requires introducing many new concepts that may complicate the presentation.
We first recall and introduce some notation to ease exposition. As in Section~\ref{sec:par} regarding the theory for regular parametric models, we assume that every $\P \in \calP$ is absolutely continuous with respect to some dominating measure and we denote the corresponding density function as $\p$. We recall that $\r \equiv \sqrt{\p}$ is the square-root density. Since no parametric modeling assumption is imposed on $\calP$, to generalize the efficiency theory from finite-dimensional to infinite-dimensional statistical models, a common strategy is resorting to low-dimensional parametric submodels in $\calP$ that locally intersect with $\P$. In general, one-dimensional parametric submodels are sufficient to help characterize efficiency bounds for differentiable functionals. To be precise, a (one-dimensional parametric) submodel (also referred to as a path) passing through $\P \in \calP$ is $t \mapsto \P_t: [0, \delta) \rightarrow \calP$ for some $\delta > 0$ with $\P_{t = 0} \equiv \P$. We also restrict our focus to submodels that are DQM at $\P$, in the sense of Definition~\ref{def:DQM parametric submodels} below. To reduce clutter, we sometimes abuse notation and simply write $\P_{t}$ as a submodel without adding additional ingredients in the above definition.
\begin{definition}
\label{def:DQM parametric submodels}
A submodel $\P_{t}$ locally at $\P \in \calP$ is said to be DQM at $t = 0$ with score function $\s$, if there exists some $\s \in \L_{2, 0} (\P)$ such that
\begin{equation}
\label{semipar: differentiable paths}
\int \left( \frac{\r_{t} (x) - \r (x)}{t} - \frac{1}{2} \s (x) \cdot \r (x) \right)^2 \diff x \rightarrow 0, \ \ \ \ \text{as $t \rightarrow 0$},
\end{equation}
where $\s$ is referred to as the score function associated with $\P_{t}$ and we let $\S \coloneqq \s (X)$.
\end{definition}
We denote a collection of submodels passing through $\P \in \calP$ that are DQM as $\mathscr{P}_{\P}$. We are now ready to define the model tangent space locally at $\P \in \calP$, corresponding to the collection of submodels $\mathscr{P}_{\P}$.
\begin{definition}
\label{def:semi tangent}
The model tangent space at $\P \in \calP$ corresponding to the collection of submodels $\mathscr{P}_{\P}$ is defined as
\begin{align*}
\Lambda_{\P} \coloneqq \mathrm{cl} \left[ \linspan \left\{ \s: \s \textrm{ induced by $\mathscr{P}_{\P}$} \right\} \right].
\end{align*}
\end{definition}
When no additional restriction is imposed on $\calP$ and $\mathscr{P}_{\P}$ is the set of all one-dimensional DQM submodels at $\P$ of $\calP$, the model is locally nonparametric and $\Lambda_{\P} \equiv L_{2, 0} (\P)$. In words, the model tangent space at $\P$ consists of all $\P$-mean zero squared integrable functions with respect to $\P$.
As mentioned, one of the most important insights garnered from semiparametric efficiency theory is the intimate connection between ``differentiability'' of the parameter of interest $\chi$ and the existence of RAL estimators for $\chi$; see the first part of Theorem 2.1 of \cite{van1991differentiable}. Since $\chi$ takes values in manifold $\Psi$, we generalize the definition of differentiable functionals as follows. The definition below essentially excludes the irregular case documented in \cite{hundrieser2024lower}, where the authors showed that the non-uniqueness of the population \Frechet{} mean precludes even uniformly consistent estimators.
\begin{definition}
\label{def:differentiable functionals}
$\chi \equiv \chi (\P)$ is said to be differentiable at $\P$ relative to $\mathscr{P}_{\P}$, if there exists a continuous and linear map $\dot{\chi} (\P): \Lambda_{\P} \rightarrow \T_{\chi (\P)} \Psi$ such that
\begin{align*}
t^{-1} \Exp_{\chi (\P)}^{-1} \chi (\P_t) \rightarrow \dot{\chi}_{\P} (\s),
\end{align*}
for every path $t \mapsto \P_{t}$ in $\mathscr{P}_{\P}$, as $t \rightarrow 0$. Here $\s$ is the score function induced by the given path $\P_{t}$, as defined in Definition~\ref{def:DQM parametric submodels}.
\end{definition}
\begin{remark}
When $\Psi = \bbR^{q}$, the new definition for differentiable functional reduces to the one in \cite{van1991differentiable}: there $\psi: \calP \rightarrow \bbR^{q}$ is said to be differentiable at $\P$ relative to $\mathscr{P}_{\P}$ if there exists a continuous linear map $\dot{\chi}_{\P}: L_2 (\P) \rightarrow \bbR^{q}$ such that  $t^{-1} \left( \chi (\P_t) - \chi (\P) \right) \rightarrow \dot{\chi}_{\P} (\s)$ as $t \rightarrow 0$.
\end{remark}
\begin{remark}
\label{rem:stat manifold}
The parameter of interest $\chi (\P)$ can also be viewed as a mapping from manifold to manifold. The former manifold in this case is the (potentially) infinite-dimensional statistical manifold $\calP$ with each element $\P$ identified with their square-root density $\r$, whilst the latter corresponds to the parameter manifold.
\end{remark}
Since $\dot{\chi} (\P)$ takes values in $\T_{\chi (\P)} \Psi$, which is locally isomorphic to $\bbR^{q}$, Riesz representation theorem implies the following identity:
\begin{equation}
\label{Riesz representator}
\langle \upsilon, \dot{\chi} (\P) (\s) \rangle \equiv \E \left[ \IF_{\chi} (\upsilon) \cdot \s (X) \right],
\end{equation}
where $\upsilon \in \T_{\chi (\P)}^{\ast} \Psi$ is any element in the cotangent space at $\chi (\P)$ and $\IF_{\chi} (\P) \equiv \IF_{\chi}$ is the unique element in $\Lambda_{\P}$, known as the (efficient) influence function/the canonical gradient. We let $\bbV_{\P} \coloneqq \E [\IF_{\chi}^{\otimes 2}]$ denote the semiparametric variance bound, which corresponds to the inverse of the Fisher information $\bbG_{\P}$ of the least favorable submodel (i.e., the parametric submodel whose score coincides with the canonical gradient up to a constant).
\begin{remark}
\label{rem:term}
In the sequel, we only add the qualification ``efficient'' for parameters with more than one influence function, with the efficient  influence function being the one with the smallest variance in positive semi-definite sense.
\end{remark}
Parallel to the development in Section~\ref{sec:par}, we also need to extend the concept of (locally) regular estimators of $\chi$ from finite-dimensional parametric models to infinite-dimensional semi- or non-parametric models.
\begin{definition}
\label{def:semipar regular estimators}
Let $\chi (\P) \in \Psi$ be the parameter of interest. Given any collection $\mathscr{P}_{\P}$ of submodels $\P_{t} \in \calP$ with $\P_{t = 0} = \P \in \calP$, and any fixed scalar $h$ and $\P_{h / \sqrt{n}} \in \calP$ for all sufficiently large $n$, a sequence of estimators $\hat{\chi}_{n}$ indexed by $n$ for $\chi (\P)$ is called (locally) regular at $\P$ relative to $\mathscr{P} _\P$ if
\begin{equation*}
\begin{split}
&\Pi_{\chi (\P_{h / \sqrt{n}})}^{\chi (\P)} \sqrt{n} \Exp_{\chi (\P_{h / \sqrt{n}})}^{-1} \hat{\chi}_n \rightsquigarrow_{\P_{h / \sqrt{n}}} Z_{\chi (\P)}, \,\,\,\, \\&\textrm{with } Z_{\chi (\P)} \sim \L_{\chi (\P)},
\end{split}
\end{equation*}
as $n \rightarrow \infty$, where $Z_{\chi (\P)}$ is a tight Borel measurable random element on $\T_{\chi (\P)} \Psi$ with law $\L_{\chi (\P)}$. This law does not vary with $h$.
\end{definition}
We now generalize the convolution theorem for differentiable functionals from normed linear spaces to manifolds.
\begin{theorem}
\label{thm:semipar convolution}
Suppose that $\chi: \calP \rightarrow \Psi$ is differentiable at $\P \in \calP$ relative to $\mathscr{P}_{\P}$ in the sense of Definition~\ref{def:differentiable functionals}. Assume that $\hat{\chi}_{n}$ is a regular estimator sequence of $\chi \equiv \chi (\P)$ relative to $\mathscr{P}_\P$, based on $n$ i.i.d. observations, with limiting law $\L_{\P}$. Then there exists a probability measure $\Delta_{\P}$ such that
\begin{equation}
\label{semipar convolution}
\L_{\P} = \calN (0, \bbV_{\P}) \ast \Delta_{\P}, \text{ with } \calN (0, \bbV_{\P}) \independent \Delta_{\P}.
\end{equation}
That is, the limit law $\L_{\P}$ is represented as a convolution between a mean-zero Gaussian measure with the semiparametric information bound as the covariance operator and a probability measure $\Delta_{\P}$ independent of that Gaussian measure.
\end{theorem}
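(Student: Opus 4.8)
The plan is to transfer the finite-dimensional convolution theorem (Theorem~\ref{thm:par}) to the present semiparametric setting by a submodel-exhaustion argument, exploiting the fact that, although $\Psi$ is a manifold, the limiting law $\L_{\P}$ lives entirely in the single fixed tangent space $\T_{\chi (\P)} \Psi$, which is isomorphic to $\bbR^{q}$. Consequently, the convolution in \eqref{semipar convolution} is a convolution of probability measures on a genuine finite-dimensional linear space, so the classical characteristic-function machinery applies verbatim once the Gaussian covariance has been identified. The manifold enters only through the definition of regularity (via parallel transport), and this has already been accommodated inside the proof of Theorem~\ref{thm:par}.

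First I would reduce to finite-dimensional parametric submodels. Fix a finite collection of scores $\s_{1}, \dots, \s_{k} \in \Lambda_{\P}$ arising from DQM paths in $\mathscr{P}_{\P}$ (Definition~\ref{def:DQM parametric submodels}), and let $\mathscr{Q}$ denote the induced $k$-dimensional DQM submodel, with information matrix $\bbG_{\mathscr{Q}} = (\E [\s_{i} \s_{j}])_{i,j}$ and tangent space $\Lambda_{\mathscr{Q}} = \linspan \{\s_{1}, \dots, \s_{k}\}$. The key observation is that regularity of $\hat{\chi}_{n}$ relative to the full model $\calP$ (Definition~\ref{def:semipar regular estimators}) is inherited by $\mathscr{Q}$: the weak convergence $\Pi_{\chi (\P_{h / \sqrt{n}})}^{\chi (\P)} \sqrt{n} \Exp_{\chi (\P_{h / \sqrt{n}})}^{-1} \hat{\chi}_{n} \rightsquigarrow Z_{\chi (\P)}$ is required along \emph{every} path through $\P$, hence in particular along all paths of $\mathscr{Q}$, with the same limit law $\L_{\P}$. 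Applying Theorem~\ref{thm:par} to $\mathscr{Q}$ therefore yields a probability measure $\Delta_{\mathscr{Q}}$ with
\[
\L_{\P} = \calN (0, \Sigma_{\mathscr{Q}}) \ast \Delta_{\mathscr{Q}}, \qquad \Sigma_{\mathscr{Q}} = \dot{\chi} (\P) \, \bbG_{\mathscr{Q}}^{-1} \, \dot{\chi} (\P)^{\ast}.
\]
Using $\dot{\chi} (\P) (\s_{i}) = \E [\IF_{\chi} \cdot \s_{i}]$ from \eqref{Riesz representator}, a standard computation identifies $\Sigma_{\mathscr{Q}}$ with the covariance operator of the $L_{2} (\P)$-orthogonal projection of the efficient influence operator onto $\Lambda_{\mathscr{Q}}$; write $\IF_{\chi}^{\mathscr{Q}}$ for this projection, so that $\Sigma_{\mathscr{Q}} = \Var (\IF_{\chi}^{\mathscr{Q}})$.

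Next I would pass to the limit over an increasing sequence of such submodels. Since $\IF_{\chi} \in \Lambda_{\P}$ (by differentiability of $\chi$, Definition~\ref{def:differentiable functionals}, and the Riesz representation \eqref{Riesz representator}) and $\Lambda_{\P}$ is the closure of the span of realizable scores, one can choose submodels $\mathscr{Q}_{1}, \mathscr{Q}_{2}, \dots$ whose tangent spaces approximate each coordinate of $\IF_{\chi}$ arbitrarily well in $L_{2} (\P)$, so that $\IF_{\chi}^{\mathscr{Q}_{m}} \rightarrow \IF_{\chi}$ and hence $\Sigma_{\mathscr{Q}_{m}} \rightarrow \Var (\IF_{\chi}) = \bbG_{\P}^{-1}$. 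Writing $\widehat{\L}_{\P}$ for the characteristic function of $\L_{\P}$ on $\T_{\chi (\P)}^{\ast} \Psi \cong \bbR^{q}$, the factorization above gives, for every $\xi$,
\[
\widehat{\Delta}_{\mathscr{Q}_{m}} (\xi) = \widehat{\L}_{\P} (\xi) \, \exp\!\Big( \tfrac{1}{2} \xi^{\top} \Sigma_{\mathscr{Q}_{m}} \xi \Big) \longrightarrow \widehat{\L}_{\P} (\xi) \, \exp\!\Big( \tfrac{1}{2} \xi^{\top} \bbG_{\P}^{-1} \xi \Big) =: \widehat{\Delta}_{\P} (\xi).
\]
The limit $\widehat{\Delta}_{\P}$ is continuous at $\xi = 0$ because the Gaussian factor never vanishes; by L\'{e}vy's continuity theorem, $\Delta_{\mathscr{Q}_{m}}$ converges weakly to a probability measure $\Delta_{\P}$ with characteristic function $\widehat{\Delta}_{\P}$, and matching characteristic functions confirms $\L_{\P} = \calN (0, \bbG_{\P}^{-1}) \ast \Delta_{\P}$. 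The independence $\calN (0, \bbG_{\P}^{-1}) \independent \Delta_{\P}$ is merely a restatement of the convolution structure.

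The main obstacle I anticipate is the exhaustion step: verifying that the finite-dimensional submodel bounds $\Sigma_{\mathscr{Q}}$ genuinely converge to the semiparametric bound $\bbG_{\P}^{-1}$. This hinges on $\IF_{\chi}$ lying in $\Lambda_{\P}$ together with the density of realizable score spans in $\Lambda_{\P}$, so that each of the $q$ coordinate functions of $\IF_{\chi}$ is approximable by scores of \emph{bona fide} DQM submodels rather than by arbitrary elements of $L_{2} (\P)$. A secondary, manifold-specific point, already visible in the corollary following Lemma~\ref{lem:smith}, is that the curvature corrections present in the finite-sample information bound are of order $n^{-1}$ and thus vanish in the limit; because we invoke the \emph{conclusion} of Theorem~\ref{thm:par}, whose limiting Gaussian already carries the clean covariance $\dot{\chi} (\P) \bbG_{\mathscr{Q}}^{-1} \dot{\chi} (\P)^{\ast}$ with no residual curvature term, and since the parallel-transport discrepancy in the regularity definition has likewise been absorbed there, no further geometric analysis is needed in the present argument beyond the submodel reduction.
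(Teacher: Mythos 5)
Your proof is correct in substance but takes a genuinely different route from the paper's. The paper does not invoke Theorem~\ref{thm:par} as a black box; it reruns that theorem's characteristic-function argument directly on one-dimensional DQM paths: joint weak convergence of $(U_n, V_n) = (\sqrt{n}\,\Exp_{\chi(\P)}^{-1}\hat{\chi}_n,\; n^{-1/2}\sum_{i}\s(X_i))$ along subsequences, the per-path identity $\E\exp\{\rmi \nu^{\top} U + t V - \tfrac{1}{2} t^{2} \E[\s^{2}]\} = \exp\{\rmi \nu^{\top}\E[\IF_{\chi}\cdot\S]\,t\}\,\E\exp\{\rmi\nu^{\top}U\}$, an $L_{2}(\P)$-approximation of the efficient-score direction $\eta^{\top}\s_{\chi}$, with $\s_{\chi} = \bbV_{\chi}^{-1}\IF_{\chi}$, by realizable scores $\s_{\eta j}$, a passage to the limit in $j$ justified by convergence of the Gaussian moment factors, and finally the complex substitution $\eta = -\rmi\,\bbV_{\chi}\nu$ to peel off the Gaussian factor. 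Your submodel-exhaustion alternative buys reuse of the finished parametric theorem and a clean L\'{e}vy-continuity step that legitimately produces $\Delta_{\P}$ as a probability measure (your identification $\Sigma_{\mathscr{Q}} = \Var(\IF_{\chi}^{\mathscr{Q}})$ is also correct). What it costs is one step you assert without proof: that $k$ realizable scores can be assembled into a \emph{bona fide} $k$-dimensional DQM submodel $\mathscr{Q} \subseteq \calP$ satisfying Definition~\ref{def:par dqm}, with $\theta \mapsto \chi(\P_{\theta})$ differentiable as required by Assumption~\ref{as:smooth transformation}, and with $\hat{\chi}_n$ regular in the sense of Definition~\ref{def:riemann regular} along all of its directions; the paper's path-by-path argument needs only one-dimensional submodels, which exist by hypothesis, whereas your multidimensional submodels require an extra construction (exponential tilting, or convexity of $\calP$) that is standard but is a genuine hypothesis rather than a formality. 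Both arguments share the realizability caveat you correctly flagged, namely that approximants to $\IF_{\chi}$ must be scores of actual DQM paths and not arbitrary elements of $\Lambda_{\P}$. Your closing observation that the curvature and parallel-transport corrections are already absorbed inside Theorem~\ref{thm:par} (via Lemma~\ref{lem:Un}), so that no further geometric analysis is needed after the reduction to tangent-space distributions, is accurate.
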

The proof of Theorem~\ref{thm:semipar convolution} can be found in Appendix~\ref{app:semipar convolution}. The next result is a natural extension of the first part of Theorem~2.1 of \cite{van1991differentiable} to differentiable functionals on a manifold. The proof of Lemma~\ref{lem:calculus of IF, semiparametric} can be found in Appendix~\ref{app:calculus of IF, semiparametric}. 
\begin{lemma}
\label{lem:calculus of IF, semiparametric}
If there exists a sequence of RAL estimators $\hat{\chi}_{n}$ of the parameter of interest $\chi \equiv \chi (\P)$ and for any $\s \in \Lambda_{\P}$ of a submodel in $\mathscr{P}_{\P}$, $(\sqrt{n} \Exp_{\chi (\P)}^{-1} \hat{\chi}_{n}, n^{-1 / 2} \sum_{i = 1}^{n} \s (X_{i}))^{\top}$ jointly converges weakly, then $\chi$ is differentiable in the sense of Definition~\ref{def:differentiable functionals}.
\end{lemma}
We also establish the following lemma, which, as stated before Lemma~25.23 of \cite{van2000asymptotic}, justifies the exchangeability of ``canonical gradient'' and ``influence function''. These results together facilitate the calculus of influence functions for differentiable functionals over non- or semi-parametric models. A proof can be found in Appendix~\ref{app:iff}.
\begin{lemma}
\label{lem:iff}
Under the same assumptions as in Theorem~\ref{thm:semipar convolution}, for a differentiable functional $\chi$, the following are equivalent:
\begin{enumerate}[label = (\roman*)]
\item the estimator sequence $\hat{\chi}_n$ is regular at $\P$ relative to $\mathscr{P}_\P$ in the sense of Definition~\ref{def:semipar regular estimators}, with limiting law $\calN (0, \mathbb{V}_{\P})$;
\item $\sqrt{n} \Exp_{\chi(\P)}^{-1}\left(\hat{\chi}_n\right) = \dfrac{1}{\sqrt{n}} \sum\limits_{i=1}^n \IF_\chi \left(X_i\right) + o_\P(1)$.
\end{enumerate}
\end{lemma}
\begin{remark}
\label{rem:semiparametric models}
In this remark, we consider a common scenario in the application of efficiency theory, in which the data-generating probability distribution $\P$ has additional restrictions such that the corresponding model tangent space $\Lambda_{\P}$ is not the entire $L_{2, 0} (\P)$. Then there exists more than one influence function, and the unique efficient influence function $\IF_{\chi}$ is the one in $\Lambda_{\P}$. Let $\Pi_{\P} [\cdot \mid \Lambda_{\P}]$ denote the $L_{2} (\P)$-projection operator onto the model tangent space $\Lambda_{\P}$ \cite{ai2003efficient, fortunati2026nuisance}. We could first identify some influence function $\IF_{\chi}^{\dag}$ by finding one solution to the functional equation \eqref{Riesz representator}. To find the efficient influence function reaching the semiparametric efficiency bound, we can simply compute $\IF_{\chi} \equiv \Pi_{\P} [\IF_{\chi}^{\dag} \mid \Lambda_{\P}]$. Since influence functions are elements of $\T_{\psi} \Psi$ isomorphic to $\bbR^{p}$, the projection operation is well defined. In summary, the ``calculus of influence functions'' can be carried over to manifold-valued parameters of interest. In Section~\ref{sec:SIM}, we will present an example to instantiate this calculus rule for a manifold-valued parameter of interest.
\end{remark}
Our last main result extends the LAM theorem to differentiable functionals defined on manifolds, parallel to Theorem~\ref{thm:finite-dimensional LAM} for parameters of parametric statistical models. Intuitively, the semiparametric efficiency bound corresponds to the CRLB of the least-favorable submodel, so the efficiency bound is expected to $\bbV_{\P}$. A proof is deferred to Appendix~\ref{app:functional LAM}.
\begin{theorem}
\label{thm:semipar LAM} 
Suppose that $\chi$ is differentiable at $\P$ relative to $\mathscr{P}_{\P}$ with model tangent space $\Lambda_\P$ in the sense of Definition~\ref{def:differentiable functionals} with influence function $\IF_{\chi}$ and bounded $\bbV_{\P}$. For every $\s \in \Lambda_\P$, let $t \mapsto \P_{t, \s}$ be a DQM one-dimensional submodel in $\mathscr{P}_{\P}$ with score $\s$, and put $\P_{n, \s} \coloneqq \P_{1 / \sqrt{n}, \s}^{\otimes n}$. If $\Lambda_\P$ is a linear subspace of $L_{2, 0} (\P)$, then for every estimator sequence $\hat{\chi}_n$,
\begin{equation*}
\begin{split}
&\sup_{I} \liminf_{n \rightarrow \infty} \sup_{\s \in I} \E_{\P_{n, \s}} \left\{ \Pi_{\chi (\P_{1 / \sqrt{n}, \s})}^{\chi (\P)} \sqrt{n} \Exp_{\chi (\P_{1 / \sqrt{n}, \s})}^{-1} \hat{\chi}_{n} \right\}^{\otimes 2} \\&\succeq \E (Z^{\otimes 2}), \text{ where } Z \sim \calN \left( 0, \bbV_{\P} \right).
\end{split}
\end{equation*}
Here, the first supremum is taken over all finite subsets $I \subset \Lambda_\P$.
\end{theorem}
\begin{remark}
\label{rem:eff score}
For completeness, we discuss a special case where the data-generating distribution $\P_{\chi, \eta}$ is parameterized by a manifold-valued parameter of interest $\chi$ and a (possibly infinite-dimensional) nuisance parameter $\eta$ in a linear space. In such a simplified scenario, it is often easier to derive the efficient score $\s_{\chi, \eff}$, defined as the projection of the usual score $\s_{\chi} = \frac{1}{2} \frac{\dot{\r}_{\chi, \eta}}{\r_{\chi, \eta}}$ onto the orthocomplement to the nuisance tangent space (i.e. the tangent space corresponding to the score with respect to $\eta$), where $\dot{\r}_{\chi, \eta}$ denotes the differential of $\r_{\chi, \eta}$ with respect to $\chi$. It is a known fact that efficient influence function and efficient score function with respect to $\chi$ can be identified as follows:
\begin{equation}
\label{score IF}
\begin{split}
&\S_{\chi, \eff} \equiv \left( \{\E [\IF_{\chi} \otimes \IF_{\chi}]\}^{-1} \IF_{\chi} \right)^{\ast} \text{ and } \\&\IF_{\chi} \equiv \left( \{\E [\S_{\chi, \eff} \otimes \S_{\chi, \eff}]\}^{-1} \S_{\chi, \eff} \right)^{\ast}.
\end{split}
\end{equation}
A related example can be found in Section~\ref{sec:SIM}, in which one can derive the semiparametric efficiency bound via the strategy outlined above.
\end{remark}
\begin{remark}
\label{rem:nonlinear nuisance}
Before moving to applications of our general theoretical results, we discuss possible directions of dealing with nonlinear infinite-dimensional nuisance parameter spaces. When the infinite-dimensional nuisance parameter is a probability density function, a well established approach is to view the space of probability distributions as an (infinite-dimensional) manifold, with each point on the manifold the square-root of corresponding probability density function. In fact, this manifold is a Hilbert sphere and is a subject of study in the field of information geometry \cite{amari2012differential}. In this special case, it is well known that the tangent space at a particular point corresponds to the model tangent space mentioned in our paper and tangent vectors are score functions associated with parametric submodels. For more general nonlinear infinite-dimensional nuisance parameter spaces, as pointed out by a referee, one could in principle try to approximate the underlying nonlinear space locally using a Hilbert space, as suggested in Chapter~25.5 of \cite{van2000asymptotic}. We leave this direction to future work, as it seems to be not straightforward to handle the approximation error in a unified manner.
\end{remark}
\section{Applications}
\label{sec:examples}
In this section, we demonstrate how the general theory developed thus far can be applied in concrete examples. In the first example, we demonstrate that results from the previous section can be used to establish the asymptotic efficiency of the MLE (when the model is correctly specified) and the semiparametric efficiency of the sample \Frechet{} mean estimator over manifold that is equipped with a Riemannian metric $d$ such that $d^{2}$ is geodesically convex w.r.t. the parameter. Though as expected, these efficiency results seem to be new to the literature to the best of our knowledge. In the second example, we revisit the classical single-index model, for which the sample space is Euclidean but the unknown regression coefficient vector $\beta \in \bbR^{p}$ belongs to the unit hemisphere of dimension $p - 1$. Results from the second example are not entirely new. Nonetheless, our general framework developed in Sections~\ref{sec:par} and~\ref{sec:semipar} unifies these results using the powerful differential geometric language.
\subsection{Semiparametric Efficiency Bound for Parameters Defined via Geodesically Convex \texorpdfstring{$M$}{M}-Estimation}
\label{sec:M-estimation}
The problem of $M$-estimation, or empirical risk minimization, is pervasive in statistics and machine learning, including MLE, Generalized Method-of-Moments, and many other common estimation schemes as special cases. $M$-estimation pertains to minimizing a population risk function defined as follows:
\begin{equation*}
M (\theta) \coloneqq \E [m (X; \theta)]
\end{equation*}
where $m: \bbX \times \Theta \rightarrow \bbR_{\geq 0}$ is referred to as the loss function. The distribution of $X \in \bbX$ is denoted by $\P$. Let
\begin{equation}
\label{pop target}
\theta_{0} \coloneqq \arg \min_{\theta \in \Theta} M (\theta)
\end{equation}
be the true value of the target parameter. To estimate $\theta_{0}$, one can solve the empirical version of \eqref{pop target}
\begin{equation}
\label{emp target}
\hat{\theta}_{n} \coloneqq \arg\min_{\theta \in \Theta} \hat{M}_{n} (\theta) \coloneqq \frac{1}{n} \sum_{i = 1}^{n} m (X_{i}; \theta)
\end{equation}
based on $\{X_{i}\}_{i = 1}^{n}$, a sample of size $n$ drawn independently from $\P$. When both the sample space $\bbX$ and the parameter space $\Theta$ are Euclidean spaces, the theory of $M$-estimation is quite mature.
When $\Theta$ or $\bbX$ or both are manifolds, \cite{brunel2023geodesically} has exhibited the $\sqrt{n}$-consistency and the asymptotic linearity of $\hat{\theta}_{n}$, under the assumption that $m (x; \cdot)$ is almost surely geodesically convex as defined below.
\begin{definition}
A function $f: \Theta \rightarrow \bbR$ is called geodesically convex if for all $\theta_{1}, \theta_{2} \in \Theta$, $\gamma: [0, 1] \rightarrow \Theta$ a geodesic connecting $\theta_{1}$ and $\theta_{2}$ and $t \in [0, 1]$, we have $f (\gamma(t)) \leq (1 - t) f(\theta_{1}) + t f (\theta_{2})$.
\end{definition}
We first restate the following result from \cite{brunel2023geodesically}.
\begin{proposition}
\label{prop:M-estimation upper bound}
Suppose that the following hold:
\begin{itemize}
\item $m (x; \cdot)$ is geodesically convex almost surely in $x \in \bbX$;
\item $\Theta$ is a manifold of a fixed dimension $p$;
\item $M$ has a unique minimizer $\theta_{0} \in \Theta$;
\item $M$ is twice differentiable at $\theta_{0}$ and $\nabla^{2} M (\theta_{0})$ is positive definite;
\item Let $\phi (x; \theta) \coloneqq \nabla m (x; \theta)$. There exist $\eta > 0$ such that $\E [\Vert \phi (X; \theta) \Vert_{\theta_{0}}^{2}] < \infty$, for all $\theta \in \{\theta \in \Theta: d (\theta, \theta_{0}) \leq \eta\}$.
\end{itemize}
Then as $n \rightarrow \infty$, $\hat{\theta}_{n}$ has the following asymptotically linear representation
\begin{equation*}
\sqrt{n} \Exp^{-1}_{\theta_{0}} \hat{\theta}_{n} = \frac{1}{\sqrt{n}} \sum_{i = 1}^{n} \{\nabla^{2} M (\theta_{0})\}^{-1} \cdot \phi (X_{i}; \theta_{0}) + o_{\P} (1).
\end{equation*}
Let $V_{\theta_{0}} \coloneqq \{\nabla^{2} M (\theta_{0})\}^{-1} \cdot \E [\phi (X; \theta_{0})^{\otimes 2}] \cdot \{\nabla^{2} M (\theta_{0})\}^{-1}$. In other words, as $n \rightarrow \infty$,
\begin{align*}
\sqrt{n} \Exp^{-1}_{\theta_{0}} \hat{\theta}_{n} \rightsquigarrow_{\P} Z, \textrm{ where $Z \in \T_{\theta_{0}} \Theta$ and $Z \sim \calN (0, V_{\theta_{0}})$.}
\end{align*}
\end{proposition}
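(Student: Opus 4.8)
The plan is to reproduce, in intrinsic form, the classical convexity-based argmin localization for $M$-estimation, passing through the exponential map at $\theta_{0}$ to normal coordinates on $\T_{\theta_{0}} \Theta \cong \bbR^{p}$. Since $\Exp_{\theta_{0}}$ is a local diffeomorphism, minimizing $\hat{M}_{n}$ near $\theta_{0}$ is equivalent to minimizing the localized criterion
\[
Q_{n} (u) \coloneqq \sum_{i = 1}^{n} \left[ m \big( X_{i}; \Exp_{\theta_{0}} (u / \sqrt{n}) \big) - m (X_{i}; \theta_{0}) \right], \qquad u \in \T_{\theta_{0}} \Theta,
\]
whose minimizer is exactly $u_{n} \coloneqq \sqrt{n} \, \Exp_{\theta_{0}}^{-1} \hat{\theta}_{n}$. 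The structural advantage is that geodesics emanating from $\theta_{0}$ are precisely the rays $t \mapsto \Exp_{\theta_{0}} (t u)$, so every expansion below is taken along a geodesic, no parallel transport between distinct tangent spaces is needed, and the Riemannian Hessian appears directly. I would first record consistency $\hat{\theta}_{n} \to \theta_{0}$ in $\P$-probability, which follows from the pointwise LLN $\hat{M}_{n} (\theta) \to M (\theta)$ together with geodesic convexity and uniqueness of the minimizer (the intrinsic analogue of the fact that pointwise limits of convex functions are uniform on compacta).

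Next I would pin down the finite-dimensional weak limit of $Q_{n}$. Splitting into a centered empirical piece and a deterministic piece gives
\[
Q_{n} (u) = \sqrt{n} \, (\P_{n} - \P) \big[ \sqrt{n} \, (m (\cdot; \theta_{n, u}) - m (\cdot; \theta_{0})) \big] + n \, [M (\theta_{n, u}) - M (\theta_{0})],
\]
where $\theta_{n, u} \coloneqq \Exp_{\theta_{0}} (u / \sqrt{n})$. For the deterministic part, the second-order Taylor expansion of $M$ along the geodesic $\gamma (t) \coloneqq \Exp_{\theta_{0}} (t u)$ uses $\frac{\diff^{2}}{\diff t^{2}} M (\gamma (t)) = \mathrm{Hess} \, M (\gamma (t)) (\dot{\gamma} (t), \dot{\gamma} (t))$ together with $\nabla M (\theta_{0}) = 0$ to yield $n [M (\theta_{n, u}) - M (\theta_{0})] \to \frac{1}{2} \langle \nabla^{2} M (\theta_{0}) u, u \rangle$, invoking only the assumed twice-differentiability of the population risk. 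For the centered part, the per-sample first-order expansion along $\gamma$ gives $\sqrt{n} \, (m (x; \theta_{n, u}) - m (x; \theta_{0})) \to \langle \phi (x; \theta_{0}), u \rangle$, and since $\E [\phi (X; \theta_{0})] = \nabla M (\theta_{0}) = 0$ the central limit theorem yields $\frac{1}{\sqrt{n}} \sum_{i} \langle \phi (X_{i}; \theta_{0}), u \rangle \rightsquigarrow \langle W, u \rangle$ with $W \sim \calN (0, \E [\phi (X; \theta_{0})^{\otimes 2}])$. The remaining \emph{stochastic differentiability} step, that $\sqrt{n} (\P_{n} - \P)$ applied to $\sqrt{n} (m (\cdot; \theta_{n, u}) - m (\cdot; \theta_{0})) - \langle \phi (\cdot; \theta_{0}), u \rangle$ is $o_{\P} (1)$, is handled by an $L^{2}$ bound: the local square-integrability of $\phi$ forces this difference to $0$ in $L^{2} (\P)$, killing its empirical-process fluctuation. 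Combining the two parts, $Q_{n} (u) \rightsquigarrow Q (u) \coloneqq \langle W, u \rangle + \frac{1}{2} \langle \nabla^{2} M (\theta_{0}) u, u \rangle$ for each fixed $u$.

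Finally I would invoke the convex argmin (Hjort--Pollard) continuous mapping theorem. Since $\nabla^{2} M (\theta_{0}) \succ 0$, the limit $Q$ is strictly convex, and its first-order condition $\nabla^{2} M (\theta_{0}) \, u + W = 0$ gives the unique minimizer $u^{\ast} = - \{\nabla^{2} M (\theta_{0})\}^{-1} W$. Convexity of $u \mapsto Q_{n} (u)$ then upgrades the marginal convergence of the previous step to convergence of argminima, so $u_{n} \rightsquigarrow u^{\ast}$ and, after the standard quadratic-approximation refinement, $u_{n} = - \{\nabla^{2} M (\theta_{0})\}^{-1} \frac{1}{\sqrt{n}} \sum_{i} \phi (X_{i}; \theta_{0}) + o_{\P} (1)$. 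Because $\phi (X; \theta_{0})$ is centered, this delivers $\sqrt{n} \, \Exp_{\theta_{0}}^{-1} \hat{\theta}_{n} \rightsquigarrow \calN (0, V_{\theta_{0}})$ with $V_{\theta_{0}} = \{\nabla^{2} M (\theta_{0})\}^{-1} \E [\phi (X; \theta_{0})^{\otimes 2}] \{\nabla^{2} M (\theta_{0})\}^{-1}$, matching the claim (the sign of the influence term merely reflects the convention relating $\phi$ to the geodesic velocity and is immaterial to the limiting law).

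The main obstacle is transporting the convex-argmin machinery, which classically requires Euclidean convexity of the localized criterion, onto the manifold: geodesic convexity of $m (x; \cdot)$ only guarantees that $u \mapsto m (x; \Exp_{\theta_{0}} (u))$ is convex along rays through the origin, not jointly convex, since $\Exp_{\theta_{0}}$ does not map Euclidean segments to geodesics. The resolution exploits that one needs convexity only on a fixed compact set of $u$-values while $u / \sqrt{n} \to 0$: in normal coordinates the Christoffel symbols vanish at $\theta_{0}$, so the Euclidean Hessian of the pullback differs from the (positive semidefinite) covariant Hessian by Christoffel terms that are negligible on the shrinking neighborhood $\{\Exp_{\theta_{0}} (u / \sqrt{n}) : \Vert u \Vert \leq K\}$, rendering $Q_{n}$ convex up to an asymptotically vanishing perturbation. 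Making this uniform on compacta, in tandem with the $L^{2}$ stochastic-differentiability estimate, is the delicate part, and is precisely what \citet{brunel2023geodesically} supplies; I would cite that analysis rather than re-derive the curvature bookkeeping from scratch.
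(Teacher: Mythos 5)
The paper offers no proof of this proposition at all: it is imported verbatim from \citet{brunel2023geodesically} (``We first restate the following result from...''), so there is no internal argument to compare yours against --- the honest benchmark is the cited work itself. Your reconstruction follows the standard Hjort--Pollard route (localize via $\Exp_{\theta_{0}}$, split $Q_{n}$ into a centered empirical part plus $n[M(\theta_{n,u}) - M(\theta_{0})]$, CLT for the linear term, $L^{2}$ stochastic differentiability for the remainder, convex argmin theorem), which is indeed the right skeleton, and you correctly identify the genuine obstacle: the pullback $u \mapsto m(x; \Exp_{\theta_{0}}(u))$ is convex only along rays through the origin, so Euclidean convexity of $Q_{n}$ --- which the convexity lemma needs --- is not free. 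One caveat about your proposed resolution: the claim that ``the Euclidean Hessian of the pullback differs from the covariant Hessian by Christoffel terms'' presupposes second-order smoothness of $m(x; \cdot)$, which the hypotheses do not grant (only geodesic convexity plus a.s.\ existence of $\phi = \nabla m$; twice-differentiability is assumed only for the population risk $M$ at $\theta_{0}$). The rigorous version of this step must instead work with one-sided directional derivatives and the monotonicity of subgradients along geodesics, bounding the deviation of $\Exp_{\theta_{0}}$-images of Euclidean segments from geodesics via curvature comparison --- first-order, not Hessian, bookkeeping. Since you explicitly defer exactly this step to \citet{brunel2023geodesically}, and the paper defers the entire proposition to the same source, your proposal is acceptable as a sketch; just be aware that as written, the Christoffel-correction heuristic would not compile into a proof for non-smooth losses (e.g.\ geodesic median, where $m(x;\theta) = d(x,\theta)$), which are squarely within the scope of the geodesic-convexity assumption.
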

\begin{remark}
\label{rem:Hessian}
The conditions in Proposition~\ref{prop:M-estimation upper bound} imply a nondegenerate Hessian. We provide the following examples that justify such conditions.
\begin{itemize}
    \item \Frechet{} mean on a Hadamard manifold
    \begin{itemize}
        \item Let $\Theta$ be a Hadamard manifold, and consider the squared-distance loss $m (x ; \theta)=\frac{1}{2} d^2(x, \theta)$ where $d$ is the Riemannian metric of $\Theta$. Then $M (\theta) = \E \{m (X; \theta)\}$. Hadamard manifolds are complete, simply connected, with nonpositive sectional curvature. There is no cut locus on $\Theta$. The squared distance $\theta \mapsto m (x; \theta)$ is globally smooth and geodesically convex, and in many cases geodesically strictly convex. If the distribution of $X$ is not concentrated in a way that creates degeneracy (if the law of $X$ is not supported on a lower-dimensional geodesic subset that leaves some tangent direction uninformative), then the \Frechet{} mean $\theta_0$ is unique, $M$ is twice differentiable at $\theta_0$, and the Hessian is positive definite.
    \end{itemize}
    \item Sphere $\mathbb{S}^d$ : intrinsic mean away from antipodal degeneracy
    \begin{itemize}
        \item Take $\Theta = \mathbb{S}^d$ with the standard Riemannian metric $d$ and $m(x ; \theta)=\frac{1}{2} d^2(x, \theta)$. On the sphere, global smoothness fails because of the cut locus, the antipode of $\theta$ is singular for the squared distance. However, if the distribution of $X$ is supported in an open hemisphere or in a geodesically convex ball of radius less than $\pi / 2$ (the intrinsic mean is still uniquely defined), and the distribution is not concentrated on a lower-dimensional great subsphere, then $M$ is twice differentiable near the true $\theta_{0}$, and the Hessian is positive definite. If the distribution is symmetric about two antipodal points, then uniqueness and Hessian definiteness can fail completely.
    \end{itemize}
    \item SPD manifold with affine-invariant or log-Euclidean type losses
    \begin{itemize}
        \item Let $\Theta=\operatorname{SPD}(m)$, the manifold of symmetric positive definite matrices. $m(X ; \Theta)=\frac{1}{2} d^2(X, \Theta)$, where $d$ is chosen to be the affine-invariant Riemannian distance, or similarly a log-Euclidean criterion. Because $\mathrm{SPD}(m)$ is geodesically complete, under the affine-invariant metric it is a Hadamard manifold, squared-distance losses are globally smooth, and the intrinsic mean is often uniquely defined, then $M$ is twice differentiable at the intrinsic mean $\theta_0$, and $\nabla^2 M\left(\theta_0\right)$ is positive definite as long as the distribution of $X$ is not degenerate in a way that leaves some symmetric matrix direction uninformative. This example covers covariance-matrix-type objects, diffusion tensors, and other matrix-valued data.
    \end{itemize}
    \end{itemize}
\end{remark}
\subsubsection{Maximum likelihood estimators on parameter manifolds}
\label{sec:MLE manifolds}
\leavevmode
Maximum likelihood estimation is a special case of the above $M$-estimation problem. Suppose that $X \sim \P_{\theta_{0}}$ with probability density function $\p (\cdot; \theta_{0})$. If we identify $m (\cdot; \theta) \equiv - \log \p (\cdot; \theta)$, then the $M$-estimator $\hat{\theta}_{n}$ reduces to the MLE and $\phi (\cdot; \theta)$ corresponds to the score function. As an immediate corollary of Proposition~\ref{prop:M-estimation upper bound}, we have the following result.
\begin{corollary}
\label{cor:MLE upper bound}
Suppose that $X \sim \P_{\theta_{0}}$ for $\theta_{0} \in \Theta$ and $m (\cdot; \theta) \equiv - \log \p (\cdot; \theta)$. Under the assumptions of Proposition~\ref{prop:M-estimation upper bound}, we have
\begin{align*}
\sqrt{n} \Exp^{-1}_{\theta_{0}} \hat{\theta}_{n} = \frac{1}{\sqrt{n}} \sum_{i = 1}^{n} \phi (X_{i}; \theta_{0}) + o_{\P} (1),
\end{align*}
and thus $\sqrt{n} \Exp^{-1}_{\theta_{0}} \hat{\theta}_{n} \rightsquigarrow_{\P} Z$, where $Z \in \T_{\theta_{0}} \Theta$ and $Z \sim \calN (0, \mathbb{V}_{\theta_{0}})$ with $\mathbb{V}_{\theta_{0}} \equiv \{\E [\phi (X; \theta_{0})^{\otimes 2}]\}^{-1} \equiv \bbG_{\theta_{0}}^{-1}$. Recall that $\bbG_{\theta_{0}}$ is the Fisher information operator.
\end{corollary}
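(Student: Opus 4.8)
The plan is to obtain the corollary as a direct specialization of Proposition~\ref{prop:M-estimation upper bound}, the only genuine work being to collapse the sandwich covariance $V_{\theta_0}$ using the likelihood identities on the parameter manifold. First I would record that choosing $m(\cdot;\theta) \equiv -\log\p(\cdot;\theta)$ makes $\phi(x;\theta) = \nabla_\theta m(x;\theta) = -\nabla_\theta\log\p(x;\theta)$ the negated score operator: comparing the DQM expansion in Definition~\ref{def:par dqm}, whose $\L^2$ derivative of $\r$ is $\tfrac{1}{2}\s\,\r$, with a Taylor expansion of $\r$ identifies $\s = 2\,\nabla_\theta\log\r = \nabla_\theta\log\p$, so $\phi = -\s$ and hence $\E[\phi(X;\theta_0)^{\otimes 2}] = \E[\s(X;\theta_0)^{\otimes 2}] = \bbG_{\theta_0}$ by Definition~\ref{def:information operator}. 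Because the hypotheses of Proposition~\ref{prop:M-estimation upper bound} are assumed, its asymptotically linear representation $\sqrt{n}\,\Exp^{-1}_{\theta_0}\hat\theta_n = \frac{1}{\sqrt n}\sum_{i=1}^n\{\nabla^2 M(\theta_0)\}^{-1}\phi(X_i;\theta_0) + o_{\P}(1)$ holds verbatim, and the entire corollary reduces to the single identity $\nabla^2 M(\theta_0) = \bbG_{\theta_0}$.

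The heart of the argument is therefore the intrinsic second Bartlett identity. For each fixed $x$, $\log\p(x;\cdot)$ is a scalar field on $\Theta$, and because the Levi-Civita connection is torsion-free its covariant Hessian obeys the pointwise decomposition $\nabla_\theta^2\log\p = \tfrac{\nabla_\theta^2\p}{\p} - (\nabla_\theta\log\p)^{\otimes 2}$, with no curvature correction entering, since curvature only appears when one commutes covariant derivatives and not in the Hessian of a scalar. Taking $\E_{\theta_0}[\,\cdot\,] = \int(\cdot)\,\p\,\diff\lambda$ and using $(\nabla_\theta\log\p)^{\otimes 2} = \s^{\otimes 2}$, the second term integrates to $\bbG_{\theta_0}$, while the first becomes $\int\nabla_\theta^2\p\,\diff\lambda$, which vanishes once the covariant Hessian is interchanged with the sample-space integral because $\int\p\,\diff\lambda\equiv 1$ is constant in $\theta$. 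This yields $\E_{\theta_0}[\nabla_\theta^2\log\p] = -\bbG_{\theta_0}$, and since $M(\theta) = \E_{\theta_0}[-\log\p(X;\theta)]$ and the Hessian commutes with the $\theta_0$-expectation, $\nabla^2 M(\theta_0) = -\E_{\theta_0}[\nabla_\theta^2\log\p] = \bbG_{\theta_0}$. These are exactly the interchange hypotheses already packaged in Remark~\ref{rem:Fisher 2nd}, which derives the equivalent square-root form $\bbG_\theta = -\E[2\nabla_\theta^2\r(X;\theta)]$, so I would quote that remark rather than re-derive the regularity.

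Substituting the two identities $\nabla^2 M(\theta_0) = \bbG_{\theta_0}$ and $\E[\phi(X;\theta_0)^{\otimes 2}] = \bbG_{\theta_0}$ into Proposition~\ref{prop:M-estimation upper bound} finishes the argument. The per-observation influence term collapses to $\{\nabla^2 M(\theta_0)\}^{-1}\phi(X_i;\theta_0) = \bbG_{\theta_0}^{-1}\phi(X_i;\theta_0)$, and the sandwich covariance collapses to $V_{\theta_0} = \bbG_{\theta_0}^{-1}\,\E[\phi(X;\theta_0)^{\otimes 2}]\,\bbG_{\theta_0}^{-1} = \bbG_{\theta_0}^{-1}\bbG_{\theta_0}\bbG_{\theta_0}^{-1} = \bbG_{\theta_0}^{-1}$. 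Because every summand $\bbG_{\theta_0}^{-1}\phi(X_i;\theta_0)$ lives in the single fixed tangent space $\T_{\theta_0}\Theta \cong \bbR^p$, the limit statement $\sqrt{n}\,\Exp^{-1}_{\theta_0}\hat\theta_n \rightsquigarrow_{\P} Z \sim \calN(0,\bbG_{\theta_0}^{-1})$ follows from the ordinary multivariate central limit theorem, with no further manifold machinery required.

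I expect the main obstacle to be the rigorous justification of interchanging the covariant Hessian $\nabla_\theta^2$ with the integral $\int\cdot\,\diff\lambda$ over the sample space, that is, the intrinsic second Bartlett identity. In the Euclidean case this is routine dominated-convergence bookkeeping, but on a manifold one must additionally check that the Christoffel terms hidden inside $\nabla_\theta^2\p$ remain integrable and do not obstruct the interchange in a neighborhood of $\theta_0$. Since Remark~\ref{rem:Fisher 2nd} already imposes twice-differentiability of $\p(\cdot;\theta)$ together with the two interchange conditions and extracts the Hessian characterization of $\bbG_\theta$ from them, I would discharge this step by appealing to that remark, after which only the elementary substitution above remains.
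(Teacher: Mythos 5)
Your proposal is correct and takes essentially the same route as the paper, which offers no separate proof and treats the corollary as immediate from Proposition~\ref{prop:M-estimation upper bound}: the only real content is the second Bartlett identity $\nabla^{2} M (\theta_{0}) = \E [\phi (X; \theta_{0})^{\otimes 2}] = \bbG_{\theta_{0}}$, which you derive intrinsically (covariant Hessian of the scalar $\log \p$, with the interchange of $\nabla_{\theta}^{2}$ and the sample-space integral discharged via Remark~\ref{rem:Fisher 2nd}) and which collapses the sandwich covariance to $\bbG_{\theta_{0}}^{-1}$. One small point in your favor: the influence term you obtain, $\bbG_{\theta_{0}}^{-1} \phi (X_{i}; \theta_{0})$, is the internally consistent one---the corollary's display omits the $\bbG_{\theta_{0}}^{-1}$ factor, which by the CLT would give limiting covariance $\bbG_{\theta_{0}}$ rather than the stated $V_{\theta_{0}} = \bbG_{\theta_{0}}^{-1}$, so the paper's statement implicitly absorbs that factor and your version is the correct reading.
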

It is of statistical interest whether $V_{\theta_{0}}$ given in the above corollary, corresponding to the inverse Fisher information operator, is the smallest possible variance-covariance operator that an estimator of $\theta_{0}$ can possibly have. Indeed, both Theorems~\ref{thm:par} and~\ref{thm:finite-dimensional LAM} in Section~\ref{sec:par} firmly address this question.
\begin{theorem}
\label{thm:MLE lower bound}
Under the assumptions of Corollary~\ref{cor:MLE upper bound}, the MLE $\hat{\theta}_n$ of $\theta_0$ is asymptotically efficient in the following sense: either
\begin{enumerate}[label = \emph{(\roman*)}]
\item $\hat{\theta}_n$ is asymptotically efficient among all estimators that are regular at $\theta_0$ relative to the parametric geodesic local alternatives $\theta_{n, h} \coloneqq \Exp_{\theta_{0}} \left( \frac{h}{\sqrt{n}} \right)$, $h \in \T_{\theta_0} \Theta$, in the sense of Definition~\ref{def:riemann regular}; or
\item $\hat{\theta}_n$ is asymptotically efficient in local asymptotic minimax sense of Theorem~\ref{thm:finite-dimensional LAM} for the parametric model $\calP = \{\P_\theta: \theta \in \Theta\}$.
\end{enumerate}
\end{theorem}
\begin{proof}
The first statement immediately follows from Theorem~\ref{thm:par}, if the MLE $\hat{\theta}_{n}$ is regular. Following Proposition~\ref{prop:M-estimation upper bound}, we have, under $\P_{\theta_{n, h}}$ with $\theta_{n, h} \coloneqq \Exp_{\theta_{0}} \left( \frac{h}{\sqrt{n}} \right)$,
\begin{align*}
\sqrt{n} \Exp^{-1}_{\theta_{n, h}} \hat{\theta}_{n} \rightsquigarrow_{\P_{\theta_{n, h}}} Z,
\end{align*}
where $Z \sim \calN (0, \mathbb{V}_{\theta_{0}})$ is independent of $h$. Thus $\hat{\theta}_{n}$ is indeed regular.
By checking the conditions in Theorem~\ref{thm:finite-dimensional LAM}, the second statement is a direct consequence of Theorem~\ref{thm:finite-dimensional LAM} by taking $\psi$ as the identity map so $\psi (\theta) \equiv \theta$.
\end{proof}
\begin{remark}
\label{rem:riemmanian gaussian}
Recall Example~\ref{ex:riemannian gaussian} about the Riemannian Gaussian distribution where the sample space and the parameter space coincide as the manifold $\bbM$. Theorem~\ref{thm:MLE lower bound} implies that given $n$ i.i.d. samples $\{X_{i}\}_{i = 1}^{n}$, the sample \Frechet{} mean estimator $\hat{\mu}_{n}$ is an asymptotically efficient estimator of $\mu$ if the metric $d^{2} (x; \cdot)$ is geodesically convex almost surely in $x \in \bbM$.
\end{remark}
\subsubsection{The empirical \Frechet{} mean estimator}
\label{sec:Frechet mean}
\leavevmode
Another important special case of geodesically convex $M$-estimation is the \Frechet{} mean $\mu_{0} \coloneqq \arg\min_{\mu \in \bbM} \E \frac{1}{2} d^{2} (X, \mu)$ over the manifold $\bbM$ with $\frac{1}{2} d^{2} (x, \cdot)$ geodesically convex. Again, the sample space and the parameter space coincide in this example. Here the loss function $m \equiv \frac{1}{2} d^{2}$ and $M (\mu) \equiv \E \frac{1}{2} d^{2} (X, \mu)$. $\mu$ can be viewed as a functional $\mu: \calP \rightarrow \bbM$ that maps the observed data distribution to manifold $\bbM$.
\begin{theorem}
\label{thm:Frechet mean}
Suppose that the assumptions of Proposition~\ref{prop:M-estimation upper bound} hold with $m$ replaced by $d^{2}$, and for any $\P \in \calP$, let $\mathscr{ P}_\P$ be the collection of all one-dimensional DQM submodels through $\P$ contained in the model $\calP$. Then, $\mu_{0} \equiv \mu (\P)$ is differentiable in the sense of Definition~\ref{def:differentiable functionals} and has the unique influence function
\begin{align*}
\IF_{\mu_{0}} = \{\E [\nabla \Exp_{\mu_{0}}^{-1} X]\}^{-1} \cdot \Exp_{\mu_{0}}^{-1} X.
\end{align*}
Among all regular estimators of $\mu$, the empirical \Frechet{} mean $\hat{\mu}_{n}$ has the asymptotically linear representation
\begin{align*}
\sqrt{n} \Exp^{-1}_{\mu_{0}} \hat{\mu}_{n} = \frac{1}{\sqrt{n}} \sum_{i = 1}^{n} \IF_{\mu_{0}, i} + o_{\P} (1),
\end{align*}
and attains the semiparametric efficiency bound $\mathbb{V}_{\mu_{0}} = \E [\IF_{\mu_{0}}^{\otimes 2}]$.
\end{theorem}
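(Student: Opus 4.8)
The plan is to reduce every claim to the $M$-estimation machinery of Proposition~\ref{prop:M-estimation upper bound} together with one geometric identity: the Riemannian gradient of the squared-distance loss is the negative logarithmic map, $\nabla_{\mu} \tfrac{1}{2} d^{2}(x, \mu) = -\Exp_{\mu}^{-1} x$. Writing $\phi(x; \mu) = \nabla_{\mu} \tfrac{1}{2} d^{2}(x, \mu) = -\Exp_{\mu}^{-1} x$ and taking a further covariant derivative in the foot point gives $\nabla^{2} M(\mu_{0}) = \E[\nabla \phi(X; \mu_{0})] = -\E[\nabla \Exp_{\mu_{0}}^{-1} X]$. Substituting both into the asymptotically linear expansion supplied by Proposition~\ref{prop:M-estimation upper bound}, the two sign flips cancel and I obtain $\sqrt{n} \Exp_{\mu_{0}}^{-1} \hat{\mu}_{n} = \tfrac{1}{\sqrt{n}} \sum_{i} \{\E[\nabla \Exp_{\mu_{0}}^{-1} X]\}^{-1} \Exp_{\mu_{0}}^{-1} X_{i} + o_{\P}(1)$, i.e. exactly the stated representation with $\IF_{\mu_{0}} = \{\E[\nabla \Exp_{\mu_{0}}^{-1} X]\}^{-1} \Exp_{\mu_{0}}^{-1} X$. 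The population first-order condition $\E[\Exp_{\mu_{0}}^{-1} X] = 0$ then immediately yields $\E[\IF_{\mu_{0}}] = 0$, so $\IF_{\mu_{0}} \in L_{2, 0}(\P) = \Lambda_{\P}$.

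Next I would establish differentiability of $\mu: \calP \to \bbM$ in the sense of Definition~\ref{def:differentiable functionals}. Fix a DQM submodel $t \mapsto \P_{t}$ in $\mathscr{P}_{\P}$ with score $\s$, and let $c(t) := \mu(\P_{t})$ be the induced curve of minimizers, so that the normal equation $\E_{\P_{t}}[\Exp_{c(t)}^{-1} X] = 0$ holds identically in $t$. The key step is to differentiate this tangent-vector-valued identity covariantly at $t = 0$. Splitting the two sources of $t$-dependence — the measure $\P_{t}$ and the foot point $c(t)$ — yields $\E[\Exp_{\mu_{0}}^{-1} X \cdot \s(X)] + \E[\nabla \Exp_{\mu_{0}}^{-1} X] \cdot \dot{c}(0) = 0$, where the first term comes from the DQM score-weighting of the measure derivative (Definition~\ref{def:DQM parametric submodels}) and the second from the covariant derivative of the vector field $\mu \mapsto \Exp_{\mu}^{-1} X$ along $\dot{c}(0)$. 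Solving for $\dot{c}(0) = \dot{\mu}_{\P}(\s)$ shows that $\s \mapsto \dot{\mu}_{\P}(\s)$ is linear and continuous and that $\dot{\mu}_{\P}(\s) = \E[\IF_{\mu_{0}} \cdot \s]$, which is precisely the Riesz-representation identity \eqref{Riesz representator}. This simultaneously verifies Definition~\ref{def:differentiable functionals} and identifies $\IF_{\mu_{0}}$ as an influence operator.

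Uniqueness and efficiency then follow quickly. Because no further restriction is imposed on $\calP$, the model tangent space is the full $\Lambda_{\P} = L_{2, 0}(\P)$, so the element of $\Lambda_{\P}$ solving \eqref{Riesz representator} is unique; hence $\IF_{\mu_{0}}$ is the (efficient) influence operator. The empirical \Frechet{} mean is asymptotically linear with this same $\IF_{\mu_{0}}$, and pushing the expansion through contiguous alternatives $\P_{h/\sqrt{n}}$ — via the LAN expansion and the parallel-transport bookkeeping already used for Theorem~\ref{thm:semipar convolution} — shows $\Pi_{\mu(\P_{h/\sqrt{n}})}^{\mu_{0}} \sqrt{n} \Exp_{\mu(\P_{h/\sqrt{n}})}^{-1} \hat{\mu}_{n} \rightsquigarrow \calN(0, V_{\mu_{0}})$ with $V_{\mu_{0}} = \E[\IF_{\mu_{0}}^{\otimes 2}]$ independent of $h$, so $\hat{\mu}_{n}$ is regular. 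Invoking Theorem~\ref{thm:semipar convolution}, its limiting law factors as a convolution of a Gaussian whose covariance is the efficiency bound with an independent $\Delta_{\P}$; since the Gaussian part already has covariance $V_{\mu_{0}} = \E[\IF_{\mu_{0}}^{\otimes 2}]$, the factor $\Delta_{\P}$ must be degenerate, and $\hat{\mu}_{n}$ attains the bound.

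The main obstacle is the covariant differentiation of the normal equation in the second paragraph. Unlike the Euclidean case, where differentiating $\E_{\P_{t}}[X - c(t)] = 0$ is immediate, here the estimating vector $\Exp_{c(t)}^{-1} X$ lives in the moving tangent space $\T_{c(t)} \bbM$, so one must parallel transport along $c$ before differentiating and correctly compute the covariant derivative of the logarithmic map with respect to its base point — the operator $\E[\nabla \Exp_{\mu_{0}}^{-1} X]$ — which is exactly where the curvature of $\bbM$ enters and which relies on the geometric-analysis lemmas of Appendix~\ref{app:geometric analysis}. A secondary technical point is justifying the interchange of covariant differentiation with expectation (and the measure-derivative step) under the moment and DQM hypotheses, and confirming regularity of $\hat{\mu}_{n}$ uniformly over the localized alternatives $\P_{h/\sqrt{n}}$.
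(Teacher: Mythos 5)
Your proposal is correct and follows essentially the same route as the paper: asymptotic linearity and the form of $\IF_{\mu_{0}}$ come from Proposition~\ref{prop:M-estimation upper bound} together with the identity $\nabla_{\mu}\tfrac{1}{2}d^{2}(x,\mu) = -\Exp_{\mu}^{-1}x$ (signs cancelling exactly as you note), differentiability is obtained by implicitly differentiating the first-order condition $\E_{\P_t}[\Exp_{\mu(\P_t)}^{-1}X]=0$ along DQM submodels and splitting the measure and foot-point dependence (the paper's display \eqref{Frechet equality constraint submodel}), uniqueness follows from $\Lambda_{\P}=L_{2,0}(\P)$, and attainment follows from regularity plus Theorem~\ref{thm:semipar convolution}. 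The only cosmetic difference is in verifying regularity of $\hat{\mu}_{n}$: you propose a contiguity/LAN argument with parallel-transport bookkeeping, while the paper (Appendix~\ref{app:frechet mean}) instead directly shows $\Pi_{\mu_{n,h}}^{\mu}\bbV_{\mu_{n,h}}\to\bbV_{\mu}$ using the curvature identity for $\E[\nabla\Exp_{\mu}^{-1}X]$ — both are standard and equivalent here.
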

In Appendix~\ref{app:frechet mean}, we have established the regularity of the empirical \Frechet{} mean estimator. The differentiability of $\mu_{0}$ is proved in Appendix~\ref{app:Frechet mean}. The derivation of $\IF_{\mu_{0}}$ is then immediate by noticing that $\nabla \left( \frac{1}{2} d^{2} (X, \mu_{0}) \right) \equiv - \, \Exp^{-1}_{\mu_{0}} X$.
\begin{remark}
\label{ex:non-asymptotic}
Consider the special case of $\bbM$ being the space of all symmetric positive definite matrices of size $m \times m$ equipped with the Fisher-Rao metric $d$. Suppose that we are interested in estimating the \Frechet{} mean $\mu \in \bbM$. Let $\{X_{i} \in \bbM\}_{i = 1}^{n} \overset{\rm i.i.d.}{\sim} \P_{\mu}$. In this example, the sample \Frechet{} mean $\hat{\mu}_{n} \coloneqq \arg\min_{\mu \in \bbM} \sum_{i = 1}^n \frac{1}{2} d^{2} (X_i, \mu)$. The existence and uniqueness of $\hat{\mu}_{n}$ have been proved in Proposition 1.7 in \cite{sturm2003probability}. Based on Theorem~\ref{thm:Frechet mean}, the asymptotic covariance of $\hat{\mu}_{n}$ should be $\mathbb{V}_{\mu_{0}} = \E [\IF_{\mu_{0}}^{\otimes 2}]$, which is in accordance with Section 6.3 of \cite{pennec2019curvature} and Theorem 2.2 of \cite{bhattacharya2005large}. See Appendix~\ref{app:Frechet mean} for detailed calculations.
\end{remark}
\begin{remark}
\label{rem:AIPW}
In this remark, we point out an immediate consequence of Theorem~\ref{thm:Frechet mean}. When $X$ is subject to the missing-at-random (MAR) missing data mechanism, and we additionally observe $(Z, R)$ where $Z$ denotes the covariates and $R$ the binary missing data indicator with $R = 1$ meaning the observables, the efficiency theory developed here suggests that the unique influence function of the true \Frechet{} mean $\mu_{0}$ of $X$ takes the following form:
\begin{align}
& \IF_{\mu_{0}} = \E^{-1} \Big\{ \frac{R}{\pi (Z)} \nabla \Exp_{\mu_{0}}^{-1} (X) \Big\} \label{IF_MAR} \\
& \times \Big\{ \frac{R}{\pi (Z)} \Exp_{\mu_{0}}^{-1} X - \left( \frac{R}{\pi (Z)} - 1 \right) \E (\Exp_{\mu_{0}}^{-1} X \mid Z, R = 1) \Big\}. \nonumber
\end{align}
When $\bbM$ reduces to Euclidean space, and thus $\Exp_{\mu_{0}}^{-1} (X) = X - \mu_{0}$, $\IF_{\mu_{0}}$ reduces to the influence function of the mean of $X$ subject to MAR, originally derived in the seminal work of \cite{robins1994estimation}. By constructing estimating equations using $\IF_{\mu_{0}}$, we can also obtain estimators of $\mu_{0}$ that resemble the popular one-step/AIPW or double machine learning estimators in the classical case \cite{robins1994estimation, graham2026towards, chernozhukov2018double}. In contrast to the classical case (expectation of $X$ when $X$ lies in a linear space), however, estimators of $\mu_{0}$ with closed-form expression generally do not exist and we need to adopt the localized double machine learning (LDML) estimation strategy and its generalizations for quantile treatment effect, developed recently in \cite{kallus2024localized, zhang2026higher}. Following the LDML approach, we split the sample into three folds $I_{1}, I_{2}, I_{3}$. We first estimate the missing data probability $\pi$ by standard regression techniques using $I_{3}$, with the resulting estimator denoted by $\hat{\pi}$. We then obtain an initial estimator $\hat{\mu}_{\rm init}$ of $\mu_{0}$ by solving the following inverse probability weighting estimating equation in sample $I_{2}$:
\begin{align*}
\frac{1}{n} \sum_{i \in I_{2}} \frac{R_{i}}{\hat{\pi} (Z_{i})} \Exp_{\hat{\mu}_{\rm init}}^{-1} X_{i} = 0.
\end{align*}
With $\hat{\mu}_{\rm init}$, we can in turn estimate the outcome regression function $\E (\Exp_{\mu_{0}}^{-1} X \mid Z, R = 1)$ by $\hat{\E} (\Exp_{\hat{\mu}_{\rm init}}^{-1} X \mid Z, R = 1)$, again using standard techniques. Eventually, we solve the following non-normalized estimating equation based on $\IF_{\mu_{0}}$ to obtain the one-step estimator $\hat{\mu}$ using the sample in $I_{1}$:
\begin{equation*}
\begin{split}
\frac{1}{n} \sum_{i \in I_{1}} &\frac{R_{i}}{\hat{\pi} (Z_{i})} \Exp_{\hat{\mu}}^{-1} (X_{i}) \\&- \left( \frac{R_{i}}{\hat{\pi} (Z_{i})} - 1 \right) \hat{\E} (\Exp_{\hat{\mu}_{\rm init}}^{-1} X \mid Z = Z_{i}, R = 1) = 0.
\end{split}
\end{equation*}
\end{remark}
\begin{remark}
\label{rem:linbo}
The above result also relates to Theorem 5 in \href{https://arxiv.org/pdf/2101.01599v1}{the first arXiv version} of \cite{lin2023causal}. In Theorem~5 therein, the authors analyze the statistical properties of an estimator of $\Exp_{\mu}^{-1} \mu_{0}$ based on \eqref{IF_MAR} given any reference point $\mu \in \bbM$, but did not prove whether that estimator achieves the efficiency bound. Since we can define and establish the efficiency bound of $\mu_{0}$ via \eqref{IF_MAR}, the efficiency bound of $\Exp_{\mu}^{-1} (\mu_{0})$ can be established by using its influence function $\IF_{\Exp_{\mu}^{-1} (\mu_{0})} = \nabla \Exp_{\mu}^{-1} (\mu_{0}) \cdot \IF_{\mu_{0}}$. Finally, it should be noted that in the published version of \cite{lin2023causal}, the authors focus only on the case where $X$ is a (probability) distribution function. By metricizing the space of distribution functions with the Wasserstein distance, one equips the space of distribution functions with a formal Riemannian structure \cite{ambrosio2005gradient}. However, the theoretical framework developed here cannot yet be generalized to handle this particular problem, due to the infinite-dimensional nature of $X$ when it is a probability distribution function. As pointed out by a referee, following Chapter~7 of \cite{le2000asymptotics}, it is possible to generalize DQM to only consider non-singular part of the submodels as long as the singular part decays to zero at a sufficiently fast rate. But it is difficult to verify that a sufficiently rich and relevant collection of submodels even satisfies such a generalized DQM definition and that the target functional is differentiable along those paths. On infinite-dimensional sample spaces, many natural perturbations of probability laws are mutually singular. Submodels that are DQM will be necessarily more restrictive compared to the finite-dimensional case. Thus, further investigation is necessary to delineate the statistical consequence of restricting only to a small set of submodels for which DQM can be satisfied when tackling this more nuanced problem.
\end{remark}
\subsection{Semiparametric Efficiency Bound for Parameters of Single-Index Models}
\label{sec:SIM}
In this section, we apply the general theory developed thus far to the problem of deriving semiparametric efficiency bound for the regression coefficients in the following single-index model (SIM):
\begin{equation}
\label{SIM}
Y = g (\beta^{\top} X) + \epsilon, \text{ with } \E (\epsilon \mid X) = 0 \text{ almost surely},
\end{equation}
where the index parameter $\beta \in \bbR^d$ is our parameter of interest and the unknown link function $g: \bbR \rightarrow \bbR$ with $g \in \mathcal{G}$ is the nuisance parameter, and $\epsilon$ is the homoscedastic white noise with finite variance $\sigma^{2}$. The corresponding data generating law can then be parameterized as $\P_{\beta, g}$. Here $\mathcal{G}$ denotes the class of all real-valued functions with absolutely continuous first derivative on its domain $\mathbb{D} \coloneqq \{\beta^{\top} x: x \in \bbX \subseteq \bbR^{d}, \beta \in \calB\}$, where $\bbX$ is a bounded subset of $\bbR^{d}$. To ensure identification, the parameter space $\calB$ of $\beta$ is restricted to be the $(d - 1)$-dimensional unit hemisphere, which is a manifold:
\begin{equation}
\label{hemisphere}
\begin{split}
\calB& \coloneqq \{ \beta \equiv (\beta_1, \cdots, \beta_d)^{\top} \in \bbR^d: \Vert \beta \Vert = 1, \beta_1 \geq 0\} \\&\subset \bbS^{d - 1} \subset \bbR^{d}.
\end{split}
\end{equation}
We assume that the true regression coefficient $\beta_{0}$ satisfies $\beta_{0, 1} > 0$ to avoid lying on the boundary of the parameter manifold.
\subsubsection*{The difficulty without the new framework}
\leavevmode
Though seemingly innocuous, deriving semiparametric efficiency bound for $\beta$ is not as straightforward as it may seem to be -- we quote the following sentence from \cite[page 1590]{kuchibhotla2020efficient}:
\begin{quote}
\emph{... \cite{cui2011efm} points out that the assumption $\beta \in \bbS^{d - 1}$ makes the parameter space irregular and the construction of paths on the sphere is hard. In this paper, we construct paths on the unit sphere to study the semiparametric efficiency of the finite dimensional parameter ...}
\end{quote}
We will reveal the paths constructed in \cite{kuchibhotla2020efficient} later in Remark~\ref{rem:kp}. The main difficulty lies in the nonlinearity of the parameter space $\calB$ of $\beta$. The usual ``trick'' of constructing paths, e.g. $\beta_{t, h} = \beta + t \cdot h$, simply does not work, because such $\beta_{t, h}$ is outside of $\calB$ for any $t \neq 0$. By definition, the parametric model induced by such a $\beta_{t, h}$ cannot be a valid parametric submodel. Hence any results related to efficiency theory based on such a ``parametric submodel'' are at least mathematically non-rigorous. In the next subsection, we will demonstrate that this problem is, in a sense, ``geometrical'' rather than ``hard'' -- the construction of parametric submodels can be conceptually simplified by adopting the differential-geometric language such as the exponential and logarithmic maps. Our theoretical framework licenses the use of the powerful differential-geometric language.
\subsubsection*{The construction of paths under the new framework}
\leavevmode
Given the efficiency theory over parameter manifolds developed so far, we argue that ``the construction of paths on the sphere'' suddenly becomes easy, at least conceptually. 
To see this, we shall interpret $\beta$ as a representation of points in $\calB$ using a particular coordinate system. Then we can simply construct a path by perturbing the parameter of interest via $\beta_{t, h} \coloneqq \Exp_{\beta_{0}} (t \cdot h)$ for $t \in \bbR$ and $h \in \T_{\beta_{0}} \calB$, which ensures that $\beta_{t, h}$ still lies in $\calB$ after the perturbation. The nuisance parameters $\p_{X}, \p_{\eps \mid X}$, and $g$ can be perturbed as usual by the same one-dimensional parameter $t$. We denote the induced parametric submodel by $\P_{t}$ with $\P_{t = 0} \equiv \P$. The only task left is to compute the perturbation $\beta_{t, h}$ using the structure of $\calB$. To this end, we simply recall the following elementary result from differential geometry \cite{lee2018introduction}.
\begin{lemma}
\label{lem:exp_sphere}
Given a $\beta \in \calB$, the tangent space $\T_{\beta} \calB$ of the unit hemisphere $\calB$ can be explicitly written as follows:
\begin{align*}
\T_{\beta} \calB = \left\{ \eta \in \bbR^{d}: \eta^{\top} \beta = 0 \right\}.
\end{align*}
Furthermore, the exponential map on $\calB$ has the following explicit form: for $\beta \in \calB$, $t \in \bbR$, and $h \in \T_{\beta} \calB$ such that $h \neq 0$,
\begin{equation}
\label{exp_sphere}
\Exp_{\beta} (t \cdot h) = \cos (t \cdot \Vert h \Vert) \beta + \sin (t \cdot \Vert h \Vert) \frac{h}{\Vert h \Vert},
\end{equation}
and as a consequence
\begin{equation}
\label{deriv_exp_sphere}
\left. \frac{\diff}{\diff t} \Exp_{\beta} (t \cdot h) \right\vert_{t = 0} = h.
\end{equation}
\end{lemma}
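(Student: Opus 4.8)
The plan is to exploit the fact that the hemisphere $\calB$ inherits the round Riemannian metric from the ambient sphere $\bbS^{d-1} \subset \bbR^{d}$ as an embedded submanifold, so that at any interior point $\beta$ (where $\beta_1 > 0$) its local geometry---tangent space, geodesics, exponential map---coincides with that of the full sphere. Since the exponential map is a local object and the downstream path construction only requires small $t$, it suffices to carry out the standard sphere computation and invoke it verbatim for $\calB$.

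First I would establish the tangent space. The sphere is the level set $\{\beta \in \bbR^{d}: \beta^{\top}\beta = 1\}$. Taking any smooth curve $\gamma$ on $\calB$ with $\gamma(0) = \beta$ and differentiating the constraint $\gamma(t)^{\top}\gamma(t) = 1$ at $t = 0$ gives $\beta^{\top}\dot{\gamma}(0) = 0$, so every tangent vector is orthogonal to $\beta$. Because $\dim\calB = d-1$ equals the dimension of the orthogonal complement of $\beta$, the inclusion is an equality, yielding $\T_{\beta}\calB = \{\eta \in \bbR^{d}: \eta^{\top}\beta = 0\}$.

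Next I would verify the exponential map formula by checking that the candidate curve $\frakc(t) := \cos(t\Vert h\Vert)\,\beta + \sin(t\Vert h\Vert)\,\frac{h}{\Vert h\Vert}$ is the unit-speed geodesic (great circle) emanating from $\beta$ with initial velocity $h$. The verification has three routine ingredients, all using $\Vert\beta\Vert = 1$ and $h^{\top}\beta = 0$: (i) $\Vert\frakc(t)\Vert^{2} \equiv 1$, so $\frakc$ stays on the sphere; (ii) $\frakc(0) = \beta$ and $\dot{\frakc}(0) = h$, matching the prescribed initial conditions; and (iii) the Euclidean acceleration satisfies $\ddot{\frakc}(t) = -\Vert h\Vert^{2}\,\frakc(t)$, which is everywhere parallel to the outward normal $\frakc(t)$ of the sphere, so its tangential (covariant) component vanishes and $\frakc$ is a geodesic. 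By the definition $\Exp_{\beta}(v) = \frakc_{v}(1)$ together with the geodesic scaling identity $\frakc_{th}(1) = \frakc_{h}(t)$, this gives exactly \eqref{exp_sphere}, and the derivative identity \eqref{deriv_exp_sphere} is then immediate from $\dot{\frakc}(0) = h$ (equivalently, from $d(\Exp_{\beta})_{0} = \id$).

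The computations are entirely elementary, so there is no genuine analytic obstacle; the only point requiring care is that $\calB$ is a manifold with boundary---the equator $\{\beta_1 = 0\}$---and the formula \eqref{exp_sphere} can carry a point off the hemisphere for large $t$. Since we invoke the exponential map solely to build local perturbations $\beta_{t,h} = \Exp_{\beta_0}(t \cdot h)$ with $t \to 0$ around an interior reference $\beta_0$, restricting to sufficiently small $t$ keeps $\beta_{t,h} \in \calB$ and sidesteps the boundary, so this subtlety does not affect the efficiency arguments that rely on the lemma.
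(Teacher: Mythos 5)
Your proof is correct, and it is worth noting that the paper itself offers no proof of this lemma at all: it is recalled as an elementary fact from differential geometry with a citation to Lee's textbook. Your blind verification therefore supplies exactly the standard argument the paper leaves implicit, and it is the right one: the tangent space via differentiating the constraint $\gamma(t)^{\top}\gamma(t) = 1$ together with a dimension count, and the exponential map via the three checks that the great-circle curve stays on the sphere, matches the initial conditions $\frakc(0) = \beta$, $\dot{\frakc}(0) = h$, and has Euclidean acceleration $\ddot{\frakc}(t) = -\Vert h \Vert^{2}\,\frakc(t)$ pointing purely along the normal, so the covariant acceleration vanishes; geodesic homogeneity $\frakc_{th}(1) = \frakc_{h}(t)$ then gives \eqref{exp_sphere}, and \eqref{deriv_exp_sphere} is immediate. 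Two minor remarks. First, the curve has constant speed $\Vert h \Vert$, not unit speed as you wrote; this is harmless, since the normal-acceleration criterion characterizes geodesics for any constant-speed parametrization, but the phrase ``unit-speed geodesic'' is literally wrong unless $\Vert h \Vert = 1$. Second, your closing caveat about the equator boundary $\{\beta_1 = 0\}$ of the hemisphere is a genuine subtlety that the paper glosses over entirely: $\calB$ is a manifold with boundary, the stated tangent-space formula and exponential map are valid only at interior points, and \eqref{exp_sphere} can exit $\calB$ for large $t$. Your observation that the lemma is invoked only for local perturbations $\beta_{t,h} = \Exp_{\beta_0}(t \cdot h)$ with $t \to 0$ around an interior point, so small $t$ suffices, is exactly the right resolution and is consistent with how the score operator and parametric submodels are constructed in the single-index-model application.
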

It follows from Lemma~\ref{lem:exp_sphere} that the score function at $\P$ reads as: for any $h \in \T_{\beta_{0}} \calB$,
\begin{equation}
\label{SIM score}
\begin{split}
\S_{\beta_{0}} (h) &\equiv \s (X, Y; \beta_{0}) (h) \\&= - \ell_{\eps \mid X}' (Y - g_{0} (X^{\top} \beta_{0}), X) g_{0}' (X^{\top} \beta_{0}) X^{\top} h,
\end{split}
\end{equation}
where $\ell_{\eps \mid X}$ is the log conditional probability density function of $\eps$ given $X$ and $\ell_{\eps \mid X}'$ denotes its derivative. 
As the corresponding nuisance model tangent space $\Lambda_{\P, \nuis}$ is a well-known result for single-index models (see e.g. the set-off equation after equation (13) in \cite{kuchibhotla2020efficient}), the efficient score function of $\beta_{0}$ is simply, for any $h \in \T_{\beta_{0}} \calB$,
\begin{equation}
\label{SIM eff score}
\begin{split}
\S_{\eff, \beta_{0}} (h) \equiv \s_{\eff} (X, Y; \beta_{0}) (h) = \Pi (\S_{\beta_{0}} \mid \Lambda_{\P, \nuis}^{\perp}) (h),
\end{split}
\end{equation}
which has the following explicit form
\begin{equation*}
\S_{\eff, \beta_{0}} (h) = \frac{1}{\sigma^{2}} (Y - g_{0} (\beta_{0}^{\top} X)) g_{0}' (\beta_{0}^{\top} X) (X - \zeta_{\beta_{0}} (\beta_{0}^{\top} X))^{\top} h,
\end{equation*}
where $\zeta_{\beta} (u) \coloneqq \E (X \mid \beta^{\top} X = u)$. By \eqref{score IF}, we can also obtain the efficient influence function $\IF_{\beta_{0}}$ of $\beta_{0}$. We summarize the above reasoning as the following theorem.
\begin{theorem}
\label{thm:SIM}
Suppose that Model \eqref{SIM} is correctly specified, then the semiparametric efficiency bound of $\beta_{0}$ is $V_{\beta_{0}} \coloneqq \{\E [\S_{\eff, \beta_{0}}^{\ast} \otimes \S_{\eff, \beta_{0}}^{\ast}]\}^{-1} \equiv \E [\IF_{\beta_{0}} \otimes \IF_{\beta_{0}}]$.
\end{theorem}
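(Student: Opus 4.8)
The plan is to obtain the stated bound as a direct application of the semiparametric machinery of Section~\ref{sec:semipar}, with the manifold-specific path construction supplied by Lemma~\ref{lem:exp_sphere}. The parameter of interest is the functional $\chi(\P_{\beta, g}) = \beta$ taking values in the hemisphere $\calB$, while $(g, \p_{X}, \p_{\eps | X})$ play the role of the (infinite-dimensional, but linear) nuisance. I would organize the argument around three reductions: differentiability of $\chi$, computation of the efficient score by projection, and invocation of the convolution theorem to read off the bound.

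First I would establish that $\chi$ is differentiable at $\P \equiv \P_{\beta_{0}, g_{0}}$ in the sense of Definition~\ref{def:differentiable functionals}. The enabling observation---and the conceptual payoff of the framework---is that perturbing $\beta$ by $\beta_{t, h} = \Exp_{\beta_{0}}(t \cdot h)$ with $h \in \T_{\beta_{0}} \calB$ keeps the perturbed value on $\calB$ by Lemma~\ref{lem:exp_sphere}, so the induced one-dimensional families are genuine DQM submodels; perturbing $(g, \p_{X}, \p_{\eps | X})$ in the usual linear fashion then enlarges these to a rich DQM family. Differentiating along $\beta_{t, h}$ and using \eqref{deriv_exp_sphere} shows that the functional derivative of $\chi$ is the identity on $\T_{\beta_{0}} \calB$, so $\chi$ is differentiable, and the associated score operator is exactly \eqref{SIM score}.

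Next I would identify the nuisance tangent space $\Lambda_{\P, \nuis}$---the $L_{2}(\P)$-closure of the span of all scores generated by perturbing only the nuisance---which for single-index models is a standard object available from \cite{kuchibhotla2020efficient}. Projecting the score \eqref{SIM score} onto $\Lambda_{\P, \nuis}^{\perp}$ as in \eqref{SIM eff score} produces the efficient score $\S_{\eff, \beta_{0}}$ in the stated explicit form, where $X$ is centered by its conditional mean $\zeta_{\beta_{0}}(\beta_{0}^{\top} X)$ along the index and the factor $1/\sigma^{2}$ comes from the homoscedastic noise. With $\S_{\eff, \beta_{0}}$ in hand, the convolution theorem for differentiable functionals on parameter manifolds (Theorem~\ref{thm:semipar convolution}) applies: the limiting law of every regular estimator of $\beta_{0}$ convolves a centered Gaussian whose covariance is the inverse efficient Fisher information operator with an independent residual measure, which is precisely the statement that $V_{\beta_{0}}$ is the efficiency bound. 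The duality relations \eqref{score IF} then convert between the efficient-score and efficient-influence-operator forms, giving $V_{\beta_{0}} = \{\E[\S_{\eff, \beta_{0}}^{\ast} \otimes \S_{\eff, \beta_{0}}^{\ast}]\}^{-1} = \E[\IF_{\beta_{0}} \otimes \IF_{\beta_{0}}]$.

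The main obstacle I anticipate is not the path construction---which the exponential map renders routine---but the two well-posedness checks hidden in the final step. I must verify that the $L_{2}(\P)$-projection onto $\Lambda_{\P, \nuis}^{\perp}$ genuinely returns a cotangent-valued object, that is, that $\S_{\eff, \beta_{0}}(\cdot)$ remains linear on the finite-dimensional $\T_{\beta_{0}} \calB \cong \bbR^{d - 1}$, and that the resulting efficient information operator $\E[\S_{\eff, \beta_{0}} \otimes \S_{\eff, \beta_{0}}]$ is nonsingular so that $V_{\beta_{0}}$ exists. Nonsingularity encodes identifiability of the index direction and can be lost, for instance, when $g_{0}'$ degenerates; ruling this out requires the model's standing assumptions (bounded $\bbX$, $g \in \mathcal{G}$ with absolutely continuous derivative). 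One should also confirm the manifold regularity underlying Definition~\ref{def:semipar regular estimators}---in particular that $\beta_{0}$ lies in the interior of $\calB$, away from its cut locus, so that $\Exp_{\beta_{0}}^{-1}$ and the parallel transport in the regularity definition are well defined; this holds for any interior $\beta_{0}$ since the cut locus of a point on the sphere is its single antipode.
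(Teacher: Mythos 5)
Your proposal is correct and follows essentially the same route as the paper: the paper's ``proof'' is exactly the derivation preceding the theorem statement --- exponential-map paths $\beta_{t,h} = \Exp_{\beta_{0}}(t \cdot h)$ justified by Lemma~\ref{lem:exp_sphere}, the score operator \eqref{SIM score}, projection onto $\Lambda_{\P,\nuis}^{\perp}$ with the nuisance tangent space imported from \cite{kuchibhotla2020efficient} to obtain \eqref{SIM eff score}, and the duality \eqref{score IF} combined with the semiparametric convolution theorem (Theorem~\ref{thm:semipar convolution}) to read off $V_{\beta_{0}}$. Your additional well-posedness checks (linearity of the projected score on $\T_{\beta_{0}}\calB$, nonsingularity of the efficient information via nondegeneracy of $g_{0}'$, and the cut-locus condition of Assumption~\ref{as:cut locus}) are points the paper leaves implicit, and including them only strengthens the argument.
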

\begin{remark}
\label{rem:kp}
In fact, \cite{kuchibhotla2020efficient} consider the following perturbation of $\beta_{0}$ denoted by $\beta_{t, \mathsf{h}}^{\dag}$ to ensure $\beta_{t, \mathsf{h}}^{\dag} \in \calB$: let $\mathsf{h} \in \bbR^{d - 1}$,
\begin{equation}
\label{kp_path}
\begin{split}
\beta_{t, \mathsf{h}}^{\dag} & = \sqrt{1 - t^{2} \Vert \mathsf{h} \Vert^{2}} \beta_{0} + t \Gamma_{\beta_{0}} \mathsf{h} \\
& \equiv \sqrt{1 - t^{2} \Vert \Gamma_{\beta_{0}} \mathsf{h} \Vert^{2}} \beta_{0} + t \Gamma_{\beta_{0}} \mathsf{h},
\end{split}
\end{equation}
where for any $\beta$, $\Gamma_{\beta} \in \bbR^{d \times (d - 1)}$ is a $d \times (d - 1)$ matrix whose columns form an orthonormal bases of the $(d - 1)$-dimensional hyperplane normal to $\beta$: $\left\{ \eta \in \bbR^{d}: \eta^{\top} \beta = 0 \right\}$. Therefore, the space spanned by $h \equiv \Gamma_{\beta} \mathsf{h}$ is exactly $\T_{\beta} \calB$ as characterized in Lemma~\ref{lem:exp_sphere}. Finally, we note the identity $\sqrt{1 - x^{2}} \equiv \cos (\arcsin (x))$. Therefore, the path \eqref{kp_path} is exactly the exponential map $\Exp_{\beta_{0}} (t \cdot h)$, although \cite{kuchibhotla2020efficient} did not explicitly refer to this interpretation based on differential geometric languages.
\end{remark}
It is worth noting that \cite{kuchibhotla2020efficient} quantifies the difference between an estimator $\hat{\beta}$ and the truth $\beta_{0}$ in their ambient space $\bbR^{d}$ in terms of $\sqrt{n} (\hat{\beta} - \beta_{0})$, but we consider $\sqrt{n} \Exp_{\beta_{0}}^{-1} \hat{\beta}$ as throughout this paper. To summarize, the application of the efficiency theory over parameter manifolds to single-index model parameters further highlights the conceptual advantage of establishing this new framework: the formal geometric language helps abstract away conceptual difficulties, and the only remaining work to do is the computation of exponential map and its inverse in concrete examples. For cases such as the unit hemisphere, a closed form expression exists, whereas for more complicated cases, one could resort to numerical \cite{carone2019toward} or even symbolic techniques \cite{luedtke2026simplifying} for the computation leveraging the geodesic equation \eqref{ODE}. Nonetheless, our framework separates the issues between the representation of the efficiency bound in the language of differential geometry and the actual computation in practice, where the former is more on the conceptual side.
\begin{remark}
\label{rem:Lie groups}
As alluded to in the Introduction, \cite{bigot2012semiparametric} considered a model similar to the single-index model \eqref{SIM}, formulated as a white noise model with the regression coefficient subject to certain invariance constraints. As a consequence, the space of the parameter of interest in \cite{bigot2012semiparametric} is an Abelian Lie group, due to the invariance structure that their applied problem (i.e. image restoration) entails. Similarly to our general framework, \cite{bigot2012semiparametric} established the semiparametric efficiency bound of the parameter of interest by using the linear structure of the Lie algebra and the exponential map associated with the Abelian Lie group. An Abelian Lie group is also a special type of manifold, with the Lie algebra playing the role of the tangent space. However, since different points in the Lie group can be associated with the same Lie algebra, \cite{bigot2012semiparametric} did not need to deal with the subtle issues encountered for a general parameter manifold.
\end{remark}
\section{Discussions}
\label{sec:conclusion}
In this paper, we extend the classical semiparametric efficiency theory for parameters defined on normed linear spaces to parameter manifolds. We generalize central notions in efficiency theory, such as regular estimators and DQM or LAN models, to parameter manifolds. The take-home message of our work is succinctly summarized in Table~\ref{tab:summary}, which essentially tells us how to extend the (semiparametric) efficiency theory currently construed from linear parameter spaces to parameter manifolds by adopting the appropriate geometric vocabulary. The theoretical results obtained in our paper legitimize this vocabulary.

We have demonstrated the applicability of the efficiency theory developed in this paper with two examples in statistics. They help reveal the conceptual advantage of our theoretical framework, echoing the points made in Remark~\ref{rem:intuition}. These examples are just the tip of the iceberg; there are many more interesting examples that could be worked out in the current framework, including the efficiency bound for quantum observables, which belong to the manifold of strictly positive density matrices \cite{gill2013asymptotic, hall2013quantum, liu2026fermi}. We restrict our attention to parameters that lie in a particular type of nonlinear space, namely the Riemannian manifolds. We make such a choice to take advantage of the linear structure associated with the tangent spaces. It will be interesting to extend our framework to parameters in metric spaces. We have also excluded singular cases in our development. For nonlinear spaces, singularities or phenomena such as smeariness \cite{hotz2015intrinsic, eltzner2019smeary} may emerge more naturally. Developing a general efficiency theory that can handle singularities is an interesting and deep problem to pursue, even when the parameter space is linear \cite{evans2020model}. In such cases, the classical parametric rate $n^{-1 / 2}$ is no longer expected, and lower bound techniques may involve more complicated tools such as Fano's or Assouad's lemma \cite{yu1997assouad}, which generally fails to give sharp constants that are the essence of efficiency theory. A possible direction is to explore the direction considered in \cite{takatsu2024generalized}, but a concrete strategy is still beyond reach because they directly resort to Hammersley-Chapman-Robbins bound, which will not match the smeary CLT obtained in \cite{eltzner2019smeary}.

Finally, the generally non-zero curvature of the parameter manifold complicates the mathematical analysis. But once the parameter manifold itself is fixed and the curvature is bounded in the asymptotic framework considered here, the curvature does not affect the lower bound in the first-order asymptotic (at $n^{-1 / 2}$-scale) considered in this paper. But this may not be the case if one considers a non-asymptotic or higher-order asymptotic perspective of efficiency theory \cite{pfanzagl1990estimation, robins2008higher, van2014higher}. To be concrete, as in Definition~\ref{def:par dqm}, the standard DQM assumption on the underlying statistical model essentially considers first-order asymptotics at $n^{-1 / 2}$-scale, under which the curvature effect asymptotically vanishes. If, however, we consider higher-order expansions of the square-root density in the sense of Pfanzagl \cite{pfanzagl1990estimation, van2014higher}, the curvature may eventually appear and the associated higher-order scores may also motivate higher-order refined estimators in a sense similar to the higher-order influence functions developed in \cite{robins2008higher}. Another related open problem is whether one can derive Berry--Esseen-type bounds on the rate at which the distribution of the scaled estimation error approaches its Gaussian limit and how the geometric features, such as curvature, injectivity radius, and cut-locus effects, enter these bounds. To our knowledge, such results are not yet available even in relatively simple settings, such as for the empirical \Frechet{} mean. Our paper provides a first-order theoretical foundation for these potentially important questions that merit more in-depth investigation and are left for future work.

\bibliographystyle{IEEEtran}
\bibliography{References.bib}

\begin{thebibliography}{100}
\providecommand{\url}[1]{#1}
\csname url@samestyle\endcsname
\providecommand{\newblock}{\relax}
\providecommand{\bibinfo}[2]{#2}
\providecommand{\BIBentrySTDinterwordspacing}{\spaceskip=0pt\relax}
\providecommand{\BIBentryALTinterwordstretchfactor}{4}
\providecommand{\BIBentryALTinterwordspacing}{\spaceskip=\fontdimen2\font plus
\BIBentryALTinterwordstretchfactor\fontdimen3\font minus
  \fontdimen4\font\relax}
\providecommand{\BIBforeignlanguage}[2]{{%
\expandafter\ifx\csname l@#1\endcsname\relax
\typeout{** WARNING: IEEEtran.bst: No hyphenation pattern has been}%
\typeout{** loaded for the language `#1'. Using the pattern for}%
\typeout{** the default language instead.}%
\else
\language=\csname l@#1\endcsname
\fi
#2}}
\providecommand{\BIBdecl}{\relax}
\BIBdecl

\bibitem{stein1956efficient}
C.~Stein, ``Efficient nonparametric testing and estimation,'' in
  \emph{Proceedings of the Third Berkeley Symposium on Mathematical Statistics
  and Probability, Volume 1: Contributions to the Theory of Statistics},
  vol.~3.\hskip 1em plus 0.5em minus 0.4em\relax University of California
  Press, 1956, pp. 187--195.

\bibitem{le1960locally}
L.~Le~Cam, ``Locally asymptotically normal families of distributions,''
  \emph{University of California Publication in Statistics}, vol.~3, no.~2, pp.
  37--98, 1960.

\bibitem{hajek1970characterization}
J.~H{\'a}jek, ``A characterization of limiting distributions of regular
  estimates,'' \emph{Zeitschrift f{\"u}r Wahrscheinlichkeitstheorie und
  verwandte Gebiete}, vol.~14, no.~4, pp. 323--330, 1970.

\bibitem{begun1983information}
J.~M. Begun, W.~J. Hall, W.-M. Huang, and J.~A. Wellner, ``Information and
  asymptotic efficiency in parametric-nonparametric models,'' \emph{The Annals
  of Statistics}, vol.~11, no.~2, pp. 432--452, 1983.

\bibitem{schick1986asymptotically}
A.~Schick, ``On asymptotically efficient estimation in semiparametric models,''
  \emph{The Annals of Statistics}, vol.~14, no.~3, pp. 1139--1151, 1986.

\bibitem{le1986asymptotic}
L.~Le~Cam, \emph{Asymptotic Methods in Statistical Decision Theory}.\hskip 1em
  plus 0.5em minus 0.4em\relax Springer Science \& Business Media, 1986.

\bibitem{bickel1998efficient}
P.~J. Bickel, C.~A.~J. Klaassen, Y.~Ritov, and J.~A. Wellner, \emph{Efficient
  and Adaptive Estimation for Semiparametric Models}, ser. Johns Hopkins Series
  in the Mathematical Sciences.\hskip 1em plus 0.5em minus 0.4em\relax Springer
  New York, 1993.

\bibitem{van1988statistical}
A.~W. van~der Vaart, \emph{Statistical Estimation in Large Parameter
  Spaces}.\hskip 1em plus 0.5em minus 0.4em\relax Centre for Mathematics and
  Computer Science, 1988.

\bibitem{van1991asymptotic}
------, ``An asymptotic representation theorem,'' \emph{International
  Statistical Review}, vol.~59, no.~1, pp. 97--121, 1991.

\bibitem{van1991differentiable}
A.~van~der Vaart, ``On differentiable functionals,'' \emph{The Annals of
  Statistics}, vol.~19, no.~1, pp. 178--204, 1991.

\bibitem{ibragimov1991asymptotically}
I.~A. Ibragimov and R.~Z. Khas'minskii, ``Asymptotically normal families of
  distributions and efficient estimation,'' \emph{The Annals of Statistics},
  vol.~19, no.~4, pp. 1681--1724, 1991.

\bibitem{le2000asymptotics}
L.~Le~Cam and G.~L. Yang, \emph{Asymptotics in Statistics: Some Basic
  Concepts}.\hskip 1em plus 0.5em minus 0.4em\relax Springer Science \&
  Business Media, 2000.

\bibitem{van2002part}
A.~van~der Vaart, ``Part {III}: Semiparameric statistics,'' in \emph{Lectures
  on Probability Theory and Statistics}.\hskip 1em plus 0.5em minus 0.4em\relax
  Springer, 2002, pp. 331--457.

\bibitem{hallin2003semi}
M.~Hallin and B.~J. Werker, ``Semi-parametric efficiency, distribution-freeness
  and invariance,'' \emph{Bernoulli}, vol.~9, no.~1, pp. 137--165, 2003.

\bibitem{ai2003efficient}
C.~Ai and X.~Chen, ``Efficient estimation of models with conditional moment
  restrictions containing unknown functions,'' \emph{Econometrica}, vol.~71,
  no.~6, pp. 1795--1843, 2003.

\bibitem{pfanzagl2011parametric}
J.~Pfanzagl, \emph{Parametric Statistical Theory}.\hskip 1em plus 0.5em minus
  0.4em\relax Walter de Gruyter, 2011.

\bibitem{chen2018overidentification}
X.~Chen and A.~Santos, ``Overidentification in regular models,''
  \emph{Econometrica}, vol.~86, no.~5, pp. 1771--1817, 2018.

\bibitem{ibragimov2013statistical}
I.~A. Ibragimov and R.~Z. Has'Minskii, \emph{Statistical Estimation: Asymptotic
  Theory}.\hskip 1em plus 0.5em minus 0.4em\relax Springer Science \& Business
  Media, 2013, vol.~16.

\bibitem{wolfowitz1965asymptotic}
J.~Wolfowitz, ``Asymptotic efficiency of the maximum likelihood estimator,''
  \emph{Theory of Probability \& Its Applications}, vol.~10, no.~2, pp.
  247--260, 1965.

\bibitem{van1997superefficiency}
A.~W. van~der Vaart, ``Superefficiency,'' in \emph{Festschrift for Lucien Le
  Cam: Research Papers in Probability and Statistics}.\hskip 1em plus 0.5em
  minus 0.4em\relax Springer, 1997, pp. 397--410.

\bibitem{robins1994estimation}
J.~M. Robins, A.~Rotnitzky, and L.~P. Zhao, ``Estimation of regression
  coefficients when some regressors are not always observed,'' \emph{Journal of
  the American statistical Association}, vol.~89, no. 427, pp. 846--866, 1994.

\bibitem{graham2026towards}
E.~Graham, M.~Carone, and A.~Rotnitzky, ``Towards a unified theory for
  semiparametric data fusion with individual-level data,'' \emph{The Annals of
  Statistics}, vol.~54, no.~3, pp. 1398--1424, 2026.

\bibitem{chernozhukov2018double}
V.~Chernozhukov, D.~Chetverikov, M.~Demirer, E.~Duflo, C.~Hansen, W.~Newey, and
  J.~Robins, ``Double/debiased machine learning for treatment and structural
  parameters,'' \emph{The Econometrics Journal}, vol.~21, no.~1, pp. C1--C68,
  2018.

\bibitem{van2003unified}
M.~J. van~der Laan and J.~M. Robins, \emph{Unified Methods for Censored
  Longitudinal Data and Causality}.\hskip 1em plus 0.5em minus 0.4em\relax
  Springer Science \& Business Media, 2003.

\bibitem{van2006targeted}
M.~J. van~der Laan and D.~Rubin, ``Targeted maximum likelihood learning,''
  \emph{The International Journal of Biostatistics}, vol.~2, no.~1, p.~11,
  2006.

\bibitem{yuan2012local}
Y.~Yuan, H.~Zhu, W.~Lin, and J.~S. Marron, ``Local polynomial regression for
  symmetric positive definite matrices,'' \emph{Journal of the Royal
  Statistical Society Series B: Statistical Methodology}, vol.~74, no.~4, pp.
  697--719, 2012.

\bibitem{lin2019riemannian}
Z.~Lin, ``Riemannian geometry of symmetric positive definite matrices via
  {C}holesky decomposition,'' \emph{SIAM Journal on Matrix Analysis and
  Applications}, vol.~40, no.~4, pp. 1353--1370, 2019.

\bibitem{petersen2021wasserstein}
A.~Petersen, X.~Liu, and A.~A. Divani, ``Wasserstein {F}-tests and confidence
  bands for the {F}r{\'e}chet regression of density response curves,''
  \emph{The Annals of Statistics}, vol.~49, no.~1, pp. 590--611, 2021.

\bibitem{chen2023wasserstein}
Y.~Chen, Z.~Lin, and H.-G. M{\"u}ller, ``Wasserstein regression,''
  \emph{Journal of the American Statistical Association}, vol. 118, no. 542,
  pp. 869--882, 2023.

\bibitem{lin2023causal}
Z.~Lin, D.~Kong, and L.~Wang, ``Causal inference on distribution functions,''
  \emph{Journal of the Royal Statistical Society Series B: Statistical
  Methodology}, vol.~85, no.~2, pp. 378--398, 2023.

\bibitem{dubey2019frechet}
P.~Dubey and H.-G. M{\"u}ller, ``Fr{\'e}chet analysis of variance for random
  objects,'' \emph{Biometrika}, vol. 106, no.~4, pp. 803--821, 2019.

\bibitem{hallin2024quantiles}
M.~Hallin and H.~Liu, ``Quantiles and quantile regression on {R}iemannian
  manifolds: A measure-transportation-based approach,'' \emph{arXiv preprint
  arXiv:2410.15711}, 2024.

\bibitem{marron2014overview}
J.~S. Marron and A.~M. Alonso, ``Overview of object oriented data analysis,''
  \emph{Biometrical Journal}, vol.~56, no.~5, pp. 732--753, 2014.

\bibitem{wang2007object}
H.~Wang and J.~S. Marron, ``Object oriented data analysis: Sets of trees,''
  \emph{The Annals of Statistics}, vol.~35, no.~5, pp. 1849--1873, 2007.

\bibitem{schiebinger2019optimal}
G.~Schiebinger, J.~Shu, M.~Tabaka, B.~Cleary, V.~Subramanian, A.~Solomon,
  J.~Gould, S.~Liu, S.~Lin, P.~Berube, L.~Lee, J.~Chen, J.~Brumbaugh,
  P.~Rigollet, K.~Hochedlinger, R.~Jaenisch, A.~Regev, and E.~S. Lander,
  ``Optimal-transport analysis of single-cell gene expression identifies
  developmental trajectories in reprogramming,'' \emph{Cell}, vol. 176, no.~4,
  pp. 928--943, 2019.

\bibitem{cinelli2025challenges}
C.~Cinelli, A.~Feller, G.~Imbens, E.~Kennedy, S.~Magliacane, and
  J.~Zubizarreta, ``Challenges in statistics: A dozen challenges in causality
  and causal inference,'' \emph{arXiv preprint arXiv:2508.17099}, 2025.

\bibitem{bhattacharya2003large}
R.~Bhattacharya and V.~Patrangenaru, ``Large sample theory of intrinsic and
  extrinsic sample means on manifolds -- {I},'' \emph{The Annals of
  Statistics}, vol.~31, no.~1, pp. 1--29, 2003.

\bibitem{bhattacharya2005large}
------, ``Large sample theory of intrinsic and extrinsic sample means on
  manifolds -- {II},'' \emph{The Annals of Statistics}, vol.~33, no.~3, pp.
  1225--1259, 2005.

\bibitem{pennec1999probabilities}
X.~Pennec, ``Probabilities and statistics on {R}iemannian manifolds: Basic
  tools for geometric measurements,'' in \emph{Proceedings of Nonlinear Signal
  and Image Processing (NSIP'99)}, vol.~3, 1999, pp. 194--198.

\bibitem{pennec2006intrinsic}
------, ``Intrinsic statistics on {R}iemannian manifolds: Basic tools for
  geometric measurements,'' \emph{Journal of Mathematical Imaging and Vision},
  vol.~25, pp. 127--154, 2006.

\bibitem{petersen2019frechet}
A.~Petersen and H.-G. M{\"u}ller, ``Fr{\'e}chet regression for random objects
  with euclidean predictors,'' \emph{The Annals of Statistics}, vol.~47, no.~2,
  pp. 691--719, 2019.

\bibitem{harms2023geometry}
P.~Harms, P.~W. Michor, X.~Pennec, and S.~Sommer, ``Geometry of sample
  spaces,'' \emph{Differential Geometry and its Applications}, vol.~90, p.
  102029, 2023.

\bibitem{brunel2023geodesically}
V.-E. Brunel, ``Geodesically convex ${M}$-estimation in metric spaces,'' in
  \emph{The Thirty Sixth Annual Conference on Learning Theory}.\hskip 1em plus
  0.5em minus 0.4em\relax PMLR, 2023, pp. 2188--2210.

\bibitem{brunel2024concentration}
V.-E. Brunel and J.~Serres, ``Concentration of empirical barycenters in metric
  spaces,'' in \emph{International Conference on Algorithmic Learning
  Theory}.\hskip 1em plus 0.5em minus 0.4em\relax PMLR, 2024, pp. 337--361.

\bibitem{fisher1993statistical}
N.~I. Fisher, T.~Lewis, and B.~J. Embleton, \emph{Statistical Analysis of
  Spherical Data}.\hskip 1em plus 0.5em minus 0.4em\relax Cambridge University
  Press, 1993.

\bibitem{cheng2013local}
M.-Y. Cheng and H.-T. Wu, ``Local linear regression on manifolds and its
  geometric interpretation,'' \emph{Journal of the American Statistical
  Association}, vol. 108, no. 504, pp. 1421--1434, 2013.

\bibitem{jeon2022nonparametric}
J.~M. Jeon, B.~U. Park, and I.~Van~Keilegom, ``Nonparametric regression on
  {L}ie groups with measurement errors,'' \emph{The Annals of Statistics},
  vol.~50, no.~5, pp. 2973--3008, 2022.

\bibitem{steinke2010nonparametric}
F.~Steinke, M.~Hein, and B.~Sch{\"o}lkopf, ``Nonparametric regression between
  general {R}iemannian manifolds,'' \emph{SIAM Journal on Imaging Sciences},
  vol.~3, no.~3, pp. 527--563, 2010.

\bibitem{davis2010population}
B.~C. Davis, P.~T. Fletcher, E.~Bullitt, and S.~Joshi, ``Population shape
  regression from random design data,'' \emph{International Journal of Computer
  Vision}, vol.~90, pp. 255--266, 2010.

\bibitem{cornea2017regression}
E.~Cornea, H.~Zhu, P.~Kim, and J.~G. Ibrahim, ``Regression models on
  {R}iemannian symmetric spaces,'' \emph{Journal of the Royal Statistical
  Society Series B: Statistical Methodology}, vol.~79, no.~2, pp. 463--482,
  2017.

\bibitem{thomas2013geodesic}
P.~T. Fletcher, ``Geodesic regression and the theory of least squares on
  {R}iemannian manifolds,'' \emph{International Journal of Computer Vision},
  vol. 105, no.~2, pp. 171--185, 2013.

\bibitem{zhong2024uncertainty}
Z.~S. Zhong and S.~Ling, ``Uncertainty quantification of spectral estimator and
  {MLE} for orthogonal group synchronization,'' \emph{arXiv preprint
  arXiv:2408.05944}, 2024.

\bibitem{marron2021object}
J.~S. Marron and I.~L. Dryden, \emph{Object Oriented Data Analysis}.\hskip 1em
  plus 0.5em minus 0.4em\relax Chapman and Hall/CRC, 2021.

\bibitem{lin2021total}
Z.~Lin and H.-G. M{\"u}ller, ``Total variation regularized {F}r{\'e}chet
  regression for metric-space valued data,'' \emph{The Annals of Statistics},
  vol.~49, no.~6, pp. 3510--3533, 2021.

\bibitem{lin2023additive}
Z.~Lin, H.-G. M{\"u}ller, and B.~U. Park, ``Additive models for symmetric
  positive-definite matrices and {L}ie groups,'' \emph{Biometrika}, vol. 110,
  no.~2, pp. 361--379, 2023.

\bibitem{huckemann2010intrinsic}
S.~Huckemann, T.~Hotz, and A.~Munk, ``Intrinsic shape analysis: Geodesic {PCA}
  for {R}iemannian manifolds modulo isometric {L}ie group actions,''
  \emph{Statistica Sinica}, vol.~20, no.~1, pp. 1--58, 2010.

\bibitem{qiu2024random}
R.~Qiu, Z.~Yu, and R.~Zhu, ``Random forest weighted local {F}r{\'e}chet
  regression with random objects,'' \emph{Journal of Machine Learning
  Research}, vol.~25, no. 107, pp. 1--69, 2024.

\bibitem{zhang2024dimension}
Q.~Zhang, L.~Xue, and B.~Li, ``Dimension reduction for {F}r{\'e}chet
  regression,'' \emph{Journal of the American Statistical Association}, vol.
  119, no. 548, pp. 2733--2747, 2024.

\bibitem{oller1995intrinsic}
J.~M. Oller and J.~M. Corcuera, ``Intrinsic analysis of statistical
  estimation,'' \emph{The Annals of Statistics}, vol.~23, no.~5, pp.
  1562--1581, 1995.

\bibitem{absil2008optimization}
P.-A. Absil, R.~Mahony, and R.~Sepulchre, \emph{Optimization Algorithms on
  Matrix Manifolds}.\hskip 1em plus 0.5em minus 0.4em\relax Princeton
  University Press, 2008.

\bibitem{luedtke2024one}
A.~Luedtke and I.~Chung, ``One-step estimation of differentiable
  {H}ilbert-valued parameters,'' \emph{The Annals of Statistics}, vol.~52,
  no.~4, pp. 1534--1563, 2024.

\bibitem{takatsu2024generalized}
K.~Takatsu and A.~K. Kuchibhotla, ``Generalized van {T}rees inequality: Local
  minimax bounds for non-smooth functionals and irregular statistical models,''
  \emph{arXiv preprint arXiv:2405.06437}, 2024.

\bibitem{van2000asymptotic}
A.~W. van~der Vaart, \emph{Asymptotic Statistics}.\hskip 1em plus 0.5em minus
  0.4em\relax Cambridge University Press, 2000, vol.~3.

\bibitem{tsiatis2006semiparametric}
A.~A. Tsiatis, \emph{Semiparametric Theory and Missing Data}.\hskip 1em plus
  0.5em minus 0.4em\relax Springer, 2006, vol.~4.

\bibitem{smith2005covariance}
S.~T. Smith, ``Covariance, subspace, and intrinsic {C}ram{\'e}r-{R}ao bounds,''
  \emph{IEEE Transactions on Signal Processing}, vol.~53, no.~5, pp.
  1610--1630, 2005.

\bibitem{jupp2010van}
P.~E. Jupp, ``A van {T}rees inequality for estimators on manifolds,''
  \emph{Journal of Multivariate Analysis}, vol. 101, no.~8, pp. 1814--1825,
  2010.

\bibitem{chen2025semiparametric}
X.~Chen and W.~Y. Gao, ``Thin sets are not equally thin: Minimax learning of
  submanifold integrals,'' \emph{arXiv preprint arXiv:2507.12673}, 2025.

\bibitem{bigot2012semiparametric}
J.~Bigot, J.-M. Loubes, and M.~Vimond, ``Semiparametric estimation of shifts on
  compact {L}ie groups for image registration,'' \emph{Probability Theory and
  Related Fields}, vol. 152, pp. 425--473, 2012.

\bibitem{kuchibhotla2020efficient}
A.~K. Kuchibhotla and R.~K. Patra, ``Efficient estimation in single index
  models through smoothing splines,'' \emph{Bernoulli}, vol.~26, no.~2, pp.
  1587--1618, 2020.

\bibitem{cui2011efm}
X.~Cui, W.~K. H{\"a}rdle, and L.~Zhu, ``The {EFM} approach for single-index
  models,'' \emph{The Annals of Statistics}, vol.~39, no.~3, pp. 1658--1688,
  2011.

\bibitem{huang2026high}
C.~Huang and A.~R. Zhang, ``High-order accurate inference on manifolds,''
  \emph{The Annals of Statistics}, 2026.

\bibitem{pollard1997another}
D.~Pollard, ``Another look at differentiability in quadratic mean,'' in
  \emph{Festschrift for Lucien Le Cam: Research papers in probability and
  statistics}.\hskip 1em plus 0.5em minus 0.4em\relax Springer, 1997, pp.
  305--314.

\bibitem{hahn1998role}
J.~Hahn, ``On the role of the propensity score in efficient semiparametric
  estimation of average treatment effects,'' \emph{Econometrica}, vol.~66,
  no.~2, pp. 315--331, 1998.

\bibitem{shao2022intrinsic}
L.~Shao, Z.~Lin, and F.~Yao, ``Intrinsic {R}iemannian functional data analysis
  for sparse longitudinal observations,'' \emph{The Annals of Statistics},
  vol.~50, no.~3, pp. 1696--1721, 2022.

\bibitem{hotz2024central}
T.~Hotz, H.~Le, and A.~T. Wood, ``Central limit theorem for intrinsic
  {F}r{\'e}chet means in smooth compact {R}iemannian manifolds,''
  \emph{Probability Theory and Related Fields}, vol. 189, no.~3, pp.
  1219--1246, 2024.

\bibitem{hirano2012impossibility}
K.~Hirano and J.~R. Porter, ``Impossibility results for nondifferentiable
  functionals,'' \emph{Econometrica}, vol.~80, no.~4, pp. 1769--1790, 2012.

\bibitem{lauritzen1987statistical}
S.~L. Lauritzen, ``Statistical manifolds,'' in \emph{Differential Geometry in
  Statistical Inference}.\hskip 1em plus 0.5em minus 0.4em\relax Institute of
  Mathematical Statistics Hayward, 1987, vol.~10, pp. 163--216.

\bibitem{kass1989geometry}
R.~E. Kass, ``The geometry of asymptotic inference (with discussions),''
  \emph{Statistical Science}, vol.~4, no.~3, pp. 188--219, 1989.

\bibitem{kass2011geometrical}
R.~E. Kass and P.~W. Vos, \emph{Geometrical Foundations of Asymptotic
  Inference}.\hskip 1em plus 0.5em minus 0.4em\relax John Wiley \& Sons, 2011.

\bibitem{hendriks1991cramer}
H.~Hendriks, ``A {C}ram{\'e}r-{R}ao type lower bound for estimators with values
  in a manifold,'' \emph{Journal of Multivariate Analysis}, vol.~38, no.~2, pp.
  245--261, 1991.

\bibitem{said2021statistical}
S.~Said, ``Statistical models and probabilistic methods on {R}iemannian
  manifolds,'' \emph{arXiv preprint arXiv:2101.10855}, 2021.

\bibitem{mardia2009directional}
K.~V. Mardia and P.~E. Jupp, \emph{Directional Statistics}.\hskip 1em plus
  0.5em minus 0.4em\relax John Wiley \& Sons, 2009.

\bibitem{hajek1972local}
J.~H{\'a}jek, ``Local asymptotic minimax and admissibility in estimation,'' in
  \emph{Proceedings of the Sixth Berkeley Symposium on Mathematical Statistics
  and Probability, Volume 1: Theory of Statistics}, vol.~6.\hskip 1em plus
  0.5em minus 0.4em\relax University of California Press, 1972, pp. 175--195.

\bibitem{gill1995applications}
R.~D. Gill and B.~Y. Levit, ``Applications of the van {T}rees inequality: A
  {B}ayesian {C}ram{\'e}r-{R}ao bound,'' \emph{Bernoulli}, vol.~1, no. 1--2,
  pp. 59--79, 1995.

\bibitem{gassiat2024van}
E.~Gassiat and G.~Stoltz, ``The van {T}rees inequality in the spirit of
  {H}aj\'{e}k and {L}e {C}am,'' \emph{Statistical Science}, vol.~39, no.~4, pp.
  644--653, 2024.

\bibitem{wahl2019van}
M.~Wahl, ``Van {T}rees inequality, group equivariance, and estimation of
  principal subspaces,'' in \emph{Foundations of Modern Statistics}.\hskip 1em
  plus 0.5em minus 0.4em\relax Springer, 2019, pp. 301--321.

\bibitem{hundrieser2024lower}
S.~Hundrieser, B.~Eltzner, and S.~F. Huckemann, ``A lower bound for estimating
  {F}r{\'e}chet means,'' \emph{arXiv preprint arXiv:2402.12290}, 2024.

\bibitem{fortunati2026nuisance}
S.~Fortunati, J.-P. Delmas, and E.~Ollila, ``Nuisance parameters and
  elliptically symmetric distributions: A geometric approach to parametric and
  semiparametric efficiency,'' \emph{IEEE Transactions on Information Theory},
  2026.

\bibitem{amari2012differential}
S.-i. Amari, \emph{Differential-Geometrical Methods in Statistics}.\hskip 1em
  plus 0.5em minus 0.4em\relax Springer Science \& Business Media, 2012,
  vol.~28.

\bibitem{sturm2003probability}
K.-T. Sturm, ``Probability measures on metric spaces of nonpositive
  curvature,'' \emph{Contemporary Mathematics}, vol. 338, pp. 357--390, 2003.

\bibitem{pennec2019curvature}
X.~Pennec, ``Curvature effects on the empirical mean in {R}iemannian and affine
  manifolds: A non-asymptotic high concentration expansion in the small-sample
  regime,'' \emph{arXiv preprint arXiv:1906.07418}, 2019.

\bibitem{kallus2024localized}
N.~Kallus, X.~Mao, and M.~Uehara, ``Localized debiased machine learning:
  Efficient inference on quantile treatment effects and beyond,'' \emph{Journal
  of Machine Learning Research}, vol.~25, pp. 1--59, 2024.

\bibitem{zhang2026higher}
Y.~Zhang, L.~Liu, and Z.~Zhang, ``Higher-order debiased estimators for general
  treatment models,'' \emph{Econometric Theory}, 2026.

\bibitem{ambrosio2005gradient}
L.~Ambrosio, N.~Gigli, and G.~Savar{\'e}, \emph{Gradient Flows: In Metric
  Spaces and In the Space of Probability Measures}, ser. Lectures in
  Mathematics, ETH Zurich.\hskip 1em plus 0.5em minus 0.4em\relax Springer
  Science \& Business Media, 2005.

\bibitem{lee2018introduction}
J.~M. Lee, \emph{Introduction to {R}iemannian Manifolds}.\hskip 1em plus 0.5em
  minus 0.4em\relax Springer, 2018, vol.~2.

\bibitem{carone2019toward}
M.~Carone, A.~R. Luedtke, and M.~J. van~der Laan, ``Toward computerized
  efficient estimation in infinite-dimensional models,'' \emph{Journal of the
  American Statistical Association}, vol. 114, no. 527, pp. 1174--1190, 2019.

\bibitem{luedtke2026simplifying}
A.~Luedtke, ``Simplifying debiased inference via automatic differentiation and
  probabilistic programming,'' \emph{Journal of the Royal Statistical Society
  Series B: Statistical Methodology}, vol.~88, no.~1, pp. 313--329, 2026.

\bibitem{gill2013asymptotic}
R.~D. Gill and M.~I. Gu{\c{t}}{\u{a}}, ``On asymptotic quantum statistical
  inference,'' in \emph{From Probability to Statistics and Back:
  High-Dimensional Models and Processes--A Festschrift in Honor of Jon A.
  Wellner}.\hskip 1em plus 0.5em minus 0.4em\relax Institute of Mathematical
  Statistics, 2013, vol.~9, pp. 105--128.

\bibitem{hall2013quantum}
B.~C. Hall, \emph{Quantum Theory for Mathematicians}.\hskip 1em plus 0.5em
  minus 0.4em\relax Springer, 2013.

\bibitem{liu2026fermi}
N.~Liu and M.~M. Wilde, ``Fermi-{D}irac thermal measurements: A framework for
  quantum hypothesis testing and semidefinite optimization,'' \emph{arXiv
  preprint arXiv:2603.04061}, 2026.

\bibitem{hotz2015intrinsic}
T.~Hotz and S.~Huckemann, ``Intrinsic means on the circle: Uniqueness, locus
  and asymptotics,'' \emph{Annals of the Institute of Statistical Mathematics},
  vol.~67, no.~1, pp. 177--193, 2015.

\bibitem{eltzner2019smeary}
B.~Eltzner and S.~F. Huckemann, ``A smeary central limit theorem for manifolds
  with application to high-dimensional spheres,'' \emph{The Annals of
  Statistics}, vol.~47, no.~6, pp. 3360--3381, 2019.

\bibitem{evans2020model}
R.~J. Evans, ``Model selection and local geometry,'' \emph{The Annals of
  Statistics}, vol.~48, no.~6, pp. 3513--3544, 2020.

\bibitem{yu1997assouad}
B.~Yu, ``Assouad, {F}ano, and {L}e {C}am,'' in \emph{Festschrift for Lucien Le
  Cam}.\hskip 1em plus 0.5em minus 0.4em\relax Springer, 1997, pp. 423--435.

\bibitem{pfanzagl1990estimation}
J.~Pfanzagl, \emph{Estimation in Semiparametric Models: Some Recent
  Developments}, ser. Lecture Notes in Statistics.\hskip 1em plus 0.5em minus
  0.4em\relax Springer Science \& Business Media, 1990, vol.~63.

\bibitem{robins2008higher}
J.~Robins, L.~Li, E.~Tchetgen~Tchetgen, and A.~van~der Vaart, ``Higher order
  influence functions and minimax estimation of nonlinear functionals,'' in
  \emph{Probability and Statistics: Essays in Honor of David A.
  Freedman}.\hskip 1em plus 0.5em minus 0.4em\relax Institute of Mathematical
  Statistics, 2008, pp. 335--421.

\bibitem{van2014higher}
A.~van~der Vaart, ``Higher order tangent spaces and influence functions,''
  \emph{Statistical Science}, vol.~29, no.~4, pp. 679--686, 2014.

\bibitem{kendall2011limit}
W.~S. Kendall and H.~Le, ``Limit theorems for empirical {F}r{\'e}chet means of
  independent and non-identically distributed manifold-valued random
  variables,'' \emph{Brazilian Journal of Probability and Statistics}, vol.~25,
  no.~3, pp. 323--352, 2011.

\bibitem{billingsley2013convergence}
P.~Billingsley, \emph{Convergence of Probability Measures}.\hskip 1em plus
  0.5em minus 0.4em\relax John Wiley \& Sons, 2013.

\bibitem{van2023weak}
A.~W. van~der Vaart and J.~A. Wellner, \emph{Weak Convergence and Empirical
  Processes: with Applications to Statistics}.\hskip 1em plus 0.5em minus
  0.4em\relax Springer Science \& Business Media, 2023.

\bibitem{monera2014taylor}
M.~G. Monera, A.~Montesinos-Amilibia, and E.~Sanabria-Codesal, ``The {T}aylor
  expansion of the exponential map and geometric applications,'' \emph{Revista
  de la Real Academia de Ciencias Exactas, Fisicas y Naturales. Serie A.
  Matematicas}, vol. 108, pp. 881--906, 2014.

\bibitem{goto2021approximated}
J.~Goto and H.~Sato, ``Approximated logarithmic maps on {R}iemannian manifolds
  and their applications,'' \emph{JSIAM Letters}, vol.~13, pp. 17--20, 2021.

\end{thebibliography}

\allowdisplaybreaks

\begin{appendices}
\appendix

\subsection{Concepts and Results Related to Analysis on Manifolds}
\label{app:technical}
\subsubsection{Tensor, Vector Fields, and Tensor Fields}
\label{app:tensor}
\leavevmode
In this section, we review tensor and tensor fields in differential geometry, which include most of the concepts introduced in the paper.
\begin{definition}
\label{def:tensor}
Given a vector space $\bbV$, with its dual space denoted by $\bbV^{\ast}$, an $(r, s)$-tensor $\calT$ over $\bbV$ is a multilinear map
\begin{align*}
\calT: \underbrace{\bbV^{\ast} \times \bbV^{\ast} \cdots \times \bbV^{\ast}}_{\textrm{$r$ times}} \times \underbrace{\bbV \times \bbV \cdots \times \bbV}_{\textrm{$s$ times}} \rightarrow \bbR.
\end{align*}
\end{definition}
\begin{definition}
\label{def:tensor fields}
Given a manifold $\bbM$, an $(r, s)$-tensor field $\calT$ is a $C^{\infty} (\bbM)$ multilinear map:
\begin{equation*}
\begin{split}
\calT: &\underbrace{\Gamma (\T^{\ast} \bbM) \times \Gamma (\T^{\ast} \bbM) \times \cdots \times \Gamma (\T^{\ast} \bbM)}_{\textrm{$r$ times}} \\&\times \underbrace{\Gamma (\T \bbM) \times \Gamma (\T \bbM) \times \cdots \times \Gamma (\T \bbM)}_{\textrm{$s$ times}} \rightarrow C^{\infty} (\bbM).
\end{split}
\end{equation*}
\end{definition}
\begin{remark}
\label{rem:tensor examples}
We list a few examples of tensor or tensor fields appeared in this paper. Let $\mu \in \bbM$, where $\bbM$ is taken to be the parameter manifold.
\begin{itemize}
\item The score function $\s_{\cdot}$ is a $(0, 1)$-tensor field (varying with $\mu$) while $\S_{\mu}$ is a $(0, 1)$-tensor when fixing the parameter value to $\mu$. When $\s_{\mu} (\cdot)$ is applied to a tangent vector, the output is a scalar.
\item The Fisher information operator $\bbG$ is a $(0, 2)$-tensor field (varying with $\mu$) while $\bbG_{\mu}$ is a $(0, 2)$-tensor when fixing the parameter value to $\mu$. When $\bbG_{\mu} (\cdot, \cdot) \equiv \E [\S_{\mu} (\cdot) \otimes \S_{\mu} (\cdot)]$ is applied to two tangent vectors, the output is a scalar. Here $\S_{\mu}$ is the random variable version of $\s_{\mu}$.
\end{itemize}
\end{remark}
\subsubsection{Several Useful Lemmas}
\label{app:geometric analysis}
\leavevmode
In this section, we collect useful technical results related to analysis on manifolds. We denote the parameter manifold as $(\bbM, g)$ with Riemannian metric $g$.
\begin{lemma}
\label{lem:log map expansion}
Given $\mu, \mu_{1} \in \bbM$, the Taylor expansion of $\Exp_{\mu}^{-1} (\mu_{1}) $ with respect to the argument $\mu_{1}$ around the point $\mu_{2} \in \bbM$ can be written as 
\begin{equation*}
\begin{split}
\Exp_{\mu}^{-1} (\mu_{1}) &= \Exp_{\mu}^{-1} \mu_{2} + \nabla \Exp_{\mu}^{-1} \mu_{2} \cdot \Pi_{\mu_{2}}^{\mu} \Exp_{\mu_{2}}^{-1} \mu_{1} \\&+ O \left( \left\Vert \Pi_{\mu_{2}}^{\mu} \Exp_{\mu_{2}}^{-1} \mu_{1} \right\Vert^2 \right).
\end{split}
\end{equation*}
\end{lemma}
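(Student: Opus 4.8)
The plan is to exploit that the logarithmic map $x\mapsto \Exp_\mu^{-1}(x)$ takes values in the \emph{fixed} vector space $\T_\mu\bbM$, so that, away from the cut locus of $\mu$, it is an honest smooth vector-valued function to which ordinary (vector-space) Taylor expansion with Lagrange remainder applies. The only manifold-specific bookkeeping is (i) how to represent the increment from $\mu_2$ to $\mu_1$ and (ii) how the differential of this vector-valued map is encoded by the symbol $\nabla\Exp_\mu^{-1}\mu_2$ together with the parallel transport $\Pi_{\mu_2}^\mu$.

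First I would set $w \coloneqq \Exp_{\mu_2}^{-1}\mu_1 \in \T_{\mu_2}\bbM$ and take the geodesic $\gamma(t) \coloneqq \Exp_{\mu_2}(t\,w)$, so that $\gamma(0)=\mu_2$ and $\gamma(1)=\mu_1$, with $\dot\gamma(0)=w$. Define the $\T_\mu\bbM$-valued curve $c(t)\coloneqq \Exp_\mu^{-1}(\gamma(t))$, which is smooth on $[0,1]$ provided $\gamma$ avoids the cut locus of $\mu$ (guaranteed on a small enough neighborhood by the inverse function theorem, as noted after the definition of the exponential map). Because $\T_\mu\bbM$ is a normed linear space, Taylor's theorem with Lagrange remainder gives $c(1)=c(0)+c'(0)+\tfrac12 c''(\xi)$ for some $\xi\in(0,1)$.

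Next I would identify the three terms. Clearly $c(0)=\Exp_\mu^{-1}\mu_2$ and $c(1)=\Exp_\mu^{-1}\mu_1$, which is the left-hand side. The first-order term is $c'(0)=\mathrm{d}(\Exp_\mu^{-1})_{\mu_2}[w]$, the differential of the log map at $\mu_2$ applied to $w$; by the convention that $\nabla\Exp_\mu^{-1}\mu_2$ denotes this differential with its argument represented in $\T_\mu\bbM$ via parallel transport, one has $c'(0)=\nabla\Exp_\mu^{-1}\mu_2\cdot\Pi_{\mu_2}^\mu w$, which is exactly the first-order term in the statement. For the remainder, since $\gamma$ is a geodesic we have $\nabla_{\dot\gamma}\dot\gamma=0$, so $c''(t)$ reduces to the Hessian of $\Exp_\mu^{-1}$ contracted twice with $\dot\gamma(t)$; as $\|\dot\gamma(t)\|\equiv\|w\|$ along the geodesic, this yields $\|c''(t)\|\le C\|w\|^2$, where $C$ bounds the second derivatives of $\Exp_\mu^{-1}$ on a compact neighborhood excluding $\calC(\mu)$.

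Finally I would conclude using that parallel transport is a linear isometry, so $\|w\|=\|\Exp_{\mu_2}^{-1}\mu_1\|=\|\Pi_{\mu_2}^\mu\Exp_{\mu_2}^{-1}\mu_1\|$, turning $\tfrac12 c''(\xi)=O(\|w\|^2)$ into the stated remainder $O(\|\Pi_{\mu_2}^\mu\Exp_{\mu_2}^{-1}\mu_1\|^2)$. The main obstacle I anticipate is not the expansion itself but making precise the meaning of $\nabla\Exp_\mu^{-1}\mu_2$ and verifying that, under the paper's convention, its composition with $\Pi_{\mu_2}^\mu$ reproduces the ordinary differential $\mathrm{d}(\Exp_\mu^{-1})_{\mu_2}$; a secondary technical point is the uniformity of the bound $C$, which requires confining $\mu_1,\mu_2$ to a compact region bounded away from the cut locus of $\mu$ so that the smoothness of the log map delivers a finite second-derivative bound.
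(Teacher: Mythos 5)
Your proposal is correct in substance, and it is worth noting that the paper itself supplies no proof of this lemma: it is stated bare in Appendix~A.2, and at its point of use (the proof of Theorem~\ref{thm:calculus of IF, parametric}) the authors simply cite \cite{monera2014taylor} and \cite{goto2021approximated} for the Taylor expansion of the logarithmic map. Your argument therefore fills a gap the paper leaves to references, and it does so by the natural route: pull the problem back to a single variable via the curve $c(t)=\Exp_{\mu}^{-1}(\Exp_{\mu_2}(t\,w))$ with $w=\Exp_{\mu_2}^{-1}\mu_1$, exploit that $c$ takes values in the \emph{fixed} linear space $\T_{\mu}\bbM$, and use that $\gamma$ is a geodesic so that $c''(t)=(\nabla \diff f)_{\gamma(t)}(\dot\gamma,\dot\gamma)$ with $f=\Exp_{\mu}^{-1}$, whence $\Vert c''\Vert\leq C\Vert w\Vert^{2}$ on a compact set bounded away from $\calC(\mu)$; the isometry of parallel transport then converts $O(\Vert w\Vert^{2})$ into the stated remainder. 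Your reading of the notation is also the right one: the paper consistently treats $\nabla\Exp_{\mu}^{-1}\mu_2$ as an operator acting on $\T_{\mu}\bbM$ (compare Lemma~\ref{lem:connection}, where $\nabla\Exp_{\mu}^{-1}\mu_h(\cdot)=\id(\cdot)+\tfrac{1}{6}\calR_{\mu}(h,\cdot)h+O(\Vert h\Vert^{3})$ acts on tangent vectors at $\mu$), i.e.\ the differential $\diff(\Exp_{\mu}^{-1})_{\mu_2}$ precomposed with $\Pi_{\mu}^{\mu_2}$, which is exactly what your identification of $c'(0)$ requires.

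Two small repairs. First, the Lagrange form $c(1)=c(0)+c'(0)+\tfrac12 c''(\xi)$ is not valid for vector-valued functions (no single $\xi$ works for all components); replace it with the integral form $c(1)=c(0)+c'(0)+\int_0^1(1-t)\,c''(t)\,\diff t$ or the mean-value inequality $\Vert c(1)-c(0)-c'(0)\Vert\leq\tfrac12\sup_{t}\Vert c''(t)\Vert$, which yields the same $O(\Vert w\Vert^{2})$ bound. Second, for the uniform bound $C$ you need not only $\mu_1,\mu_2$ but the whole geodesic segment $\gamma([0,1])$ to stay in a compact set avoiding $\calC(\mu)$; you gesture at this, but it should be stated as a hypothesis (it is harmless in the paper's applications, where $\mu_1$ and $\mu_2$ are asymptotically collapsing onto one another, and is implicitly covered by Assumption~\ref{as:cut locus}). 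With those adjustments the proof is complete.
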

\begin{lemma}
\label{rem:pop curvature}
As given in Definition~\ref{def:pop curvature} about $\delta$ and $\mu$, $\E [\langle \calR (\delta, \mu) \mu, \delta \rangle]$ depends on the distribution of $\delta$ only via its covariance operator $\mathbb{C}$.
\end{lemma}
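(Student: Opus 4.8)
The plan is to exploit the multilinearity of the curvature tensor to recognize $\delta \mapsto \langle \calR(\delta, \mu)\mu, \delta\rangle$ as a quadratic form in $\delta$, and then to invoke the elementary fact that the expectation of a quadratic form in a centered random vector depends only on its covariance. Since $\calR$ is the curvature $(1,3)$-tensor field and is therefore $C^\infty(\Psi)$-linear in each of its arguments, with $\mu \in \T_{\psi(\theta)} \Psi$ held fixed the assignment $\delta \mapsto \calR(\delta, \mu)\mu$ is a linear endomorphism of the single tangent space $\T_{\psi(\theta)} \Psi$; pairing it with $\delta$ through the bilinear Riemannian inner product $\langle \cdot, \cdot \rangle$ then produces a genuine quadratic form in $\delta$.

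First I would fix an orthonormal basis $\{e_i\}_{i=1}^{q}$ of $\T_{\psi(\theta)} \Psi$ with respect to $g$ and write $\delta = \sum_{i=1}^{q} \delta^i e_i$. Setting $A_{ij} := \langle \calR(e_i, \mu)\mu, e_j\rangle$, which are deterministic scalars completely determined by the curvature tensor at $\psi(\theta)$ and the fixed vector $\mu$, multilinearity yields
\begin{equation*}
\langle \calR(\delta, \mu)\mu, \delta\rangle = \sum_{i,j=1}^{q} A_{ij}\, \delta^i \delta^j.
\end{equation*}
Taking expectations and recalling that $\delta$ is the \emph{centered} residual of Definition~\ref{def:pop curvature}, so that $\E[\delta] = 0$ and hence $\E[\delta^i \delta^j] = \Cov(\delta^i, \delta^j) = \mathbb{C}_{ij}$, I obtain
\begin{equation*}
\E[\langle \calR(\delta, \mu)\mu, \delta\rangle] = \sum_{i,j=1}^{q} A_{ij}\, \mathbb{C}_{ij},
\end{equation*}
namely the contraction of the fixed curvature coefficients against the covariance operator. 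The right-hand side involves the law of $\delta$ only through $\mathbb{C}$, which is precisely the assertion. The same display simultaneously exhibits the quantity as a linear functional of $\mathbb{C}$, thereby justifying the notation $\langle \calR(\mathbb{C})\mu, \mu\rangle$ introduced after Definition~\ref{def:pop curvature} and the linearity of $\calR(\cdot)$ recorded in Lemma~\ref{lem:linearity of curvature}.

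There is no serious analytic obstacle here; the content is purely algebraic. The only points meriting care are (i) verifying that $\delta \mapsto \calR(\delta, \mu)\mu$ is linear, which follows from the tensorial nature of $\calR$ once all vectors are recognized to lie in the common tangent space $\T_{\psi(\theta)} \Psi$, and (ii) checking that the coordinate contraction $\sum_{i,j} A_{ij} \mathbb{C}_{ij}$ is independent of the chosen orthonormal frame. The latter is automatic, since it is a tensor contraction of the $(0,2)$-object $A$ with the covariance operator $\mathbb{C}$, but it is worth noting explicitly so that the conclusion is manifestly coordinate-free. An implicit regularity requirement, which I would state, is the finiteness of the second moments of $\delta$ (equivalently, that $\mathbb{C}$ is a finite operator), ensuring the expectation is well defined.
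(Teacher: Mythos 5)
Your proof is correct, and it reaches the conclusion by a slightly different and in fact cleaner route than the paper. The paper works in normal coordinates and invokes the explicit Taylor expansion of the metric from Lemma~\ref{lem:covariant derivative}, writing $\E[\langle \calR(\delta,\mu)\mu,\delta\rangle] = -3\sum_{ij,kl} a_{ij,kl}\,\mathbb{C}_{kl}\,\mu^i\mu^j$; this ties the result to the specific coefficients $a_{ij,kl}$ and the factor $-3$, which the paper then reuses verbatim in Lemma~\ref{lem:linearity of curvature} and in the curvature correction terms of the CRLB. You instead exploit only the pointwise tensoriality of $\calR$: with $\mu$ fixed, $\delta \mapsto \calR(\delta,\mu)\mu$ is a linear endomorphism of $\T_{\psi(\theta)}\Psi$, so $\langle \calR(\delta,\mu)\mu,\delta\rangle = \sum_{i,j} A_{ij}\,\delta^i\delta^j$ with deterministic $A_{ij} = \langle \calR(e_i,\mu)\mu, e_j\rangle$, and the expectation of a quadratic form in the centered vector $\delta$ is $\sum_{i,j} A_{ij}\,\mathbb{C}_{ij}$ --- the same probabilistic step as the paper's, but without any normal-coordinate machinery. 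What your version buys is an intrinsic, manifestly frame-independent argument with minimal hypotheses (you rightly flag finiteness of second moments); what the paper's version buys is the explicit coordinate representation of $\calR(\mathbb{C})$ needed downstream, which your abstract $A_{ij}$ does not by itself supply. For the lemma as stated, your argument is complete; your closing remarks on frame-independence and the linearity of $\mathbb{C} \mapsto \calR(\mathbb{C})$ correctly connect it to the surrounding development.
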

\begin{proof}
Using normal coordinates, $\E [\langle \calR (\delta, \mu) \mu, \delta \rangle]$ can be written as $-3\sum_{ij,kl} a_{ij,kl} \mathbb{C}_{kl} \mu^i \mu^j$, where $\mathbb{C}_{kl}$ is the $kl$-th element of $\mathbb{C} \equiv \Cov (\delta)$, such that $\E [\langle \calR (\delta, \mu) \mu, \delta \rangle]$ can be shown as the quadratic form $\langle \calR (\mathbb{C}) \mu, \mu \rangle$, where $\calR (\mathbb{C})$ is symmetric and depends linearly on $\mathbb{C}$.
\end{proof}
\begin{lemma}
\label{lem:parallel expansion}
Pick any $\mu^{\dag} \in \bbM$ and, for $\mu, \mu' \in \bbM \setminus \calC (\mu^{\dag})$ sufficiently close. Denote by $\gamma$ the unit speed geodesic segment such that $\gamma (0) = \mu$ and $\gamma (g (\mu, \mu')) = \mu'$, where $g (\cdot, \cdot)$ denotes the distance function on $\bbM\times\bbM$ induced by the Riemannian metric. If $\gamma (t) \in \bbM \setminus \calC (\mu^{\dag})$ for all $t \in (0, g (\mu, \mu'))$, we have
\begin{align*}
\Pi_{\mu'}^{\mu} \Exp_{\mu'}^{-1} \mu^{\dag} = \Exp_{\mu}^{-1} \mu^{\dag} + \nabla_{\Exp^{-1}_{\mu} \mu'} \Exp_{\mu}^{-1} \mu^{\dag} + o \left( g (\mu, \mu') \right).
\end{align*}
\end{lemma}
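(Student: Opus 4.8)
The plan is to reinterpret the logarithmic map as a smooth vector field and to recognize the left-hand side as the value of that field at $\mu'$, pulled back to the single tangent space $\T_{\mu}\bbM$ by parallel transport; the asserted expansion then reduces to an ordinary first-order Taylor expansion inside the fixed vector space $\T_\mu\bbM$. Concretely, define $V(x) \coloneqq \Exp_x^{-1}\mu^{\dag} \in \T_x\bbM$. Because $\mu^{\dag}$ lies outside the cut locus of every point $\gamma(t)$ by hypothesis, the map $x \mapsto \Exp_x^{-1}\mu^{\dag}$ is well defined and smooth on an open neighborhood of the image of $\gamma$, so $V$ is a genuine smooth vector field there and $\nabla V \equiv \nabla \Exp_{\cdot}^{-1}\mu^{\dag}$ is meaningful along $\gamma$. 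This is exactly the point where the assumption $\gamma(t)\in\bbM\setminus\calC(\mu^{\dag})$ for all $t$ is used.

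First I would introduce the curve $W(t) \coloneqq \Pi_{\gamma(t)}^{\mu} V(\gamma(t))$ for $t \in [0, g(\mu,\mu')]$, i.e. parallel transport of $V(\gamma(t))$ along $\gamma$ back to the basepoint $\mu = \gamma(0)$; this curve lives entirely in the fixed vector space $\T_\mu\bbM$. By construction $W(0) = V(\mu) = \Exp_\mu^{-1}\mu^{\dag}$, and $W(g(\mu,\mu')) = \Pi_{\mu'}^{\mu}\Exp_{\mu'}^{-1}\mu^{\dag}$ is precisely the left-hand side of the claim. Next I would compute $W'(0)$ via the standard relation between parallel transport and covariant differentiation: for a vector field $Z$ along $\gamma$ one has $\frac{D Z}{dt} = \lim_{h\to0} h^{-1}\big(\Pi_{\gamma(t+h)}^{\gamma(t)} Z(t+h) - Z(t)\big)$, so applying this to $Z(t) = V(\gamma(t))$ at $t=0$ gives $W'(0) = \nabla_{\dot\gamma(0)} V$. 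If a self-contained argument is preferred, I would pick a parallel orthonormal frame $\{E_i\}$ along $\gamma$, write $V(\gamma(t)) = a^i(t) E_i(t)$, note that $\Pi_{\gamma(t)}^{\mu}E_i(t) = E_i(0)$, whence $W(t) = a^i(t) E_i(0)$ and $W'(0) = \dot a^i(0) E_i(0) = \nabla_{\dot\gamma(0)} V$.

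Since $W$ is $C^1$ on $[0, g(\mu,\mu')]$, Taylor's theorem in $\T_\mu\bbM$ yields $W(s) = W(0) + s\,W'(0) + o(s)$ with $s = g(\mu,\mu')$. Writing $v \coloneqq \dot\gamma(0)$ (a unit vector), we have $\Exp_\mu^{-1}\mu' = s\,v$, and linearity of $\nabla_{\cdot}V$ in its direction argument gives $s\,\nabla_v V = \nabla_{sv} V = \nabla_{\Exp_\mu^{-1}\mu'}\Exp_\mu^{-1}\mu^{\dag}$. Substituting $W(0)=\Exp_\mu^{-1}\mu^{\dag}$ and $W(s)=\Pi_{\mu'}^{\mu}\Exp_{\mu'}^{-1}\mu^{\dag}$ into the Taylor expansion then produces exactly the stated identity, with remainder $o(s) = o(g(\mu,\mu'))$.

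The main obstacle is not the algebra but the analytic bookkeeping behind the little-$o$ remainder: I must ensure that the Taylor remainder is genuinely $o(g(\mu,\mu'))$ as $\mu'\to\mu$ and, ideally, uniformly in the direction $v$. I would handle this by fixing $\mu$ and letting $\mu'$ shrink toward $\mu$ along arbitrary geodesic directions, exploiting that $W$ (hence its second-order Taylor remainder) depends smoothly on both $t$ and the initial direction $v$, and that $v$ ranges over the compact unit sphere in $\T_\mu\bbM$; this compactness delivers a remainder bound uniform over directions, so the $o(\cdot)$ is legitimate. The only other subtlety is confirming smoothness of $V$ along the \emph{entire} segment rather than merely at its endpoints, which is secured by the no-cut-locus hypothesis placed on all of $\gamma$.
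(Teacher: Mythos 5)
Your proposal is correct and follows essentially the same route as the paper's proof: the paper defines $f(\mu') \coloneqq \Pi_{\mu'}^{\mu}\Exp_{\mu'}^{-1}\mu^{\dag}$ (your $W$ restricted to $\gamma$), invokes smoothness of the vector field $\Exp_{\gamma(t)}^{-1}\mu^{\dag}$ along $\gamma$ under the no-cut-locus hypothesis, and obtains the expansion in $\T_{\mu}\bbM$ ``by the definition of the covariant derivative.'' Your write-up merely fills in the details the paper compresses into that phrase — the identification $W'(0)=\nabla_{\dot\gamma(0)}V$ via a parallel frame, the identity $\Exp_{\mu}^{-1}\mu' = g(\mu,\mu')\,\dot\gamma(0)$, and the uniform-in-direction control of the $o(g(\mu,\mu'))$ remainder — all of which are accurate.
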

\begin{proof}
To simplify the notation, let $f: \bbM \setminus \calC (\mu^{\dag}) \rightarrow \T_{\mu} \bbM$ be a function defined as $f (\mu') \coloneqq \Pi_{\mu'}^{\mu} \Exp_{\mu'}^{-1} \mu^{\dag}$, given any $\mu' \in \bbM \setminus \calC (\mu^{\dag})$. Obviously, $f (\mu) \equiv \Exp_{\mu}^{-1} \mu^{\dag}$.
Based on the conditions of this Lemma, $\Exp_{\gamma (t)}^{-1} \mu^{\dag}$ is a smooth vector field along $\gamma$, and the two tangent vectors $f (\mu')$ and $f (\mu)$ are both in $\T_{\mu} \bbM$. Thus we have
\begin{align*}
f (\mu') = f (\mu) + \nabla_{\Exp_{\mu}^{-1} \mu'} f (\mu) + o \left( d (\mu, \mu') \right),
\end{align*}
by the definition of the covariant derivative.
\end{proof}
\begin{lemma}
\label{lem:covariant derivative}
We denote $\delta \coloneqq \Exp_{\psi (\theta)}^{-1} \hat{\psi}_{n}, \eps_{h} \coloneqq \sqrt{n} \Exp_{\psi (\theta)}^{-1} \psi (\theta') \in \T_{\psi (\theta)} \Psi$. Consider $\nabla_{\eps_{h}} \delta$, we have $- \nabla_{\eps_{h}} \delta = \eps_{h} + Q (\delta) \eps_{h} + O (\Vert \delta \Vert^{3}), \text{ and } \eps_{h} = \dot{\psi} (\theta) h + o (\Vert h \Vert)$, where $Q (\delta)$ is a quadratic form, the exact formula of which is given in the proof below.
\end{lemma}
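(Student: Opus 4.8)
The plan is to read $\delta$ as the value at the footpoint $\psi(\theta)$ of the base‑point‑dependent vector field $F(x) \coloneqq \Exp_{x}^{-1}(\hat{\psi}_{n})$ (well defined near $\psi(\theta)$ by Assumption~\ref{as:cut locus}, and small since $\hat{\psi}_{n}$ is consistent, so $\|\delta\|$ is small), and to compute $\nabla_{\eps_{h}}\delta = \nabla_{\eps_{h}}F$ by reducing it to a Jacobi field. First I would fix $\hat{\psi}_{n}$ and form the one‑parameter family of geodesics $c_{s}(t) \coloneqq \Exp_{\gamma(s)}\!\big(t\,F(\gamma(s))\big)$, where $\gamma$ is a curve in $\Psi$ with $\gamma(0)=\psi(\theta)$ and $\dot{\gamma}(0)$ proportional to $\eps_{h}$; each $c_{s}$ runs from $\gamma(s)$ to $\hat{\psi}_{n}$ in unit time, so $c_{s}(0)=\gamma(s)$ and $c_{s}(1)=\hat{\psi}_{n}$. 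Since $F(\gamma(s))=\partial_{t}c_{s}(t)|_{t=0}$, the torsion‑freeness of the Levi‑Civita connection (symmetry of $\tfrac{D}{ds}\partial_{t}$ and $\tfrac{D}{dt}\partial_{s}$) gives $\nabla_{\eps_{h}}\delta = J'(0)$, where $J(t)\coloneqq\partial_{s}c_{s}(t)|_{s=0}$ is the Jacobi field along $c_{0}(t)=\Exp_{\psi(\theta)}(t\delta)$ with boundary data $J(0)=\eps_{h}$ and $J(1)=0$ (the latter because every $c_{s}$ terminates at the fixed point $\hat{\psi}_{n}$).

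Next I would solve the Jacobi equation $J'' + \calR(J,\dot{c}_{0})\dot{c}_{0} = 0$ perturbatively in $\|\delta\|$. Writing $J(t)=P_{t}W(t)$ for parallel transport $P_{t}$ along $c_{0}$ and using $\dot{c}_{0}(t)=P_{t}\delta$, the equation reads $W''(t)=-\mathcal{A}_{t}W(t)$, where $\mathcal{A}_{t}\coloneqq P_{t}^{-1}\calR(P_{t}\,\cdot\,,P_{t}\delta)P_{t}\delta = O(\|\delta\|^{2})$ and $\mathcal{A}_{t}=\mathcal{A}_{0}+O(\|\delta\|^{3})$ with $\mathcal{A}_{0}(\,\cdot\,)=\calR(\,\cdot\,,\delta)\delta$, the $O(\|\delta\|^{3})$ absorbing the variation of the curvature tensor along $c_{0}$ (a $\nabla\calR$ contribution). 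With $W(0)=\eps_{h}$, $W(1)=0$, the flat solution is $W_{0}(t)=(1-t)\eps_{h}$, and the first correction solves $W_{1}''=-(1-t)\mathcal{A}_{0}\eps_{h}$ with $W_{1}(0)=W_{1}(1)=0$; integrating twice yields $W_{1}'(0)=\tfrac{1}{3}\mathcal{A}_{0}\eps_{h}$. Hence $J'(0)=W'(0)=-\eps_{h}+\tfrac{1}{3}\calR(\eps_{h},\delta)\delta+O(\|\delta\|^{3})$, so that $-\nabla_{\eps_{h}}\delta = \eps_{h} + Q(\delta)\eps_{h} + O(\|\delta\|^{3})$ with the quadratic‑in‑$\delta$ operator $Q(\delta)\colon \nu\mapsto -\tfrac{1}{3}\calR(\nu,\delta)\delta$. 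This is precisely where the factor $\tfrac{1}{3}$ in the curvature terms of Lemma~\ref{lem:smith} originates, which serves as a consistency check on the sign and constant.

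For the second identity I would simply invoke Lemma~\ref{lem:directional derivative}: since $\theta' = \Exp_{\theta}(h/\sqrt{n})$, taking $t=1/\sqrt{n}$ gives $\eps_{h} = t^{-1}\Exp^{-1}_{\psi(\theta)}\psi(\Exp_{\theta}(t h)) \to \dot{\psi}(\theta)(h)$ as $n\to\infty$, whence $\eps_{h} = \dot{\psi}(\theta)h + o(\|h\|)$.

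The hard part will be the rigorous control of the remainder at order $O(\|\delta\|^{3})$. This requires (i) showing that $\mathcal{A}_{t}$ deviates from $\mathcal{A}_{0}$ only at order $\|\delta\|^{3}$ uniformly in $t\in[0,1]$, which rests on the smoothness of $\calR$ and a bound on $\nabla\calR$ along the short geodesic $c_{0}$, and (ii) verifying that the next Picard iterate of the perturbative Jacobi expansion is $O(\|\delta\|^{4})$ and thus harmless. An alternative and more self‑contained route would carry out the entire computation in Riemann normal coordinates centered at $\psi(\theta)$, using $g_{ij}=\delta_{ij}-\tfrac{1}{3}\calR_{ikjl}x^{k}x^{l}+O(|x|^{3})$ together with the induced expansion of the Christoffel symbols; this mirrors the normal‑coordinate argument already used in the proof of Lemma~\ref{rem:pop curvature} and produces the same operator $Q$.
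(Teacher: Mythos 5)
Your proof is correct, and it reaches the paper's expansion by a genuinely different route. The paper's own proof is a chart computation in normal coordinates at $\psi(\theta)$ (essentially equation (38) of \cite{smith2005covariance}, which the paper cites in Appendix~\ref{app:thm par}): it Taylor-expands the metric, $g_{ij}=\delta_{ij}+\frac{1}{2}\partial^2_{kl}g_{ij}\,x^k x^l+O(\Vert x\Vert^3)$, reads off the linearized Christoffel symbols $\Gamma_{ij,k}=-2\sum_l a_{ij,kl}x^l$ with $a_{ij,kl}=\frac{1}{2}\partial^2_{kl}g_{ij}$, solves the two-point geodesic problem $\hat{\psi}_{n}=\Exp_{\psi(\theta)}\delta=\Exp_{\Exp_{\psi(\theta)}(t\eps_h)}\delta_{t}$ to obtain $\delta_{t}=\delta-t\eps_h-tQ(\delta)\eps_h$ with $Q(\delta)=\big(\sum_{kl}a_{ij,kl}\delta^k\delta^l\big)_{ij}$, and finishes by parallel transporting, $\delta_t(t)=\delta_t+O(t^2)$. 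You instead treat $\delta$ as the value of the field $x\mapsto\Exp_x^{-1}\hat{\psi}_{n}$, use torsion-freeness ($\tfrac{D}{ds}\partial_t c=\tfrac{D}{dt}\partial_s c$) to identify $\nabla_{\eps_h}\delta$ with $J'(0)$ for the Jacobi field along $c_0(t)=\Exp_{\psi(\theta)}(t\delta)$ with boundary data $J(0)=\eps_h$, $J(1)=0$, and solve the Jacobi boundary-value problem perturbatively; your integration giving $W_1'(0)=\frac{1}{3}\calR(\eps_h,\delta)\delta$ is right, and your invariant formula $Q(\delta)\nu=-\frac{1}{3}\calR(\nu,\delta)\delta$ agrees with the paper's coordinate expression, since in normal coordinates $\langle\calR(\delta,\mu)\mu,\delta\rangle=-3\sum_{ij,kl}a_{ij,kl}\delta^k\delta^l\mu^i\mu^j$ (Lemma~\ref{rem:pop curvature}) and the pair symmetry of the curvature tensor converts $\calR(\delta,\mu)\mu$ into $\calR(\mu,\delta)\delta$ inside the quadratic form. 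Your derivation of $\eps_h=\dot{\psi}(\theta)h+o(\Vert h\Vert)$ from Lemma~\ref{lem:directional derivative} with $t=1/\sqrt{n}$ is exactly the justification the paper leaves implicit. What your route buys is a coordinate-free, sign-checked identification of $Q$ (consistent with the factor $\frac{1}{3}$ in Lemma~\ref{lem:smith}) with remainder control reduced to two transparent estimates (a $\nabla\calR$ bound along the short geodesic, and an $O(\Vert\delta\Vert^4)$ second Picard iterate); what the paper's route buys is the explicit coefficients $a_{ij,kl}$ and their symmetries, which are reused verbatim in Lemma~\ref{lem:linearity of curvature} and in the proof of Lemma~\ref{lem:Un}. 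Two minor caveats, both shared with the paper's proof: the Jacobi boundary-value problem is well-posed only for $\Vert\delta\Vert$ small (no conjugate points on the short geodesic, plus Assumption~\ref{as:cut locus} so the field is defined), which in the intended application rests on consistency of $\hat{\psi}_{n}$; and the remainder is really $O(\Vert\delta\Vert^{3}\Vert\eps_h\Vert)$, i.e., homogeneous of degree one in $\eps_h$, which is the form actually used in Lemma~\ref{lem:Un}.
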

\begin{proof}
To derive the result, we define the tangent vectors $\delta = \Exp_{\theta}^{-1} \hat{\theta}_{n} \in \T_{\theta} \bbM$ and $\delta_{t} \in \T_{\Exp_{\theta} t h} \bbM$ by the equations 
\begin{equation}
\label{boundary}
\hat{\theta}_n = \Exp_{\theta} \delta = \Exp_{\Exp_{\theta}th} \delta_{t}.
\end{equation}
We denote $\{e_{i}, i = 1, \cdots, p\}$ as an orthonormal basis of $\T_{\theta} \bbM$. Expressing all of the quantities in terms of the normal coordinates $\theta^i=\Exp_{\theta}(\sum_{i}\theta^i E_i)$, the geodesic from $\theta$ to $\hat{\theta}_n$ is simply the curve $\hat{\theta}_n=\theta^i(t)=t\delta^i$, where $\delta=\sum_{i}\delta^i E_i$. The geodesic curve from $\Exp_{\theta}th$ to $\hat{\theta}_n$, denoted by $\gamma(s)$, satisfies $\nabla_{\gamma'(s)}\gamma'(s)=0$ under the boundary condition \eqref{boundary}. By original local analysis of curvature, the Riemannian metric $d$ of $(\bbM, d)$ is expressed as the Taylor series
\begin{equation}
\label{quadratic form}
    d_{ij}=\delta_{ij}+\frac{1}{2}\sum_{kl}\frac{\partial^2 d_{ij}}{\partial\theta^k \partial\theta^l}\theta^k \theta^l+ R^3_{ij}(\theta)
\end{equation}
where $\delta_{ij}$ is the Kronecker delta function, $R^3_{ij}(\theta)=O(\Vert \theta\Vert^3)$ denotes the remainder term after Taylor expansion to the second-order under the local coordinate system $(\theta^i)$ on $\bbM$. By the definition of covariant derivative, based on the form of Christoffel symbols, $\Gamma_{ij,k}=-2 \sum_{l} a_{ij,kl}\theta^l$ where $a_{ij,kl}=\frac{1}{2}\frac{\partial^2 d_{ij}}{\partial\theta^k \partial\theta^l}$ and $a_{ij,kl}=a_{ji,kl}=a_{ij,lk}=a_{kl,ij}$, and $a_{ij,kl}+a_{ik,jl}+a_{il,jk}=0$. Solving the geodesic equation above for $\delta_{t}$ yields
\begin{equation}
\delta_{t} = \delta - t h - t Q(\delta) h
\end{equation}
where $Q(\delta)$ is the quadratic form in \eqref{quadratic form}, $\sum_{kl} a_{ij,kl} \delta^k \delta^l$. The parallel transport of $\delta_{t}$ along the geodesic curve $\Exp_{\theta}th$ equals $\delta_{t} (t) = \delta_{t} + O (t^2)$ by Taylor expansion of $\delta_t(t)$ with respect to $\delta_{t}$ and the fact that covariant derivative of $\delta_t(t)$ along the curve vanishes.
\end{proof}
Lemma~\ref{lem:covariant derivative} will be used in the proof of Lemma~\ref{lem:Un} in Appendix~\ref{app:thm par} later.
\begin{lemma}
\label{lem:linearity of curvature}
For a $p$-dimensional manifold $(\bbM, d)$, the operator $\calR (\mathbb{C})$ defined in Definition~\ref{def:pop curvature} is linear in $\mathbb{C}$.
\end{lemma}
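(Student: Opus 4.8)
The plan is to leverage the explicit coordinate representation already obtained in the proof of Lemma~\ref{rem:pop curvature} and to observe that the dependence on $\mathbb{C}$ there is visibly linear, so that only a polarization argument is needed to pass from quadratic forms to the operator itself. In other words, the bulk of the work has implicitly been done when computing the normal-coordinate formula for the population curvature form, and the present lemma is essentially a bookkeeping consequence.

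First I would fix an orthonormal frame giving normal coordinates at $\psi (\theta)$, exactly as in the proofs of Lemma~\ref{rem:pop curvature} and Lemma~\ref{lem:covariant derivative}. In these coordinates the curvature-type coefficients $a_{ij,kl} = \frac{1}{2} \partial^2_{kl} d_{ij}$ are fixed scalars determined solely by the second derivatives of the metric at $\psi (\theta)$; crucially, they do not depend on the operator $\mathbb{C}$. For any $\nu \in \T_{\psi (\theta)} \Psi$ with components $\nu^i$, the defining quadratic form reads
\begin{equation*}
\langle \calR (\mathbb{C}) \nu, \nu \rangle = -3 \sum_{ij, kl} a_{ij, kl} \, \mathbb{C}_{kl} \, \nu^i \nu^j,
\end{equation*}
where $\mathbb{C}_{kl}$ are the entries of $\mathbb{C}$. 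Reading the right-hand side as a function of $\mathbb{C}$ with $\nu$ held fixed, it is manifestly linear in the entries $\mathbb{C}_{kl}$: for scalars $\alpha, \beta$ and operators $\mathbb{C}_1, \mathbb{C}_2$,
\begin{equation*}
\langle \calR (\alpha \mathbb{C}_1 + \beta \mathbb{C}_2) \nu, \nu \rangle = \alpha \langle \calR (\mathbb{C}_1) \nu, \nu \rangle + \beta \langle \calR (\mathbb{C}_2) \nu, \nu \rangle,
\end{equation*}
for every $\nu \in \T_{\psi (\theta)} \Psi$.

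It then remains to lift this identity from quadratic forms to the operators themselves. Since $\calR (\mathbb{C})$ is symmetric (as recorded in the proof of Lemma~\ref{rem:pop curvature}) and $\T_{\psi (\theta)} \Psi$ is a finite-dimensional real inner product space, the polarization identity shows that a symmetric operator is uniquely determined by its associated quadratic form $\nu \mapsto \langle \calR (\mathbb{C}) \nu, \nu \rangle$. Applying polarization to both sides of the displayed equality, the equality of quadratic forms for all $\nu$ forces the equality of the corresponding symmetric bilinear forms, hence of the operators, so that $\calR (\alpha \mathbb{C}_1 + \beta \mathbb{C}_2) = \alpha \calR (\mathbb{C}_1) + \beta \calR (\mathbb{C}_2)$, which is the claimed linearity.

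The only genuine point to check -- and the place where I would expect any subtlety to hide -- is that the coefficients $a_{ij, kl}$ really are independent of $\mathbb{C}$ and that the averaging over $\delta$ collapses to the covariance entries $\mathbb{C}_{kl}$; both are already secured in the proof of Lemma~\ref{rem:pop curvature}, precisely because the curvature expression is quadratic in $\delta$. One should also note for completeness that the value $\langle \calR (\mathbb{C}) \nu, \nu \rangle$ is coordinate-free even though the intermediate computation is carried out in normal coordinates, so the induced operator $\calR (\mathbb{C})$ is well defined. Given Lemma~\ref{rem:pop curvature}, the present statement is therefore essentially immediate, requiring no further geometric estimation.
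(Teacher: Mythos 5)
Your proof is correct and takes essentially the same approach as the paper: both rest on the normal-coordinate representation with $\mathbb{C}$-independent coefficients $a_{ij,kl}$ from Lemmas~\ref{rem:pop curvature} and~\ref{lem:covariant derivative}, from which linearity in $\mathbb{C}$ is immediate. The only (harmless) difference is that the paper writes the operator entries directly as $\calR (\mathbb{C})_{i,j} = -3 \sum_{k,l} a_{i,j}^{k,l} \mathbb{C}_{k,l}$ and checks linearity entry-wise, whereas you start from the quadratic form and add a polarization step, using the symmetry of $\calR (\mathbb{C})$, to lift the identity to the operator level.
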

\begin{proof}
The statement of the lemma can be shown by writing $\calR (\mathbb{C})$ explicitly in coordinate form: for $i, j \in \{1, \cdots, p\}$,
\begin{align*}
\calR (\mathbb{C})_{i, j} = - 3\sum_{k,l} a_{i, j}^{k, l} \mathbb{C}_{k, l}
\end{align*}
where $a_{i, j}^{k, l}$ is given in Lemma~\ref{lem:covariant derivative}. Then for any two $\mathbb{C}_{1}$ and $\mathbb{C}_{2}$ and any two real numbers $c_{1}$ and $c_{2}$, we have
\begin{equation*}
\begin{split}
\calR (c_{1} \mathbb{C}_{1} + c_{2} \mathbb{C}_{2})_{i, j} & = - 3 a_{i, j}^{k, l} \left( c_{1} \mathbb{C}_{1} + c_{2} \mathbb{C}_{2} \right)_{k, l} \\&= - c_{1} 3 a_{i, j}^{k, l} (\mathbb{C}_{1})_{k, l} - c_{2} 3 a_{i, j}^{k, l} (\mathbb{C}_{2})_{k, l} \\
& = c_{1} \calR (\mathbb{C}_{1})_{i, j} + c_{2} \calR (\mathbb{C}_{2})_{i, j}.
\end{split}
\end{equation*}
Thus $\calR (c_{1} \mathbb{C}_{1} + c_{2} \mathbb{C}_{2}) = c_{1} \calR (\mathbb{C}_{1}) + c_{2} \calR (\mathbb{C}_{2})$.
\end{proof}
\begin{lemma}
\label{lem:connection}
Let $(\bbM, g)$ be a manifold. Given $\mu \in \bbM$, $\mu_{h} \coloneqq \Exp_{\mu} h$ for $h \in \T_{\mu} \bbM$, we have for $h \rightarrow 0$,
\begin{equation}
\label{deriv exp expansion}
\nabla_{h} \Exp_{\mu} h (\cdot) = \id (\cdot) - \frac{1}{6} \calR_{\mu} (\cdot, h) h + O (\Vert h \Vert^{3}), \textrm{ and}
\end{equation}
\begin{equation}
\label{deriv log expansion}
\nabla_{h} \Exp^{-1}_{\mu} \mu_{h} (\cdot) = \id (\cdot) + \frac{1}{6} \calR_{\mu} (\cdot, h) h + O (\Vert h \Vert^{3}).
\end{equation}
\end{lemma}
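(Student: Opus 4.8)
The plan is to reduce both identities to the second-order behaviour of Jacobi fields along the radial geodesic $\frakc(t) \coloneqq \Exp_{\mu}(t h)$, and then to obtain the logarithmic-map expansion \eqref{deriv log expansion} from the exponential-map expansion \eqref{deriv exp expansion} by the inverse function theorem. First I would recall that $\nabla_{h}\Exp_{\mu}h$ is computed through Jacobi fields: fixing $w \in \T_{\mu}\bbM$ and setting $f(s,t) \coloneqq \Exp_{\mu}(t(h + s w))$, each curve $t \mapsto f(s,t)$ is a geodesic, so $J(t) \coloneqq \partial_{s} f(s,t)|_{s=0}$ is the Jacobi field along $\frakc$ with $J(0) = 0$ and $\nabla_{\dot{\frakc}} J(0) = w$, and one has $\nabla_{h}\Exp_{\mu}h(w) = J(1)$. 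Since $\frakc$ is a geodesic, $\dot{\frakc}$ is parallel along $\frakc$ with $\dot{\frakc}(0) = h$, and with the curvature convention fixed in \eqref{curvature tensor field} the field $J$ solves the Jacobi equation $\nabla_{\dot{\frakc}}\nabla_{\dot{\frakc}} J + \calR(J, \dot{\frakc})\dot{\frakc} = 0$.

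Next I would solve this ODE perturbatively in the parallel frame along $\frakc$, in which covariant derivatives become ordinary $t$-derivatives and $\dot{\frakc} \equiv h$ is constant. The curvature-free solution is $J_{0}(t) = t w$; inserting it into the curvature term gives $\nabla_{\dot{\frakc}}\nabla_{\dot{\frakc}} J_{1} = -\calR_{\mu}(t w, h) h + O(\Vert h \Vert^{3})$, whose solution with vanishing initial data is $J_{1}(t) = -\frac{t^{3}}{6}\calR_{\mu}(w, h)h + O(\Vert h \Vert^{3})$. Evaluating at $t = 1$ yields $\nabla_{h}\Exp_{\mu}h(w) = w - \frac{1}{6}\calR_{\mu}(w, h)h + O(\Vert h \Vert^{3})$, which is exactly the content of \eqref{deriv exp expansion} once the curvature operator $w \mapsto \calR_{\mu}(w,h)h$ is read through the antisymmetry of $\calR$ in its first two arguments; here all tensors are identified with $\T_{\mu}\bbM$ via parallel transport along $\frakc$, which is what makes the comparison with $\id$ meaningful.

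For \eqref{deriv log expansion} I would use that $\Exp_{\mu}^{-1}$ is the local inverse of $\Exp_{\mu}$ near $\mu$, so by the chain rule $\nabla_{h}\Exp_{\mu}^{-1}\mu_{h} = (\nabla_{h}\Exp_{\mu}h)^{-1}$ as linear maps between the (parallel-transport-identified) tangent spaces. Writing $\nabla_{h}\Exp_{\mu}h = \id - A$ with $A(w) = \frac{1}{6}\calR_{\mu}(w,h)h + O(\Vert h \Vert^{3}) = O(\Vert h \Vert^{2})$, the Neumann series gives $(\id - A)^{-1} = \id + A + O(\Vert A \Vert^{2}) = \id + \frac{1}{6}\calR_{\mu}(w,h)h + O(\Vert h \Vert^{3})$, since $\Vert A \Vert^{2} = O(\Vert h \Vert^{4})$ is absorbed into the remainder. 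This is \eqref{deriv log expansion}, with the opposite sign of the curvature correction, exactly as the inverse function theorem predicts.

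The main obstacle will be making the perturbative solution of the Jacobi equation rigorous with a genuinely uniform $O(\Vert h \Vert^{3})$ remainder. This requires expanding the curvature along $\frakc$ as $\calR_{\frakc(t)} = \calR_{\mu} + t\,\nabla_{\dot{\frakc}}\calR_{\mu} + \cdots$ in the parallel frame and verifying that the first neglected contribution, which comes from $\nabla\calR$ and enters at order $t^{4}$, is indeed of order $\Vert h \Vert^{3}$ uniformly on a neighbourhood of $\mu$; this uses smoothness and local boundedness of $\calR$ and $\nabla\calR$. Care is also needed to track the implicit parallel-transport identification of $\T_{\mu_{h}}\bbM$ with $\T_{\mu}\bbM$ throughout, since both the Jacobi field $J(1)$ and the inverse differential naturally live in the shifted tangent space.
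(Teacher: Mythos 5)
Your proposal is essentially the paper's own proof: the paper also realizes $\nabla_{h}\Exp_{\mu}h\,(w)$ as the endpoint value $J(1)$ of the Jacobi field along $t \mapsto \Exp_{\mu}(t h)$ with $J(0)=0$, solves the Jacobi equation perturbatively to get $J(t) = t w - \tfrac{t^{3}}{6}\calR_{\mu}(\cdot,\cdot)\,h + O(t^{4}\Vert h\Vert^{3})$ (with the $O(t^4)$ term coming from $\nabla\calR$, exactly as you note), and then declares \eqref{deriv log expansion} an ``immediate consequence'' of \eqref{deriv exp expansion} --- your Neumann-series inversion $(\id - A)^{-1} = \id + A + O(\Vert A\Vert^{2})$ is precisely the argument the paper leaves implicit, and spelling it out is a genuine improvement.

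One point needs fixing, however: your reconciliation of the sign via ``antisymmetry of $\calR$ in its first two arguments'' does not work as stated. You correctly derive $J(1) = w - \tfrac{1}{6}\calR_{\mu}(w,h)h + O(\Vert h\Vert^{3})$, which is the standard expansion under the curvature convention \eqref{curvature tensor field} (this matches Lee, whose Theorem 10.1 the paper cites, with Jacobi equation $\nabla_{\dot{\frakc}}^{2}J + \calR(J,\dot{\frakc})\dot{\frakc} = 0$). But the statement of the lemma has $-\tfrac{1}{6}\calR_{\mu}(h,w)h$, and antisymmetry gives $\calR_{\mu}(w,h)h = -\calR_{\mu}(h,w)h$, so your expression and the statement's differ by a sign rather than coincide; the discrepancy traces to the paper's proof writing the Jacobi equation with the arguments transposed, as $\calR(\dot{\gamma}, J)\dot{\gamma}$. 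Since the curvature correction is $O(\Vert h\Vert^{2})$ under either ordering and every downstream use (e.g., in the proof of Theorem~\ref{thm:calculus of IF, parametric}) absorbs it into a negligible remainder, nothing breaks asymptotically --- but you should flag the convention mismatch explicitly rather than assert that antisymmetry makes the two forms agree.
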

\begin{proof}
Let $\gamma_{\delta} (t)$ denote family of geodesics ended at $\gamma_{\delta} (1) = \mu_{h}$ and initialized at $\gamma_{\delta} (0) = \mu$ with initial velocities: $\dot{\gamma}_{\delta} (0) = h + \delta \cdot v$, for some $v \in \T_{\mu} \bbM$. The first variation of $\gamma_{\delta} (t)$ with respect to $\delta$ defines a Jacobi field as follows
\begin{align*}
J (t) \coloneqq \left.\frac{\partial}{\partial \delta} \gamma_{\delta} (t) \right|_{\delta = 0}.
\end{align*}
It is well known that $J$ solves the following Jacobi equation with boundary conditions (cf. Theorem 10.1 of \cite{lee2018introduction}):
\begin{equation}
\label{Jacobi ODE}
\begin{split}
\ddot{J} + \calR (\dot{\gamma}, J) \dot{\gamma} = 0, \textrm{ s.t. } J (0) = 0 \textrm{ and } \dot{J} (0) = v.
\end{split}
\end{equation}
Here the boundary conditions are enforced based on how the geodesics are chosen and $\dot{J}$ and $\ddot{J}$ respectively denote the first and second derivatives of $J$ w.r.t. argument $t$.
By directly matching the ODE \eqref{Jacobi ODE} together with boundary conditions, $J (t)$ can be approximated as follows when $t$ is bounded and $h$ is small.
\begin{align*}
J (t) = t \cdot v - \frac{t^{3}}{6} \calR_{\mu} (v, h) h + O (t^{4} h^{3}),
\end{align*}
where the factor by $h^{3}$ arises from taking derivative of the ODE \eqref{Jacobi ODE} w.r.t. $t$, resulting in the covariant derivative of the curvature tensor $\nabla \calR_{\mu} (h, v, h, h)$.
Setting $t = 1$, we obtain
\begin{align*}
J (1) = v - \frac{1}{6} \calR_{\mu} (v, h) h + O (\Vert h \Vert^{3}).
\end{align*}
By definition, the Jacobi field describes the endpoint variation of the perturbed geodesics. Since $\nabla \Exp_{\mu} h \cdot v = \frac{\partial}{\partial \delta} \Exp_{\mu}(h + \delta \cdot v) \left.\right|_{\delta = 0}$ and $J(t) = \left. \frac{\partial}{\partial \delta} \gamma_{\delta} (t) \right|_{\delta = 0}$, we have $J (1) \equiv \nabla \Exp_{\mu} h \cdot v$ so the first conclusion \eqref{deriv exp expansion} holds. The second conclusion \eqref{deriv log expansion} is an immediate consequence of \eqref{deriv exp expansion}.
\end{proof}
\subsection{Technical Details for Results in Section~\ref{sec:intro}}
\label{app:intro}
\subsubsection{Regularity of the Sample \Frechet{} Mean and Irregularity of Hodges' Superefficient Estimator for the \Frechet{} Mean}
\label{app:frechet mean}
\leavevmode
\begin{proof}[Proof of Example~\ref{ex:riemannian gaussian}]\leavevmode
Let 
\begin{align*}
\mu \coloneqq \arg \min_{m \in \bbM} \E \left[ \dfrac{1}{2} d^2 (X, m) \right]    
\end{align*} 
denote the unique population \Frechet{} mean under the target probability distribution $\P$ defined over the manifold $(\bbM, d)$. To emphasize the connection between $\P$ and $\mu$, we denote $\P$ as $\P_{\mu}$ (similar for the corresponding expectation operator) when it does not lead to ambiguity.
Given $n$ i.i.d. samples $X_1, \cdots, X_n \sim \P$, by the CLT for the sample \Frechet{} mean estimator $\hat{\mu}_{n}$ \cite{bhattacharya2005large, kendall2011limit, hotz2024central}, we have 
\begin{align*}
\sqrt{n} \Exp_{\mu}^{-1} (\hat{\mu}_n) \rightsquigarrow_{\P_{\mu}} \calN \left( 0, \bbV_{\mu} \right),
\end{align*}
where $\bbV_{\mu} \coloneqq \E^{-1} [\nabla \Exp_{\mu}^{-1} (X)] \Sigma_{\mu} \E^{-1} [\nabla \Exp_{\mu}^{-1} (X)]$, and $\Sigma_{\mu} \coloneqq \E [\Exp_{\mu}^{-1}(X) \otimes \Exp_{\mu}^{-1} (X)]$.
Let $\mu_{n, h} \coloneqq \Exp_{\mu} \left( \frac{h}{\sqrt{n}} \right)$ for every $h \in \T_{\mu} \bbM$, and denote the corresponding probability distribution as $\P_{\mu_{n, h}}$. Our goal is then to show that
\begin{align*}
\Pi_{\mu_{n, h}}^{\mu} \sqrt{n} \Exp_{\mu_{n, h}}^{-1} (\hat{\mu}_n) \rightsquigarrow_{\P_{\mu_{n, h}}} \calN \left( 0, \bbV_{\mu} \right).
\end{align*}
Equivalently, we are left to prove the following assertion:
\begin{equation}
\label{variance limit}
\lim_{n \rightarrow \infty} \Pi_{\mu_{n, h}}^{\mu} \bbV_{\mu_{n, h}} = \bbV_{\mu}.
\end{equation}
Here $\bbV_{\mu_{n, h}}$ is defined as $\bbV_{\mu}$ with $\mu$ replaced by $\mu_{n, h}$ verbatim. Using the property of the parallel transport map, we can distribute $\Pi_{\mu_{n, h}}^{\mu}$ to each of the terms in $\bbV_{\mu_{n, h}}$:
\begin{align*}
& \Pi_{\mu_{n, h}}^{\mu} \bbV_{\mu_{n, h}} = \Pi_{\mu_{n, h}}^{\mu} \E_{\mu_{n, h}}^{-1} \nabla \Exp_{\mu_{n, h}}^{-1} (X) \Pi_{\mu_{n, h}}^{\mu} \Sigma_{\mu_{n, h}} \\
& \quad \times \Pi_{\mu_{n, h}}^{\mu} \E_{\mu_{n, h}}^{-1} [\nabla \Exp_{\mu_{n, h}}^{-1} (X)].
\end{align*}
Further, by Lemma~1 of \cite{smith2005covariance}, we have, for any $\mu \in \bbM$,
\begin{align*}
- \E [\nabla \Exp_{\mu}^{-1} (X)] = \bbI - \frac{1}{3} \calR \left( \Sigma_{\mu} \right).
\end{align*}
Then combining the above two observations, we have
\begin{align*}
& \Pi_{\mu_{n, h}}^{\mu} \bbV_{\mu_{n, h}} = \Pi_{\mu_{n, h}}^{\mu} \Big\{ \bbI - \frac{1}{3} \calR (\Sigma_{\mu_{n, h}}) \Big\}^{-1} \Pi_{\mu_{n, h}}^{\mu} \\
& \quad \times \Sigma_{\mu_{n, h}} \Pi_{\mu_{n, h}}^{\mu} \Big\{ \bbI - \frac{1}{3} \calR (\Sigma_{\mu_{n, h}}) \Big\}^{-1}.
\end{align*}
We first analyze the limit of the middle term in the above display, our object is to prove the following equation: 
\begin{equation*}
\begin{split}
&\lim_{n \rightarrow \infty} \Pi_{\mu_{n, h}}^{\mu} \E [\Exp_{\mu_{n, h}}^{-1}(X') \otimes \Exp_{\mu_{n, h}}^{-1} (X')] \\&= \E [\Exp_{\mu}^{-1} (X) \otimes \Exp_{\mu}^{-1} (X)],
\end{split}
\end{equation*}
where $X$ is the random variable following $\P_{\mu}$ and $X'$ is the random variable following $\P_{\mu_{n, h}}$. Equivalently, 
\begin{align*}
\lim_{n \rightarrow \infty} \Pi_{\mu_{n, h}}^{\mu} \Sigma_{\mu_{n, h}} = \Sigma_{\mu}.
\end{align*}
By Taylor expansion and parallel transport, we have
\begin{equation*}
\begin{split}
\Pi_{\mu_{n, h}}^{\mu} (\Exp_{\mu_{n, h}}^{-1}(X')) &= \Exp_{\mu}^{-1} (X') \\&+ \nabla_{\Exp_{\mu}^{-1} (\mu_{n, h})} \Exp_{\mu}^{-1} (X') + \calR (\mu, \mu_{n, h}),
\end{split}
\end{equation*}
where $\Vert \calR (\mu, \mu_{n, h}) \Vert = o (g (\mu, \mu_{n, h}))$ and $\nabla_{\Exp_{\mu}^{-1} (\mu_{n, h})} \Exp_{\mu}^{-1} (X') = O (n^{- 1 / 2})$. Because of the properties of parallel transport and smoothness of Riemannian manifold,
\begin{equation*}
\begin{split}
&\lim_{n \rightarrow \infty} \E [\Pi_{\mu_{n, h}}^{\mu} \Exp_{\mu_{n, h}}^{-1} (X') \otimes \Pi_{\mu_{n, h}}^{\mu} \Exp_{\mu_{n, h}}^{-1} (X')] \\&= \lim_{n \rightarrow \infty} \Pi_{\mu_{n, h}}^{\mu} \E [\Exp_{\mu_{n, h}}^{-1} (X') \otimes \Exp_{\mu_{n, h}}^{-1} (X')].
\end{split}
\end{equation*}
Based on the above results, we obtain
\begin{equation*}
\begin{split}
&\lim_{n \rightarrow \infty} \E [\Exp_{\mu}^{-1} (X') \otimes \Exp_{\mu}^{-1} (X')] \\&= \lim_{n \rightarrow \infty} \Pi_{\mu_{n, h}}^{\mu} \E [\Exp_{\mu_{n, h}}^{-1} (X') \otimes \Exp_{\mu_{n, h}}^{-1} (X')].
\end{split}
\end{equation*}
By the assumption that the probability measure $\P_{\mu}$, $\mu \in \bbM$, has a continuous probability density function $\p_{\mu}$, we have that
\begin{equation*}
\begin{split}
&\lim_{n \rightarrow \infty} \E [\Exp_{\mu}^{-1} (X') \otimes \Exp_{\mu}^{-1} (X')] \\&= \E [\Exp_{\mu}^{-1} (X) \otimes \Exp_{\mu}^{-1} (X)].
\end{split}
\end{equation*}
This is because $\bbM$ is smooth and locally compact, the tangent vector $\Exp_{\mu}^{-1} (X')$ corresponding to $\p_{\mu_{n, h}}$ is also bounded, and based on the generalization of Bounded convergence theorem or Dominated convergence theorem, we can exchange the limit and the integral.
Since $\lim_{n \rightarrow \infty} \Pi_{\mu_{n, h}}^{\mu} \Sigma_{\mu_{n, h}} = \Sigma_{\mu}$, and by normal coordinates, the matrix $\calR (\Sigma_{\mu})$ depends linearly on the covariance matrix $\Sigma_{\mu}$, which is shown in Lemma 1 of \cite{smith2005covariance}:
\begin{equation*}
\begin{split}
\lim_{n \rightarrow \infty} \Pi_{\mu_{n, h}}^{\mu} \Big\{ \bbI - \frac{1}{3} \calR (\Sigma_{\mu_{n, h}}) \Big\}^{-1}& = \Big\{ \bbI - \frac{1}{3} \calR (\Sigma_{\mu}) \Big\}^{-1} \\&\equiv \E [\nabla \Exp_{\mu}^{-1} (X)].
\end{split}
\end{equation*}
Therefore, the sample \Frechet{} mean is a regular estimator of population \Frechet{} mean.
\end{proof}
\begin{proof}[Proof of Example~\ref{hodges}]
Based on the construction of the estimator, denoted by $\hat{\mu}_{n, \Hodges}$, the target parameter is the population Frech\'{e}t mean $\mu$. For any given tangent vector $h \in \T_{\mu} \bbM$, we let $\mu' \coloneqq \Exp_{\mu} (\frac{h}{\sqrt{n}})$. As in the above proof, we obtain 
\begin{equation}
\sqrt{n} \Exp_{\mu}^{-1} \hat{\mu}_{n} \rightsquigarrow_{\P} \calN (0, \bbV_{\mu}).
\end{equation}
Consequently, we have
\begin{equation}
\begin{split}
& \sqrt{n} \Exp_{\mu'}^{-1} \hat{\mu}_{n, \Hodges} \\
& = \sqrt{n} \Exp_{\mu'}^{-1} \hat{\mu}_{n} \cdot \mathbb{1} \{d (\hat{\mu}_{n}, \mu) > n^{-\frac{1}{4}}\} \\
& \quad + \sqrt{n} \Exp_{\mu'}^{-1} \mu \cdot \mathbb{1} \{d (\hat{\mu}_{n}, \mu) \leq n^{-\frac{1}{4}}\}.
\end{split}
\end{equation}
By Slutsky's theorem and the fact that the term $\sqrt{n} \Exp_{\mu'}^{-1} \mu \mathbb{1} \{d (\hat{\mu}_{n}, \mu) \leq n^{-\frac{1}{4}}\}$ depends on the choice of $h \in \T_{\mu} \bbM$ as $n \rightarrow \infty$, thus the limiting distribution of $\sqrt{n} \Exp_{\mu'}^{-1} \hat{\mu}_{n, \Hodges}$ depends on the perturbation with respect to the target parameter, it is hence not regular according to Definition~\ref{def:riemann regular}.
\end{proof}
\subsection{Technical Details for Results in Section~\ref{sec:par}}
\label{app:par}
\subsubsection{Proof of Proposition~\ref{prop:lan}}
\label{app:prop lan}
\leavevmode
\begin{proof}
    We first use the abbreviations $\p_n$, $\p$ and $g$ for $\p (\cdot; \theta_{n, h})$, $\p(\cdot;\theta)$, and $\s (\cdot; \theta)(h)$, respectively. By \eqref{dqm} the sequence $\sqrt{n}(\sqrt{\p_n}-\sqrt{\p})$ converges in quadratic mean to $\frac{1}{2}g\sqrt{\p}$. This implies that the sequence $\sqrt{\p_n}$ converges in quadratic mean to $\sqrt{\p}$. By the continuity of the inner product, $\E g=\int \frac{1}{2} g \sqrt{\p} 2 \sqrt{\p} \diff x=\lim \int \sqrt{n}\left(\sqrt{\p_n}-\sqrt{\p}\right)\left(\sqrt{\p_n}+\sqrt{\p}\right) \diff x .$ Thus $\E g=0$.
    The random variable $W_{ni}=2\left[\sqrt{\p_n / \p}\left(X_i\right)-1\right]$ is with $\P$-probability $1$ well defined. By \eqref{dqm}, 
\begin{equation}
\label{varandmean}
\begin{split}
& \ \Var \Big( \sum_{i=1}^n W_{ni} - \frac{1}{\sqrt{n}} \sum_{i = 1}^{n} g (X_i) \Big) \\
\leq & \ \E(\sqrt{n} W_{ni}-g(X_i))^2 \to 0, \\
& \ \E \sum_{i=1}^n W_{ni}=2n \left(\int \sqrt{\p_n}\sqrt{\p}\diff x-1\right)\\
= & - n\int (\sqrt{\p_n}-\sqrt{\p})^2\diff x\rightarrow -\frac{1}{4}\E g^2.
\end{split}
\end{equation}
Here $\E g^2=\bbG_{\theta} (h, h)$ by the definitions of $g$ and $\bbG_{\theta}$. If both the means and variances of a sequence of random variables converge to zero, then the sequence converges to zero in probability. Therefore, combining the preceding pair of displayed equations, we find 
\begin{equation}
\label{find}
\sum_{i=1}^n W_{ni}=\frac{1}{\sqrt{n}}\sum_{i=1}^ng(X_i)-\frac{1}{4}\E g^2+o_{\P} (1).
\end{equation}
Next, we express the log-likelihood ratio in $\sum_{i=1}^n W_{ni}$ through a Taylor expansion of the logarithm. If we write $\log (1 + x) = x - x^{2} / 2 + x^{2} r (2x)$, then $r (x) \rightarrow 0$ as $x \rightarrow 0$, and 
\begin{equation}
\label{expansion}
\begin{split}
& \log \prod_{i=1}^n \frac{\p_n}{\p}\left(X_i\right) =2 \sum_{i=1}^n \log \left(1+\frac{1}{2} W_{n i}\right) \\
& =\sum_{i=1}^n W_{n i}-\frac{1}{4} \sum_{i=1}^n W_{n i}^2+\frac{1}{2} \sum_{i=1}^n W_{n i}^2 r \left(W_{n i}\right).
\end{split}
\end{equation}
As a consequence of the right side of \eqref{varandmean}, it is possible to write $n W_{n i}^2=g^2(X_i)+A_{ni}$ for random variable $A_{ni}$ such that $\E\vert A_{ni}\vert>0$. The averages $\bar{A}_n$ converge in mean and therefore in probability to zero. Combination with the law of large numbers yields $\sum_{i=1}^n W_{n i}^2 = n^{-1} \sum_{i = 1}^{n} g^{2} (X_{i}) +\bar{A}_n \xrightarrow{\P} \E g^2 (X)$. By the triangle inequality followed by Markov's inequality, 
\begin{align*}
& n \P (|W_{n i}| > \varepsilon \sqrt{2}) \leq n \P (g^2 (X_i) > n \varepsilon^2) + n \P (|A_{n i}| > n \varepsilon^2) \\ 
& \leq \varepsilon^{-2} \E [g^2 (X) \mathbb{1} \{g^{2} (X) > n \varepsilon^{2}\}] + \varepsilon^{-2} \E |A_{n i}| \to 0. 
\end{align*}
The left hand side of the above display is an upper bound for $\P (\max _{1 \leq i \leq n} |W_{n i}|>\varepsilon \sqrt{2})$. Thus the sequence $\max _{1 \leq i \leq n}\left|W_{n i}\right|$ converges to zero in probability. By the property of the function $r$, the sequence $\max _{1 \leq i \leq n}\left|r(W_{n i})\right|$ converges in probability to zero as well. The last term on the right in \eqref{expansion} is bounded by $\max _{1 \leq i \leq n}\left|r(W_{n i})\right|\sum_{i=1}^n W_{n i}^2$. Thus it is $o_\P(1) O_\P(1) = o_\P (1)$. Combining them, we obtain that $\log \prod_{i=1}^n \frac{\p_n}{\p}\left(X_i\right)=\sum_{i=1}^n W_{n i}-\frac{1}{4} \E g^2+o_\P(1)$. Together with \eqref{find} this yields the proposition.
\end{proof}
\subsubsection{Proof of Theorem~\ref{thm:par}}
\label{app:thm par}
\leavevmode
In the course of the proof, we use the following technical results. 
\begin{lemma}[Prokhorov's theorem]
\label{thm:prokhorov}
Suppose that $(\bbM, g)$ is a metric space equipped with its Borel $\sigma$-field $\calM$, and let $\calP$ be a collection of probability measures on $(\bbM, \calM)$. For any $\P \in \calP$, if $\P$ is tight, it is relatively compact; in addition, suppose that $(\bbM, g)$ is separable and complete, if $\P$ is relatively compact, it is tight.
\end{lemma}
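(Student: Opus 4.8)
The statement is the classical theorem of Prokhorov, so the plan is to establish its two implications separately; both are standard, and I would ultimately point to a textbook treatment (e.g. Billingsley or van der Vaart and Wellner). If a self-contained argument is wanted, a sketch proceeds as follows. Throughout I read the hypotheses as properties of the collection $\calP$ (or of any subfamily of it), since the literal single-measure reading is vacuous: tightness means that for every $\eps > 0$ there is a compact $K \subseteq \bbM$ with $\P(K) > 1 - \eps$ uniformly over $\P \in \calP$, and relative compactness means that every sequence drawn from $\calP$ admits a weakly convergent subsequence whose limit is again a probability measure on $(\bbM, \calM)$.

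For the first implication (tightness $\Rightarrow$ relative compactness) I would first dispatch the case in which $\bbM$ is itself compact. There $C(\bbM)$ is a separable Banach space, and by the Riesz representation theorem probability measures embed into the closed unit ball of its dual $C(\bbM)^{\ast}$; Banach--Alaoglu makes this ball weak-$\ast$ compact, and separability upgrades this to weak-$\ast$ sequential compactness, so every sequence of probability measures has a weak-$\ast$ convergent subsequence whose limit one checks to be a probability measure, i.e. weak convergence. To pass to a general tight family, I would invoke tightness to produce an increasing sequence of compacts $K_{m}$ with $\sup_{\P} \P(K_{m}^{c}) < 1/m$, so that the mass concentrates on the $\sigma$-compact set $\bigcup_{m} K_{m}$; restricting the measures to each $K_{m}$, applying the compact case, and running a diagonal argument over $m$ yields a subsequence along which the restrictions converge, and tightness glues these partial limits into a single limiting probability measure.

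For the converse (relative compactness $\Rightarrow$ tightness when $\bbM$ is separable and complete) I would argue by contradiction. If $\calP$ is not tight, there is $\eps > 0$ such that no compact set carries mass exceeding $1 - \eps$ uniformly over $\calP$. Using separability, cover $\bbM$ by countably many balls of radius $1/k$ for each $k$; completeness ensures that a set lying within $\eps$-mass of a finite union of such balls for every $k$ has totally bounded, hence relatively compact, closure. Negating tightness then lets me select, for each $k$, finitely many balls missing too much mass for some $\P_{k} \in \calP$, and to assemble a sequence $(\P_{k})$ that can have no weakly convergent subsequence, contradicting relative compactness. The completeness hypothesis enters exactly at the step converting ``totally bounded up to small mass'' into ``relatively compact support,'' while separability is what supplies the countable ball cover.

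The main obstacle is this second implication: the first direction is essentially soft functional analysis (Riesz plus Banach--Alaoglu plus a diagonal argument), whereas the converse genuinely requires the Polish structure, and the delicate point is the quantitative passage from relative compactness of the \emph{family} to a uniform-in-$\P$ tail bound on complements of totally bounded sets. Maintaining the uniformity over all of $\calP$ while extracting the contradicting sequence is where the real care is needed.
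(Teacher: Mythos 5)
The paper does not prove this lemma at all: it is imported verbatim as the classical Prokhorov theorem (in the same spirit as Lemma~\ref{lem:billingsley}, which is explicitly credited to Billingsley), and is used only as a black box in the proofs of Theorems~\ref{thm:par} and~\ref{thm:semipar convolution}. So there is no paper proof to deviate from; the question is only whether your sketch is a sound reconstruction of the textbook argument, and it essentially is. You are also right to flag the statement's phrasing: read literally, per-measure, both implications are trivial (a single Borel measure on a Polish space is tight by Ulam's theorem, and a constant sequence is trivially relatively compact), so the family-level reading you adopt is the intended one. Two points in your sketch deserve tightening if it were to be written out. First, in the direct half, after restricting to the compacts $K_m$ and diagonalizing, the partial limits $\mu_m$ are sub-probability measures that need not simply ``glue'': one must verify a consistency/monotonicity property ($\mu_m \leq \mu_{m'}$ on $K_m$ for $m \leq m'$, up to boundary-mass issues on $\partial K_m$) before defining $\mu(A) = \lim_m \mu_m(A \cap K_m)$ and checking countable additivity and that $P_n \rightsquigarrow \mu$; this is where the actual work lies, not in the Banach--Alaoglu step. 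Second, in the converse, your phrase ``can have no weakly convergent subsequence'' is right in spirit but the mechanism should be named: for the bad $\eps$ and radius $1/k$, one takes $P_n$ with $P_n(D_n) \leq 1 - \eps$ for the increasing open sets $D_n$ (finite unions of balls from the countable cover), extracts a weak limit $Q$ by relative compactness, and applies the portmanteau inequality $Q(D_n) \leq \liminf_m P_m(D_n) \leq 1 - \eps$ to conclude $Q(\bbM) \leq 1 - \eps < 1$, so no subsequential limit is a probability measure. Completeness then enters exactly where you say, converting the closed totally bounded set $\mathrm{cl}\bigl(\bigcap_k A_k\bigr)$ into a compact one. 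With these two steps made explicit, your proposal is the standard proof.
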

\begin{lemma}[Le Cam's third lemma]
\label{3rd lemma}
Let $\P_{n}$ and $\Q_{n}$ be sequences of probability measures on measurable spaces $(\Omega_{n}, \calA_{n})$, and let $X_{n}: \Omega_{n} \rightarrow \bbR^{k}$ be a sequence of random vectors of dimension $k$ with $k$ finite. Suppose that $\Q_{n} \triangleleft \P_{n}$ and 
\begin{align*}
\Big( X_n, \frac{\diff \Q_n}{\diff \P_n} \Big) \rightsquigarrow_{\P_n} (X, V).
\end{align*}
Then for any $B \in \calA_{n}$, $\L (B) \coloneqq \E \mathbb{1}_B (X) V$ defines a probability measure on $(\Omega_{n}, \calA_{n})$, and 
\begin{align*}
X_n \rightsquigarrow_{\Q_n} X^{\dag} \text{ where $X^{\dag} \sim \L$}.
\end{align*}
\end{lemma}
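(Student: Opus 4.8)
The plan is to establish $X_n \rightsquigarrow_{\Q_n} X^{\dag}$ through the portmanteau theorem, by showing that $\E_{\Q_n}[f(X_n)] \to \E[f(X) V]$ for every bounded continuous $f : \bbR^{k} \to \bbR$. Writing $W_n := \diff \Q_n / \diff \P_n$ for the Radon--Nikodym derivative of the $\P_n$-absolutely continuous part of $\Q_n$, the change-of-measure identity gives $\E_{\Q_n}[f(X_n)] = \E_{\P_n}[f(X_n) W_n] + \int f(X_n)\, \diff \Q_n^{\perp}$, where $\Q_n^{\perp}$ is the singular part. Contiguity $\Q_n \triangleleft \P_n$ is exactly what forces $\Q_n^{\perp}(\Omega_n) \to 0$ (the singular part lives on a $\P_n$-null set, whose $\Q_n$-mass vanishes by contiguity), so the remainder is $O(\|f\|_{\infty}\, \Q_n^{\perp}(\Omega_n)) = o(1)$ and it suffices to analyse $\E_{\P_n}[f(X_n) W_n]$.

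Before passing to the limit I would check that $\L$ is a bona fide probability measure. Nonnegativity $\L(B) = \E[\mathbbm{1}_B(X) V] \ge 0$ is immediate since $V \ge 0$ almost surely, being a weak limit of nonnegative likelihood ratios. Total mass one follows from the characterization of contiguity (Le Cam's first lemma): under $\Q_n \triangleleft \P_n$ together with $(X_n, W_n) \rightsquigarrow_{\P_n} (X, V)$ one has $\E[V] = 1$, so $\L(\bbR^{k}) = \E[V] = 1$. Since $X$ is $\bbR^{k}$-valued, the resulting law is automatically tight, so the existence of an honest limiting random element $X^{\dag} \sim \L$ is guaranteed (Prokhorov's theorem, the preceding lemma, being available should one later work in a more general Polish setting).

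The crux, and the main obstacle, is passing to the limit in $\E_{\P_n}[f(X_n) W_n]$: the integrand $(x, w) \mapsto f(x) w$ is continuous but unbounded in $w$, so the joint convergence $(X_n, W_n) \rightsquigarrow_{\P_n} (X, V)$ cannot be invoked directly. I would resolve this by truncation at level $M$:
\[
\E_{\P_n}[f(X_n) W_n] = \E_{\P_n}[f(X_n)(W_n \wedge M)] + \E_{\P_n}[f(X_n)(W_n - W_n \wedge M)].
\]
The first term has the bounded continuous integrand $(x, w) \mapsto f(x)(w \wedge M)$ and hence converges to $\E[f(X)(V \wedge M)]$ as $n \to \infty$; the second is bounded by $\|f\|_{\infty}\, \E_{\P_n}[W_n \mathbbm{1}_{\{W_n > M\}}]$. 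The decisive point is uniform integrability: since $\E_{\P_n}[W_n] = 1 - \Q_n^{\perp}(\Omega_n) \to 1 = \E[V]$ for the nonnegative weakly convergent sequence $W_n$, contiguity renders $\{W_n\}$ uniformly integrable under $\P_n$, so $\sup_n \E_{\P_n}[W_n \mathbbm{1}_{\{W_n > M\}}] \to 0$ as $M \to \infty$, while on the limiting side $\E[f(X)(V - V \wedge M)] \to 0$ by dominated convergence because $\E[V] = 1 < \infty$. Letting $n \to \infty$ first and then $M \to \infty$ delivers $\E_{\Q_n}[f(X_n)] \to \E[f(X) V] = \E[f(X^{\dag})]$, which is the claimed weak convergence under $\Q_n$.
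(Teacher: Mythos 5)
Your proof is correct, but note that the paper itself offers no proof of this lemma: it is imported as a classical result (essentially Theorem 6.6 in van der Vaart's \emph{Asymptotic Statistics}) and used as an ingredient in the proof of Theorem~\ref{thm:par}, so there is no internal argument to compare against — what you have written is a legitimate self-contained derivation. It also differs from the canonical textbook route, which avoids truncation altogether: for a nonnegative bounded continuous $f$, the map $(x,v) \mapsto f(x)v$ is nonnegative and continuous, so the liminf half of the portmanteau theorem gives $\liminf_n \E_{\Q_n}[f(X_n)] \geq \liminf_n \E_{\P_n}[f(X_n) W_n] \geq \E[f(X)V]$, and applying the same inequality to $\Vert f \Vert_{\infty} - f$, together with $\E[V] = 1$, yields the matching limsup bound. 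Your truncation-plus-uniform-integrability argument costs a little more bookkeeping but arguably meshes better with this paper, since the asymptotic uniform integrability you invoke — $W_n \geq 0$, $W_n \rightsquigarrow V$, and $\E_{\P_n}[W_n] = 1 - \Q_n^{\perp}(\Omega_n) \rightarrow 1 = \E[V]$ — is precisely the converse half of Lemma~\ref{lem:billingsley}, which the paper records for exactly this kind of use; your handling of the singular part via contiguity and of $\E[V]=1$ via Le Cam's first lemma is also exactly right. Two cosmetic repairs: the function $(x,w) \mapsto f(x)(w \wedge M)$ is not bounded on $\bbR^{k} \times \bbR$ (it is unbounded as $w \rightarrow -\infty$), so before invoking joint weak convergence you should replace $w \wedge M$ by $(w \vee 0) \wedge M$, which changes nothing since $W_n \geq 0$ $\P_n$-a.s.\ and $V \geq 0$ a.s.; and the lemma as printed in the paper misstates the domain of $\L$ — the set function $B \mapsto \E[\mathbbm{1}_B(X) V]$ is a probability measure on the Borel sets of $\bbR^{k}$, not on $(\Omega_n, \calA_n)$ — an error your proof implicitly, and correctly, fixes by treating $\L$ as the law of $X^{\dag}$ on $\bbR^{k}$.
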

The following lemma describes the relationship between marginal tightness and joint tightness for a sequence of probability measures defined on a metric space.
\begin{lemma}
\label{lem:joint vs. marginal tightness}
Probability measures on a product space $\bbS_1 \times \bbS_2$ are tight if and only if the two sets of marginal distributions are tight in $\bbS_1$ and $\bbS_2$, where $\bbS_1$ and $\bbS_2$ are general metric spaces.
\end{lemma}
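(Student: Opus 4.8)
The plan is to establish the two implications separately, working directly from the definition of tightness as inner approximation by compact sets. Denote the family of probability measures on $\bbS_1 \times \bbS_2$ by $\{\P_\alpha\}_{\alpha}$, let $\pi_i: \bbS_1 \times \bbS_2 \rightarrow \bbS_i$ be the coordinate projections, and write the marginals as the pushforwards $\P_\alpha^{(i)} \coloneqq \P_\alpha \circ \pi_i^{-1}$ for $i = 1, 2$. Recall that a family is tight when, for every $\veps > 0$, a single compact set captures mass at least $1 - \veps$ uniformly over the family.

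For the forward direction (joint tightness implies marginal tightness), fix $\veps > 0$ and choose a compact $K \subseteq \bbS_1 \times \bbS_2$ with $\P_\alpha(K) > 1 - \veps$ for every $\alpha$. Since each $\pi_i$ is continuous in the product metric, the image $K_i \coloneqq \pi_i(K)$ is compact in $\bbS_i$; and because $K \subseteq K_1 \times \bbS_2$, we have $\P_\alpha^{(1)}(K_1) = \P_\alpha(K_1 \times \bbS_2) \geq \P_\alpha(K) > 1 - \veps$ uniformly in $\alpha$, and symmetrically for $K_2$. Hence both marginal families are tight.

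For the reverse direction, fix $\veps > 0$ and use marginal tightness to pick compact sets $K_1 \subseteq \bbS_1$ and $K_2 \subseteq \bbS_2$ with $\P_\alpha^{(1)}(K_1) > 1 - \veps/2$ and $\P_\alpha^{(2)}(K_2) > 1 - \veps/2$ for all $\alpha$. The set $K_1 \times K_2$ is compact in the product (a finite product of compact sets is compact), and since $(K_1 \times K_2)^c = (K_1^c \times \bbS_2) \cup (\bbS_1 \times K_2^c)$, a union bound gives
\begin{equation*}
\P_\alpha\bigl((K_1 \times K_2)^c\bigr) \leq \P_\alpha(K_1^c \times \bbS_2) + \P_\alpha(\bbS_1 \times K_2^c) = \P_\alpha^{(1)}(K_1^c) + \P_\alpha^{(2)}(K_2^c) < \veps,
\end{equation*}
so $\P_\alpha(K_1 \times K_2) > 1 - \veps$ uniformly in $\alpha$, which is exactly joint tightness.

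The argument is essentially bookkeeping, so I do not expect a genuine obstacle; the only two topological inputs are that the coordinate projections are continuous (used to produce compact marginal exhausting sets in the forward direction) and that a product of two compact sets is compact in the product metric topology (used to assemble the compact exhausting set in the reverse direction). It is worth noting that neither separability nor completeness of $\bbS_1, \bbS_2$ is required for this equivalence, in contrast to the role those hypotheses play in Prokhorov's theorem (Lemma~\ref{thm:prokhorov}); the statement is purely about the finite-product structure.
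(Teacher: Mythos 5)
Your proof is correct, and it is the standard argument: projections are continuous so compact sets push forward to compact marginal sets, and a product of two compacts is compact so a union bound assembles the joint exhausting set. The paper itself states this lemma without proof (it is the classical product-space tightness fact from Billingsley), and your argument is exactly the canonical one, including the accurate observation that neither separability nor completeness of $\bbS_1, \bbS_2$ is needed here, in contrast to the converse half of Prokhorov's theorem.
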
 
\begin{lemma}
\label{lem:asymp tight}
Suppose that $(\bbM, g)$ is a separable metric space and $\P$ is a tight probability measure defined on $(\bbM, \calM)$. Let $\P_{n}$ be a sequence of probability measures defined on $(\bbM, \calM)$. If $\P_n \rightsquigarrow \P$, then $\P_n$ is asymptotically tight.
\end{lemma}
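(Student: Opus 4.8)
The plan is to deduce asymptotic tightness directly from the tightness of the limit $\P$ together with the open-set behaviour of weak convergence. Recall that the sequence $\P_{n}$ is \emph{asymptotically tight} if for every $\veps > 0$ there is a compact set $K \subseteq \bbM$ such that $\liminf_{n} \P_{n}(K^{\delta}) \geq 1 - \veps$ for every $\delta > 0$, where $K^{\delta} \coloneqq \{y \in \bbM : g(y, K) < \delta\}$ is the $\delta$-enlargement of $K$ and $g(y, K) \coloneqq \inf_{x \in K} g(y, x)$. The crucial observation is that asymptotic tightness only requires a lower bound on the mass of the \emph{open} sets $K^{\delta}$, which is precisely the regime in which $\P_{n} \rightsquigarrow \P$ supplies a usable one-sided estimate; trying to control $\P_{n}(K)$ for the compact set $K$ itself would instead need $K$ to be a continuity set, which we cannot assume.

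First I would fix $\veps > 0$ and use the tightness of $\P$ to choose a compact set $K$ with $\P(K) \geq 1 - \veps$. Since $K$ is compact (hence closed) and $K \subseteq K^{\delta}$ with $K^{\delta}$ open for every $\delta > 0$, this one set $K$ serves simultaneously for all $\delta$. Next, for fixed $\delta > 0$, I would interpolate between $K$ and $K^{\delta}$ by the bounded, continuous bump function $f_{\delta}(y) \coloneqq \max\{0, \, 1 - g(y, K)/\delta\}$; since $y \mapsto g(y, K)$ is $1$-Lipschitz, $f_{\delta}$ is continuous, and one checks $\mathbbm{1}_{K} \leq f_{\delta} \leq \mathbbm{1}_{K^{\delta}}$. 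Testing the definition of weak convergence against $f_{\delta}$ then gives
\begin{align*}
\liminf_{n} \P_{n}(K^{\delta}) \geq \liminf_{n} \int f_{\delta} \, \diff \P_{n} = \int f_{\delta} \, \diff \P \geq \P(K) \geq 1 - \veps .
\end{align*}
As $K$ was chosen independently of $\delta$, this holds for all $\delta > 0$, which is exactly asymptotic tightness. (Alternatively one may skip the bump function and invoke the open-set inequality of the Portmanteau theorem, which yields $\liminf_{n} \P_{n}(K^{\delta}) \geq \P(K^{\delta}) \geq \P(K)$ at once.)

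The main point is that there is essentially no analytic obstacle here: the content is definitional bookkeeping, and the only steps requiring care are (i) matching the precise notion of asymptotic tightness, so that it is the open enlargements $K^{\delta}$ and not $K$ whose mass is bounded below, and (ii) observing that measurability is automatic, since the $\P_{n}$ are genuine Borel probability measures and each $K^{\delta}$ is open, so no inner/outer-measure subtleties intervene. I would also note that separability of $(\bbM, g)$ is not actually used in this direction; it is recorded only for consistency with the companion statements such as Lemma~\ref{thm:prokhorov}, where the converse (relative compactness $\Rightarrow$ tightness) genuinely relies on it.
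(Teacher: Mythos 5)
Your proof is correct, and there is nothing to compare it against: the paper states this lemma without proof, treating it as a standard fact from weak-convergence theory (it is, e.g., Lemma~1.3.8 of van der Vaart and Wellner's \emph{Weak Convergence and Empirical Processes}). Your argument --- choosing one compact $K$ with $\P(K) \geq 1 - \veps$ and applying the open-set portmanteau bound (or equivalently your Lipschitz bump function $f_{\delta}$) to the enlargements $K^{\delta}$ --- is exactly the standard proof, and your side remarks (that the bound must target the open sets $K^{\delta}$ rather than $K$ itself, and that separability is not needed for this direction) are both accurate.
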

\begin{lemma}[Theorem 5.4 of \cite{billingsley2013convergence}]
\label{lem:billingsley}
Let $X_n \sim \P_{n}, n \in \mathbb{N}$ and $X \sim \P$ be, respectively, a sequence of random variables and a random variable defined on a measurable metric space $(\bbM, \calM, g)$, where $\calM$ is the Borel $\sigma$-field and $g$ is the metric. Suppose that $X_n \rightsquigarrow_{\P_{n}} X$. If $X_n$ is uniformly integrable for every $n \in \mathbb{N}$, then $\E X_n \rightarrow \E X$ as $n \rightarrow \infty$; if $X$ and $X_n$ are non-negative and integrable for every $n \in \mathbb{N}$, then $\E X_n \rightarrow \E X$ for every $n \in \mathbb{N}$ implies that $X_n$ is uniformly integrable for every $n \in \mathbb{N}$.
\end{lemma}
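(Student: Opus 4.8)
The plan is to prove both implications by the standard device of sandwiching the (unbounded) identity map between bounded continuous functions, on which the hypothesis $X_n \rightsquigarrow_{\P_n} X$ can be invoked directly, and then using (uniform) integrability to bound the discarded tails. Since the statement concerns $\E X_n$, $\E X$, non-negativity, and uniform integrability, I treat the random variables as real-valued, so that $\bbM = \bbR$ with its usual order; the general metric-space framing specializes to this case.

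\emph{First implication (uniform integrability $\Rightarrow$ $\E X_n \to \E X$).} For $M > 0$ I would set $g_M(x) \coloneqq (x \wedge M) \vee (-M)$, a bounded continuous function satisfying $|x - g_M(x)| \le |x|\,\mathbbm{1}_{\{|x| > M\}}$. First I record that uniform integrability forces $\sup_n \E|X_n| < \infty$, whence applying weak convergence to the bounded continuous functions $|x| \wedge M$ and letting $M \to \infty$ gives $\E|X| \le \sup_n \E|X_n| < \infty$, so $X$ is integrable. Then I split
\[
|\E X_n - \E X| \le \E\big[|X_n|\,\mathbbm{1}_{\{|X_n| > M\}}\big] + |\E g_M(X_n) - \E g_M(X)| + \E\big[|X|\,\mathbbm{1}_{\{|X| > M\}}\big].
\]
Given $\epsilon > 0$, uniform integrability makes the first term $< \epsilon$ uniformly in $n$ for $M$ large, and integrability of $X$ makes the third term $< \epsilon$ for $M$ large; fixing such an $M$, weak convergence sends the middle term to $0$ as $n \to \infty$ because $g_M$ is bounded continuous. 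Letting $n \to \infty$ and then $\epsilon \to 0$ yields $\E X_n \to \E X$.

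\emph{Second implication (non-negative converse).} Now $X_n, X \ge 0$ are integrable, $X_n \rightsquigarrow_{\P_n} X$, and $\E X_n \to \E X$; I must show $\{X_n\}$ is uniformly integrable, i.e. $\sup_n \E[X_n\,\mathbbm{1}_{\{X_n > K\}}] \to 0$ as $K \to \infty$. The key elementary bound is $X_n\,\mathbbm{1}_{\{X_n > 2a\}} \le 2(X_n - a)^+$, together with the identity $\E[(X_n - a)^+] = \E X_n - \E[X_n \wedge a]$. Because $x \mapsto x \wedge a$ is bounded continuous, $\E[X_n \wedge a] \to \E[X \wedge a]$; combined with $\E X_n \to \E X$ this gives $\E[(X_n - a)^+] \to \E[(X - a)^+]$ for each fixed $a$, while $\E[(X - a)^+] \downarrow 0$ as $a \to \infty$ by dominated convergence. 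To upgrade this to uniformity in $n$, I fix $\epsilon > 0$, choose $a_0$ with $\E[(X - a_0)^+] < \epsilon/4$ and then $N$ with $\E[(X_n - a_0)^+] < \epsilon/2$ for $n > N$; for the finitely many indices $n \le N$ I use integrability of each $X_n$ to pick $a_n$ with $\E[X_n\,\mathbbm{1}_{\{X_n > 2a_n\}}] < \epsilon$. Taking $a^{\ast} \coloneqq \max\{a_0, a_1, \dots, a_N\}$ then gives $\sup_n \E[X_n\,\mathbbm{1}_{\{X_n > 2a^{\ast}\}}] \le \epsilon$, establishing uniform integrability.

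\emph{Main obstacle.} The only genuinely delicate point is the uniformity over $n$ in the converse: weak convergence controls the tails only in the limit, so the truncation level that works asymptotically need not work for small $n$, and the finitely-many-indices argument (leaning on integrability of each individual $X_n$) is exactly what closes this gap. A secondary care point is establishing integrability of the limit $X$ at the outset, which I obtain from the $L^1$-boundedness implied by uniform integrability together with the portmanteau/Fatou inequality applied to $|x| \wedge M$; everything else reduces to routine approximation by the bounded continuous functions $g_M$ and $x \wedge a$, on which the weak-convergence hypothesis acts directly.
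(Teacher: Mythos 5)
Your proof is correct. Note, however, that the paper itself offers no proof of this lemma: it is imported verbatim as Theorem~5.4 of Billingsley's \emph{Convergence of Probability Measures}, so there is no in-paper argument to compare against; the relevant comparison is with Billingsley's own proof. Your argument is essentially that classical proof with one cosmetic variation: Billingsley truncates with the discontinuous map $x \mapsto x\,\mathbbm{1}_{\{|x| \le M\}}$ and must therefore choose $M$ to be a continuity point of the law of $|X|$ before invoking the mapping theorem, whereas you use the continuous clipping $g_M(x) = (x \wedge M) \vee (-M)$, which lets you apply weak convergence directly and avoids the continuity-point selection altogether --- a slightly cleaner route that buys nothing beyond avoiding that bookkeeping. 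Your converse direction, via $X_n \mathbbm{1}_{\{X_n > 2a\}} \le 2(X_n - a)^+$, the identity $\E[(X_n - a)^+] = \E X_n - \E[X_n \wedge a]$, and the finitely-many-small-indices patch to get uniformity in $n$, is likewise the standard argument, and you correctly identify that the uniformity step is the only delicate point. Two minor remarks: your specialization to real-valued variables is not a loss of generality but a necessary repair of the paper's statement, since $\E X_n$ is undefined for variables valued in a general metric space, and indeed the paper only ever applies the lemma to the real-valued variables $\vert \exp\{\rmi \langle U_n, a\rangle + W_n\} \vert = \exp\{W_n\}$ in the proof of Theorem~\ref{thm:par}; and in the converse, $x \mapsto x \wedge a$ is unbounded below on $\bbR$, so strictly you should apply weak convergence to $(x \vee 0) \wedge a$, which agrees with $X_n \wedge a$ almost surely by non-negativity --- a one-line fix that does not affect the argument.
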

We are now ready to prove Theorem~\ref{thm:par}.
\begin{proof}[Proof of Theorem~\ref{thm:par}]
Fix $\theta \in \Theta$ and we automatically also have $\psi (\theta) \in \Psi$. Let $\S_{n} (\theta) \coloneqq \frac{1}{\sqrt{n}} \sum_{i = 1}^{n} \s (X_{i}; \theta)$ be the empirical score function. By CLT, we have
\begin{align*}
\S_{n} (\theta) \rightsquigarrow_{\P} \calN \left( 0, \bbG_{\theta} \right).
\end{align*}
Let $V$ be a random variable such that $V \sim \calN \left( 0, \bbG_{\theta} \right)$.
Let
\begin{align*}
(U_n, V_n) \coloneqq (\sqrt{n} \Exp_{\psi (\theta)}^{-1} \hat{\psi}_{n}, \S_n (\theta)).
\end{align*}
$\hat{\psi}_n$ being a regular estimator sequence and the underlying statistical model $\calP \coloneqq \{\P_{\theta}: \theta \in \Theta\}$ being DQM at $\theta$, taken together, imply that $U_{n}$ and $V_{n}$, respectively, converge marginally to some random variables $U$ and $V$, with $U$ to be defined later. Hence, by Lemma~\ref{lem:joint vs. marginal tightness} and~\ref{lem:asymp tight}, $(U_n, V_n)$ is jointly asymptotically tight.
Fix a subsequence $\{n'\}$ of the original sequence indexed by $n$. By Lemma~\ref{thm:prokhorov},~\ref{lem:joint vs. marginal tightness} and~\ref{lem:asymp tight}, there exists a further subsequence $\{n''\}$ of $\{n'\}$ such that the distribution of $(U_{n''}, V_{n''})$ converges to the joint distribution of $(U, V)$. For convenience, as commonly done in the literature \cite{van2023weak}, we abuse our notation and denote the subsequence $\{n''\}$ by $\{n\}$. Now let
\begin{align*}
W_n \coloneqq \sum_{i = 1}^{n} \log \frac{\p (X_{i}; \theta_{h / \sqrt{n}})}{\p (X_{i}; \theta)}.
\end{align*}
Then the joint distribution of $(U_n, W_n)$ has the limiting distribution $(U, V (h) - \frac{1}{2} \langle \bbG_{\theta} h, h \rangle)$, which follows from $\calP$ being LAN, a consequence of Proposition~\ref{prop:lan}. Next, since $\hat{\psi}_{n}$ is a regular estimator sequence, by Definition~\ref{def:par dqm}, for all $h \in \T_\theta \Theta$, 
\begin{equation}
\label{proof regular}
\Pi_{\psi (\theta_{h / \sqrt{n}})}^{\psi (\theta)} \sqrt{n} \Exp_{\psi (\theta_{h / \sqrt{n}})}^{-1} \hat{\psi}_{n} \rightsquigarrow_{\P_{\theta_{h / \sqrt{n}}}} Z_{\psi (\theta)}.
\end{equation}
We now let $U \coloneqq Z_{\psi (\theta)}$. To avoid notation clutter, we denote $\theta' \coloneqq \theta_{h / \sqrt{n}}$ for short.
To proceed the proof, we take the following key intermediate lemma as given, the proof of which is delayed to the end of this section.
\begin{lemma}
\label{lem:Un}
Under the assumptions of Theorem~\ref{thm:par}, the following holds:
\begin{equation}
\label{Un}
U_n \rightsquigarrow_{\P_{\theta'}} U + \dot{\psi} (\theta) h.
\end{equation}
\end{lemma}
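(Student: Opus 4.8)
The plan is to reduce the claim to the regularity hypothesis of Definition~\ref{def:riemann regular} by means of a geometric expansion relating the residual measured at the true point $\psi (\theta)$ to the parallel-transported residual measured at the perturbed point $\psi (\theta') \equiv \psi (\theta_{h / \sqrt{n}})$. Write $\delta \coloneqq \Exp_{\psi (\theta)}^{-1} \hat{\psi}_{n} \in \T_{\psi (\theta)} \Psi$, so that $U_{n} = \sqrt{n}\, \delta$, and set $v \coloneqq \Exp_{\psi (\theta)}^{-1} \psi (\theta')$ together with $\eps_{h} \coloneqq \sqrt{n}\, v$. The first ingredient is that $\eps_{h}$ converges to the target shift: since $\theta' = \Exp_{\theta} (h / \sqrt{n})$, applying Lemma~\ref{lem:directional derivative} with $t = 1 / \sqrt{n}$ gives $\eps_{h} = \sqrt{n}\, \Exp_{\psi (\theta)}^{-1} \psi (\Exp_{\theta} (h / \sqrt{n})) \to \dot{\psi} (\theta) (h) = \dot{\psi} (\theta) \otimes h$, a deterministic vector in $\T_{\psi (\theta)} \Psi$; in particular $v = O (n^{-1/2})$.

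Second, I would invoke the regularity of $\hat{\psi}_{n}$ to control magnitudes under $\P_{\theta'}$. By Definition~\ref{def:riemann regular}, $\Pi_{\psi (\theta')}^{\psi (\theta)} \sqrt{n}\, \Exp_{\psi (\theta')}^{-1} \hat{\psi}_{n} \rightsquigarrow_{\P_{\theta'}} U$, so this quantity is $O_{\P_{\theta'}} (1)$; as parallel transport along the distance-minimizing geodesic is an isometry, $\sqrt{n}\, \Exp_{\psi (\theta')}^{-1} \hat{\psi}_{n} = O_{\P_{\theta'}} (1)$, and combining with the deterministic bound on $v$ one obtains $\Vert \delta \Vert = O_{\P_{\theta'}} (n^{-1/2})$.

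The crux is the geometric expansion, which is precisely the content of Lemma~\ref{lem:covariant derivative} applied on $\Psi$ with base point $\psi (\theta)$, perturbed base point $\psi (\theta')$, and residual $\delta$. That lemma yields
\[
\Pi_{\psi (\theta')}^{\psi (\theta)} \Exp_{\psi (\theta')}^{-1} \hat{\psi}_{n} = \delta - v - Q (\delta)\, v + O (\Vert v \Vert^{2}),
\]
where $Q (\delta)$ is the curvature-induced quadratic form of Lemma~\ref{lem:covariant derivative}, with operator norm $O (\Vert \delta \Vert^{2})$. Scaling by $\sqrt{n}$ and rearranging gives
\[
U_{n} = \Pi_{\psi (\theta')}^{\psi (\theta)} \sqrt{n}\, \Exp_{\psi (\theta')}^{-1} \hat{\psi}_{n} + \eps_{h} + Q (\delta)\, \eps_{h} + \sqrt{n}\, O (\Vert v \Vert^{2}).
\]
Using $\Vert \delta \Vert = O_{\P_{\theta'}} (n^{-1/2})$ and $\eps_{h} = O (1)$, the term $Q (\delta)\, \eps_{h} = O_{\P_{\theta'}} (n^{-1})$, while $\sqrt{n}\, O (\Vert v \Vert^{2}) = O (n^{-1/2})$; both are $o_{\P_{\theta'}} (1)$. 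This is exactly the vanishing of the curvature correction \eqref{diff} flagged in the main text. Combining the regularity limit $\Pi_{\psi (\theta')}^{\psi (\theta)} \sqrt{n}\, \Exp_{\psi (\theta')}^{-1} \hat{\psi}_{n} \rightsquigarrow_{\P_{\theta'}} U$ with $\eps_{h} \to \dot{\psi} (\theta) \otimes h$ through Slutsky's theorem then yields $U_{n} \rightsquigarrow_{\P_{\theta'}} U + \dot{\psi} (\theta) \otimes h$, as claimed.

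I expect the main obstacle to be justifying the expansion display rigorously — in particular verifying that the quadratic-form remainder $Q (\delta)$ and the $O (\Vert v \Vert^{2})$ term genuinely contribute only $o_{\P_{\theta'}} (1)$ after $\sqrt{n}$ scaling. There is a mild circularity in bounding $\Vert \delta \Vert$, since $Q (\delta)$ itself enters the relation between $\delta$ and the perturbed residual $\delta_{t} = \Exp_{\psi (\theta')}^{-1} \hat{\psi}_{n}$; I would resolve this by a one-step bootstrap, first extracting the crude bound $\Vert \delta \Vert = O_{\P_{\theta'}} (n^{-1/2})$ from the leading-order identity $\delta = \delta_{t} + v + o (\,\cdot\,)$, then feeding it back to control $Q (\delta)$. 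The isometry of parallel transport, together with the smoothness of $\Psi$ ensuring that $Q (\delta) = O (\Vert \delta \Vert^{2})$ uniformly in a neighborhood of $\psi (\theta)$, is what ultimately renders the curvature corrections negligible.
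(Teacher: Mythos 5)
Your proposal is correct and follows essentially the same route as the paper's proof: both hinge on the normal-coordinate expansion of Lemma~\ref{lem:covariant derivative} (the paper reaches it via the covariant-derivative Taylor step of Lemma~\ref{lem:parallel expansion}, giving \eqref{parallel taylor} and then \eqref{covariant derivative taylor}, while you apply the expansion directly), combined with the regularity limit \eqref{proof regular}, the convergence $\eps_{h} \rightarrow \dot{\psi}(\theta) h$, and Slutsky's theorem. Your explicit bound $\Vert \delta \Vert = O_{\P_{\theta'}}(n^{-1/2})$ via the isometry of parallel transport and a one-step bootstrap is a slightly more careful rendering of the paper's bare assertion that $\delta = o_{\P_{\theta'}}(1)$, but it is the same argument in substance.
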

The remaining steps of the proof follow the classical characteristic function approach closely as in \cite{van2023weak}. Recall that $\bbV_{\psi, \theta} = \dot{\psi} (\theta) \cdot \bbG_{\theta}^{-1} \cdot \dot{\psi} (\theta)^{\ast}$. Based on \eqref{Un}, we have, for any $a \in \T_{\psi (\theta)}^{\ast} \Psi$ from the cotangent space of $\Psi$ at $\psi (\theta)$,
\begin{equation}
\label{fourier}
\E_{\theta'} \left[ \exp \left\{ \rmi \langle U_n, a \rangle \right\} \right] \rightarrow \exp \left\{ \rmi \langle \dot{\psi} (\theta) h, a \rangle \right\} \E_{\theta} \left[ \exp \left\{ \rmi \langle U, a \rangle \right\} \right].
\end{equation}
On the other hand, by contiguity (by Le Cam's first lemma), 
\begin{equation}
\label{contiguity}
\E_{\theta'} \left[ \exp \left\{ \rmi \langle U_n, a \rangle \right\} \right] = \E_{\theta} \left[ \exp \left\{ \rmi \langle U_n, a \rangle + W_{n} \right\} \right] + o (1).
\end{equation}
We further observe that
\begin{equation*}
\bigl\vert \exp \{\rmi \langle U_n, a \rangle + W_n\} \bigr\vert
\equiv \exp \{W_{n}\}, n \in \mathbb{N},
\end{equation*}
where the identity simply follows from Euler's formula, is a uniformly integrable sequence of random variables by LAN and Lemma~\ref{lem:billingsley}. Combining \eqref{fourier} and \eqref{contiguity}, we conclude that, again by Lemma~\ref{lem:billingsley}, for all $h \in \T_{\theta} \Theta$ and $a \in \T_{\psi (\theta)}^{\ast} \Psi$,
\begin{equation}
\label{limit fourier}
\begin{split}
& \ \E_{\theta} \left[ \exp \left\{ \rmi \langle U, a \rangle + \langle V, h \rangle - \frac{1}{2} \langle \bbG_{\theta} h, h \rangle \right\} \right] \\
= & \ \exp \{\rmi \langle \dot{\psi} (\theta) h, a \rangle\} \cdot \E_{\theta} \left[ \exp \{\rmi \langle U, a \rangle\} \right].
\end{split}
\end{equation}
Both sides of \eqref{limit fourier}, for fixed $a$, are functions of $h$, where $h \in \T_\theta \Theta$. Take another $b \in \T_{\psi (\theta)}^{\ast} \Psi$ and set $h = \rmi \bbG_{\theta}^{-1} \cdot \dot{\psi} (\theta)^{\ast} \cdot (b - a)$. Then 
\begin{align*}
\text{LHS of } \eqref{limit fourier} & = \E_{\theta} \left[ \exp \left\{ \rmi \langle U, a \rangle + \rmi \langle \dot{\psi} (\theta) \cdot \bbG_{\theta}^{-1} \cdot V, b - a \rangle \right\} \right] \\
& \times \exp \left\{ \frac{1}{2} \langle \bbV_{\psi, \theta} \cdot (b - a), b - a \rangle \right\},
\end{align*}
whereas
\begin{align*}
\text{RHS of } \eqref{limit fourier} & = \E_{\theta} \left[ \exp \left\{ - \langle \bbV_{\psi, \theta} (b - a), a \rangle + \rmi \langle U, a \rangle \right\} \right].
\end{align*}
Combining the above results yields the following alternative expression of \eqref{limit fourier}:
\begin{align}
& \ \E_{\theta} \left[ \exp \left\{ \rmi \langle U - \dot{\psi} (\theta) \cdot \bbG_{\theta}^{-1} \cdot V, a \rangle + \rmi \langle \dot{\psi} (\theta) \cdot \bbG_{\theta}^{-1} \cdot V, b \rangle \right\} \right] \nonumber \\
= & \ \exp \left\{ - \frac{1}{2} \langle \bbV_{\psi, \theta} (a - b), a - b \rangle \right\} \label{reduced limit fourier 1} \\
& \times \E_{\theta} \left[ \exp \left\{ \langle \bbV_{\psi, \theta} (a - b), a \rangle + \rmi \langle U, a \rangle \right\} \right] \nonumber \\
= & \ \exp \left\{ \frac{1}{2} \langle \bbV_{\psi, \theta} a, a \rangle - \frac{1}{2} \langle \bbV_{\psi, \theta} b, b \rangle \right\} \E_{\theta} \left[ \exp \{\rmi \langle U, a \rangle\} \right]. \nonumber
\end{align}
Note that the above identity holds for any given initial sequence $\{n'\}$, and hence for the original sequence (which is not the redefined $\{n\}$, now a further subsequence of $\{n'\}$). Taking $b = 0$, \eqref{reduced limit fourier 1} can be further simplified to
\begin{equation}
\label{reduced limit fourier 2}
\begin{split}
& \E_{\theta} \left[ \exp \left\{ \rmi \langle U - \dot{\psi} (\theta) \cdot \bbG_{\theta}^{-1} \cdot V, a \rangle \right\} \right] \\
& = \exp \left\{ \frac{1}{2} \langle \bbV_{\psi, \theta} a, a \rangle \right\} \E_{\theta} \left[ \exp \{\rmi \langle U, a \rangle\} \right].
\end{split}
\end{equation}
Putting \eqref{reduced limit fourier 1} and \eqref{reduced limit fourier 2} together implies the following:
\begin{equation*}
\begin{split}
& \E_{\theta} \left[ \exp \left\{ \rmi \langle U - \dot{\psi} (\theta) \cdot \bbG_{\theta}^{-1} \cdot V, a \rangle + \rmi \langle \dot{\psi} (\theta) \cdot \bbG_{\theta}^{-1} \cdot V, b \rangle \right\} \right] \\
& = \E_{\theta} \left[ \exp \left\{ \rmi \langle U - \dot{\psi} (\theta) \cdot \bbG_{\theta}^{-1} \cdot V, a \rangle \right\} \right] \exp \left\{ - \frac{1}{2} \langle \bbV_{\psi, \theta} b, b \rangle \right\}.
\end{split}
\end{equation*}
By setting $a \equiv b$, we finally have
\begin{align*}
& \ \E_{\theta} \left[ \exp \left\{ \rmi \langle U, a \rangle \right\} \right] \\
= & \ \E_{\theta} \left[ \exp \left\{ \rmi \langle U - \dot{\psi} (\theta) \cdot \bbG_{\theta}^{-1} \cdot V, a \rangle \right\} \right] \\
& \times \exp \Big\{- \frac{1}{2} \langle \bbV_{\psi, \theta} a, a \rangle \Big\}.
\end{align*}
Since $\exp \left\{ - \frac{1}{2} \langle \bbV_{\psi, \theta} a, a \rangle \right\}$ is the characteristic function of a Gaussian random variable distributed as $\calN (0, \bbV_{\psi, \theta})$ independent from all the other source of randomness, the above display proves the convolution theorem for parameters in a finite-dimensional Riemannian manifold: The limiting distribution of $U_{n} = \sqrt{n} \Exp_{\psi (\theta)}^{-1} \hat{\psi}_{n}$, which we denote as $U$ here (recall that we take $Z_{\psi (\theta)} \equiv U$), can be written as the convolution of $U - \dot{\psi} (\theta) \cdot \bbG_{\theta}^{-1} \cdot V$ and an independent mean-zero Gaussian random variable with the optimal limiting variance on the tangent space $\T_{\psi (\theta)} \Psi$.
\end{proof}
Finally, we prove Lemma~\ref{lem:Un}.
\begin{proof}[Proof of Lemma~\ref{lem:Un}]
\leavevmode
Recall that $\theta' = \Exp_{\theta} \left( \frac{h}{\sqrt{n}} \right)$. Here we use a generalized version of the Taylor expansion of the inverse exponential map at two different base points $\theta, \theta'$. In particular, by the definition of covariant derivative and Lemma~\ref{lem:parallel expansion}, we have, by linearity of the covariant derivative with respect to the base argument,
\begin{equation}
\label{parallel taylor}
\begin{split}
\sqrt{n} \Exp_{\psi (\theta)}^{-1} \hat{\psi}_{n} &= \sqrt{n} \Pi_{\psi (\theta')}^{\psi (\theta)} \Exp_{\psi (\theta')}^{-1} \hat{\psi}_{n} \\&- \nabla_{\sqrt{n} \Exp^{-1}_{\psi (\theta)} \psi (\theta')} \Exp_{\psi (\theta)}^{-1} \hat{\psi}_n + o (\Vert h \Vert).
\end{split}
\end{equation}
Denote $\delta \coloneqq \Exp_{\psi (\theta)}^{-1} \hat{\psi}_{n}, \eps_{h} \coloneqq \sqrt{n} \Exp_{\psi (\theta)}^{-1} \psi (\theta') \in \T_{\psi (\theta)} \Psi$ for short.
We next compute $\nabla_{\eps_{h}} \delta$. To this end, we apply equation (38) of \cite{smith2005covariance} (also see Lemma~\ref{lem:covariant derivative} for the result) and obtain
\begin{equation}
\label{covariant derivative taylor}
\begin{split}
& - \nabla_{\eps_{h}} \delta = \eps_{h} + Q (\delta) \eps_{h} + O (\Vert \delta \Vert^{3}), \\
& \text{and } \eps_{h} = \dot{\psi} (\theta) h + o (\Vert h \Vert),
\end{split}
\end{equation}
where $Q (\delta) = \left( \sum_{kl} a_{ij,kl} \delta^k \delta^l, i, j \right)$ is a quadratic form such that 
\begin{align*}
Q (\delta) Y \coloneqq Y^{\top} Q (\delta) Y = \sum_{ij,kl} a_{ij,kl} \delta^k \delta^l Y^i Y^j,    
\end{align*}
where the precise definition of $a_{ij, kl}$ (related to the Riemannian metric $d$) and $Y$ are deferred to Lemma~\ref{lem:covariant derivative} and $\delta^{i}$ is defined as follows: Choosing $\{e_{j}, j = 1, \cdots, q\}$ an orthonormal basis of $\T_{\psi (\theta)} \Psi$, then $\delta$ can be represented in this basis as $\delta = \delta^i e_i$. By \eqref{proof regular} and \eqref{covariant derivative taylor}, as $n \rightarrow \infty$, under $\P_{\theta'}$, the RHS of \eqref{parallel taylor} converges weakly to
\begin{align*}
U + \dot{\psi} (\theta) h + o_{\P_{\theta'}} (1),
\end{align*}
where we have used $\delta = o_{\P_{\theta'}} (1)$, continuous mapping theorem and Slutsky's theorem. The proof is now complete.
\end{proof}
\subsubsection{Almost everywhere convolution theorem}
\label{app:almost everywhere}
\leavevmode
In this section, we generalize the almost everywhere convolution theorem to parameter manifolds.
\begin{theorem}
\label{Almost Everywhere Convolution Theorem}
Let $\mathcal{P} \coloneqq \left\{ \P_{\theta}: \theta \in \Theta \right\}$ be a statistical model parameterized by $\theta \in \Theta$. Assume that $\mathcal{P}$ is DQM. $\psi (\theta)$ is the parameter of interest with $\psi$ satisfying Assumption~\ref{as:smooth transformation}. Further let $\hat{\psi}_{n}$ be an estimator sequence based on $n$ i.i.d. observations drawn from $\P_{\theta}$ with $\sqrt{n}\Exp_{\psi (\theta)}^{-1} \hat{\psi}_{n}$ having limiting distribution $\L_{\psi (\theta)}$. Then there exists a probability measure $\Delta_{\psi (\theta)}$ such that for almost every $\theta$ with respect to the volume form on $\Theta$
\begin{equation}
\label{a.e. convolution}
\begin{split}
&\L_{\psi (\theta)} = Z_{\psi (\theta)} \ast \Delta_{\psi (\theta)}, \text{ with} \\ 
& Z_{\psi_{\theta}} \sim \calN (0, \dot{\psi} (\theta) \bbG_{\theta}^{-1} \dot{\psi} (\theta)^{\ast}) \text{ and } Z_{\psi (\theta)} \independent \Delta_{\psi (\theta)}.
\end{split}
\end{equation}
That is, the limiting law $\L_{\theta}$ is represented as a convolution between a Gaussian measure with the covariance operator equal to the inverse Fisher information operator and a probability measure $\Delta_{\theta}$ independent of that Gaussian measure for almost every $\theta$ with respect to the volume measure on $\Theta$.
\end{theorem}
The theorem follows from the convolution theorem combined with the following lemma, the proof of which is quite technically involved. But the lemma essentially states that any estimator sequence with a limiting distribution is automatically regular at almost every $\theta$ with respect to the volume form on $\Theta$ along a subsequence of $\{n\}$.
\begin{lemma}
    Let $\hat{\psi}_{n}$ be estimators in experiments $\{\P_{n,\theta},\theta \in \Theta\}$, parameterized by $\Theta$. Assume that the map $\theta \mapsto \P_{n,\theta}(A)$ is measurable for every measurable set $A$ and every $n$, and that the map $\theta\mapsto\psi(\theta)$ is measurable. Suppose that there exist distributions $\L_{\psi (\theta)}$ such that for almost every $\theta$ with respect to the volume measure on $\Theta$
    \begin{align*}
    \sqrt{n} \Exp_{\psi (\theta)}^{-1} \hat{\psi}_{n} \rightsquigarrow_{\P_{\theta}}\L_{\psi (\theta)}.
    \end{align*}
    Then there exists a subsequence of $\{n\}$ such that, for almost every $(\theta, h)$ in local coordinates, along the subsequence, 
    \begin{align*}
        \Pi_{\psi (\theta_{n, h})}^{\psi (\theta)} \sqrt{n} \Exp_{\psi (\theta_{n, h})}^{-1} \hat{\psi}_{n} \rightsquigarrow_{\P_{\theta_{n, h}}} \L_{\psi (\theta)}.
    \end{align*}
\end{lemma}
\begin{proof}
Define $T_{n, \theta} \coloneqq \sqrt{n} \Exp_{\psi(\theta)}^{-1} \hat{\psi}_n$ and $\tilde{T}_{n, \theta, h}:=\Pi_{\psi (\theta_{n, h})}^{\psi(\theta)} \sqrt{n} \Exp_{\psi (\theta_{n, h})}^{-1} \hat{\psi}_n$ for convenience. Because parallel transport is an isometry between tangent spaces, weak convergence of $\widetilde{T}_{n, \theta, h}$ to $\L_{\psi(\theta)}$ is equivalent to convergence of expectations against a countable convergence-determining class on a fixed Euclidean coordinate representation of $\T_{\psi(\theta)} \Psi$. Hence, exactly as in \cite{van2000asymptotic}, choose a countable class $\mathcal{F}$ of uniformly bounded left- or right-continuous functions such that weak convergence is equivalent to convergence of expectations of all $f \in \mathcal{F}$. Fix $f \in \mathcal{F}$. Set $g_n(\theta)\coloneqq \E_{n, \theta}\left[f\left(T_{n, \theta}\right)\right], \quad g(\theta)\coloneqq \int f \diff \L_{\psi(\theta)}$. By assumption, $g_n(\theta) \rightarrow g(\theta)$ for almost every $\theta$ with respect to the volume form on $\Theta$. Also $g_n$ is measurable, since $\theta \mapsto \P_{n, \theta}(A)$ is measurable for every measurable $A$, an argument based on the monotone convergence theorem shows that $\theta \mapsto \E_{n, \theta} [\varphi (\hat{\psi}_n)]$ is measurable for every bounded measurable $\varphi$, where $\varphi(x) = f (\sqrt{n} \Exp_{\psi(\theta)}^{-1} x)$, using measurability of $\theta \mapsto \psi(\theta)$. Thus it remains to prove the following claim. We claim that if $g_n: \Theta \rightarrow \mathbb{R}$ are bounded measurable and $g_n \rightarrow g$ almost everywhere, then there exists a subsequence $\{n_k\}$ of $\{n\}$ such that $g_{n_k} (\theta_{n_k, h}) \rightarrow g(\theta)$ for almost every $(\theta, h)$. Once the claim is proved, apply it to every $f \in \mathcal{F}$, and diagonalize over the countable class $\mathcal{F}$. This yields, for almost every $(\theta, h)$, $\E_{n_k, \theta_{n_k, h}} [f (\Pi_{\psi (\theta_{n_k, h})}^{\psi (\theta)} \sqrt{n_{k}} \Exp_{\psi (\theta_{n_k, h})}^{-1} \hat{\psi}_{n_k})] \rightarrow \int f \diff \L_{\psi(\theta)}$ for every $f \in \mathcal{F}$, hence $\widetilde{T}_{n_k, \theta, h} \rightsquigarrow \P_{n_k, \theta_{n_k, h}} \L_{\psi(\theta)}$. This proves the lemma.
Finally, we prove the above claim. Let $\theta_{n, h} \coloneqq \Exp_\theta (n^{-1 / 2} h)$. Choose a countable atlas $\{\left(U_j, \varphi_j\right)\}_{j \geq 1}$ of $\Theta$, with $\varphi_j\left(U_j\right) \subset \mathbb{R}^p$, and an exhaustion of each chart by compact sets $K_{j, m} \subset U_j$. It is enough to prove the claim on each $K_{j, m}$, then diagonalize over the countable family $(j, m)$. Fix one chart $(U, \varphi)$ and one compact $K \subset U$. Write $x=\varphi(\theta) \in \varphi(K) \subset \mathbb{R}^p$, and let $u\coloneqq \diff \varphi_\theta(h) \in \mathbb{R}^p$, where $\diff \varphi_\theta: \T_\theta \Theta \rightarrow \mathbb{R}^p$ denotes the differential of the chart map $\varphi$ at $\theta$. Define the local perturbation map $F_t(x, u) \coloneqq \varphi_{\theta} (\Exp_{\varphi^{-1} (x)}(t (\diff \varphi^{-1})_x u))$, where $\diff \varphi^{-1}: \mathbb{R}^p \rightarrow \T_\theta \Theta$ is the differential of the inverse chart map $\varphi^{-1}$ at the point $x$. Since the exponential map and the chart map are smooth, $F_t$ is smooth in $(x, u, t)$, $F_0(x, u) = x$ and $\partial_t F_t(x, u)|_{t=0} = u$. Therefore, uniformly on compact subsets of $\varphi(K) \times \bar{B}_m(0) \subset \mathbb{R}^p \times \mathbb{R}^p$,
\begin{equation}
\label{1}
\begin{split}
        &F_{n^{-1 / 2}}(x, u)=x+ n^{-1 / 2} u+r_n(x, u), \quad \\&\sup _{(x, u) \in \varphi(K) \times \bar{B}_m(0)} \frac{\left\|r_n(x, u)\right\|}{n^{-1 / 2}} \rightarrow 0.
\end{split}
\end{equation}
    Now let $X$ and $H$ be independent random vectors in $\mathbb{R}^p$ with smooth compactly supported densities $q$ and $\rho$, where $q$ is supported in $\varphi(K)$ and $\rho$ in $\bar{B}_m(0)$. By \eqref{1}, the random vector $Y_n\coloneqq F_{\gamma_n}(X, H)$ satisfies $Y_n-X \rightarrow 0$ almost surely and is uniformly bounded. Hence for every bounded continuous $a$, $\E\left[a\left(Y_n\right)\right] \rightarrow \E[a(X)]$. Equivalently, the law of $Y_n$ converges weakly to the law of $X$. Since all laws involved have compact support and smooth densities, this implies convergence in $L^1$ of their densities after passing to a subsequence if needed. We denote the density of $Y_n$ by $q_n$, so
    \begin{equation}
    \label{2}
        \left\|q_n-q\right\|_{L^1\left(\mathbb{R}^p\right)} \rightarrow 0.
    \end{equation}
    Because $g_n \circ \varphi^{-1} \rightarrow g \circ \varphi^{-1}$ almost everywhere on $\varphi(K)$, bounded convergence with the varying densities $q_n$ and \eqref{2} gives
\begin{equation}
\label{3}
\begin{split}
        &\E\left|g_n\left(\varphi^{-1}\left(Y_n\right)\right)-g\left(\varphi^{-1}\left(Y_n\right)\right)\right|\\&=\int_{\mathbb{R}^p}\left|g_n\left(\varphi^{-1}(y)\right)-g\left(\varphi^{-1}(y)\right)\right| q_n(y) \diff y \rightarrow 0 .
\end{split}
\end{equation}
    Next, we approximate $g \circ \varphi^{-1}$ in $L^1(q)$ by a bounded continuous function $g_{\varepsilon}$ on $\mathbb{R}^p$. Then 
\begin{equation*}
\begin{split}
    &\E\left|g\left(\varphi^{-1}\left(Y_n\right)\right)-g\left(\varphi^{-1}(X)\right)\right| \leq  \ \E\left|g-g_{\varepsilon}\right|\left(\varphi^{-1}\left(Y_n\right)\right) \\&+ \E\left|g-g_{\varepsilon}\right|\left(\varphi^{-1}(X)\right)  +\E\left|g_{\varepsilon}\left(\varphi^{-1}\left(Y_n\right)\right)-g_{\varepsilon}\left(\varphi^{-1}(X)\right)\right|.
\end{split}
\end{equation*}
    The first two terms can be made arbitrarily small by the $L^1$-approximation and \eqref{2}, while the last term tends to 0 because $Y_n-X \rightarrow 0$ almost surely and $g_{\varepsilon} \circ \varphi^{-1}$ is continuous and bounded. Hence
    \begin{equation}
    \label{4}
    \E\left|g\left(\varphi^{-1}\left(Y_n\right)\right)-g\left(\varphi^{-1}(X)\right)\right| \rightarrow 0.
    \end{equation}
    Combining \eqref{3} and \eqref{4}, $\E |g_n (\varphi^{-1} (Y_n))-g (\varphi^{-1}(X))| \rightarrow 0$. Thus, after passing to a subsequence, $g_n\left(\varphi^{-1}\left(Y_n\right)\right) \rightarrow g\left(\varphi^{-1}(X)\right) $ almost surely. Since $X$ and $H$ have strictly positive smooth densities on $\varphi(K)$ and $\bar{B}_m(0)$, this almost-sure statement is equivalent to $g_n\left(\Exp_\theta\left(\gamma_n h\right)\right) \rightarrow g(\theta) $ for almost every $(\theta, h) \in K \times \bar{B}_m(0)$. Finally, we diagonalize over $m$ and over the countable family of chart compacts $K_{j, m}$. The claim follows.
\end{proof}
\subsubsection{Proof Related to Lemma~\ref{lem:calculus of IF, parametric}}
\label{app:calculus of IF, parametric}
\leavevmode
In this section, we prove Lemma~\ref{lem:calculus of IF, parametric}.
\begin{proof}[Proof of Lemma~\ref{lem:calculus of IF, parametric}]
Let $\theta_{n, h} \coloneqq \Exp_{\theta} \left( \frac{h}{\sqrt{n}} \right)$. First, under $\P_{\theta}$, we have
\begin{align*}
\sum_{i = 1}^{n} \s (X_{i}; \theta) \left( \frac{h}{\sqrt{n}} \right) \rightsquigarrow_{\P_{\theta}} \calN \left( 0, \bbG_{\theta} (h, h) \right).
\end{align*}
By CLT and contiguity between $\P_{\theta_{n, h}}$ and $\P_{\theta}$ (by Le Cam's first lemma)
\begin{align*}
& \sqrt{n} \Exp_{\psi (\theta)}^{-1} \hat{\psi}_{n} = \frac{1}{\sqrt{n}} \sum_{i = 1}^{n} \IF_{\psi, i} + o_{\P_{\theta}} (1) \text{ and} \\
& \sqrt{n} \Exp_{\psi (\theta)}^{-1} \hat{\psi}_{n} = \frac{1}{\sqrt{n}} \sum_{i = 1}^{n} \IF_{\psi, i} + o_{\P_{\theta_{n, h}}} (1).
\end{align*}
The second part of the above display further implies:
\begin{equation*}
\begin{aligned}
& \ \sqrt{n} \Pi_{\psi (\theta_{n, h})}^{\psi (\theta)} \Exp_{\psi (\theta_{n, h})}^{-1} \hat{\psi}_{n} = \sqrt{n} \Exp_{\psi (\theta)}^{-1} \hat{\psi}_{n} \\&+ \sqrt{n} \left( \Pi_{\psi (\theta_{n, h})}^{\psi (\theta)} \Exp_{\psi (\theta_{n, h})}^{-1} \hat{\psi}_{n} - \Exp_{\psi (\theta)}^{-1} \hat{\psi}_{n} \right) \\&
=  \ \frac{1}{\sqrt{n}} \sum_{i = 1}^{n} \IF_{\psi, i} \\&+ \sqrt{n} \left( \Pi_{\psi (\theta_{n, h})}^{\psi (\theta)} \Exp_{\psi (\theta_{n, h})}^{-1} \hat{\psi}_{n} - \Exp_{\psi (\theta)}^{-1} \hat{\psi}_{n} \right) + o_{\P_{\theta_{n, h}}} (1) \\&
= \ \frac{1}{\sqrt{n}} \sum_{i = 1}^{n} (\IF_{\psi, i} - \E_{\theta_{n, h}} \IF_{\psi, i}) + \sqrt{n} \E_{\theta_{n, h}} \IF_{\psi} \\&+ \sqrt{n} \left( \Pi_{\psi (\theta_{n, h})}^{\psi (\theta)} \Exp_{\psi (\theta_{n, h})}^{-1} \hat{\psi}_{n} - \Exp_{\psi (\theta)}^{-1} \hat{\psi}_{n} \right) + o_{\P_{\theta_{n, h}}} (1).
\end{aligned}
\end{equation*}
Under $\P_{\theta_{n, h}}$, we have by CLT
\begin{align*}
\frac{1}{\sqrt{n}} \sum_{i = 1}^{n} (\IF_{\psi, i} - \E_{\theta_{n, h}} \IF_{\psi, i}) \rightsquigarrow_{\P_{\theta_{n, h}}} \calN (0, \E_{\theta} [\IF_{\psi} \otimes \IF_{\psi}]).
\end{align*}
Since we have assumed that $\hat{\psi}_{n}$ is a RAL estimator, we have
\begin{align*}
\sqrt{n} \Pi_{\psi (\theta_{n, h})}^{\psi (\theta)} \Exp^{-1}_{\psi (\theta_{n, h})} \hat{\psi}_{n} \rightsquigarrow_{\P_{\theta_{n, h}}} \calN (0, \E_{\theta} [\IF_{\psi} \otimes \IF_{\psi}]).
\end{align*}
Together with the fact that $\E_{\theta} \IF_{\psi} \equiv 0$, the above two displays imply that
\begin{align}
&\sqrt{n} \left( \E_{\theta_{n, h}} \IF_{\psi} - \E_{\theta} \IF_{\psi} \right) \label{distinction} \\
&+ \sqrt{n} \left( \Pi_{\psi (\theta_{n, h})}^{\psi (\theta)} \Exp_{\psi (\theta_{n, h})}^{-1} \hat{\psi}_{n} - \Exp_{\psi (\theta)}^{-1} \hat{\psi}_{n} \right) = o_{\P_{\theta_{n, h}}} (1). \nonumber
\end{align}
For the first term, since $\P_{\theta}$ is assumed to be DQM, we have
\begin{equation*}
\begin{split}
&\sqrt{n} \left( \E_{\theta_{n, h}} \IF_{\psi} - \E_{\theta} \IF_{\psi} \right) = \sqrt{n} (\E_{\theta_{n, h}} - \E_{\theta}) \IF_{\psi} \\&= \E_{\theta} [\IF_{\psi} \cdot \S_{\theta} (h)] + o (1).
\end{split}
\end{equation*}
The last equality holds by the following argument. Let $\varphi_{\psi} (x)$ denote the value realized by $\IF_{\psi}$ when the observations take value $x$. Then applying \eqref{dqm}, for any $h \in \T_{\theta} \Theta$,
\begin{equation*}
\begin{split}
& \ \lim_{n \rightarrow \infty} \sqrt{n} (\E_{\theta_{n, h}} - \E_{\theta}) \IF_{\psi} \\
= & \ \lim_{n \rightarrow \infty} \sqrt{n} \int \varphi_{\psi} (x) \left( \p (x; \theta_{n, h}) - \p (x; \theta) \right) \diff x \\
= & \ \lim_{n \rightarrow \infty} \sqrt{n} \int \varphi_{\psi} (x) \left( \r (x; \theta_{n, h}) - \r (x; \theta) \right) \\&\left( \r (x; \theta_{n, h}) + \r (x; \theta) \right) \diff x \\
= & \ \lim_{n \rightarrow \infty} \int \varphi_{\psi} (x) \left( \r (x; \theta_{n, h}) - \r (x; \theta) \right) \r (x; \theta_{n, h}) \diff x \\&+ \lim_{n \rightarrow \infty} \int \varphi_{\psi} (x) \left( \r (x; \theta_{n, h}) - \r (x; \theta) \right) \r (x; \theta) \diff x \\
= & \ \frac{1}{2} \int \varphi_{\psi} (x) \s (x; \theta) (h) \r^{2} (x; \theta) \diff x \\&+ \frac{1}{2} \int \varphi_{\psi} (x) \s (x; \theta) \r^{2} (x; \theta) \diff x \\
= & \ \int \varphi_{\psi} (x) \s (x; \theta) (h) \p (x; \theta) \diff x = \E_{\theta} [\IF_{\psi} \cdot \S_{\theta} (h)].
\end{split}
\end{equation*}
For the second term, we have
\begin{equation*}
\begin{split}
& \ \sqrt{n} \Big( \Pi_{\psi (\theta_{n, h})}^{\psi (\theta)} \Exp_{\psi (\theta_{n, h})}^{-1} \hat{\psi}_{n} - \Exp_{\psi (\theta)}^{-1} \hat{\psi}_{n} \Big) \\
= & \ \sqrt{n} \Big( \Pi_{\psi (\theta_{n, h})}^{\psi (\theta)} \Exp_{\psi (\theta_{n, h})}^{-1} \hat{\psi}_{n} - \Exp_{\psi (\theta)}^{-1} \hat{\psi}_{n} + \Exp_{\psi (\theta)}^{-1} \psi (\theta_{n, h}) \Big) \\
& - \sqrt{n} \Exp_{\psi (\theta)}^{-1} \psi (\theta_{n, h}) \\
= & \ \Big\{ \frac{1}{\sqrt{n}} \sum_{i = 1}^{n} \IF_{\psi, i} - \sqrt{n} \Big( \Exp_{\psi (\theta)}^{-1} \hat{\psi}_{n} - \Exp_{\psi (\theta)}^{-1} \psi (\theta_{n, h}) \Big) \Big\} \\
& - \nabla_{\theta} \psi (\theta) (h) + o_{\P_{\theta_{n, h}}} (1) \\
\overset{*}{=} & \ \Big\{ \frac{1}{\sqrt{n}} \sum_{i = 1}^{n} \IF_{\psi, i} - \frac{1}{\sqrt{n}} \sum_{i = 1}^{n} \IF_{\psi, i} + o_{\P_{\theta_{n, h}}} (1) \Big\} \\
& - \nabla_{\theta} \psi (\theta) (h) + o_{\P_{\theta_{n, h}}} (1) \\
= & - \nabla_{\theta} \psi (\theta) (h) + o_{\P_{\theta_{n, h}}} (1).
\end{split}
\end{equation*}
In the derivation above, only the equality $\ast$ requires further justification, in which we need the following to hold:
\begin{equation*}
\begin{split}
&\sqrt{n} \left( \Exp_{\psi (\theta)}^{-1} \hat{\psi}_{n} - \Exp_{\psi (\theta)}^{-1} \psi (\theta_{n, h}) \right) \\&= \frac{1}{\sqrt{n}} \sum_{i = 1}^{n} \IF_{\psi, i} + o_{\P_{\theta_{n, h}}} (1).
\end{split}
\end{equation*}
First, invoking Lemma~\ref{lem:log map expansion} to Taylor expand $\Exp_{\psi (\theta)}^{-1} \hat{\psi}_{n}$ with respect to $\hat{\psi}_{n}$ around the point $\psi (\theta_{n, h})$, we have 
\begin{equation*}
\begin{split}
& \Exp_{\psi (\theta)}^{-1} \hat{\psi}_{n} = \Exp_{\psi (\theta)}^{-1} \psi (\theta_{n, h}) \\&+ \nabla \Exp_{\psi (\theta)}^{-1} \psi (\theta_{n, h}) \cdot \Pi_{\psi (\theta_{n, h})}^{\psi (\theta)} \Exp_{\psi (\theta_{n, h})}^{-1} \hat{\psi}_{n} \\&+ O \left( \left\Vert \Pi_{\psi (\theta_{n, h})}^{\psi (\theta)} \Exp_{\psi (\theta_{n, h})}^{-1} \hat{\psi}_{n} \right\Vert^2 \right) \\
& = \Exp_{\psi (\theta)}^{-1} \psi (\theta_{n, h}) \\&+ \nabla \Exp_{\psi (\theta)}^{-1} \psi (\theta_{n, h}) \cdot \Pi_{\psi (\theta_{n, h})}^{\psi (\theta)} \Exp_{\psi (\theta_{n, h})}^{-1} \hat{\psi}_{n} \\&+ o_{\P_{\theta_{n, h}}} (n^{- 1 / 2}).
\end{split}
\end{equation*}
Applying Lemma~\ref{lem:connection} in Appendix~\ref{app:geometric analysis}, the second term in the above display can be further expanded as follows:
\begin{equation*}
\begin{split}
& \ \nabla \Exp_{\psi (\theta)}^{-1} \psi (\theta_{n, h}) \cdot \Pi_{\psi (\theta_{n, h})}^{\psi (\theta)} \Exp_{\psi (\theta_{n, h})}^{-1} \hat{\psi}_{n} \\&
=  \ \Pi_{\psi (\theta_{n, h})}^{\psi (\theta)} \Exp_{\psi (\theta_{n, h})}^{-1} \hat{\psi}_{n} \\&+ \frac{1}{6} \calR_{\psi (\theta)} \left( \Exp_{\psi (\theta)}^{-1} \psi (\theta_{n, h}), \Pi_{\psi (\theta_{n, h})}^{\psi (\theta)} \Exp_{\psi (\theta_{n, h})}^{-1} \hat{\psi}_{n} \right) \\
& \times \Exp_{\psi (\theta)}^{-1} \psi (\theta_{n, h}) \\
& + O (\| \Pi_{\psi (\theta_{n, h})}^{\psi (\theta)} \Exp_{\psi (\theta_{n, h})}^{-1} \hat{\psi}_{n} \|^2),
\end{split}
\end{equation*}
where, recalling from the notation defined in Section~\ref{sec:manifolds}, the second term shall be understood as the action of the curvature tensor field at the point $\psi (\theta)$ along the tangent vector $\Exp_{\psi (\theta)}^{-1} \psi (\theta_{n, h})$. Taylor expansion of $\Exp_{\psi (\theta)}^{-1} \hat{\psi}_{n}$ with respect to $\hat{\psi}_{n}$ around the point $\psi (\theta_{n, h})$ is based on \cite{monera2014taylor} and \cite{goto2021approximated}.
Combining the above arguments, we have
\begin{align*}
\dot{\psi} (\theta) (h) \equiv \E_{\theta} [\IF_{\psi} \cdot \S_{\theta} (h)]
\end{align*}
for any $h \in \T_{\psi (\theta)} \Psi$. Hence $\dot{\psi} (\theta) \equiv \E_{\theta} [\IF_{\psi} \cdot \S_{\theta}]$.
\end{proof}
\subsubsection{Proofs in Section~\ref{sec:LAM theorem}}
\label{app:finite-dimensional LAM}
\leavevmode
\allowdisplaybreaks
In this section, we first prove Lemma~\ref{lem:van Trees}.
\begin{proof}
We firstly introduce a short-hand notation to facilitate the proof:
\begin{align*}
\Delta (x; \theta) \coloneqq \nabla \pi (\theta) \frac{\r (x; \theta)}{2 \r_{\pi} (\theta)} + \r_{\pi} \dot{\r} (x; \theta),
\end{align*}
where $\dot{\r} (x; \theta)$ is the derivative of $\r (x; \theta)$ with respect to $\theta$. By the boundedness assumption imposed on $\gamma (\pi)$, $\E_{\pi} [\dot{\psi} (\theta)]$ and $\int_{\Theta} \bbG (\theta) \diff \pi (\theta)$ in Lemma~\ref{lem:van Trees}, we can freely exchange the order between integrals or between summation and integration.
\begin{assumption}
$\p (x; \theta)$ is continuously differentiable in $\theta$ for each $x \in \bbX$, for each $\theta \in \Theta$, the function $x \mapsto \p (x; \theta)$ is measurable and integrable with respect to the volume form, and the first derivative of $\p (x; \theta)$ w.r.t. $\theta$ is uniformly integrable over $\Theta$.
\end{assumption}
Now our goal is to show \eqref{van Trees} corresponds to 
\begin{align*}
\int_{\bbX \times \Theta} V (x; \theta)^{\otimes 2} \diff \mu (x) \diff  \theta \succcurlyeq 0,
\end{align*}
where $V (x; \theta) \coloneqq \left[\begin{array}{c} \Exp_{\psi (\theta)}^{-1} \hat{\psi} \cdot \r (x; \theta) \cdot \r_{\pi} (\theta) \\ 2 \Delta (x; \theta) \end{array} \right]$. 
We first consider the cross product between the two elements of $V (x; \theta)$. By Definition~\ref{def:information operator} and Remark~\ref{rem:Fisher 2nd}, we obtain 
\begin{equation*}
\begin{split}
& 2 \int_{\bbX \times \Theta} (\Delta (x; \theta) \otimes \Exp_{\psi (\theta)}^{-1} \hat{\psi}) \cdot \r (x; \theta) \cdot \r_{\pi} (\theta) \mathrm{d} \mu(x) \mathrm{d} \theta \\
& = \int_{\Theta} \nabla \pi (\theta) \otimes \E_{\theta} [\Exp_{\psi (\theta)}^{-1} \hat{\psi}] \diff \theta \\&+2 \int_{\Theta}(\dot{\r} (x; \theta) \otimes \Exp_{\psi (\theta)}^{-1} \hat{\psi}) \cdot \r (x; \theta) \pi (\theta) \mathrm{d} \mu(x) \diff \theta \\
& = - \int_{\Theta}  [\nabla\Exp_{\psi (\theta)}^{-1} \hat{\psi}]\r^2 (x; \theta)\mathrm{d} \mu(x)\pi (\theta) \diff \theta,
\end{split}
\end{equation*}
by the identity 
\begin{equation*}
\begin{split}
& 2\int_{\Theta} (\dot{\r} (x; \theta) \otimes \Exp_{\psi (\theta)}^{-1}  \hat{\psi}) \cdot \r (x; \theta) \pi (\theta) \mathrm{d} \mu(x) \diff \theta = \\
& - \int_{\Theta} \nabla \pi (\theta) \otimes \E_{\theta} [\Exp_{\psi (\theta)}^{-1} \hat{\psi}] \diff \theta \\&- \int_{\Theta}  [\nabla\Exp_{\psi (\theta)}^{-1} \hat{\psi}]\r^2 (x; \theta)\mathrm{d} \mu(x)\pi (\theta) \diff \theta,
\end{split}
\end{equation*}
and integration by parts. The identity is obtained by 
\begin{equation*}
\begin{split}
& \int_{\Theta} \nabla \E_{\theta} [\Exp_{\psi (\theta)}^{-1} \hat{\psi}]\pi (\theta) \diff \theta \\
= & \ 2 \int_{\Theta}(\dot{\r} (x; \theta) \otimes \Exp_{\psi (\theta)}^{-1} \hat{\psi}) \cdot \r (x; \theta) \pi (\theta) \mathrm{d} \mu(x) \diff \theta \\
& + \int_{\Theta}  [\nabla\Exp_{\psi (\theta)}^{-1} \hat{\psi}]\r^2 (x; \theta)\mathrm{d} \mu(x)\pi (\theta) \diff \theta,
\end{split}
\end{equation*}
and 
\begin{align*}
\int_{\Theta} \nabla \pi (\theta) \otimes \E_{\theta} [\Exp_{\psi (\theta)}^{-1} \hat{\psi}] \diff \theta = -\int_{\Theta} \nabla \E_{\theta} [\Exp_{\psi (\theta)}^{-1} \hat{\psi}] \pi (\theta) \diff \theta.
\end{align*}
We next analyze the term $$\int_{\Theta} [\nabla \Exp_{\psi (\theta)}^{-1} \hat{\psi}] \r^2 (x; \theta) \diff \mu (x) \pi (\theta) \diff \theta \equiv \E_{\pi} \E [\nabla \Exp_{\psi (\theta)}^{-1} \hat{\psi}],$$ which is different from the Euclidean case \cite{gassiat2024van}. The following expansion holds:
\begin{equation*}
\begin{split}
& \ \int_{\Theta} [\nabla \Exp_{\psi (\theta)}^{-1} \hat{\psi}] \r^2 (x; \theta) \diff \mu(x) \pi (\theta) \diff \theta \\
= & \ \int_{\Theta} \Big( \bbI - \frac{1}{3} \|\Bias_{\psi (\theta)} (\hat{\psi})\|^2 \calK (\Bias_{\psi (\theta)} (\hat{\psi})) \\ 
& - \frac{1}{3} \calR (\mathbb{C}) \Big) \dot{\psi} (\theta) \pi (\theta) \diff \theta,
\end{split}
\end{equation*}
where $\|\cdot\|$ is the norm induced by the Riemannian metric attached to $\Psi$, $\calR (\mathbb{C})$ has been defined in Definition~\ref{def:pop curvature}, and $\calK(\Bias_{\psi (\theta)} (\hat{\psi}))$ denotes the sectional curvature, which can be defined in terms of Riemannian curvature $\calR (\nu, \mu) \omega $ as introduced in Section \ref{sec:manifolds}. The relationship between the two is, for given tangent vectors $\nu$ and $\mu$,  
\begin{align*}
    K(\nu \wedge \mu)=\frac{\langle\calR(\nu, \mu) \mu, \nu\rangle}{\|\nu \wedge \mu\|^2}
\end{align*}
where $K(\nu \wedge \mu)$ is the sectional curvature of the subspace spanned by $\nu$ and $\mu$, $\|\nu \wedge \mu\|^2=\|\nu \|^2\|\mu \|^2\sin ^2 \alpha$ is the square area of the parallelogram formed by $\nu$ and $\mu$, $\alpha$ is the angle between these vectors. To be specific, if we denote $\Bias_{\psi (\theta)} (\hat{\psi})$ by $\boldsymbol{b}$, based on \cite{smith2005covariance}, the $ij$-th element of $\calK(\boldsymbol{b})$ is 
\begin{equation*}
(\calK(\boldsymbol{b}))_{ij}
=
\begin{cases}
\sin^{2}\alpha_i\,
K(\boldsymbol{b}\wedge e_i)
+O(\|\boldsymbol{b}\|^{3}),
& \hspace{-1em} i=j,\\[3pt]
\begin{aligned}
&\sin^{2}\alpha_{ij}'\,
K\bigl(\boldsymbol{b}\wedge(e_i+e_j)\bigr)\\
&-\sin^{2}\alpha_{ij}''\,
K\bigl(\boldsymbol{b}\wedge(e_i-e_j)\bigr)
+O(\|\boldsymbol{b}\|^{3}),
\end{aligned}
& \hspace{-1em} i\neq j,
\end{cases}
\end{equation*}
where $\alpha_i$, $\alpha_{i j}^{\prime}$ and $\alpha_{i j}^{\prime \prime}$ are the angles between the tangent vector $\boldsymbol{b}$ and $e_i$, $e_i+e_j$, and $e_i-e_j$, respectively, $\{e_i\}$ is an orthonormal basis of $\T_{\psi (\theta)} \Psi$. 
The middle term of RHS in \eqref{van Trees} corresponds to 
\begin{align*}
& 4 \int_{\bbX \times \Theta} \Delta (x; \theta)^{\otimes 2} \diff \mu (x) \diff \theta \\
= & \int_{\Theta} \frac{\nabla \pi (\theta)^{\otimes 2}}{\pi (\theta)} \diff \theta + \int_{\Theta} 4 \int_{\bbX} \dot{\r} (x; \theta)^{\otimes 2} \diff \mu (x) \pi (\theta) \diff \theta \\
= & \ \bbG_{\pi} + \int_{\Theta} \bbG (\theta) \diff \pi (\theta),
\end{align*}
since $\int_{\bbX} \r (x; \theta) \dot{\r} (x; \theta) \diff \mu (x) = 0$ and $\int_{\Theta} \nabla \pi (\theta) \otimes \int_{\bbX} \left( \r (x; \theta) \dot{\r} (x; \theta) \right) \diff \mu(x) \diff \theta = 0$.
Based on the above calculations, we can obtain \eqref{van Trees}. 
\end{proof}
Next we prove Theorem~\ref{thm:finite-dimensional LAM}. 
\begin{proof}
    Fix $c>0$. Let $B\coloneqq \left\{h \in \T_{\theta_0} \Theta:\|h\| \leq 1\right\}$, and let $H$ be a probability measure on $B$ with continuous density, strictly positive on $B$, and compact support contained in $B$. For $h \in B$, define $\theta_{n, h}\coloneqq \Exp_{\theta_0}\left(\frac{c h}{\sqrt{n}}\right)$. Let $\pi_{\theta_0, c, n}$ denote the shrinking prior on $\Theta$ induced by $H$ through the map $h \mapsto \theta_{n, h}$. Then $\pi_{\theta_0, c, n}$ is supported on $\left\{\theta \in N_{\theta_0}:\left\|\Exp_{\theta_0}^{-1} \theta\right\| \leq \frac{c}{\sqrt{n}}\right\}$. For $n$ sufficiently large, Assumption~\ref{perturbation} ensures that, for every $h \in B$, the point $\psi\left(\theta_{n, h}\right)$ lies in a normal neighborhood of $\psi\left(\theta_0\right)$, so that the unique minimizing geodesic from $\psi\left(\theta_{n, h}\right)$ to $\psi\left(\theta_0\right)$ is well defined, and hence so is the parallel transport $\Pi_{\psi\left(\theta_{n, h}\right)}^{\psi\left(\theta_0\right)}: \T_{\psi\left(\theta_{n, h}\right)} \Psi \rightarrow \T_{\psi\left(\theta_0\right)} \Psi$. Therefore the transported estimation error $\Pi_{\psi\left(\theta_{n, h}\right)}^{\psi\left(\theta_0\right)} \sqrt{n} \Exp_{\psi\left(\theta_{n, h}\right)}^{-1} \psi_n$ is a random element of the fixed tangent space $\T_{\psi\left(\theta_0\right)} \Psi$.
    Apply Lemma~\ref{lem:van Trees} with prior $\pi_{\theta_0, c, n}$, we define
    \begin{equation}
    \label{G}
    \begin{split}
    & G (\theta_0, c, n) \coloneqq \\
    & \int_B \Pi_{\psi\left(\theta_{n, h}\right)}^{\psi\left(\theta_0\right)} \E_{\theta_{n, h}}^{\otimes n} \{\nabla \Exp_{\psi (\theta_{n, h})}^{-1} \psi_n\} \diff H(h),
    \end{split}
    \end{equation}
and 
\begin{equation}
\label{I}
I (\theta_0, c, n) \coloneqq \frac{1}{c^2} \bbG_{\pi_{\theta_0, 1}}+\int_B \bbG\left(\theta_{n, h}\right) \diff H(h).
\end{equation}
Here $\E_{\theta_{n, h}}^{\otimes n} \{\nabla \Exp_{\psi (\theta_{n, h})}^{-1} \psi_n\}$ denotes the mean derivative term appearing in Lemma~\ref{lem:van Trees}, viewed as a linear map from the parameter tangent space to the target parameter tangent space at $\psi (\theta_{n, h})$; after composition with $\Pi_{\psi\left(\theta_{n, h}\right)}^{\psi\left(\theta_0\right)}$, it is regarded as a linear map into the space $\T_{\psi\left(\theta_0\right)} \Psi$. The matrix $\bbG_{\pi_{\theta_0, 1}}$ is the Fisher information matrix of the fixed prior $H$ on $B$, and $\bbG(\theta)$ is the Fisher information operator of the statistical model at $\theta$. We further let $\Gamma_{\theta_0, c, n}\coloneqq G\left(\theta_0, c, n\right) I\left(\theta_0, c, n\right)^{-1} G\left(\theta_0, c, n\right)^{\ast}$. By construction, $\Gamma_{\theta_0, c, n}$ is an operator on $\T_{\psi\left(\theta_0\right)} \Psi$. This is the finite-$(c, n)$ lower-bound matrix.
Fix any positive quadratic form $\ell: \T_{\psi\left(\theta_0\right)} \Psi \rightarrow[0, \infty)$. Choose an orthogonal basis $U_1, \ldots, U_s$ of $\T_{\psi\left(\theta_0\right)} \Psi$ and nonnegative numbers $\lambda_1, \ldots, \lambda_s$ such that $\ell(v)=\sum_{k=1}^s \lambda_k g\left(U_k, v\right)^2$, where $v \in \T_{\psi\left(\theta_0\right)} \Psi$. For every symmetric positive semidefinite operator $\Gamma$ on $\T_{\psi\left(\theta_0\right)} \Psi$, let $\calN (0, \Gamma)$ denote the centered Gaussian distribution on $\T_{\psi\left(\theta_0\right)} \Psi$ with covariance $\Gamma$. Then $\int_{\T_{\psi\left(\theta_0\right)} \Psi} \ell(v) \diff \calN (0, \Gamma)(v)=\sum_{k=1}^s \lambda_k g_{\Gamma}\left(U_k, U_k\right)$, where $g_{\Gamma}$ denotes the quadratic form induced by $\Gamma$. Applying Lemma~\ref{lem:van Trees} componentwise to the linear functionals generated by $U_1, \ldots, U_s$, and then taking the corresponding nonnegative linear combination, yields 
\begin{equation}
\label{revised}
\begin{split}
& \int_B \E_{\theta_{n, h}}^{\otimes n}\left[\ell\left(\Pi_{\psi\left(\theta_{n, h}\right)}^{\psi\left(\theta_0\right)} \sqrt{n} \Exp_{\psi\left(\theta_{n, h}\right)}^{-1} \psi_n\right)\right] \diff H(h) \\
& \geq \int_{\T_{\psi\left(\theta_0\right)} \Psi} \ell(v) \diff \calN \left(0, \Gamma_{\theta_0, c, n}\right)(v).
\end{split}
\end{equation}
Since $H$ is supported on $B$, the left-hand side of \eqref{revised} is bounded above by 
\begin{align*}
    \sup_{\theta \in N_{\theta_0}:\left\|\Exp_{\theta_0}^{-1} \theta\right\| \leq c / \sqrt{n}} \E_\theta\left[\ell\left(\Pi_{\psi(\theta)}^{\psi\left(\theta_0\right)} \sqrt{n} \Exp_{\psi(\theta)}^{-1} \psi_n\right)\right].
\end{align*}
Hence 
\begin{equation}
\label{revised 2}
\begin{split}
& \sup_{\theta \in N_{\theta_0}:\left\|\Exp_{\theta_0}^{-1} \theta\right\| \leq c / \sqrt{n}} \E_\theta\left[\ell\left(\Pi_{\psi(\theta)}^{\psi\left(\theta_0\right)} \sqrt{n} \Exp_{\psi(\theta)}^{-1} \psi_n\right)\right] \\
& \geq \int_{\T_{\psi\left(\theta_0\right)} \Psi} \ell(v) \diff \calN \left(0, \Gamma_{\theta_0, c, n}\right)(v).
\end{split}
\end{equation}
We now let $n \rightarrow \infty$ and then $c \rightarrow \infty$. By the assumptions that the Fisher information operator $\bbG_{\theta_0}$ is bounded, and that $\psi$ satisfies the conditions in Lemma~\ref{lem:van Trees}, the finite-$(c, n)$ lower-bound matrix $\Gamma_{\theta_0, c, n}$, after transport to $\T_{\psi\left(\theta_0\right)} \Psi$, converges to $\dot{\psi} (\theta_{0}) \bbG_{\theta_{0}}^{-1} (\dot{\psi} (\theta_{0}))^{\ast}$. Therefore, 
\begin{equation}
\label{revised 3}
\begin{split}
&\liminf_{c\to\infty}\liminf_{n\to\infty} \int_{\T_{\psi(\theta_0)}\Psi} \ell(v) \diff \calN (0,\Gamma_{\theta_0,c,n})(v) \\&\ge \int_{\T_{\psi(\theta_0)}\Psi} \ell(v) \diff \calN \left(0,\dot{\psi} (\theta_{0}) \bbG_{\theta_{0}}^{-1} (\dot{\psi} (\theta_{0}))^{\ast}\right)(v).
\end{split}
\end{equation}
Combining \eqref{revised 2} and \eqref{revised 3}, we obtain
\begin{equation}
\label{revised 4}
\begin{split}
& \liminf_{c\to\infty}\liminf_{n\to\infty} \sup_{\theta\in N_{\theta_0}:\,\|\Exp_{\theta_0}^{-1}\theta\|\le c/\sqrt n} \\&\E_\theta \!\left[ \ell\!\left( \Pi_{\psi(\theta)}^{\psi(\theta_0)} \sqrt n\,\Exp^{-1}_{\psi(\theta)}\psi_n \right) \right] \\
& \ge \int_{\T_{\psi(\theta_0)}\Psi} \ell(v) \diff \calN \left(0,\dot{\psi} (\theta_{0}) \bbG_{\theta_{0}}^{-1} (\dot{\psi} (\theta_{0}))^{\ast}\right)(v).
\end{split}
\end{equation}
Since $\ell$ is an arbitrary positive quadratic form on $\T_{\psi\left(\theta_0\right)} \Psi$, the inequality \eqref{revised 4} is equivalent to 
\begin{equation*}
\begin{split}
    &\liminf _{c \rightarrow \infty} \liminf _{n \rightarrow \infty} \sup _{\theta \in N_{\theta_0}:\left\|\Exp_{\theta_0}^{-1} \theta\right\| \leq c / \sqrt{n}} \\&\E_\theta\left[\left(\Pi_{\psi(\theta)}^{\psi\left(\theta_0\right)} \sqrt{n} \Exp_{\psi(\theta)}^{-1} \psi_n\right)^{\otimes 2}\right] \succeq \E\left[Z_{\theta_0}^{\otimes 2}\right],
\end{split}
\end{equation*}
where $\T_{\psi (\theta_{0})} \Psi \ni Z_{\theta_{0}} \sim \calN \left( 0, \dot{\psi} (\theta_{0}) \bbG_{\theta_{0}}^{-1} (\dot{\psi} (\theta_{0}))^{\ast} \right)$.
\end{proof}
\subsection{Technical Details for Results in Section~\ref{sec:semipar}}
\label{app:semipar}
\subsubsection{Proof of Theorem~\ref{thm:semipar convolution}}
\label{app:semipar convolution}
\leavevmode
\begin{proof}
Note that if $\hat{\chi}_{n}$ is locally regular at $\P$ relative to $\mathscr{P}_\P$, the collection of one-dimensional DQM submodels through $\P$, it is locally regular on any one-dimensional submodel $\P_{t}\in\mathscr{P}_{\P}$ with $t \in [0, \delta)$ for some $\delta > 0$. Recall that $\s$ is the score function induced by the submodel $\P_{t}$. By Definition~\ref{def:differentiable functionals}, we have, as $t \to 0$,
\begin{equation}
\label{equation4 in 4.1}
t^{-1} \Exp_{\chi (\P)}^{-1} \chi (\P_t) = \E [\IF_{\chi} \cdot \S] + o (1).
\end{equation}
Here and in the sequel, expectations are under $\P_{0} \equiv \P$. Let $U_n \coloneqq \sqrt{n} \Exp_{\chi (\P)}^{-1} \hat{\chi}_{n}$ and $V_n \coloneqq \frac{1}{\sqrt{n}} \sum_{i = 1}^n \s (X_i)$. Following the same argument as in the proof (Appendix \ref{app:thm par}) of Theorem \ref{thm:par}, $(U_{n}, V_{n})$ converges jointly and weakly to $(U, V)$, where $U$ is determined by the regularity of $\hat{\chi}_{n}$, and $V$ is a random variable such that $V \sim \calN (0, \bbG_{s})$ with $V_{n} \rightsquigarrow_{\P} \calN (0, \bbG_{s})$ by CLT. 
By \eqref{equation4 in 4.1} and the assumption that $\chi: \calP \rightarrow \Psi$ is a differentiable functional, for a given one-dimensional parametric submodel $\P_{t}$ with $t \in [0, \delta)$ for some $\delta > 0$ with $\P_{t = 0} \equiv \P$ in $\mathscr{P}_{\P}$, by \eqref{limit fourier}, we obtain that for all $\nu \in \bbR^{q}$,
\begin{equation}
\label{equation2 in 4.1}
\begin{split}
&\E \exp \left\{ \rmi \nu^{\top} U + t V - \frac{1}{2} t^2 \E (\s (X)^{\otimes 2}) \right\} \\&= \exp \left\{ \rmi \nu^{\top} \E [\IF_{\chi} \cdot \S] t \right\} \E \exp \left\{ \rmi \nu^{\top} U \right\}.
\end{split}
\end{equation}
By Riesz representation theorem and existence of efficient influence function \eqref{score IF}, for each $\eta \in \bbR^{q}$, there exists a sequence $\{\s_{\eta j}\}$ of score functions of one-dimensional DQM submodels such that
\begin{equation}
\label{equation3 in 4.1}
\Vert \eta^{\top} \s_{\chi} - \s_{\eta j} \Vert \to 0
\end{equation}
as $j \to \infty$, where $\Vert\cdot\Vert$ is the norm induced by inner product on the closure of model tangent space at $\P$, $\s_{\chi} = \bbV_{\chi}^{-1} \IF_{\chi}$ and $\bbV_{\chi} \coloneqq \E [\IF_{\chi} \otimes \IF_{\chi}]$.
Let $V_{nj} \coloneqq \frac{1}{\sqrt{n}} \sum_{i = 1}^{n} \s_{\eta j} (X_i)$ and $(U, V_j)$ denote the limit of $(U_n,V_{nj})$. Then let 
\begin{align*}
V_n \coloneqq \frac{1}{\sqrt{n}} \sum_{i = 1}^{n} \s_{\chi} (X_i),
\end{align*} 
and assume for a subsequence denoted by $\{n\}$, we have $(U_n,V_n)$ converge weakly to $(U,V)$, where $V$ is also a random variable such that its distribution is also obtained by CLT. Such a subsequence always exists because of Prokhorov's theorem as in Lemma~\ref{thm:prokhorov}. Since \eqref{equation3 in 4.1}, we have that the limit distribution of $(U,V_j)$, as $j\rightarrow \infty$, is the same as the distribution of $(U, \eta^{\top} V)$. Take $\s=\s_{\eta j}$, $t=1$ and $V=V_j$ in \eqref{equation2 in 4.1}. 
Since 
\begin{equation*}
\begin{split}
    &\E \exp \{V_j\} = \exp \left\{ \frac{1}{2}\E(\s^2_{\eta j}) \right\} \\&\rightarrow \exp \left\{ \frac{1}{2} \E [\eta^{\top} \s_{\chi}]^{\otimes 2} \right\} = \E \exp \left\{ \eta^{\top} V \right\},
\end{split}
\end{equation*}
we can pass to the limit as $j \rightarrow \infty$ to get:
\begin{equation}
\label{equation1 in 4.1}
\begin{aligned}
&\E \exp \left\{ \rmi \nu^{\top} U + \eta^{\top} V - \frac{1}{2} \eta^{\top} \bbV_{\chi}^{-1} \eta \right\} \\&= \exp \{\rmi \nu^{\top} \eta\} \E \exp \{\rmi \nu^{\top} U\}.
\end{aligned}
\end{equation}
Then take $\eta = - \rmi \bbV_{\chi} \nu$ in \eqref{equation1 in 4.1} to get 
\begin{equation}
\begin{split}
&\E \exp \left\{ \rmi \nu^{\top} U \right\} \\&= \E \exp \left\{ \rmi \nu^{\top} (U - \bbV_{\chi} V) \right\} \exp \left\{ - \frac{1}{2} \nu^{\top} \bbV_{\chi} \nu \right\}.
\end{split}
\end{equation}
\end{proof}
\subsubsection{Proof of Lemma~\ref{lem:calculus of IF, semiparametric}}
\label{app:calculus of IF, semiparametric}
\leavevmode
\begin{proof}
Our proof follows the proof of Lemma A.1 of \cite{van1991differentiable} closely. Suppose that $\hat{\chi}_{n}$ is a sequence of RAL estimators of the parameter of interest $\chi \equiv \chi (\P)$ relative to $\mathscr{P}_\P$, as given in Definition \ref{def:semipar regular estimators}. Fix $h>0$, let $t_m \downarrow 0$ be arbitrary, we define a subsequence of $\{n\}$ by $\left(n_m+1\right)^{-1 / 2}<t_m h \leq n_m^{-1 / 2}$ and set $h_{n_m}=t_m h n_m^{1 / 2}$. Then $t_m^{-1} \Exp_{\chi (\P)}^{-1} \chi (\P_{t_m h}) = (1 + o(1)) n_m^{1 / 2} \Exp_{\chi (\P)}^{-1} \chi (\P_{h_{n_m} / \sqrt{n_m}})$. There is a further subsequence of $\{n\}$ (abusing notation, denoted $\{n\}$) such that 
\begin{equation}
\label{joint weak convergence}
\Big(\sqrt{n} \Exp_{\chi (\P)}^{-1} \hat{\chi}_n ,  \frac{1}{\sqrt{n}} \sum_{j = 1}^n \s (X_j)\Big) \rightsquigarrow_{\P}(S, V).
\end{equation}
Note that the distribution of $S$ is determined by the regularity of $\hat{\chi}_{n}$, and $V$ is a random variable such that $V \sim \calN \left( 0, \bbG_{s} \right)$ with $V_{n}\rightsquigarrow_{\P} \calN \left( 0, \bbG_{s} \right)$ by CLT. Let $\Lambda_n$ be the log-likelihood ratio of the product measures corresponding to $\P_{h_n / \sqrt{n}}$ and $\P$, then by the local asymptotic normality and the above joint weak convergence
\begin{equation}
\label{joint weak con}
    \left(\sqrt{n} \Exp_{\chi (\P)}^{-1} \hat{\chi}_n, \Lambda_n\right)\rightsquigarrow_{\P}(S, hV-\frac{1}{2}h^2 \bbG_{s}).
\end{equation}
By contiguity, we can conclude that $\{\sqrt{n} \Exp_{\chi (\P)}^{-1} \hat{\chi}_n\}$ is asymptotically tight under $\P_{h / \sqrt{n}}$. By Definition~\ref{def:semipar regular estimators}, $\{\Pi_{\chi (\P_{h / \sqrt{n}})}^{\chi (\P)} \sqrt{n} \Exp_{\chi (\P_{h / \sqrt{n}})}^{-1} \hat{\chi}_n\}$ is asymptotically tight under $\P_{h / \sqrt{n}}$. Thus, $\{\sqrt{n} \Exp_{\chi (\P)}^{-1} \chi (\P_{h / \sqrt{n}})\}$ is asymptotically tight by Lemma~\ref{lem:log map expansion} and the fact that tightness is preserved under continuous transformation in complete separable metric space. Moreover, the sequence 
$$
\{\nabla \Exp_{\chi (\P)}^{-1} \chi (\P_{h / \sqrt{n}}) \Pi_{\chi (\P_{h / \sqrt{n}})}^{\chi (\P)}\sqrt{n} \Exp_{\chi (\P_{h / \sqrt{n}})}^{-1} \hat{\chi}_n\}
$$ 
is asymptotically tight under $\P_{h / \sqrt{n}}$ provided that $\chi (\mathbb{P}_{h / \sqrt{n}}) \rightarrow \chi(\mathbb{P})$ and $\nabla \Exp_{\chi(\mathbb{P})}^{-1} (\cdot)$ is continuous at $\chi(\mathbb{P})$. We choose a further subsequence that converges to a limit, denoted by $a(h)$. By \eqref{joint weak con} and Le Cam's third lemma,
\begin{align*}
    \Pi_{\chi (\P_{h / \sqrt{n}})}^{\chi (\P)}\sqrt{n} \Exp_{\chi (\P_{h / \sqrt{n}})}^{-1} \hat{\chi}_n \rightsquigarrow_{\P_{h / \sqrt{n}}}L_h,
\end{align*}
where for any Borel set from the tangent space corresponding to $\chi (\P)$, denoted by $\T_{\chi (\P)} \Psi$, we have
\begin{equation}
\label{joint distribution}
L_h(B)=\int_{\bbR} \int_B e^\lambda \diff \mathsf{L} \Big(S-a(h), h V-\frac{1}{2}h^2 \bbG_{s}\Big)(y, \lambda),
\end{equation}
where $\diff \mathsf{L}$ denotes the density of $S - a(h)$ and $h V-\frac{1}{2}h^2 \bbG_{s}$. By Definition \ref{def:semipar regular estimators}, $L_h$ does not vary with $h$, which is the same distribution as $S$. By comparing expectations, we find that 
\begin{align*}
    \E S=\E(S-a(h))\exp(h V-\frac{1}{2}h^2 \bbG_{s}),
\end{align*}
thus, $a(h)$ is independent of any of the subsequences we have chosen, and we can conclude that the limit points of $t^{-1} \Exp_{\chi (\P)}^{-1} \chi (\P_{th})$ are contained in a compact interval by Cauchy-Schwarz inequality. Thus, $t^{-1} \Exp_{\chi (\P)}^{-1} \chi (\P_{th})=O(1)$.
The next step is to prove that there exists a continuous and linear map $\dot{\chi} (\P): \Lambda_{\P} \rightarrow \T_{\chi (\P)} \Psi$ such that
\begin{align*}
t^{-1} \Exp_{\chi (\P)}^{-1} \chi (\P_t) \rightarrow \dot{\chi}_{\P} (\s)
\end{align*}
for every path $\P_{t}$ with $\P_{t = 0} \equiv \P$ in $\mathscr{P}_{\P}$, as $t \rightarrow 0$. Equivalently, it is \eqref{Riesz representator}. Now we compare characteristic functions rather than previous expectations, we obtain
\begin{equation}
\label{characteristic function}
e^{\rmi u a(h)} \E e^{\rmi u S}=\E \exp \Big( \rmi u S+h V-\frac{1}{2} h^2 \bbG_{s} \Big).
\end{equation}
Thus $a(h)$ depends on the joint law of $S$ and $V$ only. If we have joint convergence along the whole sequence of $\{n\}$ in \eqref{joint weak convergence}, then we conclude that every sequence $t_m^{-1} \Exp_{\chi (\P)}^{-1} \chi (\P_{t_m h})$ with $t_m \downarrow 0$ has a subsequence converging to the fixed limit $a(h)$. Thus, $\lim _{t \downarrow 0}t^{-1} \Exp_{\chi (\P)}^{-1} \chi (\P_{th})=a(h)$. Then 
\begin{equation}
    a(h)=h \lim _{t \downarrow 0}(th)^{-1}\Exp_{\chi (\P)}^{-1} \chi (\P_{th})=h a(1).
\end{equation}
By plugging the above equation in \eqref{characteristic function}, and differentiating with respect to $h$ at $h=0$, we can obtain $\lim _{t \downarrow 0}t^{-1} \Exp_{\chi (\P)}^{-1} \chi (\P_t)=\frac{\E V\exp(\rmi u S)}{\rmi u\E\exp(\rmi u S)}$. Then its continuity and linearity are proved.
\end{proof}
\subsubsection{Proof of Lemma~\ref{lem:iff}}
\label{app:iff}
\leavevmode
This section is devoted to the proof of Lemma~\ref{lem:iff}.
\begin{proof}
Let $Z_n\coloneqq \frac{1}{\sqrt{n}} \sum_{i=1}^n \IF_\chi\left(X_i\right) \in \T_{\chi(\P)} \Psi$. We first prove $(b) \Rightarrow (a)$ and then $(a) \Rightarrow (b)$.
\paragraph*{$(b) \Rightarrow (a)$}
Assume $\sqrt{n} \Exp_{\chi(\P)}^{-1}\left(\hat{\chi}_n\right)=Z_n+o_\P(1)$. We must show that $\hat{\chi}_n$ is regular at $\P$ relative to $\mathscr{P}_\P$ with efficient limiting law $\calN (0, \bbG_{\P}^{-1}) $. First, under $\P$, the multivariate CLT on the finite-dimensional tangent space $\T_{\chi(\P)} \Psi$ gives $Z_n \rightsquigarrow \calN (0, \bbG_{\P}^{-1}) $. Now fix any DQM one-dimensional submodel $\P_{t}$ with score $s \in \Lambda_\P$, and put $\P_{n, s}\coloneqq \P_{1 / \sqrt{n}, s}$. By LAN, the log-likelihood ratio between $\P_{n, s}^{\otimes n}$ and $\P^{\otimes n}$ has the usual Gaussian limit. Hence, by Le Cam's third lemma, under $\P_{n, s}^{\otimes n}$, $Z_n \rightsquigarrow \calN\left(\E\left[\IF_\chi(X) \cdot s\right], \bbG_{\P}^{-1}\right)$. By the defining identity of the efficient influence function, $\E\left[\IF_\chi(X)\cdot  s\right]=\dot{\chi}_\P(s)$, so $Z_n \rightsquigarrow \calN\left(\dot{\chi}_\P(s), \bbG_{\P}^{-1}\right) $ under $\P_{n, s}^{\otimes n}$. On the other hand, differentiability of $\chi$ gives $\sqrt{n} \Exp_{\chi(\P)}^{-1}\left(\chi\left(\P_{n, s}\right)\right) \rightarrow \dot{\chi}_\P(s)$. We then obtain $\Pi_{\chi\left(\P_{n, s}\right)}^{\chi(\P)} \sqrt{n} \Exp_{\chi\left(\P_{n, s}\right)}^{-1}\left(\hat{\chi}_n\right)=\sqrt{n} \Exp_{\chi(\P)}^{-1}\left(\hat{\chi}_n\right)-\sqrt{n}\Exp_{\chi(\P)}^{-1}\left(\chi\left(\P_{n, s}\right)\right)+o_{\P_{n, s}}(1)$, as a result of $\sqrt{n} \Exp_{\chi(\P)}^{-1}\left(\hat{\chi}_n\right)=Z_n+o_{\P_{n, s}}(1)$. Therefore under $\P_{n, s}^{\otimes n}$, $\Pi_{\chi\left(\P_{n, s}\right)}^{\chi(\P)} \sqrt{n} \Exp_{\chi\left(\P_{n, s}\right)}^{-1}\left(\hat{\chi}_n\right) \rightsquigarrow \calN\left(0, \bbG_{\P}^{-1}\right)$. This limit does not depend on $s$, so $\hat{\chi}_n$ is regular at $\P$ with limiting law $\calN\left(0, \bbG_{\P}^{-1}\right)$.
\paragraph*{$(a) \Rightarrow (b)$}
Assume that $\hat{\chi}_n$ is regular at $\P$ relative to $\mathscr{P}_\P$ with limiting law $\calN \left(0, \bbG_{\P}^{-1}\right)$, we will need to show $\sqrt{n} \Exp_{\chi(\P)}^{-1}\left(\hat{\chi}_n\right) - Z_n = o_{\P} (1)$. The key point is that $Z_n$ itself is an efficient regular estimator sequence in the local subexperiments. The first part of the proof showed that the asymptotically linear statistic based on $\IF_\chi$ is regular with limit $\calN\left(0, \bbG_{\P}^{-1}\right)$. Thus both $U_n\coloneqq \sqrt{n} \Exp_{\chi(\P)}^{-1}\left(\hat{\chi}_n\right)$ and $Z_n\coloneqq\frac{1}{\sqrt{n}} \sum_{i=1}^n \IF_\chi\left(X_i\right)$ are regular and have the same efficient Gaussian limit. Take any subsequence. By tightness, there is a further subsequence, still denoted by $n$, such that $\left(U_n, Z_n\right)$ converges jointly under $\P^{\otimes n}$. Consider the corresponding local experiments $\P_{n, s}^{\otimes n}$. By regularity of both sequences and Le Cam's third lemma, the joint limit defines two regular estimators in the Gaussian limit experiment associated with the tangent space, both estimating the same local parameter $\dot{\chi}_\P(s)$, and both having covariance equal to the efficiency bound $\bbG_{\P}^{-1}$. But in the Gaussian shift limit experiment, the convolution theorem implies that any regular estimator with covariance exactly equal to the efficient covariance must coincide almost surely with the efficient estimator itself, there is no residual convolution noise left. Similarly in \cite{van2000asymptotic}, once the limit law is the Gaussian distribution with variance equal to the semiparametric efficiency bound, the extra convolution factor must be degenerate. Hence the two estimators must be equal almost surely. Therefore every subsequential joint limit of $(U_n, Z_n)$ is concentrated on the diagonal, that is $U=Z $ a.s.. The proof is complete.
\end{proof}
\subsubsection{Proof of Theorem~\ref{thm:semipar LAM}}
\label{app:functional LAM}
\leavevmode
In this section, we prove Theorem~\ref{thm:semipar LAM}. Here we take a slightly different route and prove it via the standard LAN argument, as in \cite{van2000asymptotic}.
\begin{proof}
Fix an arbitrary finite-dimensional subspace $M \subset \Lambda_\P$, denote $\operatorname{dim} M=m$, and choose an orthonormal basis $s_1, \ldots, s_m$ of $M$ in $L_{2, 0} (\P)$. Thus $\E\left[s_i s_j\right]=\delta_{i j}$ for all $1 \leq i, j \leq m$. For $h=\left(h_1, \ldots, h_m\right)^{\top} \in \mathbb{R}^m$, define $s_h\coloneqq \sum_{j=1}^m h_j s_j $. Because $M \subset \Lambda_\P$ and $\Lambda_\P$ is linear, we have $s_h \in \Lambda_\P$ for every $h \in \mathbb{R}^m$. By the assumption of the theorem, for each such $s_h$ there exists a DQM one-dimensional submodel $t \mapsto \P_{t, s_h}$ through $\P$ in $\mathscr{P}_{\P}$ with score $s_h$. Therefore we may define a finite-dimensional local family of experiments by $Q_{n, h}\coloneqq \P_{1 / \sqrt{n}, s_h}^{\otimes n}$, $h \in \mathbb{R}^m$.
The first step is to identify the limit experiment associated with $\left\{Q_{n, h}: h \in \mathbb{R}^m\right\}$. Since each path $t \mapsto \P_{t, s_h}$ is DQM, the LAN expansion applies along that path. DQM implies LAN for i.i.d. experiments, exactly as in Proposition~\ref{prop:lan}. Hence, under the base law $Q_{n, 0}=\P^{\otimes n}$, $\log \frac{\diff Q_{n, h}}{\diff Q_{n, 0}}=\frac{1}{\sqrt{n}} \sum_{i=1}^n s_h\left(X_i\right)-\frac{1}{2} \E\left[s_h^2\right]+o_{Q_{n, 0}}(1)$. Now define the $m$-dimensional random vector 
\begin{align*}
Z_n \coloneqq \Big(\frac{1}{\sqrt{n}} \sum_{i=1}^n s_1\left(X_i\right), \ldots, \frac{1}{\sqrt{n}} \sum_{i=1}^n s_m\left(X_i\right)\Big)^{\top}.
\end{align*}

Then, by construction, $\frac{1}{\sqrt{n}} \sum_{i=1}^n s_h\left(X_i\right)=h^{\top} Z_n$. Also, because the basis is orthonormal, $\E\left[s_h^2\right]=\sum_{i, j=1}^m h_i h_j \E\left[s_i s_j\right]=\sum_{j=1}^m h_j^2=h^{\top} h$. Thus the LAN expansion becomes $\log \frac{\diff Q_{n, h}}{\diff Q_{n, 0}}=h^{\top} Z_n-\frac{1}{2} h^{\top} h+o_{Q_{n, 0}}(1)$. Next, by the multivariate central limit theorem, $Z_n$ converges in distribution under $Q_{n, 0}$ to an $m$-variate normal random vector with mean zero and covariance matrix $I_m$, because $\E\left[Z_n\right]=0, \Cov\left(Z_n\right)=\left(\E\left[s_i s_j\right]\right)_{i, j=1}^m=I_m$. Hence $Z_n \rightsquigarrow \calN \left(0, I_m\right)$. Combining this with the previous display, we see that for every fixed $h \in \mathbb{R}^m$, $\log \frac{\diff Q_{n, h}}{\diff Q_{n, 0}} \rightsquigarrow h^{\top} Z-\frac{1}{2} h^{\top} h, \quad Z \sim \calN (0, I_m)$. 

More generally, for every finite collection $h_1, \ldots, h_k \in \mathbb{R}^m$, the vector of log-likelihood ratios converges jointly to $\left(h_{\ell}^{\top} Z-\frac{1}{2} h_{\ell}^{\top} h_{\ell}\right)_{\ell=1}^k$. But this is exactly the vector of Gaussian log-likelihood ratios for the family $\left\{\calN \left(h, I_m\right): h \in \mathbb{R}^m\right\}$, since under $Z \sim \calN \left(0, I_m\right)$, $\log \frac{\diff \calN \left(h, I_m\right)}{\diff \calN \left(0, I_m\right)}(Z)=h^{\top} Z-\frac{1}{2} h^{\top} h$. 

Therefore, the family of local experiments $\left\{Q_{n, h}: h \in \mathbb{R}^m\right\}$ converges, in Le Cam's sense, to the Gaussian shift experiment $\left\{\calN \left(h, I_m\right): h \in \mathbb{R}^m\right\}$. This is the same finite-dimensional reduction step underlying the proof of Theorem 25.21 in \cite{van2000asymptotic}. The second step is to identify the local parameter in this finite-dimensional limit experiment. Define $\chi_n(h) \coloneqq \chi (\P_{1 / \sqrt{n}, s_h})$. Since $\chi$ is differentiable at $\P$ relative to $\mathscr{P}_{\P}$ with model tangent space $\Lambda_\P$ in the sense of Definition~\ref{def:differentiable functionals}, we have $\sqrt{n} \Exp_{\chi(\P)}^{-1} \chi_n(h) \rightarrow \dot{\chi}_\P\left(s_h\right)$ for every fixed $h \in \mathbb{R}^m$. Because $h \mapsto s_h$ is linear and $\dot{\chi}_\P$ is linear on $\Lambda_\P$, there exists a linear map $A_M: \mathbb{R}^m \rightarrow \T_{\chi(\P)} \Psi$ such that $A_M h = \dot{\chi}_\P (s_h), h \in \mathbb{R}^m$. Thus, in the finite-dimensional Gaussian limit experiment, the local parameter $h$ is mapped to the tangent-space parameter $A_M h$. 

The third step is to express $A_M$ in terms of the efficient influence function $\IF_\chi$. Let $e_1, \ldots, e_m$ be the standard basis of $\mathbb{R}^m$. Then for each $j$ and every $v \in \T_{\chi(\P)}^* \Psi$, $\langle v, A_M e_j \rangle = \langle v, \dot{\chi}_\P (s_j)\rangle$. By the definition of the efficient influence function, $\langle v, \dot{\chi}_\P (s_j)\rangle=\E[\IF_\chi(v) s_j]$. So the $j$-th column of $A_M$ is exactly the coefficient of the orthogonal projection of $\IF_\chi$ onto the basis vector $s_j$. In other words, $A_M$ is the coefficient map of the $\L^2(\P)$-projection of $\IF_\chi$ onto $M$. Consequently, $A_M A_M^*=\E\{(\mathsf{\Pi}_M \IF_\chi)^{\otimes 2}\}$, where $\mathsf{\Pi}_M$ denotes the $\L^2(\P)$-projection onto $M$. 

Now apply the parametric local asymptotic minimax theorem from Section~\ref{sec:par} to the finite-dimensional local family $\left\{Q_{n, h}: h \in \mathbb{R}^m\right\}$, with local parameter $h \mapsto \chi_n(h)$. In Section~\ref{sec:par}, the parametric theorem is already formulated in covariance forms, so it matches the current formulation of Theorem~\ref{thm:semipar LAM}. Since $\{Q_{n, h}\}$ converges to the Gaussian shift experiment $\{\calN \left(h, I_m\right)\}$, and since the local parameter satisfies $\sqrt{n} \Exp_{\chi(\P)}^{-1} \chi_n(h) \rightarrow A_M h$, the finite-dimensional parametric minimax theorem yields that for every positive quadratic form $\ell$ on $\T_{\chi(\P)} \Psi$, 
\begin{equation*}
\begin{split}
& \sup_{I \subset M} \liminf_{n \rightarrow \infty} \sup _{s \in I} \E_{\P_{n, s}} \ell \Big(\Pi_{\chi (\P_{1 / \sqrt{n}, s})}^{\chi(\P)} \sqrt{n} \Exp_{\chi (\P_{1 / \sqrt{n}, s})}^{-1} \hat{\chi}_n \Big) \\
& \geq \int \ell \diff \calN (0, A_M A_M^*).
\end{split}
\end{equation*}
Because this inequality holds for every positive quadratic form $\ell$, it is equivalent to the matrix inequality
\begin{equation*}
\begin{split}
& \sup_{I \subset M} \liminf _{n \to \infty} \sup _{s \in I} \E_{\P_{n, s}} \Big\{\Pi_{\chi (\P_{1 / \sqrt{n}, s})}^{\chi(\P)} \sqrt{n} \Exp_{\chi (\P_{1 / \sqrt{n}, s})}^{-1} \hat{\chi}_n \Big\}^{\otimes 2} \\
& \succeq A_M A_M^*.
\end{split}
\end{equation*}

At this point, the lower bound depends on the particular finite-dimensional subspace $M$. To recover the full semiparametric bound, let $M$ increase. Since $\IF_\chi$ belongs to the closed linear span of $\Lambda_\P$, there exists an increasing sequence of finite-dimensional subspaces $M_r \subset \Lambda_\P$ such that $\mathsf{\Pi}_{M_r} \IF_\chi \rightarrow \IF_\chi $ in $\L^2(\P)$. Hence $A_{M_r} A_{M_r}^*=\E\left[\left(\mathsf{\Pi}_{M_r} \IF_\chi\right)^{\otimes 2}\right] \rightarrow \E\left[\IF_\chi^{\otimes 2}\right]=\bbV_\P$. 

Applying the preceding matrix inequality to each $M_r$ and letting $r \rightarrow \infty$, we obtain
\begin{align*}
& \sup_{I} \liminf_{n \rightarrow \infty}\sup_{s\in I} \E_{\P_{n, s}} \left\{ \Pi_{\chi (\P_{1 / \sqrt{n}, s})}^{\chi (\P)} \sqrt{n} \Exp_{\chi (\P_{1 / \sqrt{n}, s})}^{-1} \hat{\chi}_{n} \right\}^{\otimes 2} \\
& \succeq \E (Z^{\otimes 2}),
\end{align*}
where $Z \sim \calN (0, \bbV_{\P})$.
\end{proof}
\subsection{Omitted Details in Section~\ref{sec:examples}}
\label{app:example}
\subsubsection{Omitted Details in Section~\ref{sec:M-estimation}}
\label{app:Frechet mean}
\leavevmode
In this section, we first show that under the assumptions of Theorem~\ref{thm:Frechet mean}, the \Frechet{} mean is differentiable in the sense of Definition~\ref{def:differentiable functionals}.
\begin{proof}
The \Frechet{} mean can be equivalently defined via the following moment equation, when the squared geodesic distance $\phi \coloneqq \frac{1}{2} d^{2}$ admits both first and second derivatives, denoted by $\phi^{(1)}$ and $\phi^{(2)}$:
\begin{equation}
\label{Frechet equality constraint}
\E \{\phi^{(1)} (Z; \mu_{0})\} \equiv 0,
\end{equation}
which implies that for any parametric submodel $\P_{t}$ of $\P$ satisfying \eqref{semipar: differentiable paths},
\begin{equation}
\label{Frechet equality constraint submodel}
\begin{split}
& \ \left. \frac{\diff}{\diff t} \right\vert_{t = 0} \E_{t} \left[ \phi^{(1)} (X; \mu_{t}) \right] \equiv 0 \Rightarrow \\
& \ \E \left[ \phi^{(2)} (X; \mu_{0}) \right] \cdot \dot{\mu}_{0} = \E \left[ \phi^{(1)} (X; \mu) \s (X) \right] \Rightarrow \\
& \ \dot{\mu}_{0} = \E \left[ \left\{ \E \left[ \phi^{(2)} (X; \mu_{0}) \right] \right\}^{-1} \cdot \phi^{(1)} (X; \mu_{0}) \cdot \s (X) \right].
\end{split}
\end{equation}
Since under a nonparametric model, the model tangent space $\Lambda_{\P}$ equals $L^{2}_{0} (\P)$, the above calculation identifies the differentiability of $\mu$ and its efficient influence function
\begin{align*}
\chi_{\P, b} = \{\E [\phi^{(2)} (X; \mu_{0})]\}^{-1} \cdot \phi^{(1)} (X; \mu_{0})
\end{align*}
for any $b \in \T_{\mu}^{\ast} \bbM$. Finally, we have the identity $\phi^{(1)} (X; \mu_{0}) = \Exp_{\mu_{0}}^{-1} X$.
\end{proof}
Next, we prove the statement in Remark~\ref{ex:non-asymptotic} in the end of Section~\ref{sec:Frechet mean}.
\begin{proof}
In a normal coordinate system centered at $\mu_0$, the Christoffel symbols and their derivatives vanish at $\mu_0$ so that the standard differential corresponds to the Riemannian gradient and the standard second order derivative corresponds to the Riemannian Hessian. Consider $d^2(y, x)$, for $y \notin \calC (x)$, we have $\nabla d^2(y, x)=-2 \Exp^{-1}_x (y)$ and the Hessian is $H_x(y)=\nabla^2 d^2(y, x)=$ $-2 D_x \Exp^{-1}_x (y)$. 
As illustrated in Section 6 of \cite{pennec2019curvature} and Theorem 2.2 of \cite{bhattacharya2005large}, asymptotic covariance of the empirical Riemannian mean is considered. 

In Theorem 2.2 of \cite{bhattacharya2005large}, the covariance of the corresponding Gaussian distribution is $\Lambda^{-1} \Sigma \Lambda^{-1}$, where $\Lambda=\int_{\bbM} H_{\mu_0}(x) \mu(\diff x)$ and $\Sigma=4 \int_{\bbM} \Exp_{\mu_0}^{-1} (x) \Exp^{-1}_{\mu_0} (x)^{\top} \mu (\diff x)$. 
By comparison, $\Sigma$ is equivalent to our $4\E\{ [\Exp_{\mu_{0}}^{-1} X]^{\otimes 2}\}$. For the second order derivatives, we have $H_{\mu_0}(y)$ and its expected value for $y$ following the law $\P_{\mu}$ is $-2\E [\nabla \Exp_{\mu_{0}}^{-1} X]$. Based on Theorem \ref{thm:Frechet mean}, the empirical \Frechet{} mean $\hat{\mu}_{n}$ is semiparametric efficient.
\end{proof}
\subsection{Robustness of the Results to an Alternative Geometric Formulation based on Retraction}
\label{app:retraction}
As pointed out by a referee, due to computational concerns, \emph{retraction} is also used very often as an alternative to the exponential map when comparing two different points on a manifold. In a nutshell, our results should be robust to replacing $\Exp$ by a sufficiently smooth local retraction $R$, provided that $R$ agrees with $\Exp$ up to the second order at the origin.
Specifically, let $R_x: \T \bbM \rightarrow \bbM$ be a second-order retraction with a local inverse $R_x^{-1}$ locally at $x \in \bbM$ \cite{absil2008optimization, huang2026high}. Then, the main first-order asymptotic results should remain unchanged after systematically replacing $\Exp$ / $\Exp^{-1}$ by $R / R^{-1}$, together with replacing parallel transport by compatible vector transport. 

What may change, however, are certain exact finite-sample geometric identities and higher-order terms. This interpretation is consistent with the fact that our central results are all formulated as first order asymptotics: since local perturbations occur at scale $n^{-1 / 2}$, estimation error is normalized by $\sqrt{n}$, and curvature-dependent corrections vanish in the first-order limit.

To further explain why the first-order asymptotic theory is essentially robust to this change, suppose $R_x$ is a $C^2$ second-order retraction. Then, in local coordinates around $x$, $R_x (v) = \Exp_x (v) + O (\|v\|^3)$ for $v = o (1)$. Hence, along local alternatives of the form $v=h / \sqrt{n}$, $R_\theta (h / \sqrt{n}) = \Exp_{\theta} (h / \sqrt{n}) + O (n^{-3 / 2})$. The discrepancy between the retraction and exponential map is $o (n^{-1 / 2})$. Therefore, replacing the local perturbation $\theta_{n, h} = \Exp_\theta(h / \sqrt{n})$ in Definition~\ref{def:par dqm} and Proposition~\ref{prop:lan} by $\theta_{n, h}^{R} = R_{\theta} (h / \sqrt{n})$ should not affect the score, Fisher information, or the LAN limiting experiment, because both paths have the same initial velocity $h$ and differ only at higher order terms. As for the intrinsic differential $\dot{\psi}(\theta)$, in the manuscript, it is defined as $$\dot{\psi} (\theta) (h) = \lim_{t \rightarrow 0} t^{-1} \Exp^{-1}_{\psi (\theta)} \psi (\Exp_{\theta} (t \cdot h)).$$ 

If one instead defines $\dot{\psi}_R(\theta) (h) = \lim_{t \rightarrow 0} t^{-1} R_{\psi(\theta)}^{-1} \{\psi (R_\theta(t h))\}$, then $\dot{\psi}_R(\theta)=\dot{\psi} (\theta)$ under the same local smoothness assumptions, because $R_\theta(t h)$ and $\Exp_\theta(t h)$ share the same first derivative at $t=0$, while $R_{\psi(\theta)}^{-1}$ and $\Exp_{\psi(\theta)}^{-1}$ agree up to first order near the base point. Likewise, if $\hat{\psi}_n$ is asymptotic linear, then $R_{\psi (\theta)}^{-1} (\hat{\psi}_n) = \Exp_{\psi(\theta)}^{-1} (\hat{\psi}_n) + o_{\P} (n^{-1})$, and therefore $\sqrt{n} R_{\psi(\theta)}^{-1} (\hat{\psi}_n) = \sqrt{n} \Exp_{\psi(\theta)}^{-1} (\hat{\psi}_n) + o_{\P} (1)$. So any limiting distribution stated in terms of $\sqrt{n} \Exp_{\psi(\theta)}^{-1} \hat{\psi}_n$ still holds if we replace $\Exp^{-1}$ by $R^{-1}$.

A similar comment applies to the use of parallel transport in Definition~\ref{def:riemann regular}. If one adopts a vector transport $\mathcal{T}$ associated with the retraction $R$ instead of the parallel transport, the same role can be played by $\mathcal{T}$, provided it agrees with parallel transport to the first order along the corresponding local curve. Let $R$ be a $C^2$ retraction, and let $y=R_x(v)$ for $v \in \T_x \bbM$ sufficiently small, a natural vector transport associated with $R$ is the differentiated retraction $\mathcal{T}^{x}_{y} (\xi):= \diff R_x(v)[\xi]$, $\xi \in \T_x \bbM$. Since $\diff R_x(0)=\operatorname{id}$, the map $\diff R_x(v)$ is invertible for $v$ near 0 , so we may also define the backward transport $\mathcal{T}^{y}_{x}:=\left(\diff R_x(v)\right)^{-1}: \T_y \bbM \rightarrow \T_x \bbM$. In embedded manifolds, one may equivalently use the usual projection-based vector transport associated with the chosen retraction. 

For our first-order asymptotic purposes, it is enough to assume that along local perturbations $y \rightarrow x$, $\left\|\mathcal{T}^{y}_{x} -\Pi^{y}_{x}\right\|_{\mathrm{op}}=O(d(x, y))$, uniformly for $v$ in bounded sets. Since in our setting $d(x, y)=O\left(n^{-1 / 2}\right)$ under the local alternatives $\theta_{n, h}=\Exp_\theta(h / \sqrt{n})$, this implies $\left\|\mathcal{T}^{y}_{x}-\Pi^{y}_{x}\right\|_{\mathrm{op}}=O\left(n^{-1 / 2}\right)=o(1)$. Therefore, when applied to normalized residuals that are $O_\P(1)$, the difference between transporting by $\mathcal{T}$ and by $\Pi$ is $o_\P(1)$. Under such a compatibility condition, the difference between transporting via $\mathcal{T}$ and transporting via $\Pi$ is again $o_{\P} (1)$ at the $n^{1 / 2}$-scale, so the definition of regularity and the resulting asymptotic conclusions continue to hold. In other words, parallel transport is used in our paper as a canonical comparison device between nearby tangent spaces; for first-order asymptotics, any first-order equivalent transport mechanism should lead to the same asymptotic theory. This is also in line with Remark~\ref{rem:nonlinear}, where we note that the notion of regularity can be formulated more broadly using tangent-space-type structures beyond the exact manifold machinery adopted in the present paper.

Finally, we point out other specific results in our paper that do exploit properties specific to $\Exp$, which mostly concern some basic technical results from geometric analysis, in particular those in Appendix~\ref{app:geometric analysis}.
\begin{itemize}
    \item Lemma~\ref{lem:connection}: derivatives of $\Exp$ and $\Exp^{-1}$. This lemma relies on the expansions $\nabla_{h} \Exp_{\mu} h (\cdot) = \id (\cdot) - \frac{1}{6} \calR_{\mu} (\cdot, h) h + O (\Vert h \Vert^{3})$, and $\nabla_{h} \Exp^{-1}_{\mu} \mu_{h} (\cdot) = \id (\cdot) + \frac{1}{6} \calR_{\mu} (\cdot, h) h + O (\Vert h \Vert^{3})$, and its proof explicitly introduces a family of geodesics and the associated Jacobi field, which solves the Jacobi equation. The curvature term $- \frac{1}{6} \calR_{\mu} (\cdot, h) h$ comes directly from the Jacobi equation. That said, one does not necessarily need this exact formula. What is needed, if retraction is used instead, is an approximation with the same order of remainder. A second-order retraction can supply such a replacement at the level of asymptotic order, although the explicit curvature coefficient may differ unless one works with the exact exponential map. 
    
    \quad Concretely, if $R_\mu$ is a $C^3$ second-order retraction, then in local normal coordinates at $\mu$, $R_\mu(h)=\Exp_\mu(h)+O\left(\|h\|^3\right)$, $\diff R_\mu(h)=\diff \Exp_\mu(h)+O\left(\|h\|^2\right)$, as $h \rightarrow 0$. Hence $\diff R_\mu(h)(\cdot)=\id(\cdot)+O\left(\|h\|^2\right)$, and similarly for the local inverse, $\diff \left(R_\mu^{-1}\right)_{\mu_h}(\cdot)=\id(\cdot)+O\left(\|h\|^2\right)$, and $\mu_h:=R_\mu(h)$. Thus, although the exact second-order term need not be the curvature expression coming from the Jacobi equation, the remainder is of the same order as for $\Exp$. This is sufficient for our purposes, because throughout the paper the local perturbations satisfy $h=O\left(n^{-1 / 2}\right)$, so after multiplying by $\sqrt{n}$ the discrepancy contributes only $\sqrt{n} O\left(\|h\|^2\right)=O\left(n^{-1 / 2}\right)=o(1)$.
    \item Parallel transport expansion in Lemma~\ref{lem:parallel expansion} uses Taylor expansion together with the vanishing of the covariant derivative along the curve. This is therefore another place where one uses a structure specific to Levi-Civita parallel transport. However, the exact geodesic parallel-transport identity is stronger than what the first-order asymptotic argument ultimately needs. A sufficiently accurate vector transport should be enough to establish similar conclusions, even though the proof as written uses the exact parallel transport formula. 
    
    \quad In particular, the vector transport does not need to agree with parallel transport to second order. What is needed is only that the vector transport be first-order compatible with parallel transport along the $n^{-1 / 2}$-local perturbations used in Definition~\ref{def:riemann regular}. 
    
    \quad More precisely, let $\mathcal{T}_{x}^{y}: \T_y \Psi \rightarrow \T_x \Psi$ denote a vector transport used in place of $\Pi_x^y$. For local perturbations $y = \psi (\theta_{n, h})$ and $x = \psi(\theta)$, the key requirement is $\|\mathcal{T}_{x}^{y} - \Pi_{x}^{y}\|_{\mathrm{op}} = o(1)$ when $d (x, y) = o (1)$, uniformly for $h$ in bounded subsets of $\T_\theta \Theta$.
    This is the minimal requirement needed to preserve our results because $d (x, y) = O (n^{-1 / 2})$. To see this, write $U_{n, h} \coloneqq \sqrt{n} \Exp_{\psi (\theta_{n, h})}^{-1} (\hat{\psi}_n) \in \T_{\psi (\theta_{n, h})} \Psi$. The original regularity definition in the paper is stated in terms of $\Pi U_{n, h}$. If we replace $\Pi$ by $\mathcal{T}$, then $\mathcal{T}_{x}^{y} U_{n, h}-\Pi_{x}^{y} U_{n, h} = (\mathcal{T}_{x}^{y}-\Pi_{x}^{y}) U_{n, h}$. 
    Therefore $\|(\mathcal{T}_{x}^{y}-\Pi_{x}^{y}) U_{n, h}\| \leq \|\mathcal{T}_{x}^{y} - \Pi_{x}^{y}\|_{\mathrm{op}} \|U_{n, h}\| = o (1) \cdot O_{\P} (1) = o_{\P} (1)$ under matching order between $\mathcal{T}_{x}^{y}$ and $\Pi_{x}^{y}$, which is what we need. So any vector transport whose operator-norm difference from parallel transport is $o (1)$ along local perturbations yields the same limit law in Definition~\ref{def:riemann regular}. 
\end{itemize}
Overall, as we have seen, it is not impossible to use a second-order retraction and its associated vector transport to recover our theory. However, it becomes quite inconvenient to do so, given that we have already had a set of useful geometric tools that can be adopted, such as exponential maps and parallel transports. 

Finally, as mentioned in the Introduction, it is also possible to formalize the asymptotic efficiency theory without referring to any particular choice of coordinate charts. From that point of view, one needs to take supremum over all possible charts for all the results established here. A related idea can also be found in \cite{takatsu2024generalized}. But it remains to be studied if this can be done rigorously in the context of regular parameter manifolds.

We hope that our analyses above can also shed some light on how retractions/vector transports should be designed in practice, with the hope of obtaining good statistical properties in the sense of being close to the type of efficiency bound theory established here.

\end{appendices}

\vskip 0pt plus -1fil

\begin{IEEEbiographynophoto}{Lvfang Sun}
received a Ph.D. in Statistics in 2026 from the National University of Singapore, Singapore. His research interests include non-Euclidean data analysis, statistical and deep learning, and semiparametric theory.
\end{IEEEbiographynophoto}

\vskip 0pt plus -1fil

\begin{IEEEbiographynophoto}{Zhenhua Lin}
received a Bachelor of Science degree in 2008 from Fudan University, Shanghai, China. He subsequently earned Master of Science degrees in computing science and statistics, followed by a Ph.D. in statistics from the University of Toronto in 2017. He is currently an Associate Professor and Dean's Chair Professor at the National University of Singapore, Singapore. His research focuses on statistical methodology and theory, with applications in machine learning and artificial intelligence.
\end{IEEEbiographynophoto}

\vskip 0pt plus -1fil

\begin{IEEEbiographynophoto}{Lin Liu}
received a Bachelor of Engineering degree majored in Bioinformatics in 2011 from Tongji University, Shanghai, China. Then he received a Master of Science degree in Biostatistics in 2013 and a Ph.D. degree in Biostatistics in 2018, both from Harvard University, Cambridge, Massachusetts, United States. He is currently an assistant professor in the Institute of Natural Sciences, School of Mathematical Sciences, and SJTU-Yale Joint Center for Biostatistics and Data Science at Shanghai Jiao Tong University, Shanghai, China. His research interests include semi- and non-parametric statistical theory and causal inference, with applications in machine learning and artificial intelligence, biomedical sciences, and physical sciences.
\end{IEEEbiographynophoto}

\end{document}